\def\C{\mathbb {C}}
\def\R{\mathbb {R}}
\def\N{\mathbb {N}}
\def\Z{\mathbb {Z}}
\def\O{\mathcal O}
\def\lie#1{\mathfrak{ #1}}
\def\lieh{\lie h}
\def\lieg{\lie g}
\def\liek{\lie k}
\def\lieu{\lie u}
\def\inv{^{-1}}
\newcommand{\rank}{\operatorname{rank}}
\newcommand{\SL}{\operatorname{SL}}
\newcommand{\Sp}{\operatorname{Sp}}
\newcommand{\SO}{\operatorname{SO}}
\newcommand{\PGL}{\operatorname{PGL}}
\newcommand{\Ker}{\operatorname{Ker}}
\newcommand{\pr}{\operatorname{pr}}
\def\Lie{\operatorname{Lie}}
\def\GL{\operatorname{GL}}
\def\Hom{\operatorname{Hom}}
\def\End{\operatorname{End}}
\def\Ad{\operatorname{Ad}}
\def\phi{\varphi}
\def\rank{\operatorname{rank}}
\def\U{\operatorname{U}}
\def\quot#1#2{#1/\!\! /#2}
\def\ss{\mathcal S}
\def\AA{\mathsf{A}}
\def\CC{\mathsf{C}}
\def\BB{\mathsf{B}}
\def\DD{\mathsf{D}}
\def\EE{\mathsf{E}}
\def\GG{\mathsf{G}}
\def\pt{\partial}
\def\Aff{\operatorname{Aff}}
\def\Dbar{\leavevmode\lower.6ex\hbox to 0pt{\hskip-.23ex
    \accent"16\hss}D}
    \def\sspan{\operatorname{span}}
\numberwithin{equation}{subsection}
\newtheorem{theorem}[subsection]{Theorem}
\newtheorem{lemma}[subsection]{Lemma}
\newtheorem{proposition}[subsection]{Proposition}
\newtheorem{corollary}[subsection]{Corollary}
\theoremstyle{definition}
\newtheorem{definition}[subsection]{Definition}
\theoremstyle{remark}
\newtheorem{remark}[subsection]{Remark}
\newtheorem{example}[subsection]{Example}
\title[Linear maps preserving orbits]{\boldmath Linear maps preserving orbits} 
 \author{Gerald W. Schwarz}
\address{Department of Mathematics\\
Brandeis University\\
Waltham, MA 02454-9110}
\email{schwarz@brandeis.edu}
\subjclass[2000]{20G20, 22E46}
\keywords{Characteristic orbits, linear preserver problems}
\begin{document}
\begin{abstract}
Let $H\subset\GL(V)$ be a connected complex reductive group where $V$ is a finite-dimensional complex vector space. Let  $v\in V$ and let $G=\{g\in\GL(V)\mid gHv = Hv\}$. Following  Ra\"is \cite{Rais1} we say that the orbit $Hv$ is \emph{characteristic for $H$\/} if the identity component of $G$ is  $H$.  If $H$ is semisimple, we say that $Hv$ is \emph{semi-characteristic\/} for $H$ if the identity component of $G$ is an extension of $H$  by a torus. We classify the $H$-orbits which are not (semi)-characteristic in many cases.
\end{abstract}

\maketitle
\section{Introduction}

Let $K$ be a field. Then $H:=\PGL_n(K)$ acts on $V:=M(n,K)$ via conjugation. There is a large literature on solving  \emph{linear preserver problems\/}, that is, on finding the subgroups of $\GL(V)$ which preserve a certain set $F$ of $H$-orbits in $V$. See \cite{Pierce} for a survey. One method of solving such problems is to classify all possible subgroups of $\GL(V)$ containing $H$ and then check to see if these subgroups preserve $F$. This idea goes back at least to Dynkin \cite{Dynmax} and has been used in many papers, e.g., 
\cite{Gur1, Gur2, Gur3, Dok1, Dok2, Dok3}. We generalize the problem  (but only in characteristic zero) by letting $H$ be a reductive complex  algebraic group,  letting $V$ be an arbitrary finite dimensional representation of $H$ and letting $F$ be an $H$-orbit $H v$. The question then becomes: What is the subgroup $G$ of $\GL(V)$ which preserves $H
 v$?  The method of solution is often to look at the possible $G$ and possible $G_v$ such that  $G=HG_v$ (which implies  that $Gv=Hv$). We are able to answer the question in many circumstances. We are particularly interested in identifying those cases where $G^0$ is the image of $H$, which, in the language of Ra\"is \cite{Rais1}, means identifying those $H$-orbits which are \emph{characteristic}.

Our base field is $\C$, the field of complex numbers.  Let $V$ be a finite dimensional $H$-module where $H$ is a  connected reductive group.  Let $0\neq v\in V$ and set $G:=\{g\in\GL(V)\mid gHv=Hv\}$. Then $G$ is a closed algebraic subgroup of $\GL(V)$ (see \ref{lem:closed} below), We say that \emph{$Hv$ is characteristic for $H$\/} (or simply that $v$ is characteristic for $H$ or just that $v$ is characteristic) if $G^0$ is the image of $H$ in $\GL(V)$. (From now on we will not distinguish $H$ from its image in $\GL(V)$, so we will say that $v$ is characteristic if $G^0=H$, even though this is not quite correct.)\  The definition that $Hv$ is \emph{semi-characteristic\/} is as above, except that we require only that $G^0$ is an extension of $H$ by a torus   (so $G$ has to be reductive).  
In general,  $G$ is not reductive (see Examples \ref{ex:glpq},  \ref{ex:exceptions}, \ref{ex:gu2=0} and \ref{ex:complicated}). We say that $v$ is \emph{almost characteristic\/} if $H$ is a Levi factor of $G^0$ and that \emph{$v$ is almost semi-characteristic\/} if $H$ contains the semisimple part of a Levi factor of $G^0$.

In \S 2 we consider some elementary properties of our definitions. We see that one has a chance for $G^0=H$ only in the case that $v\in V$ is \emph{generic\/}, which is equivalent to saying that $H
 v$ spans $V$. In \S 3 we consider what can happen to $G$ if we add a trivial factor to $V$. We show that $Hv$ is characteristic if $H$ is a torus and $v\in V$ is generic. In \S 4 we consider the case that $H$ is simple of rank at least 2 and $V$ is irreducible. We recall some fundamental results of A. Onishchik which apply. We are then able to classify the irreducible $H$-modules $V$ and $v\in V$ such that $Hv$ is not semi-characteristic. We determine which orbits are semi-characteristic in the adjoint representation of a semisimple group. In \S 5 we consider the case that $H$ is simple of rank at least 2 and $V$ is reducible. We determine the possible semisimple $G$ containing $H$ such that $Gv=Hv$ for  $v\in V$.   In \S 6 we consider the case that $H$ is semisimple and $V$ is irreducible. In \S 7 we determine the structure of $G$ when $H=\SL_2$. In an appendix we prove branching rules which we need to establish our results.
 
Our  thanks go to  M. Ra\"is for his questions and conjectures in   \cite{Rais1} which led to this paper. We thank Peter Heinzner, Peter Littelmann, Ernest Vinberg and Arkady Onishchik for helpful remarks and  Alfred No\"el and Steven G. Jackson for help with calculations. We thank the University of Poitiers, the Ruhr-Universit\"at Bochum and the Mathematisches Institut, Universit\"at zu K\"oln for their warm hospitality while this paper was being written. Finally, special thanks to the referee for a very meticulous reading of the manuscript and many helpful remarks. (S)he also found a serious error in our original version of the section on $\SL_2$.

\section{Elementary remarks}
 We consider when we can remove the prefixes ``almost'' and ``semi.''   We also   reduce to the case that  $Hv$ spans $V$. First we show that $G$ is closed in $\GL(V)$.
 
 \begin{lemma} \label{lem:closed} Let $V$ be a finite-dimensional $H$-module where $H$ is algebraic. Let $G=\{g\in\GL(V)\mid gHv=Hv\}$. Then $G$ is a closed subgroup of $\GL(V)$.
 \end{lemma}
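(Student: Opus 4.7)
The plan is to realize $G$ as the intersection in $\GL(V)$ of the set-theoretic stabilizers of two closed subvarieties of $V$. Let $X := \overline{Hv}$ (Zariski closure) and let $Y := X \setminus Hv$. Both are closed in $V$: $X$ by definition, and $Y$ because $Hv$, being the image of the orbit morphism $H \to V$, $h \mapsto hv$, is locally closed (a standard fact for actions of algebraic groups), hence open in $X$. For a subset $Z \subseteq V$, I write $\mathrm{St}(Z) := \{g \in \GL(V) \mid gZ = Z\}$.

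The first step is the identity $G = \mathrm{St}(X) \cap \mathrm{St}(Y)$. One inclusion is immediate: if $g \in G$, then $gX = \overline{gHv} = \overline{Hv} = X$ (since $g$ is a homeomorphism of $V$ and thus commutes with taking closures), and consequently $gY = g(X \setminus Hv) = X \setminus Hv = Y$. Conversely, if $g$ preserves both $X$ and $Y$, then, using that $g$ is a bijection of $V$, $g(Hv) = g(X \setminus Y) = gX \setminus gY = X \setminus Y = Hv$.

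The second step is to show that $\mathrm{St}(Z)$ is closed in $\GL(V)$ for every closed $Z \subseteq V$. For each $z \in Z$ the evaluation $\mu_z : \GL(V) \to V$, $g \mapsto gz$, is a morphism, so $\mu_z^{-1}(Z)$ is closed, and therefore
\[
\{g \in \GL(V) \mid gZ \subseteq Z\} \;=\; \bigcap_{z \in Z}\mu_z^{-1}(Z)
\]
is closed. To promote the inclusion $gZ \subseteq Z$ to equality, I invoke the Noetherian property of $V$: the descending chain $Z \supseteq gZ \supseteq g^2 Z \supseteq \cdots$ stabilizes at some $n$, and applying $g^{-n}$ to $g^n Z = g^{n+1} Z$ yields $Z = gZ$. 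Hence $\mathrm{St}(Z) = \{g \mid gZ \subseteq Z\}$ is closed.

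Combining these two steps exhibits $G$ as the intersection of two closed subsets of $\GL(V)$; that $G$ is a subgroup is an obvious group-theoretic check. There is no real obstacle: the only two points requiring a little care are the passage from the constructible set $Hv$ to the pair of closed sets $X$ and $Y$, and the Noetherian argument that converts $gZ \subseteq Z$ into $gZ = Z$.
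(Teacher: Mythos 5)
Your proof is correct and follows exactly the same strategy as the paper, which writes $G = G_1 \cap G_2$ where $G_1$ and $G_2$ are the stabilizers of $\overline{Hv}$ and $\overline{Hv}\setminus Hv$ respectively and simply asserts these are closed. You have supplied the details the paper leaves implicit, in particular the Noetherian argument promoting $gZ\subseteq Z$ to $gZ=Z$.
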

 
 \begin{proof}
 Let $G_1=\{g\in \GL(V)\mid g\overline{Hv}=\overline{Hv}\}$ and $G_2=\{g\in \GL(V)\mid g(\overline{Hv}\setminus Hv)=(\overline{Hv}\setminus Hv)\}$. Then $G_1$ and $G_2$ are closed subgroups of $\GL(V)$ and $G=G_1\cap G_2$.
 \end{proof}
 
 We now consider complexifications of compact group actions. Let $C$ be a compact Lie group and $W$ a real $C$-module. Let $w\in W$ and assume that $Cw$ spans $W$.
 \begin{proposition} \label{prop:compact} Let $C$, $W$ and $w$ be as above.
 Let $L=\{g\in\GL(W)\mid gCw=Cw\}$. Then $L$ is compact.
 \end{proposition}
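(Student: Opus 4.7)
The plan is to show that $L$ is closed in $\GL(W)$ and that both $L$ and $L^{-1}=L$ are bounded in $\End(W)$; together these force $L$ to be compact.

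First I would observe that $Cw$ is compact in $W$ (it is the continuous image of the compact group $C$ under the orbit map $c\mapsto cw$), and in particular closed. Preserving a closed set setwise is a closed condition on $\GL(W)$: the condition $g(Cw)\subseteq Cw$ is the intersection, over $x\in Cw$, of the closed conditions $gx\in Cw$, and similarly for $g^{-1}$. So $L$ is closed in $\GL(W)$. (This is parallel to Lemma \ref{lem:closed} but simpler, since the orbit of a compact group is automatically closed, so there is no need to split into orbit and boundary pieces.)

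Next, since $Cw$ spans $W$, I would choose $c_1,\ldots,c_n\in C$ such that the vectors $e_i:=c_iw$ form a basis of $W$. Fix any norm on $W$. For each $g\in L$ the vector $ge_i=g(c_iw)$ lies in the compact, hence bounded, set $Cw$. Expanding $g$ as a matrix in the basis $\{e_i\}$, this uniformly bounds its entries, so $L$ is bounded in $\End(W)$. Since $L$ is a group, the same argument applied to $g^{-1}\in L$ shows that $\{g^{-1}:g\in L\}$ is also bounded. Given any sequence $g_n\in L$, passing to a subsequence I can arrange $g_n\to g$ and $g_n^{-1}\to h$ in $\End(W)$; then $gh=\lim g_ng_n^{-1}=\Id$, so $g\in\GL(W)$ and the subsequence converges in $\GL(W)$. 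Combined with closedness, this makes $L$ compact.

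There is no serious obstacle; the only point requiring care is ensuring that limits of elements of $L$ remain inside $\GL(W)$ rather than merely in $\End(W)$, which is why one must apply the boundedness argument to $g^{-1}$ as well as to $g$. The compactness of $C$ enters in exactly the place one expects: it makes $Cw$ bounded, which is the input that forces $L$ to be bounded in $\End(W)$.
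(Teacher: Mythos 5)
Your proof is essentially the paper's: both pick a basis of $W$ inside $Cw$, observe that the compactness of $Cw$ bounds $\|gw_i\|$ uniformly over $g\in L$, and conclude that $L$ is closed and bounded, hence compact. You add a useful extra step that the paper elides — applying the boundedness argument to $g^{-1}$ as well to rule out limits escaping to singular matrices in $\End(W)$ — but this is a detail within the same argument, not a different route.
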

 \begin{proof}
 Fix a basis $w_1,\dots,w_n$ of $W$ lying in $Cw$ and let $||\cdot||$ be a norm on $W$. Then for $g\in L$ and $1\leq i\leq n$, $||gw_i||$ is bounded by a constant  which is independent of $g$. Thus $L$ is a closed bounded subset of $\GL(W)$, hence compact.
 \end{proof}
 
 \begin{corollary} 
 Let $H=C_\C$ be the complexification of $C$ acting on $V=W\otimes_\R\C$. Let $G=\{g\in\GL(V)\mid gHw=Hw\}$. Then $G$ is the complexification of $L$, hence reductive.
 \end{corollary}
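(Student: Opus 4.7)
The plan is to argue in three steps: $L$ embeds into $G$ via Zariski density, $G$ is defined over $\R$ as an algebraic subgroup of $\GL(V)$, and the real points of $G$ coincide with $L$, which together yield $G = L_\C$.

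First I would show $L \subset G$. Each $g \in L$ extends $\C$-linearly to $\tilde g \in \GL(V)$ with $\tilde g(Cw) = Cw \subset Hw$. The orbit $Hw$ is locally closed in $V$, so the preimage $\{h \in H \mid \tilde g(hw) \in Hw\}$ under the morphism $h \mapsto \tilde g(hw)$ is Zariski open in $H$ and contains $C$. Since $C$ is Zariski dense in $H = C_\C$, this preimage equals $H$, giving $\tilde g(Hw) \subset Hw$. Applying the same argument to $g^{-1}$ yields $\tilde g(Hw) = Hw$, hence $\tilde g \in G$.

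Next, let $\sigma$ denote the antilinear involution of $V$ with $W = V^\sigma$. Since $\sigma(w) = w$ and the involution $\tau: g \mapsto \sigma g \sigma^{-1}$ of $\GL(V)$ stabilizes $H$ (fixing $C$ pointwise), one has $\sigma(Hw) = Hw$. Consequently, for any $g \in G$ the $\C$-linear map $\tau(g)$ also preserves $Hw$, so $\tau(G) = G$ and $G$ is defined over $\R$ with real points $G(\R) = G \cap \GL(W)$, and $G = G(\R)_\C$.

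The main obstacle is to verify $G(\R) = L$, which reduces to the identification $Hw \cap W = Cw$. To establish it, I would fix a $C$-invariant Hermitian form on $V$ whose restriction to $W$ is real (obtained by extending a $C$-invariant real inner product on $W$). A direct calculation using the decomposition $\Lie(H) = \lie c \oplus i\lie c$ shows that $w$ is a critical point of $\|\cdot\|^2$ on $Hw$, so by Kempf--Ness the orbit $Hw$ is closed in $V$, and by Matsushima's theorem $H_w$ is reductive. Being $\sigma$-stable, $H_w$ admits a polar decomposition $H_w = C_w \cdot \exp(i\lie c_w)$, with $C_w = H_w \cap C$, compatible with $H = C \cdot \exp(i\lie c)$. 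For $v = hw \in Hw \cap W$, writing $h = c\exp(iX)$ in polar form and applying $\sigma$ to $v = hw$ forces $\exp(2iX) \in H_w$; uniqueness of the polar decomposition in $H$ together with the decomposition of $H_w$ then implies $X \in \lie c_w$, so $\exp(iX) \in H_w$ and $v = cw \in Cw$. Combined with the previous two steps this gives $G = L_\C$, which is reductive as the complexification of a compact group.
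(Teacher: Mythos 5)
Your overall strategy — show $L\subset G$, show $G$ is defined over $\R$, and compare real points — is reasonable, and your step 3 (establishing $Hw\cap W = Cw$ via Kempf--Ness and Matsushima) is essentially re-deriving the content of the cited \cite[Lemma 4.3]{SchAlgebraicquotients}, which the paper takes as input. However, there is a genuine gap in step 2. From ``$\tau(G)=G$, so $G$ is defined over $\R$'' you conclude ``$G=G(\R)_\C$.'' That implication is false in general: being defined over $\R$ does not make the real points Zariski dense. For instance $\mu_3=\{z\in\GL_1:z^3=1\}$ is defined over $\R$ but $\mu_3(\R)=\{1\}$, so $\mu_3\neq(\mu_3(\R))_\C$. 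In the present situation the same failure mode would occur if $G$ had a component with no real point, and nothing in your argument rules that out. In fact what you have shown after step 3 is only $L_\C\subset G$ and $G(\R)=L$; the desired inclusion $G\subset L_\C$ is exactly equivalent to $G(\R)$ being Zariski dense in $G$, so assuming it is circular.

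A secondary, more easily repaired, issue is in step 1: you argue that the preimage $U=\{h\in H: \tilde g(hw)\in Hw\}$ is Zariski open and contains the Zariski dense set $C$, hence equals $H$. An open set containing a Zariski dense subset need not be everything (e.g.\ $\R^*\subset\C^*\subsetneq\C$). The fix is to use that $Hw$ is closed (which you establish later via Kempf--Ness): then $Hw=\overline{Cw}^{\mathrm{Zar}}$, and $\tilde g(Cw)=Cw$ immediately gives $\tilde g(Hw)=Hw$. The paper avoids both issues by a different route: it takes the ideal $I\subset\R[W]$ of the real algebraic set $Cw$, uses the cited lemma to identify the complex zero locus of $I$ with $Hw$, and then observes that the finite system of real polynomial equations $g^*f_i\in I_s$ cuts out $L$ inside $\GL(W)$ and $G$ inside $\GL(V)$, giving $G=L_\C$ directly by equating the two descriptions of the complex solution set. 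To salvage your approach you would need a separate argument that every component of $G$ meets $\GL(W)$ (or that $G\subset L_\C$), which is the real content being established.
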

 
 \begin{proof}
 Since $Cw$ is real algebraic \cite[Lemma 4.3]{SchAlgebraicquotients}, it is defined by an ideal $I\subset \R[W]$, and clearly the complex zeroes of $I$ are $Hw$. Let $I_s$ denote the subspace of $I$ of elements of degree at most $s$, $s\in \N$. Then $I$ is generated by some $I_s$. Let $f_1,\dots,f_m$ be a basis for $I_s$. Then $g\in\GL(W)$ lies in $L$ if and only if $g^*f_i\in I_s$ for all $i$. This gives a set of real equations defining the compact Lie group $L$, and   the complex solutions of these equations are $L_\C$. But the complex solutions of the equations are clearly $G$. Thus $G=L_\C$.
 \end{proof}
 
 Recall (\cite[Ch.\ II Theorem 11]{JacobsonLA, JacobsonLA2}) that if $G\subset\GL(V)$ acts irreducibly on $V$, then $G$ is reductive. 
 \begin{corollary}
 Let $H$ be reductive, let $V$ be an $H$-module and let $v\in V$. Suppose that $v$ is almost semi-characteristic for $H$. Then $v$ is semi-characteristic in the following two cases.
 \begin{enumerate}
\item $V$ is an irreducible representation of $H$.
\item There is a compact Lie group $C$ and real $C$-module $W$ such that $V=W\otimes_\R\C$, $v\in W$  and $H=C_\C$.
\end{enumerate}
 \end{corollary}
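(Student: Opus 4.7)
The plan is to show in each case that $G$ is reductive; once this is known, the ``almost'' upgrades to plain semi-characteristic automatically, so the two cases really only differ in how one verifies reductivity.

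First I would unpack the hypothesis under the provisional assumption that $G^0$ is reductive, so that $G^0$ coincides with its own Levi factor. The almost semi-characteristic assumption then reads $[G^0,G^0]\subseteq H$. Since $G^0$ is reductive, $G^0/[G^0,G^0]$ is a torus; and since $H$ is semisimple, the composite $H\hookrightarrow G^0\twoheadrightarrow G^0/[G^0,G^0]$ is trivial, giving $H\subseteq[G^0,G^0]$. Combined with the hypothesis this yields $H=[G^0,G^0]$, so $G^0=Z(G^0)^0\cdot H$ is an extension of $H$ by the central torus $Z(G^0)^0$, which is exactly the definition of $v$ being semi-characteristic.

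It remains to produce the reductivity of $G$ in each of the two cases. In case (1), $V$ is $H$-irreducible and $H\subseteq G$, so $V$ is $G$-irreducible, and the theorem of Jacobson recalled just before the statement gives that $G$ is reductive. In case (2), the immediately preceding corollary applies directly: $G=L_\C$ is the complexification of a compact Lie group, hence reductive. Since the two preceding corollaries do essentially all of the work, I do not anticipate any substantive obstacle; the only small point is the elementary observation that a semisimple subgroup of a connected reductive group necessarily lies in its derived subgroup, which is immediate from the absence of nontrivial homomorphisms from a semisimple group to a torus.
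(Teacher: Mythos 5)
Your proof is correct and follows exactly the route the paper intends: reductivity of $G$ comes from Jacobson's theorem in case (1) and from the immediately preceding corollary in case (2), and then the reductive structure theory ($G^0 = Z(G^0)^0\cdot[G^0,G^0]$ together with $[G^0,G^0]=H$) upgrades ``almost'' to the genuine statement. The only thing to note is that your step ``$H\subseteq[G^0,G^0]$'' uses semisimplicity of $H$, which is implicit in the paper's definition of semi-characteristic even though the corollary's hypothesis is stated for reductive $H$; your reading is the right one.
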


The following result characterizes when $Hv$ is a cone.

\begin{proposition} Let $0\neq v\in V$ where $V$ is an $H$-module.
Suppose that $Hv$ is a cone. Then there is a $1$-parameter subgroup $\sigma\colon\C^*\to H$ such that $v$ is an eigenvector of $\sigma$ with nonzero weight.
\end{proposition}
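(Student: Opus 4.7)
My plan is to construct the closed algebraic subgroup $N\subset H$ consisting of those $h$ which merely rescale $v$, and then extract the desired cocharacter from a maximal torus of $N^0$.

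First I would set
\[
N:=\{h\in H\mid hv\in \C v\},
\]
which is closed in $H$ (as the preimage of the closed subvariety $\C v\subset V$ under the orbit map $h\mapsto hv$) and is a subgroup: if $h_iv=t_iv$ with $t_i\in\C$, then necessarily $t_i\neq 0$ (since $h_iv\neq0$), and $(h_1h_2)v=t_1t_2 v$, $h_1\inv v=t_1\inv v$. The rule $\chi(h)v=hv$ then defines an algebraic character $\chi\colon N\to\C^*$. The cone hypothesis gives $\C^*v\subset Hv$, so for every $t\in\C^*$ there exists $h_t\in H$ with $h_tv=tv$; such $h_t$ lies in $N$ with $\chi(h_t)=t$, so $\chi$ is surjective.

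Next I would pass to the identity component $N^0$. The image $\chi(N^0)$ is a connected closed subgroup of $\C^*$, hence either trivial or all of $\C^*$; if it were trivial, then $\chi$ would factor through the finite group $N/N^0$, contradicting surjectivity onto $\C^*$. So $\chi(N^0)=\C^*$. Let $T\subset N^0$ be a maximal torus. Because $\chi$ vanishes on the unipotent radical $R_u(N^0)$ (the target $\C^*$ contains no nontrivial unipotent elements) and on $[N^0,N^0]$ (the target is abelian), $\chi$ descends to the maximal reductive abelian quotient of $N^0$, which is represented by $T$. Hence $\chi|_T\colon T\to\C^*$ is already surjective.

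Finally, I would split $\chi|_T$. A surjective character of an algebraic torus corresponds to a primitive element of the character lattice $X^*(T)\cong\Z^{\dim T}$; completing it to a $\Z$-basis and dualizing, I obtain a cocharacter $\sigma\colon\C^*\to T\subset H$ with $\chi\circ\sigma=\Id_{\C^*}$, i.e.\ $\sigma(t)v=tv$ for all $t\in\C^*$. Thus $v$ is an eigenvector of $\sigma$ of weight $1\neq 0$, as required.

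The step I expect to be the main obstacle is the reduction from $\chi(N^0)$ to $\chi(T)$: one has to verify that $\chi$ really kills the unipotent radical of $N^0$, remembering that $N^0$ need not itself be reductive even though $H$ is. Once this is in place, splitting a surjective character of a torus is routine integral linear algebra.
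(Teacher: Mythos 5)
Your proof is correct and takes a genuinely different route from the paper's. The paper argues infinitesimally: from the cone hypothesis it extracts $X\in\lieh$ with $X(v)=v$, conjugates $X$ into a Borel subalgebra, takes the Jordan decomposition $X=s+n$, and then runs an induction over the weight filtration (ordered by $\lambda+\N\Phi$) to show that the semisimple part $s$ alone satisfies $sv=v$; the torus $\{t\in T\mid tv\in\C^*v\}^0$ then supplies the cocharacter. You instead stay at the group level: you build the closed algebraic subgroup $N$ of scalar-multipliers of $v$, extract its character $\chi$, push surjectivity down to $N^0$, and then reduce to a maximal torus by killing the unipotent radical (no nontrivial unipotents in $\C^*$) and the derived subgroup ($\C^*$ abelian). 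This avoids the Jordan-decomposition and weight-filtration bookkeeping entirely, at the cost of invoking a bit more structure theory of linear algebraic groups (Levi/unipotent radical, closedness of images of morphisms). Both are clean; yours is arguably more conceptual.

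One small slip to fix: a surjective algebraic character $T\to\C^*$ corresponds to a \emph{nonzero}, not necessarily primitive, element of $X^*(T)$ (for instance $t\mapsto t^2$ is surjective but not primitive), so you cannot in general split $\chi|_T$ to get $\chi\circ\sigma=\Id_{\C^*}$. This is harmless for the statement: a nonzero $\lambda\in X^*(T)$ pairs nontrivially with some $\sigma\in X_*(T)$ under the perfect pairing $X^*(T)\times X_*(T)\to\Z$, and then $\sigma(t)v=t^{\langle\lambda,\sigma\rangle}v$ with $\langle\lambda,\sigma\rangle\neq 0$, which is exactly the ``nonzero weight'' the proposition requires.
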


\begin{proof}
Since $Hv$ is a cone, $v\in T_v(Hv)$ and there is an $X\in\lieh$ such that $X(v)=v$. Applying an element of $H$ we can   assume that $X\in\lie b$, a Borel subalgebra of $\lieh$. Write $X=s+n$ (Jordan decomposition) where $s$ is semisimple and $n$ is nilpotent. Then $s$ and $n$ are in $\lie b$. We can assume that $s\in\lie t\subset \lie b$ where $\lie t$ is the Lie algebra of $T$, a maximal torus of $H$. Write $v=\sum_{\lambda \in\Lambda}v_\lambda$ as a sum of nonzero weight vectors where $\Lambda$ is the set of weights of $V$ relative to $T$ such that $v_\lambda\neq 0$. Let $\Phi$ be the set of positive roots. Then for $\lambda\in\Lambda$ we have $(s+n)v_\lambda=sv_\lambda$ modulo $\sum_{\mu>\lambda} V_\mu$ where $\mu>\lambda$ means that $\mu\in\lambda+\N\Phi$. Thus by an easy induction we get that $sv_\lambda=v_\lambda$ for all $\lambda\in\Lambda$ so that $sv=v$. Hence $S:=\{t\in T\mid tv\in\C^* v\}^0$ is a subtorus of $T$ which acts nontrivially on $v$. It follows that there is a one-parameter subgroup  $\sigma\colon \C^*\to S$ as desired.
\end{proof}

\begin{proposition}
Let $0\neq v\in V$ where $V$ is an irreducible $H$-module. Suppose that $\C^*v\not\subset Hv$. Then  $v$ is characteristic if it is semi-characteristic. In particular, this holds if $v$ is not in the null cone of $V$.
\end{proposition}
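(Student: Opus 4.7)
The approach is to assume $v$ is semi-characteristic and analyse the torus $T'$ in the extension
\[
1 \to T' \to G^0 \to H \to 1
\]
provided by the hypothesis; the goal is to show $T' = 1$. Since semi-characteristic requires $H$ semisimple, $H$ is connected semisimple, and consequently $T'$ is automatically central in $G^0$: the conjugation action of $G^0$ on the normal abelian subgroup $T'$ factors through $H \to \Aut(T')$, and $\Aut(T')$ is discrete while $H$ is connected, so the action is trivial.

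The key step is then a Schur's lemma calculation. Because $V$ is irreducible under $H \subset G^0$, it is irreducible under the reductive overgroup $G^0$, so the connected center $Z(G^0)^0$ acts on $V$ by scalars, i.e.\ $Z(G^0)^0 \subset \C^*\Id_V$. In particular $T' \subset \C^*\Id_V$. Since $T'$ is connected and the only positive-dimensional connected subgroup of $\C^*\Id_V \cong \C^*$ is $\C^*\Id_V$ itself, either $T'$ is trivial (and we are done) or $T' = \C^*\Id_V$.

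To rule out the second case I would use the defining property of $G$: from $G\cdot Hv = Hv$ and $H \subset G$ one gets $Gv = Hv$, hence $T' v \subset G^0 v \subset Gv = Hv$. Under the identification $T' = \C^*\Id_V$ this yields $\C^* v \subset Hv$, contradicting the standing hypothesis. Therefore $T' = 1$, $G^0 = H$, and $v$ is characteristic.

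For the final assertion, if $v$ is not in the null cone, fix a homogeneous $H$-invariant polynomial $f$ of positive degree $d$ with $f(v) \ne 0$; then $f$ is constant on $Hv$ while $f(\lambda v) = \lambda^d f(v)$ on $\C^* v$, so $\lambda v \in Hv$ forces $\lambda^d = 1$, and in particular $\C^* v \not\subset Hv$, so the first part applies. The only mildly subtle point in the whole argument is the centrality of $T'$ in $G^0$ (and the fact that this uses semisimplicity of $H$ essentially); everything else is a direct application of Schur's lemma together with the definition of $G$.
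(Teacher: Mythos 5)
Your proof is correct and is essentially the same argument as the paper's: the paper simply notes that $G$ is reductive (via Jacobson's theorem on irreducible linear groups, recalled just before), that its center is in the scalars (Schur), and that the hypothesis $\C^*v\not\subset Hv$ forces the center to be finite. Your treatment unpacks the same chain of ideas in slightly more detail (deriving centrality of $T'$ from discreteness of $\Aut(T')$ rather than citing the general fact about centers of reductive groups), but the substance and the key use of Schur's lemma plus the identity $Gv=Hv$ coincide.
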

\begin{proof}
The group $G$ is reductive and its center is contained in the scalar matrices. Under our hypotheses on $v$, the center must be finite.
\end{proof}
 
  Let $V=\bigoplus_{i=1}^k n_i V_i$ be  the isotypic decomposition of an $H$-module where $H$ is nontrivial  reductive. Let $v\in V$. Then $v=(v_{ij})$ where $v_{ij}$ belongs to the $j$th copy of $V_i$, $j=1,\dots,n_i$, $i=1,\dots,k$. Let $S$ denote $\GL(V)^H=\GL(n_1)\times\dots\times \GL(n_k)$. Let  $U_i\subset V_i$ be the linear subspace of $V_i$ generated by the $v_{ij}$, $j=1,\dots,n_i$. If $\dim U_i=n_i$ for all $i$, then we say that $v$ is \emph{generic}. 

\begin{proposition}\label{prop:generic}
Let $H$ be reductive, let $V$ be an $H$-module and let $v\in V$. Then the following are equivalent.
\begin{enumerate}
\item The span of $Hv$ is   $V$.
\item There is no nontrivial one parameter  subgroup of $S=\GL(V)^H$ which fixes $v$.
\item The vector $v$ is  generic.
\end{enumerate}
\end{proposition}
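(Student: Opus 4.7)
The plan is to reduce to a single isotypic component and then apply Schur's lemma. First, write $V = \bigoplus_{i=1}^k V_i \otimes M_i$ with multiplicity spaces $M_i \cong \C^{n_i}$, so $S = \prod_i \GL(M_i)$. Since the $V_i$ are pairwise non-isomorphic, every $H$-submodule $W \subseteq V$ equals the direct sum of its intersections with the isotypic components; applied to $W = \sspan(Hv)$, this yields $\sspan(Hv) = \bigoplus_i \sspan(Hv_i)$, where $v_i$ is the $i$-th component of $v$. Condition (2) also decouples, because a one-parameter subgroup of $S$ is a tuple $(\sigma_i)$ with $\sigma_i$ acting only on $V_i \otimes M_i$; condition (3) decouples by definition. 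So it suffices to prove the equivalences in the isotypic case $V = V_1 \otimes M$, which I now assume.

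In this setting, Schur's lemma gives a bijection $N \mapsto V_1 \otimes N$ between subspaces $N \subseteq M$ and $H$-submodules of $V_1 \otimes M$: indeed $\Hom_H(V_1, V_1 \otimes M) = M$, so every $H$-equivariant embedding $V_1 \hookrightarrow V_1 \otimes M$ has the form $u \mapsto u \otimes m$ for some $m \in M$. In particular, $\sspan(Hv) = V_1 \otimes N_0$ for the smallest subspace $N_0 \subseteq M$ with $v \in V_1 \otimes N_0$. Writing $v = \sum_{j=1}^{n_1} v_j \otimes e_j$ and viewing $v$ as the linear map $V_1^* \to M$, $\xi \mapsto \sum_j \xi(v_j)\, e_j$, one checks (e.g.\ via the Schmidt/singular-value decomposition) that $\dim N_0$ equals the rank of this map, which coincides with $\dim U_1 = \dim \sspan\{v_1, \dots, v_{n_1}\}$.

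Both equivalences are then immediate. For (1)$\iff$(3), $\sspan(Hv) = V_1 \otimes M$ iff $N_0 = M$, i.e., iff $\dim U_1 = n_1$. For (2)$\iff$(3), any one-parameter subgroup $\sigma \colon \C^* \to \GL(M)$ gives a weight decomposition $M = \bigoplus_k M^{(k)}$, and $\sigma$ fixes $v$ iff $v \in V_1 \otimes M^{(0)}$, iff $N_0 \subseteq M^{(0)}$; such a $\sigma$ can be chosen nontrivial precisely when $N_0 \subsetneq M$, equivalently $\dim U_1 < n_1$. There is no serious obstacle here: once one identifies the common invariant $\dim N_0 = \dim U_1$, all three conditions translate directly into statements about it.
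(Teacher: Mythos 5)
Your proof is correct. It takes a somewhat different route from the paper's. The paper's argument exploits the fact that all three conditions are invariant under the action of $S=\GL(V)^H$: if (1) or (3) fails, one applies some $s\in S$ to arrange $(sv)_{i1}=0$ for some $i$, then the obvious $\C^*\subset\GL(V)^H$ acting on the first copy of $V_i$ fixes $sv$, so (2) fails; conversely, if (2) fails, the fixed set of the bad one-parameter subgroup is a proper $H$-submodule containing $v$. You instead decouple by isotypic component, use Schur's lemma to identify $H$-submodules of $V_1\otimes M$ with subspaces of $M$, and identify the numerical invariant $\dim N_0=\dim U_1$ as the rank of $v$ viewed as a linear map $V_1^*\to M$; each of (1), (2), (3) is then a restatement of $N_0=M$. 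Your approach is more explicit and makes the underlying linear-algebra invariant visible (which can be useful elsewhere), whereas the paper's is shorter because the $S$-invariance lets it normalize $v$ once and read off all three conditions simultaneously; the two are really the same phenomenon seen from two angles, since applying $s\in S$ to push a component $v_{i1}$ to zero is exactly how one sees $\dim U_i<n_i$.
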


\begin{proof} If $s\in S$, then $v$ satisfies one of the conditions if and only if $sv$ does. Clearly, if (1) or (3) fails, we can find an $s$ and an $i$ such that $(sv)_{i1}=0$. If $W$ denotes the first copy of $V_i$ in $n_iV_i$, we have that $\C^*=GL(W)^H\subset \GL(V)^H$ is a one-parameter subgroup fixing $v$, so (2) fails. Conversely, if (2) fails, the fixed point set of a ``bad'' one-parameter subgroup is a proper $H$-submodule of $V$ containing $v$ and (1) and (3) fail.
\end{proof} 

  \begin{corollary}
  Let $v\in V$ and let $G$ be a Levi component of $\{g\in\GL(V)\mid gHv=Hv\}$. Then $v$ is generic for $H$ if and only if it is generic for $G$.
  \end{corollary}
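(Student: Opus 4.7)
The plan is to reduce the statement to Proposition \ref{prop:generic}, which rephrases genericity as the condition that the orbit spans $V$. Under that reformulation the corollary is essentially the observation that $Hv$ and $Gv$ span the same subspace.

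First I would note that $G$, being a Levi component of the closed algebraic subgroup $\tilde G := \{g\in\GL(V)\mid gHv=Hv\}$ of $\GL(V)$, is reductive. Hence Proposition \ref{prop:generic} is applicable both to the pair $(H,v)$ and to the pair $(G,v)$: in each case genericity of $v$ is equivalent to the span of the corresponding orbit being all of $V$.

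Next I would ensure that $H\subset G$. Since $H$ is reductive and contained in $\tilde G$, Mostow's theorem lets us choose the Levi component of $\tilde G^0$ to contain $H$; all other Levi components differ from this one by conjugation inside $R_u(\tilde G^0)\subset\tilde G$, and a short check shows that such a conjugation sends $Gv$ to $u\cdot Gv$ for some $u\in\GL(V)$, which does not affect whether $Gv$ spans $V$. So there is no loss in assuming $H\subset G$.

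With $H\subset G\subset\tilde G$ in hand, I would verify $Hv=Gv$. The inclusion $Hv\subseteq Gv$ is immediate from $H\subseteq G$. Conversely, every $g\in G$ satisfies $gHv=Hv$, and since $v\in Hv$ this forces $gv\in Hv$, giving $Gv\subseteq Hv$. Hence $\sspan(Hv)=\sspan(Gv)$, and combining with Proposition \ref{prop:generic} yields the equivalence: $v$ is generic for $H$ iff $\sspan(Hv)=V$ iff $\sspan(Gv)=V$ iff $v$ is generic for $G$. The only mild obstacle is the invocation of Mostow's theorem to place $H$ inside a Levi; everything else is bookkeeping.
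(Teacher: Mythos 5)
Your proof is correct and takes the natural route: reduce to Proposition \ref{prop:generic} by observing that the orbits $Hv$ and $Gv$ have the same span. The two inclusions $Gv\subseteq Hv$ (because $G$ lies in the orbit-stabilizer $\tilde G$) and $Hv\subseteq Gv$ (once $H\subseteq G$) do the work, and Proposition \ref{prop:generic} translates equality of spans into equivalence of genericity. The paper leaves this corollary without proof, and in context it almost certainly intends the Levi component to be one containing $H$ (consistent with the convention used in defining ``almost characteristic''), under which reading the argument collapses to exactly the two-line inclusion check you give. Your extra appeal to Mostow's theorem, together with the observation that conjugate Levis produce orbits that differ by a $\GL(V)$-translation, correctly disposes of the case of an arbitrary Levi; it is a bit more than is strictly needed but it does no harm and closes a gap that a careless reader might worry about.
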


 If $v$ is not generic, then $G$ is in a rather trivial way larger than $H$. To avoid this case, we usually assume from now on that $v$ is generic.

\section{Trivial factors and tori}

Let $v\in V$  and  suppose that $V^H= (0)$. Consider $\tilde v=(v,1)\in \tilde V:=V\oplus\C$. Let $\tilde G=\{g\in \GL(\tilde V)\mid gH\tilde v=H\tilde v\}$. We conjecture that $\tilde G=G$, where $G\subset\GL(\tilde V)$ in the canonical way. Equivalently, we conjecture that the subgroup of the affine group $\Aff(V)$ preserving $Hv$ lies in $\GL(V)$. Note that $v$ generic implies that $\tilde v$ is generic (we can add at most a one-dimensional fixed point set). The following example shows that the conjecture fails if  $H$ is not reductive.

 \begin{example}\label{ex:nonred}
 Let $H=(\C,+)$ act on $\C^2$ by sending $(a,b)\in\C^2$ to $(a,ta+b)$, $t\in H$. Let $\tilde H=H\times\C$ where $(t,s)\cdot (a,b)=(a,ta+s+b)$, $(t,s)\in \tilde H$, $(a,b)\in\C^2$. Then for $a\neq 0$, the $H$ and $\tilde H$ orbits of $(a,b)$ are the same, where $H\subset\GL(\C^2)$, $\tilde H\subset\Aff(\C^2)$ and $\tilde H\not\subset\GL(\C^2)$.
 \end{example}

For this section only $G$ will denote the subgroup of $\Aff(V)$ preserving $Hv$ (rather than the corresponding subgroup of $\GL(V)$). It is easy to see that we can always reduce to the case that $V^H=(0)$, which we assume holds for the rest of this section.

We have a homomorphism  $\Aff(V)\to\GL(V)$ which sends an element $(g,c)\in G\subset\GL(V)\ltimes V$ to $g\in\GL(V)$. Let $G'$ denote the image of $G$ in $\GL(V)$.

\begin{lemma}
The homomorphism $G\to G'$ is injective.
\end{lemma}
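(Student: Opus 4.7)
The kernel of $G \to G'$ is
$$W := \{c \in V : c + Hv = Hv\},$$
and the goal is to show $W = 0$. First I would establish that $W$ is a closed $H$-stable vector subspace of $V$: closedness follows from the equality $W = \mathrm{stab}_{(V,+)}(\overline{Hv}) \cap \mathrm{stab}_{(V,+)}(\overline{Hv}\setminus Hv)$, an intersection of closed algebraic subgroups; in characteristic zero, closed algebraic subgroups of a vector group are vector subspaces; and $H$-stability comes from the identity $h t_c h^{-1} = t_{hc}$ in $\Aff(V)$. By construction $v + W \subset Hv$.

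Next, the semidirect product $H \ltimes W \subset G$ (well-defined because $W$ is $H$-invariant) acts transitively on $Hv$ since $H$ does, and the stabilizer of $v$ is carried isomorphically by $(h,w) \mapsto h$ onto the closed algebraic subgroup
$$H' := \{h \in H : hv - v \in W\} \supset H_v,$$
which has $\dim H' = \dim H_v + \dim W$. One then checks directly that $H'v = v + W$. The assignment $\phi(h) := hv - v$ defines a $1$-cocycle $\phi : H' \to W$ for the restricted $H'$-module structure on $W$. Assuming for contradiction $W \neq 0$, the plan is to show $\phi$ is a coboundary: $\phi(h) = h w_0 - w_0$ for some $w_0 \in W$. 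Then $v - w_0 \in V^{H'}$, and leveraging the reductivity of $H$ together with $V^H = 0$, one concludes that this fixed vector is actually $H$-fixed, forcing $v = w_0 \in W$; hence $Hv = v + W = W$, which contains $0$, so $v = 0$, after which $W = 0$ follows trivially.

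The main difficulty will be the cohomological vanishing: $H'$ is in general \emph{not} reductive (its unipotent radical has dimension $\dim W$), so Whitehead's lemma cannot be applied to $H'$ directly. The fix must exploit the full reductivity of $H$ together with the $H$-module structure on $W$, rather than just the $H'$-module structure --- for instance by averaging the affine action of $H'$ on $W$ over a maximal compact $K \subset H$, or by passing to the Levi decomposition $G = L \ltimes R_u(G)$ with $L \supset H$ reductive, using $H^1(L, V) = 0$ combined with $V^L \subset V^H = 0$ to promote an $H'$-fixed vector to an $H$-fixed one. Example~\ref{ex:nonred} shows that reductivity cannot be dispensed with, so the hypothesis must enter essentially at precisely this step.
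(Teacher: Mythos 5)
The setup is correct: the kernel $W$ of $G\to G'$ is a closed $H$-stable linear subspace of $V$, and $v+W\subset Hv$. But the cohomological strategy hits a genuine wall at the ``promotion'' step, not at the vanishing step you worry about. Even granting that $\phi$ is a coboundary, say $\phi(h)=hw_0-w_0$ for some $w_0\in W$, you only obtain $v-w_0\in V^{H'}$, and there is no mechanism to upgrade $H'$-invariance to $H$-invariance. When $W\neq 0$ the subgroup $H'$ (the stabilizer in $H$ of the coset $v+W\in V/W$) is a \emph{proper} subgroup of $H$, so $V^{H'}$ is in general strictly larger than $V^H=0$; reductivity of $H$ gives no control over $V^{H'}$. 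None of the suggested fixes addresses this: a maximal compact of $H$ is not contained in $H'$, so you cannot average the $H'$-action over it, and $H^1(L,V)=0$ together with $V^L=0$ for a Levi $L\supset H$ are statements about $L$ (hence $H$), not about the smaller, unrelated group $H'$. So as written this step would fail, and it is the load-bearing one.

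Ironically, the part you flag as the main difficulty is easy: by reductivity there is an $H$-equivariant projection $\pi\colon V\to W$, and $w_0:=\pi(v)$ already trivializes the cocycle, since for $h\in H'$ one has $hw_0-w_0=\pi(hv-v)=hv-v=\phi(h)$ (because $hv-v\in W$ and $\pi|_W=\operatorname{id}$). The paper uses precisely this projection $\pi$ but bypasses cohomology entirely: since $v+W\subset Gv=Hv$ and $\pi$ is the identity on $W$, one gets $W=\pi(v)+W=\pi(v+W)\subset\pi(Hv)=H\pi(v)\subset W$, so $W=H\pi(v)$ is a single $H$-orbit inside the $H$-module $W$. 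Since $0\in W$ is an $H$-fixed point and distinct orbits are disjoint, this forces $W=\{0\}$. (Equivalently: apply $\pi$ to your identity $H'v=v+W$ to get $H'w_0=W$, then use that $0$ is $H'$-fixed.) This single use of $\pi$ is where reductivity genuinely enters and replaces both of your problematic steps at once.
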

  
 \begin{proof}
The kernel $K$ of $G\to G'$ consists of the pure translations in $G$, i.e., the homomorphisms $x\mapsto x+c$ where $x$, $c\in V$. Clearly $K$ is isomorphic to a closed subgroup of the additive group $(V,+)$ of $V$. Now $(V,+)$ has Lie algebra $V$ (trivial bracket) and the exponential map is the identity. Thus $\liek$ is a vector subspace $W$ of $V$ and $K/K^0$ is isomorphic to a finite subgroup of $(V/W,+)$.  Hence $K$ is connected and $K=(W,+)$ where $W$ must be $H$-stable. Let $\pi\colon V\to W$ be an $H$-equivariant projection (here we use that $H$ is reductive). Then there are elements of $G$ which translate $v$ to $v'$ where $\pi(v')$ is arbitrary. Since $H$ preserves $W$ and $\Ker \pi$, this is not possible for elements of $H$, unless $W=0$. Hence $K$ is the trivial group.  
  \end{proof}

  Note that injectivity fails in the case of Example \ref{ex:nonred}.
  
  \begin{lemma}\label{lem:conj}
Let $M$ be a reductive subgroup of the affine group $\Aff(V)$. Then there is an $\alpha\in\Aff(V)$ such that $\alpha M\alpha\inv\subset\GL(V)$.
\end{lemma}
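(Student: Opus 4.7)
The strategy is to view $M$ as the graph of a section of the canonical projection $\pi\colon\Aff(V)=\GL(V)\ltimes V\to\GL(V)$, rewrite this section as a $1$-cocycle, and kill the cocycle by translating by a suitable vector.

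First I would show that $K:=M\cap V$ is trivial. The group $K$ is normal in $M$ (it is the kernel of $\pi|_M$) and sits inside the unipotent group $(V,+)$. Its identity component $K^0$ is a connected normal unipotent subgroup of the reductive group $M$, hence $K^0=\{0\}$. Thus $K$ is finite, but a finite subgroup of the torsion-free additive group $(V,+)$ is necessarily trivial. Consequently $\pi|_M\colon M\to M':=\pi(M)\subset\GL(V)$ is a bijective morphism of algebraic groups in characteristic zero, hence an isomorphism. Let $s=(\pi|_M)\inv\colon M'\to M$; writing $s(g)=(g,c(g))$, the identity $s(gg')=s(g)s(g')$ yields the $1$-cocycle condition
\[
c(gg')\;=\;c(g)+g\cdot c(g'),\qquad g,g'\in M',
\]
with $c\colon M'\to V$ a regular map.

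The key step is to show that $c$ is a coboundary: there exists $b\in V$ with $c(g)=g\cdot b-b$ for all $g\in M'$. This is the statement $H^1(M',V)=0$, which is standard for a rational module over a reductive group. A hands-on proof is by averaging: choose a maximal compact subgroup $C\subset M'$ meeting every connected component, and set $b_0:=\int_C c(h)\,dh$ against Haar measure. The cocycle relation $c(gh)=c(g)+g\cdot c(h)$ together with $C$-invariance of the Haar measure gives $c(g)=b_0-g\cdot b_0$ for all $g\in C$. Since $C$ is Zariski-dense in $M'$ and both sides are regular, the identity extends to $M'$; putting $b:=-b_0$ we obtain $c(g)=g\cdot b-b$.

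Finally, let $\alpha\in\Aff(V)$ be the translation $x\mapsto x+b$. A direct computation in the semidirect product shows that
\[
\alpha\,(g,c(g))\,\alpha\inv\;=\;\bigl(g,\,-g\cdot b+c(g)+b\bigr)\;=\;(g,0)\in\GL(V)
\]
for every $g\in M'$, whence $\alpha M\alpha\inv\subset\GL(V)$, as required. The only nontrivial ingredient is the cocycle-killing step---equivalently, the existence of a fixed point in $V$ for the affine action of the reductive group $M$; everything else is bookkeeping in the semidirect product.
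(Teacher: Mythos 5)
Your proof is correct, and it takes a genuinely different (more constructive) route than the paper. The paper's argument is also transcendental, but it cites a structural theorem: it takes a maximal compact subgroup $K\subset M$ with $M=K_\C$, invokes Hochschild's theorem that all maximal compact subgroups of $\Aff(V)$ are conjugate, observes that $\U(V)$ is one such, and concludes that $K$ (hence $M$) can be conjugated into $\GL(V)$. You instead show directly that the projection $\pi|_M$ is injective (by observing $M\cap V$ must be trivial in a reductive group and in the torsion-free group $(V,+)$), interpret $M$ as a $1$-cocycle $c\colon M'\to V$, and then kill the cocycle by averaging over a Zariski-dense maximal compact subgroup to produce the translating vector $b$ explicitly. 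In effect you reprove, in this special case, the fixed-point fact that underlies Hochschild's conjugacy theorem, rather than citing it. What this buys you is a self-contained and concrete proof that exhibits the conjugating element; what the paper's approach buys is brevity and a uniform reduction to a known compact-group result. One small remark: your first step proving $M\cap V=\{e\}$ mirrors the paper's argument in the lemma just preceding this one (``The homomorphism $G\to G'$ is injective''), so that ingredient is already available in context.
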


\begin{proof}
We use transcendental methods. Choose a hermitian metric on $V$ so that we have a unitary group $\U(V)\subset \GL(V)$. Let $K$ be a maximal compact subgroup of $M$. Then $M$ is the complexification $K_\C$ of $K$. Now any compact subgroup of $\Aff(V)$ is contained in  a maximal compact subgroup of $\Aff(V)$ and all the maximal compact subgroups of $\Aff(V)$ are conjugate \cite[Ch.\ XV Theorem 3.1]{Hochschildstructure}. But clearly  $\U(V) \subset\Aff(V)$ is maximally compact. Thus $K$ is conjugate to a subgroup of $\U(V)$, hence $M$ is conjugate to a subgroup of $\U(V)_\C=\GL(V)$.
\end{proof}

  \begin{proposition}\label{prop:easy}
  In the following cases  $G\subset\GL(V)$.
  \begin{enumerate}
\item The image $G'\subset \GL(V)$ is reductive.
\item There is an $h'\in H$ such that $h'v=\lambda v$, $\lambda\in\C$, $\lambda\neq 1$.
\end{enumerate}
\end{proposition}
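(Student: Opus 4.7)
The plan is to describe $G$ via a $1$-cocycle and to show the cocycle vanishes in both cases. By the injectivity lemma above every element of $G$ is uniquely of the form $(g, c(g))$ for a map $c\colon G'\to V$; the group law in $\Aff(V)$ forces the cocycle identity $c(g_1 g_2) = c(g_1) + g_1 c(g_2)$, and since $(h, 0)\in G$ for all $h\in H$ we have $c|_H\equiv 0$. Proving $G\subset\GL(V)$ is then equivalent to showing $c\equiv 0$.

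For case~(1), the isomorphism $G\cong G'$ together with the reductivity of $G'$ makes $G$ a reductive subgroup of $\Aff(V)$. Lemma~\ref{lem:conj} furnishes $\alpha = (a, w)\in\Aff(V)$ with $\alpha G\alpha^{-1}\subset\GL(V)$; in particular $\alpha H\alpha^{-1}\subset\GL(V)$. A direct computation yields $\alpha (h, 0)\alpha^{-1} = (aha^{-1},\, (I - aha^{-1})w)$, so the translation part must vanish: $(I - aha^{-1})w = 0$ for every $h\in H$, i.e.\ $w\in V^{aHa^{-1}} = a\cdot V^H = (0)$. Hence $w = 0$ and $\alpha = (a, 0)\in\GL(V)$, from which $G\subset\alpha^{-1}\GL(V)\alpha = \GL(V)$.

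For case~(2), I begin by replacing $h'$ by its semisimple part (still in $H$, still acting on $v$ as $\lambda$), so $h'$ may be assumed semisimple. For any $(g, c)\in G$, applying to $(h')^n v = \lambda^n v\in Hv$ yields $\lambda^n gv + c = h_n v$ for some $h_n\in H$ and every $n\in\Z$; comparing with $n = 0$ gives $(1 - \lambda^n)c = (h_n - \lambda^n h_0)v$. Let $\tau^0$ be the identity component of the Zariski closure of $\langle h'\rangle\subset H$, a subtorus. One checks that $v$ is a weight vector of $\tau^0$ with a single character $\chi$: any two characters of $\tau^0$ appearing in the weight decomposition of $v$ take the same value $\lambda^m$ on the element $h'^m\in\tau^0$ (for $m$ the order of $h'$ modulo $\tau^0$), hence agree on a Zariski-dense subset of $\tau^0$ and therefore coincide. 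If $\chi\neq 1$ then $\chi(\tau^0) = \C^*$, so $\C^* v\subset Hv$; taking limits places $0\in\overline{Hv}\setminus Hv$, and since $G$ preserves this $H$-stable closed stratum it must fix $0$, giving $c\equiv 0$.

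The main obstacle is the subcase of case~(2) in which $\chi$ is trivial, so $\tau^0$ fixes $v$ and $\lambda$ is necessarily a root of unity of some order $d\geq 2$; the limiting argument is then unavailable. I would close this subcase by combining the finitely many relations $\lambda^n gv + c = h_n v$ for $n = 0,\dots,d-1$ (exploiting $\sum_{k=0}^{d-1}\lambda^k = 0$) with the equivariance $h'\, c(g) = c(h' g h'^{-1})$ obtained from conjugating $(g, c)$ by $(h', 0)\in G$, and then averaging over the finite cyclic group $\langle h'\rangle$ modulo its intersection with $\tau^0$ to force $c$ into the coboundary form $(g-I)w$ with $w\in V^H = (0)$, whence $c\equiv 0$.
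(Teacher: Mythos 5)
Your proof of part (1) matches the paper's: both use Lemma~\ref{lem:conj} to conjugate $G$ (hence $H$) into $\GL(V)$ by an affine map $\alpha$, then observe that the translation part of any such $\alpha$ must be an $H$-fixed vector, which is zero; this forces $\alpha\in\GL(V)$ and hence $G\subset\GL(V)$.

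For part (2), your approach is genuinely different from the paper's and has two gaps. First, in the subcase $\chi\neq 1$ you assert that because $G$ preserves the $H$-stable set $\overline{Hv}\setminus Hv$ it ``must fix $0$.'' Preserving a set containing $0$ does not force $0$ to be a fixed point, so this step is unjustified as written. (It can be repaired: if $\C^*v\subset Hv$ then $Hv$ is a cone, so scalar multiplications $t\cdot\Id$ lie in $G\cap\GL(V)$; conjugating $(g',c)\in G$ by $t\cdot\Id$ gives $(g', t^{-1}c)\in G$, and letting $t\to\infty$ and using that $G$ is closed yields $(g',0)\in G$, so $(1,c)\in G$ is a pure translation, hence trivial by the injectivity lemma.) Second, the subcase where $\chi=1$ and $\lambda$ is a root of unity is only sketched---you describe a plan (``averaging\ldots to force $c$ into coboundary form'') but do not carry it out, and it is not obvious how to make the averaging produce a coboundary $(g-I)w$ rather than merely a relation among values of $c$.

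The paper's argument for (2) avoids the semisimple-part reduction, the Zariski closure, and the root-of-unity case split entirely, working at the Lie algebra level instead. Given $c+A(\cdot)\in\lieg$, one notes that both $c+A(hv)$ and $c+A(hh'v)=c+\lambda A(hv)$ lie in the tangent space $\lieh(hv)=\lieh(\lambda hv)$, so their difference $(\lambda-1)A(hv)\in\lieh(hv)$; since $\lambda\neq 1$ this means the linear vector field $A$ is tangent to $Hv$ at every point, hence $A\in\lieg$. Then $c$ is a pure translation in $\lieg$, which must vanish. This hands you the conclusion in three lines using only the single hypothesis $h'v=\lambda v$, $\lambda\neq 1$, with no need to diagonalize, pass to a subtorus, or treat roots of unity separately. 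If you want to keep your cocycle framework, you would still need to close both gaps above; the Lie-algebra route is considerably more economical.
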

 
  \begin{proof} 
   If (1) holds, then $G$ is reductive and there is an element $\alpha\in\Aff(V)$ such that $\alpha G\alpha\inv\subset\GL(V)$, hence  $\alpha H\alpha\inv\subset \GL(V)$. But one easily sees that any affine transformation conjugating $H$ into $\GL(V)$ must have translation part which is fixed by $H$. But $V$ contains no nonzero $H$-fixed vectors. Hence $G\subset\GL(V)$.
  
  Assume (2). Let $x\mapsto c+A(x)$ be an element  of $\lieg=\Lie(G)$ where $0\neq c\in V$ and $A\in\lie{gl}(V)$. Then the difference  of $c+A(hv)$ and $c+A(hh'v)$ is a nonzero multiple of  $A(hv)$ and lies in $\lieh(hv)$ for any $h\in H$. Thus $A$ itself lies in $\lieg$ and $\lieg$ contains the linear and translation parts of its elements. But $\lieg$ cannot contain pure translations, as we saw above. Thus  $\lieg\subset\lie{gl}(V)$ and $G\subset\GL(V)$.
\end{proof}

\begin{corollary} 
We have that $G\subset\GL(V)$ in the following cases.
\begin{enumerate}
\item $V$ is an irreducible $H$-module.
\item $V$ is an $\SL_2$-module  whose irreducible components are all of even dimension, i.e., a module all of whose weights are odd.
\end{enumerate}
\end{corollary}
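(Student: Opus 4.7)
The plan is to reduce both cases directly to the two sufficient conditions supplied by Proposition \ref{prop:easy}.

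For case (1), I would use Proposition \ref{prop:easy}(1). Since $V$ is an irreducible $H$-module, $H \subset G'$ already acts irreducibly on $V$, so $G'$ itself acts irreducibly. The Jacobson theorem recalled in the text (just before the corollary on semi-characteristic orbits) then says $G'$ is reductive, and we are done.

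For case (2), I would apply Proposition \ref{prop:easy}(2), with the central element $h' = -\Id \in \SL_2$ as the witness. The key observation is that on the irreducible $\SL_2$-module of dimension $k+1$ the weights of the standard maximal torus are $k, k-2, \ldots, -k$, and $-\Id$ acts on a weight-$m$ vector as $(-1)^m$. The hypothesis that every irreducible summand has even dimension means every such $k$ is odd, so every weight of $V$ is odd, and consequently $-\Id$ acts on all of $V$ as $-\Id_V$. In particular $h' v = -v$, so the hypothesis of Proposition \ref{prop:easy}(2) holds with $\lambda = -1 \neq 1$. (Note that this hypothesis also guarantees $V^H = 0$, since no weight vanishes, so the standing reduction of the section is satisfied.)

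There is no real obstacle here: once the proposition is in hand, the only content is recognizing the right feature of each hypothesis — irreducibility forces $G'$ to act irreducibly, and the oddness of all weights is exactly what makes the central involution $-\Id$ of $\SL_2$ act as a nontrivial scalar on $v$. The whole argument is a two-line verification for each case.
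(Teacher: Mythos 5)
Your proposal is correct and is, as far as one can tell, exactly the argument the author intends: the paper states this corollary without proof immediately after Proposition \ref{prop:easy}, and the two cases map one-to-one onto the two sufficient conditions of that proposition. Case (1) via irreducibility of $G'$ (Jacobson's theorem) and case (2) via the central element $-\Id\in\SL_2$ acting as the scalar $-1$ are precisely the observations needed, and your remark that the oddness of all weights also enforces the section's standing hypothesis $V^H=(0)$ is a nice point of hygiene that the paper leaves implicit.
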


\begin{remark}
Suppose that $C$, $W$, $w\in W$ are as in Proposition \ref{prop:compact} where $W^C=0$. Let $L$ denote the subgroup of the real affine group of $W$ stabilizing $Cw$. Then  one can show that $L$   is compact, and as above one sees that $L\subset\GL(W)$. Complexifying, we see that the subgroup of the affine group of $V=W\otimes_\R\C$ preserving $Hw$, where $H=C_\C$,   is again just the complexification of $L$, a subgroup of $\GL(V)$. 
\end{remark}

\begin{proposition}\label{prop:vij}
Let $V=\oplus_i n_iV_i$ be the isotypic decomposition of $V$. Suppose that for no $i$  and $j$ do we have that $V_i$ occurs in $\Hom(V_j,V_i)$. Then $G\subset\GL(V)$.
\end{proposition}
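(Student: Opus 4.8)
The plan is to reduce to a statement about $\lieg=\Lie(G)$, extract from the tangency condition a rigid structural constraint on the elements of $\lieg$ by differentiating along the orbit, and then feed in the hypothesis on $\Hom$-modules.

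First I would show it is enough to prove $\lieg\subseteq\lie{gl}(V)$. Given that, $G^0\subseteq\GL(V)$, and for any $g\in G$ we have $gHg\inv\subseteq gG^0g\inv=G^0\subseteq\GL(V)$, so $g$ is an affine transformation conjugating the linear group $H$ into $\GL(V)$; exactly as in the proof of Proposition \ref{prop:easy}(1), the translation part of such a $g$ must lie in $V^H=(0)$, so $g\in\GL(V)$. So take $\xi=(A,c)\in\lieg\subseteq\lie{gl}(V)\ltimes V$. The condition $\xi\in\lieg$ says $A(hv)+c\in T_{hv}(Hv)=h(\lieh v)$ for all $h\in H$; at $h=e$ this gives $Av+c\in\lieh v$, so after subtracting a suitable element of $\lieh\subseteq\lieg$ we may assume $Av=-c$. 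It remains to prove $c=0$.

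Consider the morphism $F\colon H\to V$, $F(h)=h\inv\bigl(A(hv)+c\bigr)=(\Ad(h\inv)A)v+h\inv c$. By the defining condition, $F$ takes values in the \emph{fixed} linear subspace $\lieh v$, and $F(e)=0$; hence every derivative of $F$ at $e$ lies in $\lieh v$. Differentiating along left-invariant vector fields and using $Av=-c$ one gets $\partial_XF(e)=A(Xv)$ for $X\in\lieh$, and, by an easy induction on the order of differentiation, that $A$ stabilizes each subspace $U_k:=\sspan\{X_1\cdots X_\ell\,v:1\le\ell\le k,\ X_i\in\lieh\}$. Consequently $A$ stabilizes $U:=\bigcup_kU_k$, which is the $H$-submodule of $V$ generated by $\lieh v=T_v(Hv)$, and $A(\lieh v)\subseteq\lieh v$. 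Since $v$ is generic, $\sspan(Hv)=V$ by Proposition \ref{prop:generic}, so the $H$-submodule generated by $v$, namely $\C v+U$, is all of $V$. Thus: $U$ is an $H$-submodule with $\C v+U=V$, stable under $A$, with $Av=-c$.

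Now split into cases. If $v\notin U$ then $U$ is a hyperplane and $V/U$ is a one-dimensional $H$-module, necessarily nontrivial (otherwise $V$ would have a trivial submodule, contradicting $V^H=(0)$), hence equal to one of the irreducible summands $V_i$ with $n_i=1$; since $A(U)\subseteq U$, $A$ acts on $V/U$ by a scalar, so $c$ is a scalar multiple of $v$ modulo $U$. If $v\in U$ then $U=V$ and one must use instead the finer fact that $A$ stabilizes every $U_k$ (this case contains, e.g., the situation where $Hv$ is a cone, i.e. $v\in\lieh v$, which could also be dealt with via Proposition \ref{prop:easy}(2) together with the earlier Proposition characterizing when $Hv$ is a cone). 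In either case the hypothesis that $V_i$ does not occur in $\Hom(V_j,V_i)$ for any $i,j$ is what rules out $c\neq0$: written out in the isotypic decomposition $V=\bigoplus_i n_iV_i$, the relations $A(\lieh v)\subseteq\lieh v$ and $A(U_k)\subseteq U_k$ say that the blocks $A_{ij}\in\Hom(V_j,V_i)\otimes\Hom(\C^{n_j},\C^{n_i})$ of $A$ can carry the components $v_{(j)}$ of $v$ only into tangent directions of $Hv_{(i)}$, and since those tangent directions already generate $V_{(i)}$ by genericity, the only way $c=-Av$ could be nonzero is through the $V_i$-isotypic part of $\Hom(V_j,V_i)$ — which the hypothesis forces to vanish. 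Carrying this last analysis through, i.e. matching isotypic components in the two cases above and squeezing $c=0$ out of the $\Hom$-hypothesis, is the step I expect to be the main obstacle; everything before it is essentially formal.
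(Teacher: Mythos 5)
Your reduction to $\lieg\subset\lie{gl}(V)$, the normalization $Av=-c$, and the differentiation argument showing that $A$ preserves each $U_k$ are all correct. But the argument stops exactly where the hypothesis has to do its work, and the mechanism you sketch for that final step cannot function. The hypothesis that $V_i$ does not occur in $\Hom(V_j,V_i)$ is a statement about the $H$-module decomposition of $\Hom(V_j,V_i)$; it says nothing about an individual element $A_{ij}\in\Hom(V_j,V_i)\otimes\Hom(\C^{n_j},\C^{n_i})$, and your argument only ever has a single element $(A,c)$ in hand. Moreover, the constraints you derive ($A(U_k)\subset U_k$, $Av=-c$) are formal consequences of membership in $\lieg$, so they hold for every element of $\lieg$ for every module $V$; they cannot by themselves force $c=0$, since there do exist non-reductive affine symmetry groups of orbits in general. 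What is missing is the use of the $\Ad(H)$-stability of $\lieg$: the translation parts of elements of $\lieg$ form an $H$-submodule of $V$; if it is nonzero, complete reducibility lets you choose an irreducible constituent $W\simeq V_i$ and a family $A_w+w\in\lieg$, $w\in W$, with $hA_wh\inv=A_{hw}$. Then $w\mapsto (\text{component of }A_w\text{ in }\Hom(n_jV_j,n_iV_i))$ is an $H$-equivariant map from $V_i$ into a sum of copies of $\Hom(V_j,V_i)$, and Schur's lemma together with the hypothesis forces it to vanish. This equivariant family is the only place the hypothesis can enter, and it never appears in your write-up.

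That family is exactly what the paper's proof constructs, and with it the conclusion is immediate: since $A_w$ followed by projection onto $n_iV_i$ vanishes, $\exp(A_w+w)(v)$ has projection $\pi(v)+w$ onto $W$ (where $\pi$ is an equivariant projection onto $W$), so $\pi(Gv)\supset W$; but $\pi(Hv)=H\pi(v)$ is a single orbit in the irreducible module $W$ and cannot fill out $W$. Note that this argument needs no genericity of $v$, whereas your step $\C v+U=V$ does (and genericity is not among the stated hypotheses of the proposition). The filtration $\{U_k\}$ is a nice observation but is not needed; I would replace everything after the normalization $Av=-c$ by the construction of the equivariant family $\{A_w+w\}$ and the Schur's lemma argument above.
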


\begin{proof}
Suppose that $G\not\subset\GL(V)$. Then we would have a subspace of $\lieg$ consisting of elements $A_w+w$, $w\in W$, where $W\simeq V_i$ is an irreducible submodule of $V$, $A_w\in\lie{gl}(V)$ and $hA_wh\inv=A_{hw}$ for $h\in H$.  Our hypotheses imply that  $A_w$ followed by projection to $n_iV_i$ is zero.  Thus $\exp(A_w+w)(v)$ has the same projection to  $W$  as $v+w$. Hence we cannot have $Hv=Gv$. 
\end{proof}

 \begin{example}
Let  $V:=\sum_{i=1}^n m_i\phi_i$ and $H=\AA_n$, $n\geq 1$, where $\phi_i$ is the $i$th fundamental representation of $H$, $i=1,\dots,n$. Then $G\subset\GL(V)$.
 \end{example}
  
   \begin{theorem}\label{thm:torus}
Let $H$  be a torus. Then 
\begin{enumerate}
\item $G\subset\GL(V)$.
\item If $v\in V$ is generic, then $G^0=H$.
\end{enumerate}
  \end{theorem}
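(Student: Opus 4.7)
The plan is to derive (1) from Proposition \ref{prop:vij} and to handle (2) by pinning down $G_v$, using the standing assumption $V^H=(0)$ throughout. For (1), since $H$ is a torus each isotypic component $V_i$ is a one-dimensional weight space for a character $\chi_i$, and $V^H=(0)$ forces each $\chi_i$ to be nontrivial. The $H$-module $\Hom(V_j,V_i)$ has weight $\chi_i-\chi_j$, and $V_i$ can occur there only when $\chi_j=0$, which never happens. Proposition \ref{prop:vij} then immediately yields $G\subset\GL(V)$.

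For (2), working in $\GL(V)$ by (1), genericity of $v$ together with Proposition \ref{prop:generic} gives $\sspan(Hv)=V$; combined with $V^H=(0)$, this forces the characters of $V$ to be pairwise distinct with each weight space one-dimensional. Write $V=\bigoplus_\chi \C e_\chi$ and $v=\sum_\chi a_\chi e_\chi$ with all $a_\chi\ne 0$. These characters generate the character lattice of the image of $H$ in $\GL(V)$, so the pointwise stabilizer $H_v=\bigcap_\chi\Ker\chi$ is trivial. Writing $G=H G_v$ (possible because $Gv=Hv$) with $H\cap G_v=H_v=1$, it suffices to show $G_v$ is finite. Each $g\in G_v$ restricts to an algebraic automorphism $\phi_g\colon Hv\to Hv$ fixing $v$; identifying $Hv$ with $H$ via $hv\leftrightarrow h$, $\phi_g$ becomes a variety morphism of the torus $H$ to itself fixing the identity, hence a group homomorphism, and so lies in the discrete group $\Aut_{\mathrm{grp}}(H)\cong \GL_r(\Z)$ with $r=\dim H$.

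The assignment $g\mapsto\phi_g$ is algebraic with discrete image, hence constant on connected components of $G_v$; in particular $\phi_g=\Id$ for every $g\in G_v^0$, so every such $g$ fixes $Hv$ pointwise. Since $Hv$ spans $V$ this forces $g=\Id$, so $G_v^0=1$. Then $\dim G=\dim H$ and, since $H\subset G^0$ is connected of the same dimension, $G^0=H$. The only nontrivial external input is the standard fact that a variety morphism between algebraic tori sending $1$ to $1$ is automatically a homomorphism, which follows because invertible regular functions on a torus are precisely the nonzero scalar multiples of characters.
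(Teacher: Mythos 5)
Your proof is correct, and it differs from the paper's in both halves.

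For part (1), you invoke Proposition \ref{prop:vij}: since every $V_i$ is a one-dimensional weight space with nonzero character $\chi_i$ (using the standing assumption $V^H=(0)$), and $\Hom(V_j,V_i)$ has weight $\chi_i-\chi_j$, the containment $V_i\subset\Hom(V_j,V_i)$ would force $\chi_j=0$, which is excluded. This is a clean reduction and applies to all $v$, not just generic ones. The paper instead proves (1) by a direct computation: it treats $H=\C^*$ first, writes the affine image of $\lambda\cdot v$ in the $i$th coordinate as a Laurent polynomial in $\lambda$ plus $c_i$, and observes that a Laurent polynomial with a nonzero constant term and some nonzero higher coefficient must vanish somewhere on $\C^*$, forcing $c_i=0$; the general torus is then handled by choosing a one-parameter subgroup on which all weights become distinct.

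For part (2), the paper continues the same Laurent-polynomial computation: nonvanishing on $\C^*$ forces each row of $g$ to be supported on a single weight index, so some power $g^k$ preserves the weight lines and then agrees with an element of $H$ on the spanning orbit, whence $g^k\in H$. You instead pass to the stabilizer: $G=HG_v$ with $H\cap G_v=H_v=1$ (faithfulness of $H$ on $V$ and genericity give $H_v=\bigcap\Ker\chi=1$), and you show $G_v^0=1$ by observing that each $g\in G_v$ induces a variety automorphism of $Hv\cong H$ fixing the identity, hence a group automorphism, so $G_v$ maps algebraically into the discrete group $\Aut(H)\cong\GL_r(\Z)$. Both routes ultimately rest on Rosenlicht's description of units on a torus (invertible regular functions are scalar multiples of characters) — the paper uses it implicitly through the nonvanishing Laurent polynomial, you use it to see that a pointed torus endomorphism is a homomorphism and that $g\mapsto d\phi_g|_1$ lands in $\GL_r(\Z)$. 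Your version is more structural, avoids the reduction from a general torus to $\C^*$, and makes the finiteness of $G/H\cong G_v$ visible as a finite subgroup of $\GL_r(\Z)$; the paper's version is more elementary and also records the explicit fact that a power of $g$ lies in $H$.
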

 
 \begin{proof}    We may assume that $V^H=(0)$. First consider (1) in the case that $H=\C^*$.
Let $W$ be the subspace of $V$ spanned by $H\cdot v$.  Then any $g\in G$ must preserve $W$, so we can replace $V$ by $W$.  Thus we can reduce to the case that $v\in V$ is generic. This implies that the weight spaces of $H$ are one-dimensional.
  We have a weight basis $v_1,\dots,v_n$ of $V$ such that $v=(v_1,\dots,v_n)$ where the weight of $v_i$ is $0\neq a_i\in\Z$. Suppose that the orbit of $v$ is preserved by a transformation $(g,c)$ where $(g,c)(v)=(\sum_{j}a_{ij}v_j+c_i)$. Here  the $a_{ij}$ and $c_i$ are scalars. Then the $i$th component of $g(\lambda\cdot v)$ (where $\lambda$ is a parameter in $H=\C^*$) is $\sum_{j}a_{ij}\lambda^{a_j}v_j+c_i$. Now the powers of $\lambda$ that occur are distinct, hence the Laurent polynomial in $\lambda$ that gives the $i$th component has some nonzero coefficient for a nonzero power of $\lambda$. If $c_i\neq 0$, then one can see that the polynomial takes on the value 0 for some $\lambda\neq 0$. But the $\C^*$-orbit of $v$ is nonzero in the $i$th slot. Thus $c_i=0$ for all $i$ and $g$ lies in $\GL(V)$ so we have (1).  The reasoning above also shows that for each $i$ there is a unique $j$ such that $a_{ij}\neq 0$. Thus a power $g^k$ of $g$ preserves the weight spaces. Then $g^kv=hv$ for some $h\in H$, and it follows that $g^k=h$. Thus we have (2). Note that $g$ normalizes $H=\C^*$, so that we actually have $g^2\in H$.
  
  Now suppose that $H$ is a torus. As before, to prove (1), we can assume that $v$ is generic. Let $(g=(a_{ij}),c)\in G$. Choose a 1-parameter subgroup $\lambda$ of $H$ such that all the characters of $V$, restricted to $\lambda$, are distinct. It follows, as above, that $c=0$ and that a power of $g$ lies in $H$.
 \end{proof}

\subsection{}\label{subsec:nilpotent}  Let $G_0$ denote a Levi component of $G$ containing $H$. Then as we saw before, we must have that $G_0\subset\GL(V)$. We can write $G'$ as $G_0\ltimes G'_u$ where $G'_u$ is the unipotent radical of $G'$. Then we have the corresponding decompostion of $\lieg'$ as $\lieg_0\ltimes\lieg'_u$. As $H$-module, $\lieg'_u$ is completely reducible. Assuming that $G$ is not contained in $\GL(V)$  we can choose an irreducible $H$-module $W\subset\lieg'_u$ whose inverse image in $\lieg$ is  not contained in $\lie{gl}(V)$. Then we have a copy of $W$ in $V$ and elements $A_w\in\lie{gl}(V)$, $w\in W$, such that $x\mapsto A_w(x)+w$ lies in $\lieg$ and $\{A_w\}_{w\in W}$ maps to our copy of $W$ in $\lieg'_u$. For all $h\in H$ we have $hA_wh\inv=A_{hw}$.

 \begin{theorem}\label{thm:nullcone}
 Suppose that $v\in V$ is generic and  in the null cone. Then $G\subset\GL(V)$.
 \end{theorem}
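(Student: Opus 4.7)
The plan is a proof by contradiction. Suppose $G\not\subset\GL(V)$. By paragraph \ref{subsec:nilpotent} there is an irreducible $H$-submodule $W$ realised as a submodule of $V$ together with an $H$-equivariant linear map $w\mapsto A_w\in\lie{gl}(V)$, with $A_w$ nilpotent, such that the affine operator $X_w\colon x\mapsto A_w(x)+w$ lies in $\lieg$ for every $w\in W$; equivalently, the unipotent radical $G_u$ of $G$ is nontrivial and does not sit inside $\GL(V)$.

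The first ingredient is a geometric observation. Since $G$ preserves the locally closed subset $Hv$, it preserves both $\overline{Hv}$ and the complement $\overline{Hv}\setminus Hv$. The null-cone hypothesis forces $0\in\overline{Hv}$, and since $v\neq 0$ we have $0\in\overline{Hv}\setminus Hv$; hence $g(0)\in\overline{Hv}\setminus Hv$ for every $g\in G$. Specialising to the one-parameter subgroup $g=\exp(tX_w)$ and using the nilpotency of $A_w$, we obtain a polynomial curve
\[
\gamma_w(t)=\exp(tX_w)(0)=tw+\tfrac{t^2}{2}A_ww+\tfrac{t^3}{6}A_w^2w+\cdots\;\subset\;\overline{Hv}\setminus Hv,
\]
with $\gamma_w'(0)=w$; by $H$-equivariance the whole submodule $W$ lies in the tangent cone of $\overline{Hv}\setminus Hv$ at $0$.

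The second ingredient uses that orbits of a unipotent algebraic group acting on an affine variety are closed. Thus $G_u\cdot v\subset Hv$ and $G_u\cdot 0\subset\overline{Hv}\setminus Hv$ are both closed in $V$, and they are disjoint. Because $G_u$ is normal in $G=G_0\ltimes G_u$ and $G_0\supset H$, we have $h\cdot G_uv=G_u\cdot hv$ for all $h\in H$, so $Hv=\bigcup_{h\in H}G_u\cdot hv$ is foliated by these closed $G_u$-orbits; moreover the $H$-equivariance yields $\lambda(s)X_w\lambda(s)^{-1}=s^dX_w$ for $w$ of $\lambda$-weight $d$ and any one-parameter subgroup $\lambda\colon\C^*\to H$.

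To finish, choose by Hilbert--Mumford a one-parameter subgroup $\lambda\colon\C^*\to H$ with $\lim_{t\to0}\lambda(t)v=0$. For each $t\neq 0$ the point $\lambda(t)v\in Hv$ lies in the closed $G_u$-orbit $G_u\cdot\lambda(t)v\subset Hv$, while as $t\to 0$ these orbits degenerate toward $G_u\cdot 0\subset\overline{Hv}\setminus Hv$. The main obstacle is to make this degeneration rigorous and extract the contradiction: combining the tangent-cone containment $W\subset T_0(\overline{Hv}\setminus Hv)$ from the first ingredient with the closed-orbit stratification and the $\lambda$-equivariance of the curves $\gamma_w$, and using genericity of $v$ (so that $Hv$ spans $V$ and there is no proper $H$-submodule trapping the dynamics), the plan is to show that the disjointness $G_u\cdot v\cap G_u\cdot 0=\emptyset$ together with the flat limit of $G_u\cdot\lambda(t)v$ as $t\to0$ forces every $w\in W$ to vanish, so $W=0$, contradicting our assumption and thereby establishing $G\subset\GL(V)$.
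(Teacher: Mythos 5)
Your setup (invoking \S\ref{subsec:nilpotent}, passing to the affine vector fields $X_w=A_w+w$, and noting that $G$ preserves both $\overline{Hv}$ and $\overline{Hv}\setminus Hv$) is sound, and the observation that the curves $\gamma_w(t)=\exp(tX_w)(0)$ land in $\overline{Hv}\setminus Hv$ with tangent $w$ is a genuinely different geometric angle from the paper. However, the proposal stops short of a proof: your own last paragraph says ``the main obstacle is to make this degeneration rigorous'' and only describes a \emph{plan} to show that disjointness of the $G_u$-orbits plus the flat limit of $G_u\cdot\lambda(t)v$ forces $W=0$. That final step is exactly the crux of the theorem, and as written it does not follow. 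Knowing that $W$ is contained in the tangent cone of $\overline{Hv}\setminus Hv$ at $0$ and that $G_u\cdot 0$ and $G_u\cdot v$ are disjoint closed sets gives no contradiction by itself: there is nothing preventing a nonzero $W$ from being tangent at a boundary point of an orbit closure. You would still need to show that $Gv$ actually \emph{contains} a point that should lie in the boundary.

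The paper's proof closes this gap with a quantitative argument you do not reproduce. It fixes the equivariant projection $\pi\colon V\to W$ onto the first copy $V_i$ in the isotypic decomposition, notes that genericity forces $\pi(hv)=h\,v_{i1}\neq 0$ for all $h\in H$, and then writes the projection of $\exp(A_w+w)(v)$ to $W$ as $\pi(v)+w+p(v,w)$ where $p$ has no linear terms in $w$ and its coefficients vanish at $v=0$. Since $v$ is in the null cone one can replace $v$ by $hv$ with $hv$ arbitrarily close to $0$; then the coefficients of $p(hv,\cdot)$ are small, so by the inverse function theorem the map $w\mapsto w+p(hv,w)$ covers a ball of fixed radius around $0$ in $W$. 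Taking $w$ so that $\pi(hv)+w+p(hv,w)=0$ produces a point of $Gv=Hv$ with zero projection to $W$ — impossible. This surjectivity-onto-a-neighborhood step (not just a tangency or a limit of orbits) is the missing ingredient; the tangent-cone containment you derive is a strictly weaker infinitesimal statement, and the flat-limit heuristic for $G_u\cdot\lambda(t)v$ is never turned into an inequality or an inclusion that could contradict $G_u\cdot v\cap(\overline{Hv}\setminus Hv)=\emptyset$. To repair the argument you would need to upgrade the tangency to an open-mapping statement, which is essentially what the paper's inverse-function-theorem step does.
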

 
 \begin{proof}
Suppose the contrary. Let $V=\oplus_i n_i V_i$ be the isotypic decomposition of $V$ as $H$-module. Let $A_w+w\in\lieg$, $w\in W$ be as above where we may assume that $W=V_i$ (first copy) for some $i$. Let $\pi\colon V\to W$ be an equivariant projection and set  $v_i=\pi(v)$. Since $v$ is generic, $v_i\neq 0$. The projection of  $\exp(w+A_w)(v)$ to $W$ has the form $v_i+w+p(v,w)$ where $p(v,w)$ is a polynomial which has no linear factors in $w$ and such that the coefficients of the various monomials in $w$ are polynomials in $v$ without constant term. By applying elements $h\in H$ we can make the coefficients of $hw$ in $p(hv,hw)$ as small as we want. But there is no loss if we replace $hw$ by $w$ since we are able to consider all possible $w$. Thus we can assume that the coefficients of the monomials in $w$ in $p(v,w)$ are very small, in which case the inverse function theorem tells us that $w\mapsto w+p(v,w)$ covers a ball around $0\in W$ whose radius we can choose to be independent of $v$ (for $v$ close to zero). Then we see that $w\mapsto v_i+w+p(v,w)$ takes on the value $0$. Thus $Gv$ contains a point which projects to $0\in W$, which is impossible. Hence $G\subset\GL(V)$.
 \end{proof}
 
 Recall that $V$ is called \emph{stable\/} if it contains a nonempty Zariski open subset of closed orbits. 
 
 \begin{corollary}
 Let $V$ be stable with a one-dimensional quotient. Then $G\subset\GL(V)$.
 \end{corollary}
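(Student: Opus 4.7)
The plan is to separate on whether $v$ lies in the null cone of $V$: the null-cone case is immediate from Theorem \ref{thm:nullcone}, while the remaining case will follow from Proposition \ref{prop:easy}(2) once I locate a nontrivial scalar multiple of $v$ inside $Hv$. Throughout I adopt the standing reductions of this section, namely that $V^H=(0)$ and that $v$ is generic in the sense of Proposition \ref{prop:generic}.

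Since $V^H=(0)$ and $V/\!\!/H$ is one-dimensional, $\C[V]^H$ is a normal graded $\C$-algebra of Krull dimension one with no linear elements; such an algebra is forced to be a polynomial ring $\C[f]$ on a single homogeneous generator $f$ of some degree $d\ge 2$, and the null cone of $V$ coincides with $\{f=0\}$. If $f(v)=0$, Theorem \ref{thm:nullcone} applies directly and gives $G\subset\GL(V)$. Suppose instead $f(v)\ne 0$. The stability hypothesis together with $\dim V/\!\!/H=1$ forces the generic $H$-orbit to be closed of dimension $\dim V-1$, matching the dimension of the corresponding fiber of $f$; hence on a nonempty $H$-stable open subset $\Omega\subset V$ one has $Hv=f^{-1}(f(v))$. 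For any $d$th root of unity $\lambda$ we have $f(\lambda v)=\lambda^d f(v)=f(v)$, so $\lambda v$ lies in this fiber and hence in $Hv$. Picking $\lambda\ne 1$ (possible since $d\ge 2$) produces $h\in H$ with $hv=\lambda v\ne v$, and Proposition \ref{prop:easy}(2) yields $G\subset\GL(V)$.

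The main obstacle I anticipate is justifying the identity $Hv=f^{-1}(f(v))$ uniformly in $v$, rather than only for $v\in\Omega$. For $v$ generic outside $\Omega$ but not in the null cone I would either deform to a point of $\Omega$ and exploit that $G$ is closed in $\Aff(V)$ by Lemma \ref{lem:closed} to pass to the limit, or pass to the unique closed orbit $Hv_0\subset\overline{Hv}$ (which $G$ also preserves, since it preserves $\overline{Hv}$) and rerun the scaling argument there, noting that $\lambda v_0$ still lies in the same fiber of $f$ so that $\lambda v_0\in Hv_0$ remains available. Irreducibility of the fiber $f^{-1}(c)$, $c\ne 0$, needed for the dimension match to force equality with $Hv$, follows from the transitive action of the connected group $H$ on a dense open part of it.
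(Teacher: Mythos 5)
Your argument is the paper's argument: split on whether $f(v)=0$, apply Theorem \ref{thm:nullcone} in the null-cone case, and otherwise observe that $d$th roots of unity carry $v$ inside $Hv$, whence Proposition \ref{prop:easy}(2). The ``obstacle'' you anticipate at the end, however, is not a real one, and your two proposed workarounds are the weakest part of the write-up. The clean resolution is homogeneity: since $H$ acts linearly, scalar multiplication by $t\in\C^*$ is $H$-equivariant and maps $f^{-1}(c)$ isomorphically onto $f^{-1}(t^d c)$, so $\C^*$ permutes the nonzero fibers transitively as $H$-varieties; once $f^{-1}(c_0)$ is a single (necessarily closed) $H$-orbit for one $c_0\ne 0$, the same holds for every $c\ne 0$, i.e.\ your $\Omega$ is simply $\{f\ne 0\}$ and no deformation or passage to a boundary orbit is needed. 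By contrast, your second workaround does not go through as stated: $G$ preserving $\overline{Hv}$ gives no control over the $H$-orbit stratification of $\overline{Hv}$, because $G$ need not normalize $H$; so there is no reason for $G$ to preserve the closed orbit $Hv_0$, and even if it did, producing $\lambda v_0\in Hv_0$ is not the same as producing $h\in H$ with $hv=\lambda v$, which is what Proposition \ref{prop:easy}(2) requires. Your first workaround (deforming $v$ into $\Omega$) is also suspect because $G$ itself depends on $v$, so there is no obvious limit to pass to. Replace the final paragraph with the one-line scaling observation and the proof matches the paper.
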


\begin{proof}
We have that $\C[V]^H=\C[f]$ where $f$ is homogeneous of degree $d>1$. Moreover, $f\inv(f(v))=Hv=Gv$ if $f(v)\neq 0$. Now the case that $f(v)=0$ follows from Theorem \ref{thm:nullcone} and if $f(v)\neq 0$, then $Gv\supset \Gamma v$ where $\Gamma\subset G$ is a finite subgroup isomorphic to $\Z/d\Z\subset\C^*$ acting via scalar multiplication on $V$. Then $G\subset\GL(V)$ by Proposition \ref{prop:easy}(2).
\end{proof}
 \begin{remark}
 A case by case check  shows that $H$ simple and $\dim \quot VH=1$ implies that $G\subset\GL(V)$.
 \end{remark}
 
 \begin{theorem}\label{thm:affsl2}
 If $H=\SL_2$, then $G\subset\GL(V)$.
 \end{theorem}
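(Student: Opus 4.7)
The plan is to assume for contradiction that $G \not\subset \GL(V)$ and derive a contradiction. Following subsection \ref{subsec:nilpotent}, I would obtain an irreducible $H$-submodule $W$ of $\lieg'_u$ of highest weight $m \geq 1$ (the lower bound coming from $V^H = (0)$), an equivariant embedding $\iota\colon W \hookrightarrow V$, and an equivariant linear map $A\colon W \to \lie{gl}(V)$ such that $X_w := A_w + \iota(w)$ lies in $\lieg$ for every $w \in W$. Since $Gv = Hv$, the tangency condition $X_w(v) \in T_v(Hv) = \lieh v$ yields the key constraint
\[
A_w(v) + \iota(w) \in \lieh v \qquad \text{for all } w \in W. \tag{$\ast$}
\]

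I would then dispose of three routine sub-cases. (a) If $v$ is in the null cone, Theorem \ref{thm:nullcone} already gives $G \subset \GL(V)$. (b) If every irreducible component of $V$ has even dimension, the Corollary to Proposition \ref{prop:easy} applies. (c) If there exists $h' \in H$ with $h'v = \lambda v$ for some $\lambda \neq 1$, Proposition \ref{prop:easy}(2) applies. Each possibility yields a contradiction. So we may assume $v$ is semisimple, $V$ has at least one odd-dimensional irreducible component, and no element of $H$ moves $v$ along $\C v$ by a nontrivial scalar; in particular the $T$-weight decomposition of $v$ for a maximal torus $T\subset H$ involves at least two distinct weights.

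In this remaining case I would run a weight-space analysis. Take a weight basis $w_{-m}, w_{-m+2},\ldots, w_m$ of $W$ and decompose $v = \sum_k v_k$ by $T$-weights. Since $A_{w_k}$ has weight $k$ in $\lie{gl}(V)$, it sends the weight-$j$ space of $V$ into the weight-$(j+k)$ space, while the $T$-weight support of $\lieh v$ is contained in the union of $\{j-2, j, j+2\}$ over those $j$ with $v_j \neq 0$. Projecting $(\ast)$ onto individual weight components gives, for each $k$, linear relations tying $\iota(w_k)$ — the unique weight-$k$ contribution from the translation part — to the various $A_{w_k}(v_j)$ and to the weight components of $\lieh v$. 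Using the equivariance relation $[X, A_w] = A_{Xw}$ for $X \in \lieh$ to propagate the scalar entries of $A$ across all weights of $W$ renders the resulting system overdetermined.

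The main obstacle is showing this system is inconsistent whenever $\iota$ is nonzero. When the $T$-weight support of $v$ is narrow, the support of $\lieh v$ is small, and weight counting immediately forces enough components of $A_{w_{\pm m}}(v) + \iota(w_{\pm m})$ to vanish to contradict $\iota \neq 0$. The delicate case is when $v$ has wide weight support so that $\lieh v$ nearly saturates the $T$-weight range of $V$; here one must exploit that the isotypic piece of highest weight $m$ inside $\Hom_H(W, V \otimes V^*)$ lies in either $S^2 V$ or $\Lambda^2 V$ according to the parity of $m$, together with the classification of $\SL_2$-stabilizers of generic semisimple $v$ (the stabilizer $H_v$ is either finite or conjugate to $T$). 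A direct computation using the Clebsch--Gordan decomposition of $V \otimes V^*$ then closes the argument.
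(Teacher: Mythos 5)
The approach you sketch is genuinely different from the paper's: the paper assumes for contradiction that $G\not\subset\GL(V)$, applies Engel's theorem to the nilpotent endomorphisms $A_w$ to build a partial flag $0=V_0\subset V_1\subset\dots\subset V_k=V$, and then runs an induction (Lemmas \ref{lem:1}--\ref{lem:4}) that pushes $W$ down the flag by alternately invoking the null-cone argument of Theorem \ref{thm:nullcone} and an isotropy argument; it never needs to know whether the original orbit $Hv$ is closed. You instead try to dispose of the null cone, even-dimensional, and rescaling cases up front and then do a direct weight-space computation.

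There is a genuine gap in the reduction step. After excluding (a) $v$ in the null cone, (b) all components of even dimension, and (c) some $h'v=\lambda v$ with $\lambda\neq 1$, you conclude ``we may assume $v$ is semisimple.'' This does not follow: for $\SL_2$, being outside the null cone means only that $0\notin\overline{Hv}$, which does not force $Hv$ to be closed. For example, with $V=R_2\oplus R_2$ and $v=(e,h)$ where $e$ is a nonzero nilpotent and $h$ a nonzero semisimple element of $\lie{sl}_2$ with $\trace(eh)=0$, $v$ is generic, is not in the null cone, satisfies neither (b) nor (c), and yet $Hv$ is not closed (the one-parameter subgroup $\operatorname{diag}(t,t^{-1})$ sends $(e,h)$ to $(t^2e,h)\to(0,h)\notin Hv$). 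This ``mixed'' regime — neither closed nor null — is precisely what the paper's flag machinery is built to handle: Lemma \ref{lem:4} produces a null-cone situation only after passing to the quotient $V/V_{k-j}$, not in $V$ itself, which is why the argument there can iterate. Your reduction collapses this subtlety.

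Beyond this, the core of your plan — projecting the constraint $A_w(v)+\iota(w)\in\lieh v$ onto weight components, propagating coefficients by equivariance, and ``closing the argument'' via Clebsch--Gordan in the wide-support case — is left as an intention rather than executed. The hard point (showing that a nonzero $\iota$ is incompatible with the at-most-three-dimensional space $\lieh v$ for every weight structure of $v$, including the non-semisimple ones you need to readmit) is exactly where the paper's Lemmas \ref{lem:2}--\ref{lem:4} do real work. As written this is a plausible plan with a broken reduction and an unfinished main step, not a proof.
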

 
We prove the theorem by contradiction, so assume that we have   $A_w+w$  as in \ref{subsec:nilpotent}. Then the $A_w$ lie  in a Lie algebra of nilpotent matrices, and by Engel's theorem we can find a partial flag $0=V_0\subset V_1\subset\dots\subset V_k=V$ such that $V_1$ is the joint  kernel of the $A_w$, $V_2/V_1\subset V/V_1$ is the joint kernel of   the $A_w$, etc. Note that the $V_j$ are $H$-stable.  
 
 \begin{lemma}\label{lem:1} 
 We have  $W\subset V_{k-1}$. 
 \end{lemma}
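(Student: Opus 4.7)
The plan is to assume $\bar W \neq 0$ and derive a contradiction, where $\bar V := V/V_{k-1}$ and $\bar W \subset \bar V$ is the image of $W$. The key initial computation is that by the construction of the flag $A_w(V) \subset V_{k-1}$ for every $w \in W$, so
\[
\exp(t(A_w+w)) \cdot v \;=\; \exp(tA_w)\, v \;+\; \sum_{j \ge 0} \frac{t^{j+1}}{(j+1)!}\, A_w^{j}\, w
\]
projects modulo $V_{k-1}$ to $\bar v + t \bar w$. Since this element of $G$ preserves $Hv$, one obtains $\bar v + t \bar w \in H \bar v$ for all $t \in \C$ and $w \in W$. Differentiating at $t = 0$ yields $\bar W \subset \lieh \bar v$, hence $\dim \bar W \le \dim \lieh = 3$; and applying $h \in H$ together with the $H$-invariance of $\bar W$ upgrades this to $H\bar v + \bar W \subset H\bar v$.

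Since $\bar W$ is a nonzero quotient of the irreducible module $W$, necessarily $\bar W \cong W$ is an irreducible $\SL_2$-module of dimension at most $3$, so $W \cong V(n)$ with $n \in \{0,1,2\}$. I would dispose of $n = 0$ first by Schur: equivariance $h A_w h^{-1} = A_w$ puts $A_w \in \End_H(V)$, and a nilpotent $H$-equivariant endomorphism vanishes; but then $(A_w, w) = (0, w) \in \lieg$ would be a pure translation, contradicting the earlier injectivity of $G \to G'$.

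For $n \in \{1, 2\}$, I would use the quotient $\pi \colon \bar V \to \bar V/\bar W$. The $\bar W$-translation-invariance of $H\bar v$ forces $H\bar v = \pi^{-1}(H\pi(\bar v))$, so $H/H_{\pi(\bar v)} \cong H\pi(\bar v)$ embeds in the affine variety $\bar V/\bar W$ with $\dim H_{\pi(\bar v)} \ge \dim \bar W \ge 2$. Any $2$-dimensional connected subgroup of $\SL_2$ is a Borel $B$, and $N_H(B) = B$; but $H/B \cong \mathbb{P}^1$ admits no nonconstant morphism to an affine variety, so the only option is $H_{\pi(\bar v)} = H$, i.e., $\pi(\bar v)$ is $H$-fixed. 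Hence $H\bar v = \bar v + \bar W$, and the resulting affine action of $H$ on this coset is given by a cocycle $H \to \bar W$ which is a coboundary by Whitehead's lemma, producing $\bar u \in \bar W$ with $\bar v - \bar u$ $H$-fixed. Since $V^H = 0$ and $H$ reductive imply $\bar V^H = 0$, one concludes $\bar v \in \bar W$, say $\bar v = \bar w_0$ for some $w_0 \in W$. Setting $w = w_0$ and $t = -1$ in $\bar v + t\bar w \in H\bar v$ yields $0 \in H\bar v$, forcing $\bar v = 0$ and hence $\bar W \subset \lieh \bar v = 0$---a contradiction. The main obstacle is the projective-to-affine rigidity step: combined with Whitehead's vanishing and the assumption $V^H = 0$, it collapses the geometric translation-invariance into the linear equation $\bar v = 0$.
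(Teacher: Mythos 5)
Your initial move is the same as the paper's: compute that $\exp\bigl(t(A_w+w)\bigr)\cdot v$ projects to $\bar v+t\bar w$ modulo $V_{k-1}$, conclude $\bar v+\bar W\subset H\bar v$, and derive a contradiction if $\bar W\neq 0$. The paper then simply says this ``is not possible for the $H$-orbit'' without elaboration; your contribution is to actually justify that claim, via the bound $\dim\bar W\le\dim\lieh$, the reduction to $H\pi(\bar v)$ in $\bar V/\bar W$, the rigidity of $\mathbb P^1\to$ affine, and Whitehead's lemma. That part of the argument is sound, though somewhat longer than necessary: once you know $\pi(\bar v)$ is $H$-fixed, you can conclude directly that $\pi(\bar v)=0$ (because $\bar V/\bar W$ is a quotient of $V$, $H$ is reductive, and $V^H=0$), so $\bar v\in\bar W$ without invoking any cohomology.

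There is, however, a genuine flaw in your $n=0$ case. The assertion ``a nilpotent $H$-equivariant endomorphism vanishes'' is false: if $V$ contains an isotypic piece $mR_l$ with $m\ge 2$, then $\End_H(V)$ contains a full matrix algebra $M_m(\C)$ and hence plenty of nonzero nilpotents, so Schur does not give $A_w=0$. The intended conclusion is still correct, but for a different and much simpler reason: by the setup of \S\ref{subsec:nilpotent}, $W$ is (identified with) an irreducible $H$-submodule of $V$, and the running assumption throughout this section is $V^H=0$, so $W$ cannot be the trivial module $R_0$ in the first place. Replacing your Schur argument with this observation closes the gap; the remainder of the proof (for $n=1,2$) then stands.
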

 
 \begin{proof} Since $A_w(v)\in V_{k-1}$, we have that $(A_w+w)(v+V_{k-1})=w+V_{k-1}$. Thus $\exp(A_w+w)(v+V_{k-1})=v+w+V_{k-1}$. Let $\pi$ be the projection of $V$ to $V/V_{k-1}$ with kernel $V_{k-1}$. If $W\not\subset V_{k-1}$, then  $\pi(Gv)$ contains a nontrivial $H$-stable subspace of $V/V_{k-1}$.    This is not possible for the $H$-orbit, hence $W\subset V_{k-1}$. 
 \end{proof}

 \begin{lemma}\label{lem:2} 
 Suppose that for some $j\geq 1$ we have  $W\subset V_{k-j}$ and $A_w(v)\in V_{k-j}$ for all  $w\in W$. Then the stabilizer of $v+V_{k-j}$ in $H$ is infinite.
 \end{lemma}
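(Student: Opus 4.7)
The plan is to produce a one-parameter family of points of $Hv$ lying in $v+V_{k-j}$ by flowing $v$ along a one-parameter subgroup of $G$ generated by $A_w+w\in\lieg$ for a suitable $w\in W$. First I would compute, using the realization of $\Aff(V)$ as block matrices $\sm{A}{b}{0}{0}$ acting on $V\oplus\C$, that for any $w\in W$ and $t\in\C$,
$$\exp\bigl(t(A_w+w)\bigr)\cdot v \;=\; v + t\,\phi(tA_w)\bigl(A_w v+w\bigr),\qquad \phi(X):=\sum_{n\ge 0}\frac{X^n}{(n+1)!}.$$
By hypothesis $w\in W\subset V_{k-j}$ and $A_w v\in V_{k-j}$, and $A_w(V_{k-j})\subset V_{k-j-1}\subset V_{k-j}$ by the defining property of the Engel flag, so $\phi(tA_w)$ preserves $V_{k-j}$; hence the displacement lies in $V_{k-j}$. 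Moreover $\exp(t(A_w+w))\in G$ preserves $Hv$, so the entire curve $t\mapsto \exp(t(A_w+w))\cdot v$ lies in $Hv\cap (v+V_{k-j})$.

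Second, I would show this curve is non-constant for some choice of $w$, equivalently that the initial velocity $A_w v+w$ is nonzero. Suppose instead that $A_w v=-w$ for every $w\in W$. The equivariance relation $A_{hw}=hA_w h\inv$ then gives $A_w(h\inv v)=-w$ for all $h\in H$, and hence $A_w v'=-w$ for every $v'\in Hv$. By linearity, $A_w$ vanishes on the $H$-stable subspace $U':=\sspan\{v_1-v_2 : v_1,v_2\in Hv\}$. Genericity of $v$ yields $\sspan(Hv)=V=\C v+U'$. If $U'\neq V$, then $V/U'$ is a one-dimensional $H$-module, necessarily trivial since $H=\SL_2$ has no non-trivial characters; as $H$ is reductive this would split off a trivial summand of $V$, contradicting the standing assumption $V^H=(0)$ of this section. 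Hence $U'=V$, forcing $A_w=0$ for every $w\in W$, and through the isomorphism $w\mapsto A_w\in\lieg'_u$ of \ref{subsec:nilpotent} this forces $W=0$, contrary to the setup.

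Third, once $w\in W$ is chosen with $A_w v+w\ne 0$, the morphism $t\mapsto \exp(t(A_w+w))\cdot v$ yields a one-dimensional curve in $Hv\cap (v+V_{k-j})$. Its pullback through the orbit map $H\to Hv$, $h\mapsto hv$, is a positive-dimensional subvariety of the stabilizer $\{h\in H : hv-v\in V_{k-j}\}$, which is therefore infinite.

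The main obstacle is excluding the degenerate case $A_w v+w\equiv 0$; that step uses genericity of $v$, the standing hypothesis $V^H=(0)$, and the absence of non-trivial characters on $\SL_2$.
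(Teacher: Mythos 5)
Your proof is correct, and it runs parallel to the paper's argument; the core obstruction in both is to rule out $A_w v+w=0$ for all $w\in W$, using the equivariance $A_{hw}=hA_wh\inv$ together with the hypothesis $V^H=(0)$. The differences are in how this is carried out. The paper assumes the stabilizer of $v+V_{k-j}$ is finite and deduces $A_w(v)+w=0$ via a dimension count on $T_v(Gv)$ (using $Gv=Hv$ and $\lieh(v)\cap V_{k-j}=0$), then averages $A_w(hv)+w=0$ over a maximal compact subgroup $K$ of $H$ to get $w=0$ directly, since $\int_K hv\,dh\in V^H=(0)$. You argue in the opposite direction: you exponentiate explicitly (your formula $\exp(t(A_w+w))v=v+t\phi(tA_w)(A_wv+w)$ is correct and neatly avoids the tangent-space subtleties) and then show $A_wv+w\neq 0$ for some $w$ by observing that $A_w$ would otherwise vanish on $U'=\sspan\{v_1-v_2:v_i\in Hv\}$; you get $U'=V$ from genericity plus reductivity plus the absence of nontrivial characters on $\SL_2$, concluding $A_w=0$ and hence $W=0$ via the no-pure-translations fact. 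Both variants are valid. Two minor remarks: first, the averaging argument of the paper works for any reductive $H$, whereas your span-of-differences step genuinely uses that $\SL_2$ has no nontrivial characters (so it would need modification for a general reductive $H$ in other contexts); second, your explicit exponential makes the transition from ``$A_wv+w\neq 0$'' to ``the stabilizer is infinite'' cleaner than the paper's Lie-algebra-level dimension count, which is stated somewhat tersely there.
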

 
 \begin{proof}
 Suppose that $v+V_{k-j}$ has   finite stabilizer. Since $A_w(v)+w$ projects  to zero in $V/V_{k-j}$, we must have that $A_w(v)+w=0$, else the $G$-orbit of $v$ has dimension greater than $\dim H$. Now for $h\in H$, $A_w(hv)+w=h(A_{h\inv w}+h\inv w)(v)=0$, so that the average of $A_w(hv)+w$ over a maximal compact subgroup $K$ of $H$ is zero. Since $V^H=0$ and $A_w$ is linear, the average of $w+A_w(hv)$ over $K$ is $w$.  Thus $W=0$,  a contradiction. 
 \end{proof}
  
  \begin{lemma}\label{lem:3}
  Suppose that $v+V_{k-j}$ is in the null cone of $V/V_{k-j}$ for some  $j\geq 1$. Then $W\subset V_{k-j-1}$.
  \end{lemma}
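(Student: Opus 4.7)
I will argue by contradiction, adapting the strategy of Theorem~\ref{thm:nullcone} to the quotient $V/V_{k-j-1}$. Suppose $W \not\subset V_{k-j-1}$. In the inductive context where this lemma is to be applied (starting from Lemma~\ref{lem:1} at $j=1$), I may assume $W \subset V_{k-j}$, so that the canonical map $W \to V_{k-j}/V_{k-j-1}$ is an $H$-equivariant injection.

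First I compute $\exp(A_w+w)(v)$ modulo $V_{k-j-1}$. Expanding the affine exponential as $e^{A_w}(v) + \phi(A_w)(w)$ with $\phi(A) = \sum_{n\geq 0} A^n/(n+1)!$, and using both $A_w^n(v) \in V_{k-n}$ and $A_w(V_{k-j}) \subset V_{k-j-1}$ (which applies to $w \in V_{k-j}$ itself), all terms of $A_w$-degree $>j$ on $v$ and of $A_w$-degree $\geq 1$ on $w$ vanish mod $V_{k-j-1}$, yielding
\[
\exp(A_w+w)(v) \equiv v + w + \sum_{n=1}^{j} \tfrac{1}{n!}\,A_w^n(v) \pmod{V_{k-j-1}}.
\]
Fix an $H$-equivariant decomposition $V = V_{k-j-1}\oplus U_{k-j}\oplus U_{>}$ with $V_{k-j} = V_{k-j-1}\oplus U_{k-j}$, and write $v = v_0 + v_{k-j} + v_{>}$. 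The same vanishing shows $A_w^n(v) \equiv A_w^n(v_{>}) \pmod{V_{k-j-1}}$ for $n \geq 1$.

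Next I use the null cone hypothesis to produce a one-parameter subgroup $h_\lambda$ of $H$ with $h_\lambda v_{>} \to 0$. Substituting $h_\lambda v$ for $v$, letting $\pi\colon V \to V/V_{k-j-1}$ be the projection, and setting
\[
F_\lambda\colon W \to V/V_{k-j-1},\qquad F_\lambda(w) := \pi\bigl(\exp(A_w+w)(h_\lambda v)\bigr),
\]
the map $F_\lambda$ is, for $\lambda$ small, an arbitrarily small perturbation of the affine map $w \mapsto \pi(h_\lambda v) + \pi(w)$ whose linear part is injective on $W$. By the inverse function theorem, $F_\lambda(W)$ contains an analytic submanifold of $V/V_{k-j-1}$ of dimension $\dim W$ through $\pi(h_\lambda v)$. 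Since $F_\lambda(W) \subset \pi(G\cdot h_\lambda v) = \pi(H\cdot h_\lambda v) = H\pi(h_\lambda v)$, this submanifold lies inside an $H$-orbit of dimension at most $\dim H = 3$.

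The contradiction is immediate when $\dim W > 3$. The remaining low-dimensional cases $\dim W = 2$ or $3$ are where the analysis is most delicate: letting $\lambda \to 0$ and passing to the closure forces $\overline{H\pi(v)}$ to contain an affine subspace parallel to $\pi(W)$ through some limit point of $\pi(h_\lambda v)$, and I expect this structural constraint, combined with a weight analysis of $v_{>}$ and $\pi(v)$ relative to the null-cone $1$-parameter subgroup, to be incompatible with $v$ being generic. Carrying out this final case analysis is the main obstacle in completing the proof.
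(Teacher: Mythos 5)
Your setup is correct and matches the paper's intent: you expand $\exp(A_w+w)(v)$ modulo $V_{k-j-1}$, observe that the terms involving $A_w$ to degree $\geq 1$ depend only on $v$ modulo $V_{k-j}$, and bring in the null-cone one-parameter subgroup $h_\lambda$. The gap comes at the endgame, which you flag yourself. By projecting only to $V/V_{k-j-1}$ and using that the image of $F_\lambda$ is a $\dim W$-dimensional submanifold sitting inside the $H$-orbit $H\pi(v)$, you obtain only $\dim W\leq\dim H=3$; for $H=\SL_2$ this leaves $W\simeq R_1$ and $W\simeq R_2$ genuinely open, and the induction in Theorem \ref{thm:affsl2} cannot proceed without those cases.

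The paper's (very terse) proof instead projects one step further. Since $W$ is irreducible and $W\not\subset V_{k-j-1}$, one has $W\cap V_{k-j-1}=0$, so the image of $W$ in $V/V_{k-j-1}$ is an irreducible direct $H$-summand isomorphic to $W$; let $\pi_W$ be an $H$-equivariant projection onto it. Running the computation of Theorem \ref{thm:nullcone} with $\pi_W$ in place of the isotypic projection gives $\pi_W(\exp(A_w+w)(v))=\pi_W(v)+w+q(v,w)$ with all coefficients of $q$ depending only on $v+V_{k-j}$; substituting $h_\lambda v$ for $v$ shrinks these coefficients, and the inverse function theorem produces a $w$ for which the expression vanishes. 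Since $\exp(A_w+w)(h_\lambda v)\in Gv=Hv$, this gives $0\in\pi_W(Hv)=H\pi_W(v)$, forcing $\pi_W(v)=0$ and contradicting genericity of $v$. This contradiction is uniform in $\dim W$, which is what the lemma needs. So the missing idea is to compose with $\pi_W$ and aim for zero, which is impossible for any $H$-translate of the nonzero vector $\pi_W(v)$, rather than to count dimensions; the dimension count alone cannot exclude the small modules that actually arise for $\SL_2$.
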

  
  \begin{proof}
  Assume that $W\not\subset V_{k-j-1}$. Our argument in \ref{thm:nullcone} shows that the $G$-orbit of $v$ projected to the image of $W$ in $V/V_{k-j-1}$ contains zero, which is not possible for the $H$-orbit.
  Hence  $W\subset V_{k-j-1}$.
  \end{proof}

  \begin{lemma}\label{lem:4} Suppose that for some $j\geq 1$ we have that $A_w(v)\in V_{k-j}$ for all $w\in W$ and that $W\subset V_{k-j}$. Then   $v+V_{k-j}$ is in the null cone of $V/V_{k-j}$.
  \end{lemma}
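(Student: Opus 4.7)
The plan is a case analysis on the identity component $P$ of the stabilizer of $\bar v:=v+V_{k-j}$ in $H=\SL_2$, which is positive-dimensional by Lemma~\ref{lem:2}. Up to $H$-conjugation $P$ is one of $H$, a Borel $B$, a maximal torus $T$, or a unipotent one-parameter subgroup $U$. First I observe that $(V/V_{k-j})^H=0$: an $H$-equivariant complement $V'$ to $V_{k-j}$ in $V$ gives $(V/V_{k-j})^H\cong (V')^H\subset V^H=0$.

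The cases $P\supseteq B$ are immediate: $\bar v$ is $B$-fixed, hence a highest-weight vector of weight $0$, hence $H$-fixed, hence $0$ by the observation above, so trivially in the null cone. If $P$ is conjugate to $U$, after replacing $v$ by a translate $hv$ I may assume $U^+$ fixes $\bar v$. Then $\bar v$ is a sum of highest-weight vectors of the irreducible $H$-summands of $V/V_{k-j}$, all of strictly positive weight (no trivial summand is present). The $1$-parameter subgroup $\lambda(t)=\operatorname{diag}(t,t\inv)$ satisfies $\lim_{t\to 0}\lambda(t)\bar v=0$, placing $\bar v$ in the null cone.

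The remaining case, which I expect to be the main obstacle, is $P$ conjugate to $T$, and the plan is to rule it out. After conjugation assume $T\bar v=\bar v$. The explicit formula $\exp(t(A_w+w))(v)=e^{tA_w}(v)+\sum_{n\ge 0}\frac{t^{n+1}}{(n+1)!}A_w^n(w)$, combined with the hypotheses $A_w(v)\in V_{k-j}$ and $w\in V_{k-j}$, gives $\exp(t(A_w+w))(v)\equiv v\pmod{V_{k-j}}$ for all $t\in\C$. Hence the curve $t\mapsto\exp(t(A_w+w))(v)\in Gv=Hv$ lies in the fiber $\pi\inv(\bar v)\cap Hv=H_{\bar v}v$, and by continuity at $t=0$ in the identity-component orbit $Tv$. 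Now $Tv\cong T/T_v$ is either a single point or isomorphic to $\C^*$, while the curve is a polynomial in $t$ because the linear part $A_w$ is nilpotent. Since any polynomial map $\C\to\C^*$ is constant, the curve is constant, so $A_w(v)+w=0$ for every $w\in W$. The averaging argument from Lemma~\ref{lem:2} (apply $h\in K$ and use $V^K=V^H=0$) then yields $W=0$, contradicting the choice of nonzero $W$ in \S\ref{subsec:nilpotent}. This rules out the torus case and completes the plan.
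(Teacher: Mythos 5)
Your proof is correct, and it lands in the same place as the paper's: the only nontrivial thing to do is to rule out the possibility that the identity component $P$ of the stabilizer of $\bar v=v+V_{k-j}$ is a maximal torus $T$. What you do differently is worth noting.

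The paper argues by contradiction: if $\bar v$ is not in the null cone, then Lemma \ref{lem:2} forces $P\cong\C^*$ (the unipotent and Borel cases being impossible when $\bar v$ is outside the null cone), the affine transformations $g(z)=\exp(z(A_w+w))$ are shown to preserve the $T$-orbit $Tv$, and Theorem \ref{thm:torus} is then invoked as a black box to conclude that the translation part $w$ must vanish. You instead make the case analysis on $P$ explicit (handling $P\supseteq B$ and $P=U$ directly, which the paper treats implicitly), and in the $T$ case you need only the weaker observation that the curve $t\mapsto g(t)v$ \emph{lies in} $Tv$ (rather than that each $g(z)$ \emph{preserves} $Tv$, which the paper establishes via an argument with the full stabilizer $S$). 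You then replace the appeal to Theorem \ref{thm:torus} with the elementary fact that a morphism $\C\to\C^*$ is constant, applied to the polynomial curve $g(t)v$ landing in $Tv\cong\C^*$; this yields $A_w(v)+w=0$, after which you fall back on the averaging argument already used in Lemma \ref{lem:2} to get $W=0$. The net effect is a more self-contained and slightly more elementary argument for the $T$-case; the paper's version is shorter precisely because it leans on the previously proved Theorem \ref{thm:torus}. Both are correct.

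One small point of rigor worth spelling out: you should note why the image of the polynomial curve is contained in the \emph{single} $T$-orbit $Tv$ rather than just in $H_{\bar v}v$. The clean justification is that $H_{\bar v}v$ is a finite disjoint union of $T$-orbits (the $T$-orbits of an algebraic group's identity component partition any orbit of the full group and are its connected components), and the image of $\C$ under a morphism is irreducible and contains $v=g(0)v$, hence lies entirely in $Tv$. This is implicit in your ``by continuity at $t=0$'' but benefits from being made precise in the algebraic setting.
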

  
  \begin{proof} Suppose not. Consider $V':=\C\cdot v+V_{k-j}\subset V$. Then for $z\in\C$,  $z(A_w+w)$ exponentiates to an element $g(z)$ of $\Aff(V')$ which fixes $v':=v+V_{k-j}\in V/V_{k-j}$. By Lemma  \ref{lem:2} we know that $v'$ has an infinite stabilizer $S$ in $H$. Since $v'$ is not in the null cone,  $S$ has identity component $T\simeq \C^*$.  For any $s\in S$ there is an $h_{z,s}\in H$   such that $h_{z,s}v=g(z)sv$. Then  $h_{z,s}\in S$ since $g(z)s$ fixes $v'$ so that the $g(z)$ preserve the $S$-orbit of $v$. The group generated by $T$ and the $g(z)$ is connected, so  it preserves the $T$-orbit of $v$.   By Theorem \ref{thm:torus}  we see that $w=0$, a contradiction.
   \end{proof}

  \begin{proof}[Proof of Theorem \ref{thm:affsl2}] We have that $A_w(v)\in V_{k-1}$ and $W\subset V_{k-1}$. Suppose that we have $A_w(v)\in V_{k-j}$  and $W\subset V_{k-j}$ for some $j\geq 1$. By Lemmas \ref{lem:2} and \ref{lem:4} and genericity of $v$ we may assume that $v+V_{k-j}$ is a sum of highest weight vectors.  By Lemma \ref{lem:3}, $W\subset V_{k-j-1}$, so that if $A_w(v)\in V_{k-j-1}$ we can continue. We eventually arrive at a case where $A_w(v)\not\in V_{k-j-1}$ (we cannot have a pure translation in $\lieg$). Since $v+V_{k-j}$ is a sum of highest weight vectors   there are unique elements $B_w\in\lie u$ such that $A_w(v)+w+B_w(v)\in V_{k-j-1}$. Here $\lieu$ is the Lie algebra of the standard unipotent subgroup of $H$. Since $A_w(v)+w\not\in V_{k-j-1}$, $B_w(v)\not\in V_{k-j-1}$ and  $v+V_{k-j-1}$ is not a sum of highest weight vectors. Since $v+V_{k-j}$ is a nonzero sum of highest weight vectors, the $H$-isotropy group of $v+V_{k-j-1}$, which is a subgroup of the $H$-isotropy  group of $v+V_{k-j}$, is finite.  Arguing as in Lemma \ref{lem:2} we obtain that $w':=A_w(v)+w+B_w(v)=0$ and that $W=0$, a contradiction. Hence $G\subset\GL(V)$.
    \end{proof}

From now on we will assume that $V^H=0$, even though we have only established our conjecture for $\SL_2$ or the case that $V$ is irreducible.

   \section{The case $H$ is simple and $V$ is irreducible}\label{sec:simple}
   
   Our goal in this section is to find the possible $G\subset\GL(V)$ preserving an orbit $Hv$ where $V$ is an irreducible $H$-module and $H$ is simple of rank at least two. We will see that perforce $G$ is simple. We begin by recalling some important results of Onishchik.
   
 Let $H\subset G$ where $G$  and $H$ are linear algebraic groups. Let  $V$ be an $H$-module. If $v\in V$ and $Gv=Hv$, then $G=HK$ where $K=G_v$. Conversely, $G=HK$ implies that  $Gv=Hv$ for $v\in V^K$. There is a rather restricted class of possibilities for $H$ and $K$ when $G$ is simple and $H$ is semisimple, as follows from the work of Onishchik \cite{Oncompact, Onreductive}. 
 
 If $K$ is a connected complex linear algebraic group, let $\liek$ denote its Lie algebra and let $L(K)$ denote a Levi subgroup of $K$. The next two theorems   follow  from \cite{Oncompact} and \cite{Onreductive} (see also \cite{Onbook} and \cite{OnGor}).
 
 \begin{theorem}
 Let $H$ and $K$ be connected algebraic subgroups of the connected reductive group $G$.  Then the following are equivalent.
 \begin{enumerate}
 \item $G=HK$.
 \item $G=\sigma(H)\tau(K)$ where $\sigma$ and $\tau$ are  any automorphisms of $G$.
 \item $G=L(H)L(K)$.
 \item $G_0=H_0K_0$ where $H_0$ and $K_0$ are maximal compact subgroups of $H$ and $K$ contained in a maximal compact subgroup $G_0$ of $G$.
 \item $\lieg=\lieh+\lie k$ (if $H$ and $K$ are reductive).
\end{enumerate}
\end{theorem}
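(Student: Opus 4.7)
The five conditions split naturally into three blocks of increasing depth, and I would prove them in that order. For (1)$\Leftrightarrow$(5)$\Leftrightarrow$(4), begin with $H$ and $K$ reductive. The implication (1)$\Rightarrow$(5) is immediate: surjectivity of the multiplication map $\mu\colon H\times K\to G$ forces its differential at $(e,e)$, whose image is $\lieh+\liek$, to cover $\lieg$. Conversely, (5) makes $d\mu_{(e,e)}$ surjective, so $HK$ is open in $G$. To close up, pass to maximal compact subgroups $H_0\subset H$, $K_0\subset K$ sitting inside a common maximal compact $G_0$ of $G$ (existence is a theorem of Mostow); the corresponding real identity $\lieg_0=\lieh_0+\liek_0$ then makes $H_0K_0$ open, hence closed by compactness, in the connected compact group $G_0$, so $G_0=H_0K_0$. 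Complexifying recovers $G=HK$ and simultaneously yields (4).

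The equivalence (1)$\Leftrightarrow$(3) follows formally from the preceding step. A unipotent group has no nontrivial compact subgroups, so the maximal compact of any connected algebraic group coincides with that of its Levi component; condition (4) for $(H,K)$ is therefore literally the same as condition (4) for $(L(H),L(K))$. Applying the already-established equivalence (1)$\Leftrightarrow$(4) to both pairs yields $G=HK$ iff $G=L(H)L(K)$. One direction of (1)$\Leftrightarrow$(3) is obvious, and this chain supplies the other.

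The deepest and genuinely nontrivial implication is (1)$\Rightarrow$(2), where compact-form techniques alone do not suffice: one must invoke Onishchik's classification of factorizations of reductive complex Lie algebras. That classification enumerates, for each simple $\lieg$, the finite list of pairs $(\lieh,\liek)$ of reductive subalgebras satisfying $\lieh+\liek=\lieg$, recorded up to conjugacy in $\Aut(\lieg)$. The cataloguing data — the isomorphism types of $\lieh$ and $\liek$ together with their embedding classes — are preserved by any automorphism of $\lieg$, so $(\sigma(H),\tau(K))$ lies in the same entry of the list as $(H,K)$ for every $\sigma,\tau\in\Aut(G)$, and therefore still factors $G$. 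This is the main obstacle: without the explicit classification there is no formal reason why factorization should survive perturbing $H$ and $K$ by independent automorphisms of $G$.
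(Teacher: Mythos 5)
The paper does not prove this theorem; it is quoted from Onishchik (\cite{Oncompact}, \cite{Onreductive}), so your attempt has to stand on its own. As written it has two genuine gaps, and the second is the crux of the whole result.

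First, the load-bearing step in your treatment of (1)$\Leftrightarrow$(4)$\Leftrightarrow$(5) is the assertion that one can always choose maximal compact subgroups $H_0\subset H$, $K_0\subset K$ lying in a \emph{common} maximal compact $G_0$ of $G$, ``by a theorem of Mostow.'' Mostow's theorem aligns a \emph{single} reductive subgroup with a Cartan involution of $G$; it does not produce a Cartan involution stabilizing two reductive subgroups simultaneously, and in fact such a common alignment does not exist in general. Already for $G=\SL_2(\C)$ and two generic maximal tori $T_1,T_2$, the sets of compatible maximal compacts are two disjoint geodesics in $\mathbb H^3=G/\SU(2)$, so no common $G_0$ exists. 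Of course $\lie t_1+\lie t_2\neq\lieg$ there, which is the point: the existence of a compatible $G_0$ is not an unconditional fact but is part of what must be proved, conditional on $G=HK$ (equivalently on $\lieh+\liek=\lieg$). Your route (5)$\Rightarrow$(4)$\Rightarrow$(1) therefore begs the question: you need the aligned compacts to pass from ``$HK$ open'' to ``$HK$ closed,'' but producing them is exactly the nontrivial content of Onishchik's theorem. Relatedly, the chain you use for (1)$\Leftrightarrow$(3) applies ``(1)$\Leftrightarrow$(4)'' to the pair $(H,K)$, which you only established under the hypothesis that $H,K$ are reductive; for non-reductive $H,K$ that implication was never proved, so the argument that (3)$\Rightarrow$(1) is ``the obvious direction'' while (1)$\Rightarrow$(3) ``follows formally'' is circular.

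Second, for (1)$\Rightarrow$(2): the case of inner automorphisms is elementary and you should say so explicitly --- if $G=HK$, write $g=hk$ and observe $HgK=H(hk)K=HK=G$, so $G=g_1Hg_1^{-1}\,g_2Kg_2^{-1}$ for all $g_1,g_2$. What is nontrivial is the case of outer automorphisms (and, for reductive $G$, permutations of isomorphic simple factors). Appealing to ``Onishchik's classification'' here is circular: Table~1 of the paper is recorded \emph{up to} independently replacing $H$ and $K$ by automorphic images, a normalization that presupposes precisely the invariance (1)$\Leftrightarrow$(2) you are trying to establish; moreover the table is for $G$ simple of rank $\geq 2$, whereas the statement is for arbitrary connected reductive $G$. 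The classification is a consequence of this structure theorem, not a tool available before it. So (1)$\Rightarrow$(2) for outer $\sigma,\tau$ remains unproved in your write-up.

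The minor steps are fine: (1)$\Rightarrow$(5) via the constant-rank (equivariance) argument and the observation that unipotent radicals contribute nothing to maximal compacts are correct, and the dimension count $\dim_\R(\lieh_0+\liek_0)\geq\dim_\R\lieg_0$ (once compatible compacts are available) does give $\lieg_0=\lieh_0+\liek_0$. But without an actual proof of the simultaneous-alignment lemma and of the outer-automorphism case, the proposal does not constitute a proof.
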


  \begin{corollary}
  Suppose that $G=HK$ where all the groups are connected algebraic. Choose Levi factors $L(G)\supset L(H)$, $L(K)$.  Then $L(G)=L(H)L(K)$.
  \end{corollary}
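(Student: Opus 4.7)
The plan is to reduce to the reductive case handled by the theorem by dividing out the unipotent radical of $G$. Let $\pi\colon G\to \bar G:=G/R_u(G)$ be the quotient map; then $\bar G$ is connected and reductive, and $\pi$ restricts to a group isomorphism $L(G)\to \bar G$. Since $\bar G=\pi(HK)=\pi(H)\pi(K)$, the equivalence (1)$\Leftrightarrow$(3) of the preceding theorem, applied inside the reductive group $\bar G$, yields
\[
\bar G=L(\pi(H))\,L(\pi(K)).
\]

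The main step will be to identify these Levi factors downstairs as images of $L(H)$ and $L(K)$, i.e.\ to show $\pi(L(H))=L(\pi(H))$ and $\pi(L(K))=L(\pi(K))$. The restriction $\pi|_{L(H)}$ has kernel $L(H)\cap R_u(G)$, which is a normal unipotent subgroup of the reductive group $L(H)$, hence trivial; so $\pi(L(H))$ is a reductive subgroup of $\pi(H)$. From $H=L(H)\ltimes R_u(H)$ we get $\pi(H)=\pi(L(H))\cdot\pi(R_u(H))$, and the intersection $\pi(L(H))\cap \pi(R_u(H))$ is simultaneously reductive and unipotent, hence trivial. Thus $\pi(L(H))$ is a Levi factor of $\pi(H)$, and the same reasoning applies to $K$.

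To finish, Mostow's conjugacy theorem for Levi factors lets us arrange (by conjugating $L(K)$ by a suitable element of $R_u(G)$) that $L(K)\subset L(G)$ as well; this adjustment leaves the conclusion to be proved unaffected. With both $L(H)$ and $L(K)$ inside $L(G)$, we have $L(H)\,L(K)\subset L(G)$, and applying $\pi$ gives
\[
\pi(L(H)\,L(K))=L(\pi(H))\,L(\pi(K))=\bar G.
\]
Because $\pi$ is a bijection on $L(G)$, any $g\in L(G)$ must coincide with the unique element of $L(H)\,L(K)\subset L(G)$ having the same image under $\pi$; hence $L(G)=L(H)\,L(K)$.

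The only real obstacle is the identification $\pi(L(H))=L(\pi(H))$ (and the analogue for $K$); once that is in hand, the corollary is a clean transfer of Onishchik's theorem from $\bar G$ to $L(G)$ via the Levi isomorphism $L(G)\cong \bar G$.
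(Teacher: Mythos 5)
The paper states this corollary without proof, leaving it as a routine consequence of the theorem; so there is no author's argument to compare against. Your reduction modulo $R_u(G)$ is the natural (and almost certainly intended) approach, and the overall structure of your proof is sound. Two points, however, should be repaired.

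First, the assertion that $\pi(L(H))\cap\pi(R_u(H))$ is ``simultaneously reductive and unipotent'' is not justified as written: reductivity does not pass to arbitrary subgroups, so you cannot conclude that this intersection is reductive merely from $\pi(L(H))$ being reductive. What actually makes the argument work is normality: $\pi(R_u(H))$ is normal in $\pi(H)$ (image of a normal subgroup under a surjection onto $\pi(H)$), so $\pi(L(H))\cap\pi(R_u(H))$ is a normal unipotent --- and over $\C$ therefore connected --- subgroup of the reductive group $\pi(L(H))$, hence trivial. Alternatively, one can argue directly: if $\pi(x)=\pi(y)$ with $x\in L(H)$ and $y\in R_u(H)$, then $xy^{-1}\in H\cap R_u(G)$, which is a normal unipotent subgroup of $H$ and hence lies in $R_u(H)$; so $x\in L(H)\cap R_u(H)=\{e\}$.

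Second, the appeal to Mostow conjugacy to arrange $L(K)\subset L(G)$ should be dropped: the hypothesis of the corollary already asks you to choose $L(G)$ containing both $L(H)$ and $L(K)$, so the step is superfluous. More importantly, the step as described is not legitimate in general --- conjugating $L(K)$ by an element $u\in R_u(G)$ produces a Levi factor of $uKu^{-1}$, not of $K$, unless $u$ happens to normalize $K$, and there is no reason for that. So if the containment $L(K)\subset L(G)$ were \emph{not} part of the hypothesis, you could not simply conjugate to obtain it. With these two fixes the proof is correct.
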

 
 Now assume that  $\lieh$ and $\liek$ are reductive   subalgebras of the reductive Lie algebra $\lieg$. Let $\lieg_s$ be the sum of the simple components of $\lieg$ of rank at least 2 (the \emph{strongly semisimple\/} part of $\lieg$). Let $G_s$ be the corresponding subgroup of $G$. Let  $r(\lieg)$ be the sum of the center and simple components of rank 1 of $\lieg$ so that $\lieg=\lieg_s\oplus r(\lieg)$.
 
 \begin{theorem}\label{thm:g=h+k}
 Let $\lieh$ and $\liek$ be reductive subalgebras of the reductive Lie algebra $\lieg$. Then the following are equivalent.
 \begin{enumerate}
\item $\lieg=\lieh+\lie k$.
\item $\lieg_s=\lieh_s+\lie k_s$ and $r(\lieg)$ is the sum of the projections of $r(\lieh)$ and $r(\lie k)$ to $r(\lieg)$.
 \end{enumerate}
 \end{theorem}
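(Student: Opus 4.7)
My plan is to prove $(2)\Rightarrow(1)$ by a direct argument and to handle $(1)\Rightarrow(2)$ by projecting onto the two summands of the canonical decomposition $\lieg = \lieg_s \oplus r(\lieg)$; write $p_1$ and $p_2$ for these projections. A structural remark I will use throughout is that the strongly semisimple part $\lie m_s$ of any reductive subalgebra $\lie m \subset \lieg$ lies inside $\lieg_s$: each simple summand of $\lie m_s$ is simple of rank $\geq 2$ and, being simple, must inject into one of $\lieg_s$ or $r(\lieg)$, but $r(\lieg)$ contains no simple subalgebra of rank $\geq 2$. In particular $p_2(\lie m) = p_2(r(\lie m))$ for every reductive $\lie m \subset \lieg$.

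For $(2)\Rightarrow(1)$ I take $X \in \lieg$ and write $X = X_s + X_r$ with $X_s \in \lieg_s$, $X_r \in r(\lieg)$. The first hypothesis of $(2)$ supplies $A \in \lieh_s \subset \lieh$ and $B \in \liek_s \subset \liek$ with $X_s = A+B$. The second gives $X_r = p_2(C) + p_2(D)$ with $C \in r(\lieh)$, $D \in r(\liek)$; since $p_2(C) = C - p_1(C)$ and $p_1(C) \in \lieg_s = \lieh_s + \liek_s \subset \lieh + \liek$, we obtain $p_2(C) \in \lieh + \liek$, and similarly $p_2(D) \in \lieh + \liek$. Hence $X \in \lieh + \liek$.

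For $(1)\Rightarrow(2)$, applying $p_2$ to $\lieg = \lieh + \liek$ immediately yields $r(\lieg) = p_2(r(\lieh)) + p_2(r(\liek))$, the second half of $(2)$. Applying $p_1$ gives $\lieg_s = \tilde\lieh + \tilde\liek$ with $\tilde\lieh := p_1(\lieh)$ and $\tilde\liek := p_1(\liek)$ reductive. The kernel of $p_1|_\lieh$ is an ideal of $\lieh$ contained in $r(\lieh)$ (since $\lieh_s$ injects), so $\tilde\lieh$ has the same strongly semisimple part as $\lieh$, namely $\lieh_s$; likewise $\tilde\liek_s = \liek_s$. The first half of $(2)$ thus reduces to the strongly semisimple assertion: whenever $\lieg_s$ is semisimple with every simple factor of rank $\geq 2$ and $\lieg_s = \tilde\lieh + \tilde\liek$ with $\tilde\lieh, \tilde\liek$ reductive, one already has $\lieg_s = \tilde\lieh_s + \tilde\liek_s$.

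This last reduction is the main obstacle. I would prove it by projecting the factorization onto each simple summand of $\lieg_s$ and invoking Onishchik's case-by-case classification of factorizations of simple Lie algebras \cite{Oncompact, Onreductive, Onbook}: inspection of the list shows that in every factorization of a simple Lie algebra of rank $\geq 2$ by reductive subalgebras, the central tori and rank-$1$ simple summands of the factors are unnecessary, so the factorization descends to the strongly semisimple parts; gluing the simple-component results yields the required $\lieg_s = \tilde\lieh_s + \tilde\liek_s$.
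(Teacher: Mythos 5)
The paper does not actually prove Theorem~\ref{thm:g=h+k}: the sentence immediately preceding it states that both this theorem and the one before it ``follow from \cite{Oncompact} and \cite{Onreductive},'' and no argument is supplied. So you are attempting more than the text offers. Your $(2)\Rightarrow(1)$ direction is correct and cleanly elementary, and your reduction of $(1)\Rightarrow(2)$ --- via the projections $p_1$, $p_2$, the fact that $\lie m_s\subset\lieg_s$ for any reductive $\lie m\subset\lieg$, and the observation that $p_1(\lieh)$ is reductive with strongly semisimple part exactly $\lieh_s$ --- is sound and correctly isolates the remaining ``strongly semisimple assertion.''

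The gap is in the last paragraph. Even granting (which does require citing the relevant reduction theorem in Onishchik, not merely ``inspection'' of Table 1, since that table lists only \emph{semisimple} factorizations) that every factorization of a simple Lie algebra of rank $\geq 2$ by reductive subalgebras descends to strongly semisimple parts, the ``gluing'' step does not follow formally. Projecting $\lieg_s = \tilde\lieh + \tilde\liek$ onto each simple factor $\lie s_i$ yields $\lie s_i = \pr_i(\tilde\lieh_s) + \pr_i(\tilde\liek_s)$, but coordinatewise surjectivity of a sum of two subspaces of $\bigoplus_i\lie s_i$ does not imply surjectivity of the sum itself. For instance, if $\tilde\lieh_s$ sits diagonally across two isomorphic factors $\lie s_1\oplus\lie s_2$, then $\pr_1(\tilde\lieh_s)$ and $\pr_2(\tilde\lieh_s)$ are already all of $\lie s_1$ and $\lie s_2$, so the factor-wise identities hold for any $\tilde\liek_s$ whatsoever and carry no information about whether $\tilde\lieh_s + \tilde\liek_s$ spans $\lieg_s$. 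One must use the original equality $\lieg_s = \tilde\lieh + \tilde\liek$ globally, not only its images in the simple factors; this reduction from semisimple to simple $\lieg_s$ is itself a nontrivial theorem of Onishchik, and as written you assert it rather than prove it.
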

 
 \begin{corollary}
Suppose that  $v\in V$ is not semi-characteristic and that $H$ contains a strongly semisimple subgroup. Then so does $G_v$.
 \end{corollary}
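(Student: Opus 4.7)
The strategy is to argue the contrapositive: assume $G_v$ contains no strongly semisimple subgroup, so that a Levi subalgebra $\lie k$ of $\Lie(G_v^0)$ satisfies $\lie k_s = 0$. Together with the hypothesis $\lieh_s \neq 0$, I aim to deduce that $v$ must be semi-characteristic.

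The first step is to produce a decomposition $\lie l = \lieh + \lie k$ inside a Levi subalgebra $\lie l$ of $\Lie(G^0)$ containing $\lieh$. The orbit equality $Gv = Hv$, combined with the connectedness of $Hv$, yields $G^0 v = Hv$ and hence $\Lie(G^0) = \lieh + \Lie(G_v^0)$. Applying the corollary to Onishchik's first theorem (matching of Levi factors in $G^0 = H\cdot G_v^0$), one obtains a Levi $L \supset H$ of $G^0$ and a Levi $K$ of $G_v^0$ with $L = HK$, and hence $\lie l = \lieh + \lie k$ with all three of $\lie l$, $\lieh$, $\lie k$ reductive. Theorem \ref{thm:g=h+k} then applies: clause~(2) gives $\lie l_s = \lieh_s + \lie k_s = \lieh_s$ (using $\lie k_s = 0$) and expresses $r(\lie l)$ as the sum of the projections of $r(\lieh)$ and $r(\lie k)$ to $r(\lie l)$.

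The final step is to upgrade these Lie-algebra equalities to the semi-characteristic conclusion, namely that $G^0 = L$ (so that $G^0$ is reductive) and $\lie l = \lieh \oplus \lie t$ for some central torus $\lie t$. Two obstructions must be ruled out: a rank-$1$ simple factor of $\lie l$ not contained in $\lieh$, and a nontrivial unipotent radical of $G^0$. The hypothesis $\lieh_s \neq 0$ enters essentially here: a strongly semisimple ideal of $\lieh$ acts by $\ad$ on the ambient Lie algebra $\Lie(G)$, and its invariants in both the unipotent radical and the rank-$\leq 1$ complement of $\lieh_s$ in $\lie l$ are tightly constrained by the second part of clause~(2) when $\lie k_s = 0$. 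I expect this rigidity argument --- showing that both residual obstructions are incompatible with $\lie k_s = 0$ in the presence of $\lieh_s \neq 0$ --- to be the main technical hurdle; once it is in place, $\lie l = \lieh \oplus \lie t$ and $G^0 = L$ follow, completing the contrapositive.
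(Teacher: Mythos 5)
Your framework --- argue the contrapositive, pass to Levi factors via Onishchik's corollary to get $\lie l = \lieh + \lie k$ inside a Levi $\lie l$ of $\Lie(G^0)$, then apply Theorem~\ref{thm:g=h+k}(2) to conclude $\lie l_s = \lieh_s$ when $\lie k_s = 0$ --- is exactly the right reduction, and up to that point the argument is sound.

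The problem is that you never actually carry out what you yourself identify as the decisive step. You write that two obstructions must be ruled out (a rank-$1$ simple factor of $\lie l$ outside $\lieh$, and a nontrivial unipotent radical of $G^0$) and then defer them as ``the main technical hurdle,'' with only a vague gesture toward ``rigidity'' coming from $\lieh_s \neq 0$ and the $r(\cdot)$ clause of Theorem~\ref{thm:g=h+k}(2). But $\lieh_s \neq 0$ plus $\lie k_s = 0$ plus that clause does not by itself kill either obstruction: it is perfectly consistent for $\lie l$ to acquire an extra rank-$1$ simple factor supplied by $\lie k$, or for $G^0$ to have nontrivial unipotent radical. So the proposal identifies the gap but does not close it.

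What actually closes it in the paper's context (Section~\ref{sec:simple}: $H$ simple of rank at least $2$ and $V$ an irreducible $H$-module) is Schur's lemma. Irreducibility of $V$ makes $G^0$ act irreducibly, hence $G^0$ is reductive and there is no unipotent radical to worry about; and once $\lie k_s = 0$ forces $\lieg^0_s = \lieh_s = \lieh$, the complementary ideal $r(\lieg^0)$ commutes with $\lieh$, hence acts on $V$ by scalars, hence is abelian and central --- no rank-$1$ factors survive. So $G^0 = H\cdot T$ and $v$ is semi-characteristic. That is the ingredient your proof needs and does not supply; without it (e.g.\ for reducible $V$ with $H$ merely containing a rank-$\ge 2$ simple factor, as the bare wording allows) the final step would not go through, because $G^0$ can genuinely pick up an extra $\SL_2$ with $G_v$ meeting it only in a diagonal rank-$1$ subgroup.
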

 
From the above and
 \cite{Oncompact} we have the following

 \begin{theorem}
 Let $G$ be connected, simple and simply connected of rank at least $2$. Let  $H$ and $K$ be connected semisimple subgroups of $G$ such that $G=HK$. Then, up to switching the roles of $H$ and $K$ and replacing  each of them by their image under an automorphism of $G$, all possibilities are listed in Table 1.
 \end{theorem}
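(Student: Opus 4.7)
The plan is to reduce the classification to a statement about compact real forms and then use topological/numerical invariants to cut the problem down to a finite check. By the preceding theorem of Onishchik, the equation $G=HK$ is equivalent to $\lieg = \lieh + \lie k$, which in turn is equivalent to the decomposition $G_0 = H_0K_0$ of a maximal compact subgroup of $G$ as a product of maximal compact subgroups of $H$ and $K$. Replacing $G, H, K$ by their compact real forms $G_0, H_0, K_0$ is harmless for the classification, and working in the compact category gives access to Lie-theoretic topology.

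The first concrete step is to exploit the homeomorphism $G_0/K_0 \cong H_0/(H_0\cap K_0)$ induced by the product decomposition. Since both are compact manifolds of the same dimension, one gets immediately $\dim \lieh + \dim\liek - \dim(\lieh\cap\liek) = \dim \lieg$, and more importantly the Poincaré polynomials (and Euler characteristics, via rank conditions) of the two homogeneous spaces must coincide. For compact connected groups the Poincaré polynomial of a homogeneous space can be written in closed form in terms of the exponents of the groups involved, so one obtains strong divisibility and parity constraints on the possible triples $(\lieg, \lieh, \liek)$. In particular, when $\lieg$ is simple the Poincaré polynomial of $G_0$ is $\prod (1+t^{2m_i+1})$ in the exponents $m_i$ of $\lieg$, and matching these with the corresponding products for $H_0$, $K_0$, and $H_0\cap K_0$ rules out most numerical possibilities.

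The second step is to reduce to \emph{primitive} factorizations, meaning we may enlarge each of $\lieh, \liek$ to a maximal connected proper reductive subalgebra of $\lieg$ (replacing $K$ by a larger group only makes $G=HK$ easier, and a factorization of $G$ by two maximal subgroups controls all refinements, as one sees via Theorem \ref{thm:g=h+k} applied to intersections). At this point Dynkin's classification of maximal connected subgroups of simple complex Lie algebras supplies a short explicit list of candidate pairs $(\lieh, \liek)$ for each simple $\lieg$ of rank at least 2. Feeding these candidates into the Poincaré-polynomial and dimension constraints leaves only a handful of surviving pairs, and for each survivor one must exhibit an explicit factorization (equivalently, verify $\lieg = \lieh + \lie k$ by a direct computation in a root-space decomposition).

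The main obstacle is the combinatorial case analysis, particularly for the exceptional groups, where Dynkin's list of maximal subgroups is long and where the numerical constraints from Poincaré polynomials become delicate; here one typically has to invoke fine information such as equality of ranks to force $\chi(G_0/K_0)\neq 0$ and thus pin down the embeddings. A secondary subtle point is that outer automorphisms of $G$ (and of $H$ and $K$) may move one factorization to another, so the final list in Table 1 is given only up to the ambiguity recorded in the statement; in practice one handles this by fixing a Cartan and Borel at the outset and classifying $(\lieh, \liek)$ up to $\Aut(\lieg)$, which by inspection of the Dynkin diagrams amounts to the stated identifications. Once the primitive list is established, non-primitive factorizations are recovered by descending along chains of reductive subgroups using Theorem \ref{thm:g=h+k}.
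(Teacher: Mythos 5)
The paper does not prove this theorem. It appears immediately after the two Onishchik-type equivalence theorems with the single line ``From the above and \cite{Oncompact} we have the following,'' so the entire proof is a citation to Onishchik's classification of factorizations of compact (equivalently, complex reductive) Lie groups. Your proposal reconstructs, at a reasonable level of fidelity, the strategy that Onishchik actually used in the cited papers: pass to compact real forms via the equivalence $G=HK \iff G_0=H_0K_0$, exploit the diffeomorphism $G_0/K_0\cong H_0/(H_0\cap K_0)$ (equivalently, $G_0\cong (H_0\times K_0)/\Delta(H_0\cap K_0)$) to impose cohomological and rank constraints, reduce to maximal (primitive) pairs via Dynkin's classification of maximal subgroups, filter the candidates numerically, and verify the survivors. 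So you are following the approach of the \emph{cited reference} rather than anything internal to the paper, which is the only sensible move here.

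Two places where the sketch is imprecise and would need to be tightened. First, the Poincar\'e polynomial constraint is not literally about closed forms for $P(G_0/K_0)$ in terms of exponents; what Onishchik actually uses is the multiplicativity relation $P(G_0)\,P(H_0\cap K_0)=P(H_0)\,P(K_0)$ (coming from degeneration of the relevant spectral sequence for the bundle $H_0\cap K_0\to H_0\times K_0\to G_0$) together with the additivity $\rank G_0 + \rank(H_0\cap K_0) = \rank H_0 + \rank K_0$ and a comparison of the multisets of exponents. Your formula $\prod(1+t^{2m_i+1})$ is the Poincar\'e polynomial of the group $G_0$ itself, not of the quotient, and the statement ``one obtains strong divisibility and parity constraints'' needs to be replaced by the precise multiplicative identity above. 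Second, the reduction to primitive pairs is correct in spirit, but descending from $G=HK'$ with $K'$ maximal to a general reductive $K\subset K'$ with $G=HK$ is a genuine recursive subproblem: you must re-run the factorization analysis inside $K'$ for the pair $(H\cap K', K)$, and Theorem 4.4 of the paper only controls the rank-1 and central directions, not the strongly semisimple core. These are both fixable and are in fact handled in Onishchik's papers, so the architecture of your sketch is sound; but as written the numerical invariants you invoke are not quite the ones that do the work.
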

  \begin{table}
\caption{}
 \begin{center}   
 \begin{tabular}{| l  | c | c | c | c | c | c |}
 \hline
& $G$ & $H$ & $\phi_1(G)|H$ & $K$ & $\phi_1(G)|K$ & $H\cap K$\\ \hline   

1& $\AA_{2n-1}$  & $\CC_n$ & $\phi_1$ & $\AA_{2n-2}$ & $\phi_1+\theta_1$ & $\CC_{n-1}$ \\ \hline
2 & $\DD_{n+1}$  & $\BB_n$ & $\phi_1+\theta_1$ & $\AA_n$ & $\phi_1+\phi_n$ & $\AA_{n-1}$\\ \hline
3.1 & $\DD_{2n}$  & $\BB_{2n-1}$ & $\phi_1+\theta_1$ & $\CC_n$ & $2\phi_1$ & $\CC_{n-1}$\\ \hline
3.2 & $\DD_{2n}$  & $\BB_{2n-1}$ & $\phi_1+\theta_1$ & $\CC_n\times\AA_1$ & $\phi_1\otimes\phi_1$ & $\CC_{n-1}\times\AA_1$\\ 
\hline
4.1 & $\BB_3$ & $\GG_2$ & $\phi_2$ & $\BB_2$ & $\phi_1+\theta_2$ & $\AA_1$ \\ \hline
4.2& $\BB_3$ & $\GG_2$ & $\phi_2$ & $\DD_3$ & $\phi_1+\theta_1$ & $\AA_2$ \\ \hline
5.1 & $\DD_4$ & $\BB_3$ & $\phi_3$ & $\BB_2$ & $\phi_1+\theta_3$ & $\AA_1$\\ \hline
5.2 & $\DD_4$ & $\BB_3$ & $\phi_3$ & $\BB_2\times\AA_1$ & $\phi_1+\phi_1^2$ & $\AA_1\times\AA_1$\\ 
\hline
5.3 & $\DD_4$ & $\BB_3$ & $\phi_3$ & $\DD_3$ & $\phi_1+\theta_2$ & $\AA_2$\\ \hline
5.4 & $\DD_4$ & $\BB_3$ & $\phi_3$ & $\BB_3$ & $\phi_1+\theta_1$ & $\GG_2$\\ \hline
6 & $\DD_8$ & $\BB_7$ & $\phi_1+\theta_1$ & $\BB_4$ & $\phi_4$ & $\BB_3$\\ \hline
 \end{tabular}
\end{center}
\end{table}

In our tables, we always have $n>1$ and $k\geq 1$. We use $\theta_k$ to denote a trivial representation of dimension $k$. Corresponding to an ordering of the simple roots of  $G$ we have fundamental representations $\phi_i=\phi_i(G)$, $i=1,\dots,\rank G$. We use the ordering of the roots of the   simple groups   of Dynkin \cite{Dynmax}.  Note that entries (5.1), (5.2) and (5.3) of Table 1 are special cases of (3.1), (3.2) and (2), but we have included them for completeness.

 \begin{corollary}\label{cor:rank1}
Let $(G,H,K)$ be a triple in Table 1. 
\begin{enumerate}
\item  If  $L\subset G$ is a reductive subgroup commuting with $H$ or $K$, then $L$ has rank at most 1.
\item We  have $G=H_sK_s$ where $K_s$ and $H_s$ are simple.
\end{enumerate}
\end{corollary}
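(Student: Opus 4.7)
The plan is to establish both parts by direct inspection of Table~1.

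For part~(2), I would simply read off the table: $H$ is simple in every row. The group $K$ is also simple except in rows~3.2 and~5.2, where $K=\CC_n\times\AA_1$ and $K=\BB_2\times\AA_1$ respectively. In those two exceptional cases I would invoke rows~3.1 and~5.1, which already give the refined decompositions $G=\BB_{2n-1}\cdot\CC_n$ and $G=\BB_3\cdot\BB_2$. Hence, taking $H_s=H$ and $K_s$ to be either $K$ itself or the non-$\AA_1$ simple factor of $K$ in the two exceptional rows, we obtain $G=H_sK_s$ with both factors simple.

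For part~(1), let $L\subset G$ be reductive and centralize $H$ (the case of $K$ is identical). Since $L\subset Z_G(H)$, it suffices to bound $\rank Z_G(H)^0$ by one. The key reduction is that $Z_G(H)\subset Z_{\GL(V)}(H)\cap G$ for $V:=\phi_1(G)$, and $Z_{\GL(V)}(H)^0=\prod_i\GL(n_i)$, where the multiplicities come from the isotypic decomposition $V|H=\bigoplus_i n_iV_i$ recorded in the column $\phi_1(G)|H$ of the table. To compute the intersection with $G$, I would use the $G$-invariant form: for $G$ of type $\AA$ one imposes only $\det=1$; for orthogonal or symplectic $G$, the form pairs dual (but distinct) isotypics in pairs, each contributing a single $\GL_1$, while on each self-dual isotypic it restricts to a symmetric or antisymmetric form on the multiplicity space, giving an $\SO$ or $\Sp$ factor.

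Applying this reduction row by row, the rank-one factor arises only in a handful of cases: row~1 ($K$), where $\phi_1|K=\phi_1+\theta_1$ produces a $\GL_1\subset\SL_{2n}$; row~2 ($K$), where $\phi_1+\phi_n$ is a pair of dual distinct irreducibles giving a $\GL_1\subset\SO_{2n+2}$; row~3.1 ($K$), where $\phi_1|K=2\phi_1(\CC_n)$ is a symplectic-type irreducible of multiplicity two, so the orthogonal form factors as a product of symplectic forms and the multiplicity-space centralizer is $\Sp_2=\SL_2$; and rows~4.1,~5.1,~5.3 ($K$), where a trivial summand $\theta_k$ of dimension $k=2$ or $3$ produces an $\SO_k$. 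In every other row the multiplicity spaces are one-dimensional and the invariant form pins the centralizer down to a finite extension of a rank-zero group. The only point requiring genuine care is row~3.1, where one must verify that the orthogonal form on $V$ combined with the symplectic structure on $\phi_1(\CC_n)$ forces the induced form on the two-dimensional multiplicity space to be antisymmetric, yielding $\Sp_2$ rather than $\SO_2$; the remaining entries are immediate once the isotypic decomposition is written down.
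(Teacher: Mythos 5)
Your proof is correct, and it is essentially the argument the paper intends: the corollary is presented without proof as an immediate observation from Table~1, and your row-by-row inspection of the centralizers $Z_G(H)$ and $Z_G(K)$ via the isotypic decompositions in the $\phi_1(G)|H$ and $\phi_1(G)|K$ columns is exactly the right way to fill in the details. The handling of part~(2) (reducing the two non-simple $K$'s in rows~3.2 and~5.2 to the refined decompositions in rows~3.1 and~5.1) and the identification of the rank-one cases in part~(1) are both accurate; as you note, the $\Sp_2$-versus-$\SO_2$ distinction in row~3.1 is immaterial since both have rank one.
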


Now that we know the possibilities for $G$, $H$ and $K$, our task is  to find the  irreducible representations of $G$ which remain irreducible when restricted to $H$. This can be read off from   \cite[Table 5]{Dynmax}. However, given that one knows the possibilities for $(G,H,K)$, it is relatively easy to see which irreducible representations of $G$ are possible.  Note that we can sometimes gain an irreducible representation by adding a group of rank 1 to $H$ (Table 2(3.5)).

\begin{theorem}
Let $G=G_s$ be simple and let $H$ and $K=K_s$ be proper semisimple subgroups of $G$ such that   $G=HK$. Assume that $V$ is an irreducible representation of $G$ which is also irreducible when restricted to $H$. Then, up to automorphisms of $G$, all possibilities are listed in Table 2.
\end{theorem}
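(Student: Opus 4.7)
The plan is a case-by-case analysis of the eleven triples $(G,H,K)$ listed in Table 1. For each triple, the task is to determine all irreducible $G$-modules $V$ whose restriction $V|_H$ remains irreducible as an $H$-module. The primary tool is the theory of branching rules: given a dominant weight $\lambda$ of $G$, one combinatorially computes the $H$-weight multiplicities in $V_\lambda|_H$ and detects irreducibility.

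The first step is to consult Dynkin's Table 5 in \cite{Dynmax}, which essentially contains the classification needed: for each semisimple subgroup $H$ of a simple $G$, Dynkin lists the irreducible $G$-modules that restrict irreducibly to $H$. Since Table 1 already restricts the possible pairs $(G,H)$ to those admitting a semisimple complement $K$, most of the remaining work is verification. In the classical rows of Table 1 --- entries 1, 2, 3.1, 3.2 and 4.2, where $H$ sits inside $G$ by a standard classical embedding --- the branching of fundamental $G$-modules is well known and one reads off directly which $\phi_i(G)|_H$ remain irreducible. In the exceptional rows, namely 4.1 ($\GG_2 \subset \BB_3$), 5.1--5.4 (the spin-type embedding $\BB_3 \subset \DD_4$ related by triality) and 6 ($\BB_4 \subset \DD_8$), the required branching rules are more delicate; I would prove them in the appendix using character formulas together with the explicit constructions of the $7$-dimensional $\GG_2$-module and the half-spin representations.

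Completeness is then argued by a combination of two constraints. First, reducibility of $V_{\phi_i}(G)|_H$ for some fundamental weight $\phi_i$ imposes strong constraints on which larger $V_\lambda(G)|_H$ can remain irreducible, since $V_\lambda(G)$ is realized as the Cartan component of tensor products of fundamentals and the restriction to $H$ inherits reducible subquotients whose highest weights cannot all be absorbed into a single irreducible $H$-module. Second, the Weyl dimension formula grows much faster in $\lambda$ for $G$ than the corresponding $H$-dimensions can keep up with, so only $\lambda$ with small coefficients on the fundamental weights remain viable. Direct verification on this short finite list completes the classification for $H$ itself. Finally, one accounts for the situation in which $H$ is enlarged by a commuting rank-$1$ group --- producing an entry such as (3.5) of Table 2: Corollary \ref{cor:rank1} bounds the commuting reductive subgroup of $G$ to rank at most one, and each such enlargement is examined individually to catch the additional irreducible restrictions it makes possible.

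The main obstacle is precisely the exceptional cases: the branching rules for the spin embeddings $\BB_3 \subset \DD_4$ and $\BB_4 \subset \DD_8$, and for $\GG_2 \subset \BB_3$, are not immediate from the standard classical tables, and deriving them --- while keeping normalizations of fundamental weights and the identification of half-spin representations consistent under triality --- is the most delicate portion of the argument. Once these branching rules are in hand, however, the final classification is essentially forced by the combinatorial structure of Table 1.
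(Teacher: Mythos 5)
Your proposal matches the paper's approach: the paper does not give an explicit proof of this theorem but simply observes that the classification "can be read off from \cite[Table 5]{Dynmax}" after intersecting with the list of admissible $(G,H,K)$ from Table 1, and relegates the branching-rule verification of the $V|K$ and $V^K$ columns to the appendix. Your additional completeness argument via the Weyl dimension formula is not needed once one invokes Dynkin's table, but it does not detract from the correctness of the plan.
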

 
 \begin{table}
\caption{}
\begin{center}
\begin{tabular}{| c | c | c | c | c | c | c | c |} \hline
& $G$ & $V$ & $H$ & $V|H$ &  $K$ & $V|K$ & $V^K$ \\ \hline 
1 & $\AA_{2n-1}$ & $\phi_1^k$ & $\CC_n$ & $\phi_1^k$ & $\AA_{2n-2}$ & $\phi_1^k+\phi_1^{k-1}+\dots+\theta_1$ & $\theta_1$ \\ \hline
2.1 & $\DD_{2n+1}$  & $\phi_{2n}^k$  & $\BB_{2n}$ & $\phi_{2n}^k$ & $\AA_{2n}$ & $\ss^k(\phi_1 +\phi_3 +\dots+\phi_{2n-1} +\theta_1)$ & $\theta_1$ \\ \hline
2.2 & $\DD_{2n+1}$  & $\phi_{2n+1}^k$  & $\BB_{2n}$ & $\phi_{2n}^k$ & $\AA_{2n}$ & $\ss^k(\phi_2 +\phi_4 +\dots+\phi_{2n} +\theta_1)$ & $\theta_1$\\ \hline
3.1 & $\DD_{2n}$  & $\phi_{2n-1}^k $  & $\BB_{2n-1}$ & $\phi_{2n-1}^k$ & $\AA_{2n-1}$ & $\ss^k(\phi_2 +\phi_4 +\dots+\phi_{2n-2} +\theta_2)$ & $S^k(\C^2)$\\ \hline
3.2 & $\DD_{2n}$ & $\phi_{2n-1}^k $  & $\BB_{2n-1}$ & $\phi_{2n-1}^k$ & $\CC_n$ & $*$ & $S^k(\C^{n+1})$\\ \hline
3.3 & $\DD_{2n}$  & $\phi_{2n}^k $  & $\BB_{2n-1}$ & $\phi_{2n-1}^k$ & $\AA_{2n-1}$ & $\ss^k(\phi_1 +\phi_3 +\dots+\phi_{2n-1})$ & $(0)$\\ \hline
3.4 & $\DD_{2n}$  & $\phi_{2n}^k $  & $\BB_{2n-1}$ & $\phi_{2n-1}^k$ & $\CC_n$ & $*$ & $(0)$\\ \hline
3.5 & $\DD_{2n}$  & $\phi_1 $  & $\CC_n\times\AA_1$ & $\phi_1\otimes\phi_1$ & $\BB_{2n-1}$ & $\phi_1+\theta_1$ & $\theta_1$\\ \hline
4.1 & $\BB_3$ & $\phi_1^k$ & $\GG_2$ & $\phi_2^k$ & $\BB_2$ & $\ss^k(\phi_1+\theta_2)$& $S^k(\C^2)$\\ \hline
4.2 & $\BB_3$ & $\phi_1^k$ & $\GG_2$ & $\phi_2^k$ & $\DD_3$ & $\ss^k(\phi_1+\theta_1)$& $\theta_1$\\ \hline
5.1 & $\DD_4$ & $\phi_1^k$ & $\BB_3$ & $\phi_3^k$ & $\BB_2$ & $\ss^k(\phi_1+\theta_3)$ & $S^k(\C^3)$\\ \hline
5.2 & $\DD_4$ & $\phi_1^k$ & $\BB_3$ & $\phi_3^k$ & $\DD_3$ & $\ss^k(\phi_1+\theta_2)$ & $S^k(\C^2)$\\ \hline
5.3 & $\DD_4$ & $\phi_1^k$ & $\BB_3$ & $\phi_3^k$ & $\BB_3$ & $\ss^k(\phi_1+\theta_1)$ & $\theta_1$\\ \hline
5.4 & $\DD_4$ & $\phi_1$ & $\CC_2\times\AA_1$ & $\phi_1\otimes\phi_1$ & $\BB_3$ & $\phi_1+\theta_1$ & $\theta_1$\\ \hline
6.1 & $\DD_8$ & $\phi_1$ & $\BB_4$ & $\phi_4$ & $\BB_7$ & $\phi_1+\theta_1$ & $\theta_1$ \\ \hline
6.2 & $\DD_8$ & $\phi_7$ & $\BB_4$ & $\phi_1\phi_4$ & $\BB_7$ & $\phi_7$  &$(0)$ \\ \hline
6.3 & $\DD_8$ & $\phi_7^k$ & $\BB_7$ & $\phi_7^k$ & $\BB_4$ & $*$ & $S(f_4)_k$\\ \hline
6.4 & $\DD_8$ & $\phi_8^k$ & $\BB_7$ & $\phi_7^k$ & $\BB_4$ & $*$  &$S(f_2,f_3)_k$\\ \hline

\end{tabular}
\end{center}
\end{table}

In Table 2, if $V$ is a $K$-module, then $\ss^k(V)$ denotes the $K$-subspace of $S^k(V)$ generated by $S^k(V^U)$ where $U$ is a maximal unipotent subgroup of $K$. In other words, in $\ss^k(V)$ we take only the Cartan components of products. In column $V^K$ the notation $S(f_2,f_3)_k$ means the span of the monomials in $f_2$ and $f_3$ of degree $k$ where $f_i$ has degree $i$, $i=2$, $3$. Here the $f_i\in\C[\phi_8(D_8)]^{B_4}$. A similar  interpretation applies to  $S(f_4)_k$ where $f_4\in \C[\phi_7(D_8)]^{B_4}$ has degree $4$. The justification of the entries   $V|K$ and $V^K$ can be found in the Appendix. 

\begin{remark} In Table 2, we have chosen not to remove all redundancies  due to automorphisms of groups of type $D_n$.
In column $V|K$ we have omitted the decompositions in (3.2) and (3.4) which are obtained by restricting $V$ to $\CC_n$. To determine this one needs the branching rule for restrictions of $\SL_{2n}$-representations to $\CC_n$. As determined by Weyl \cite{Weyl}, one proceeds as follows. Let $\omega\in\bigwedge^2((\C^{2n})^*)$ be nonzero and $\CC_n$-invariant. Then from the exterior powers of $\omega$ we obtain invariants in the duals of $\phi_2^{b_2}\dots\phi_{2n-2}^{b_{2n-2}}$ for nonnegative $b_j$. Then the restriction of an irreducible representation $\phi$ of $\SL_{2n}$ to $\CC_n$ is obtained by taking all possible complete  contractions of the duals of our invariants in the $\phi_2^{b_2}\dots\phi_{2n-2}^{b_{2n-2}}$ with $\phi$. For example, $\phi_1\phi_5(\SL_8)$ contracted with $\omega$ gives rise to $\phi_4$ and $\phi_1\phi_3$ while contraction with $\omega\wedge\omega\in\wedge^4(\C^{8})^*$  gives rise to $\phi_1^2$ and $\phi_2$. Note that the only representations of $\SL_{2n}$ which can give rise to the trivial representation of $\CC_n$ are those of the form $\phi_2^{a_2}\dots\phi_{2n-2}^{a_{2n-2}}$. Hence in Table 2 (3.4) the trivial $\CC_n$-representation does not occur in the column $V|K$ for any $k\geq 1$ and in (3.2) the occurrences of the trivial representation are the symmetric algebra in $\theta_2$ and the subspaces $\phi_2^{\CC_n},\dots,\phi_{2n-2}^{\CC_n}$.
\end{remark}
\begin{remark}
We do not know what to put in the column $V|K$ in cases (6.3) and (6.4) of Table 2. However, in the Appendix we are able to compute $V^K$. 
\end{remark}

Suppose that $H\subset G$ where $G$ and $H$ are  semisimple, and  connected  and   $V$ is a $G$-module which is irreducible as an $H$-module. Then the inclusion of $H$ in $G$ has a very special form, as shown by Dynkin \cite[Theorem 2.2]{Dynmax}.

\begin{theorem}\label{thm:irred}
Let $G_1,\dots,G_k$ be the simple components of  $G$. Then $H=H_1\cdots  H_k$ where the $H_i$ are nontrivial semisimple subgroups of the $G_i$, $i=1,\dots, k$.
\end{theorem}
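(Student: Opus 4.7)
The plan is to combine the tensor decomposition of $V$ as a $G$-module with a Goursat-type argument at the Lie algebra level.

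First I would use $G=G_1\times\dots\times G_k$ to decompose $V=V_1\otimes\dots\otimes V_k$ with each $V_i$ an irreducible $G_i$-module; after discarding any $G_i$ that acts trivially on $V$, we may assume each $V_i$ is nontrivial. Let $\pi_i\colon G\to G_i$ denote the projection and set $H_i:=\pi_i(H)$, so each $H_i$ is a semisimple subgroup of $G_i$ (image of the semisimple $H$ under a morphism of algebraic groups) and $H\subseteq H':=H_1\times\dots\times H_k$. Since $V$ is $H$-irreducible it is a fortiori $H'$-irreducible; because $H'$ respects the tensor decomposition, this forces each $V_i$ to be irreducible as an $H_i$-module, and in particular every $H_i$ to be nontrivial (otherwise $V_i$ would be a nontrivial irreducible of the trivial group).

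It remains to show $H=H'$, which (both groups being connected) reduces to the Lie algebra equality $\lieh=\lieh'=\lieh_1\oplus\dots\oplus\lieh_k$. I would decompose $\lieh=\bigoplus_j\lie t_j$ and $\lieh_i=\bigoplus_s\lie s_i^{(s)}$ into simple ideals; since each $\lie t_j$ is simple, its image in any $\lieh_i$ is either zero or maps isomorphically onto one simple ideal of $\lieh_i$. Thus $\lieh=\lieh'$ is equivalent to the statement that every $\lie t_j$ projects nontrivially to exactly one $\lieh_i$. Since $\lieg_i$ acts faithfully on the nontrivial irreducible $V_i$, so does $\lieh_i\subseteq\lieg_i$, and we may refine the tensor decomposition to $V_i=\bigotimes_s V_i^{(s)}$ with each $V_i^{(s)}$ a nontrivial irreducible $\lie s_i^{(s)}$-module.

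Finally, I would derive a contradiction from a hypothetical ``diagonal'' simple factor. Suppose some $\lie t_j$ projects nontrivially to both $\lieh_a$ and $\lieh_b$ with $a\neq b$, onto $\lie s_a^{(s_1)}$ and $\lie s_b^{(s_2)}$ respectively. Then $\lie t_j$ acts on $V_a^{(s_1)}\otimes V_b^{(s_2)}$ through the diagonal embedding, and both tensor factors are nontrivial irreducibles of the simple Lie algebra $\lie t_j$. By a classical fact about tensor products of irreducible representations of a simple Lie algebra (the Cartan component $V_{\lambda+\mu}$ appears with multiplicity one, and the strict inequality $\dim V_\lambda\cdot\dim V_\mu>\dim V_{\lambda+\mu}$ for nonzero dominant $\lambda,\mu$ in a simple root system forces additional summands of distinct highest weight), the product $V_a^{(s_1)}\otimes V_b^{(s_2)}$ has at least two distinct $\lie t_j$-isotypic components. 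This induces a nontrivial $\lie t_j$-isotypic decomposition of all of $V$; since the other simple ideals $\lie t_{j'}\subseteq\lieh$ commute with $\lie t_j$ they preserve it, producing an $\lieh$-stable decomposition of $V$ into more than one summand and contradicting $H$-irreducibility. Hence $\lieh=\lieh'$ and $H=H_1\cdots H_k$. The main obstacle is exactly this final representation-theoretic input, which is what rules out any ``diagonal'' identification of simple factors across different $\lieh_i$'s.
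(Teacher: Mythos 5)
The paper does not prove this theorem; it simply cites Dynkin \cite[Theorem 2.2]{Dynmax}, so there is no in-paper argument to compare against. Your self-contained proof is correct and is essentially the standard argument for Dynkin's result.

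Two small points are worth flagging. First, as stated in the paper the theorem implicitly assumes $V$ is an almost faithful $G$-module (this is made explicit in \S 6); your opening move of ``discarding any $G_i$ that acts trivially on $V$'' is exactly the right reading, since otherwise the claimed nontriviality of $H_i$ and the decomposition $H=H_1\cdots H_k$ can both fail (a diagonal factor of $\lieh$ could sit across a trivially acting $\lieg_a$ and some $\lieg_b$ without affecting $H$-irreducibility). Second, the last step passes from ``$V_a^{(s_1)}\otimes V_b^{(s_2)}$ has at least two distinct $\lie t_j$-isotypic components'' to ``$V$ has a nontrivial $\lie t_j$-isotypic decomposition,'' and you should note that this requires one further (easy) observation: as a $\lie t_j$-module $V\cong (V_a^{(s_1)}\otimes V_b^{(s_2)})\otimes R$ for some auxiliary $\lie t_j$-module $R$, and if $V_{\nu_1}$ and $V_{\nu_2}$ ($\nu_1\neq\nu_2$) both occur in the first factor while $V_{\kappa_0}$ occurs in $R$, then the Cartan components $V_{\nu_1+\kappa_0}$ and $V_{\nu_2+\kappa_0}$ both occur in $V$ and are distinct, so $V$ is indeed not $\lie t_j$-isotypic. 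With that spelled out, the argument is airtight: a simple ideal of $\lieh$ cannot project nontrivially to two distinct $\lieg_i$, so $\lieh=\bigoplus_i\pi_i(\lieh)$, which is the assertion. The only nontrivial input is the classical fact you invoke, that $V_\lambda\otimes V_\mu$ is reducible for a simple Lie algebra when $\lambda,\mu\neq0$; the strict inequality $\dim V_\lambda\cdot\dim V_\mu>\dim V_{\lambda+\mu}$ follows from Weyl's dimension formula because $(\lambda,\theta)(\mu,\theta)>0$ for the highest root $\theta$, and this is precisely where simplicity (rather than mere semisimplicity) of the root system is used.
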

 
 We are interested in the case that $H$ is simple. Then Theorem \ref{thm:irred} tells us that $G$ has to be simple and from Table 2 we get the following theorem.

\begin{theorem} \label{thm:simple}Let $H$ be simple of rank at least two and let $\phi\colon H\to\GL(V)$ be an irreducible representation. Then every  nonzero $H$-orbit $Hv$  is semi-characteristic, except for the following cases (where   $n\geq 2$ and $k\geq 1$).
\begin{enumerate}
\item $H=\CC_n$,  $\phi=\phi_1^k$ and $v$ is a highest weight vector. Equivalently, $v$ is fixed by $\AA_{2n-2}$ where $\AA_{2n-2}$, $\CC_n$,  $\AA_{2n-1}$ and $V$ are as in Tables  1(1) and 2(1).
\item $H=\BB_{2n}$,   $\phi=\phi_{2n}^k$ and $v$ is a highest weight vector. Equivalently, $v$ is fixed by $\AA_{2n}$ where $\AA_{2n}$, $\BB_{2n}$, $\DD_{2n+1}$ and $V$ are as in Tables   1(2) and  2(2.1) or 2(2.2).
\item $H=\BB_{2n-1}$,  $\phi=\phi_{2n-1}^k$ and $v$ is fixed by $\CC_n$ where $\CC_n$, $\BB_{2n-1}$, $\DD_{2n}$ and $V$ are as in Tables  1(3.1) and   2(3.2).
\item $H=\GG_2$, $\phi=\phi_2^k$ and $v$ is fixed by $\BB_2$ where $\BB_2$, $\GG_2$, $\BB_3$ and $V$ are as in Tables  1(4.1) and   2(4.1).
\item $H=\BB_4$, $\phi=\phi_4$ and $H  v$ is closed. Equivalently, $v$ is fixed by $\BB_7$ where $\BB_4$, $\BB_7$, $\DD_8$ and $V$ are as in   Tables 1(6) and  2(6.1).
\item $H=\BB_7$, $\phi=\phi_7^k$ and $v$ is fixed by $\BB_4$ where $\BB_4$, $\BB_7$, $\DD_8$ and $V$ are as in Tables  1(6) and 2(6.3) or 2(6.4).   
\end{enumerate}
\end{theorem}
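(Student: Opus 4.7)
The plan is to deduce the theorem from the Onishchik and Dynkin classifications summarized in Tables~1 and~2, then translate each surviving entry into the intrinsic description demanded in the statement.

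First I would reduce to the case where the semisimple part of $G^0$ is simple. Because $V$ is irreducible as an $H$-module, it is irreducible as a $G^0$-module, so $G^0$ is reductive; let $G_s$ denote its derived subgroup. If $Hv$ is not semi-characteristic, then $G_s \supsetneq H$. Apply Dynkin's Theorem~\ref{thm:irred} to the pair $(G_s, H)$ with the irreducible module $V$: if $G_s$ split as a nontrivial product of simple factors, the simple group $H$ would embed in exactly one factor, forcing the remaining factors to act trivially on $V$ and hence to be trivial in $\GL(V) \supset G_s$. So $G_s$ is simple, of rank at least $\rank H \geq 2$.

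Second, I would produce the Onishchik factorization. By the corollary to Theorem~\ref{thm:g=h+k}, since $H$ is strongly semisimple and $Hv$ is not semi-characteristic, $G_v$ contains a strongly semisimple subgroup $K$, which perforce lies in $G_s$. From $Gv = Hv$ one has $G_s = H \cdot (G_s)_v$; combining this with the Onishchik equivalences yields $G_s = HK$ for a suitably chosen semisimple $K \subset G_v$, so that the triple $(G_s, H, K)$ appears, up to an automorphism of $G_s$, in Table~1. Since $V$ is irreducible for both $G_s$ and $H$, the representation $V$ features in the corresponding row of Table~2.

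Third, I would identify which rows contribute. If $V^K \neq 0$, any $0 \neq v \in V^K$ satisfies $G_s v = HKv = Hv$, producing a non-semi-characteristic orbit; conversely every non-semi-characteristic orbit (with $H$, $K$ as above) is of this shape after translating $v$ by an element of $G_s$. Hence rows of Table~2 with $V^K = 0$ (namely (3.3), (3.4), (6.2)) and those with non-simple $H$ (namely (3.5), (5.4)) are irrelevant. Among the rest, the inclusion $\BB_2 \subset \DD_3$ in Table~1(4.$i$) collapses row (4.2) into (4.1); the triality of $\DD_4$ together with the coincidences $\BB_2 \cong \CC_2$ and $\DD_3 \cong \AA_3$ collapses (5.1)--(5.3) into the $n=2$ specialization of case~(3); and the outer automorphism of $\DD_{2n+1}$ (resp.\ $\DD_8$) pairs (2.1) with (2.2) (resp.\ (6.3) with (6.4)). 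What remains partitions exactly into the six cases (1)--(6) of the statement.

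The hard part will be translating the condition $v \in V^K$ in each surviving row into the intrinsic description of the theorem. In case~(1), one checks that the one-dimensional space $V^{\AA_{2n-2}} \subset S^k(\C^{2n}) = \phi_1^k(\AA_{2n-1})$ is spanned by $e_1^k$, which is an eigenvector of the standard torus of $\CC_n = \Sp_{2n}$ of weight $k\epsilon_1$, hence the $\CC_n$-highest weight line of $\phi_1^k$. Cases~(2)--(4) and (6) are handled by parallel weight and branching computations. In case~(5), the equivalence of ``$\BB_4 v$ is closed'' and ``$v \in V^{\BB_7}$'' uses that $\C[\phi_4(\BB_4)]^{\BB_4} = \C[f_2]$ is generated by a single quadratic $f_2$, so the closed $\BB_4$-orbits are the nonzero level sets of $f_2$, which is nonzero on the line $V^{\BB_7}$. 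These verifications are elementary but case by case, and rely on the branching rules established in the Appendix.
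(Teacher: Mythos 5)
Your proposal follows the paper's own (very terse) proof exactly: Dynkin's Theorem~\ref{thm:irred} forces the derived group of $G^0$ to be simple, Onishchik's factorization puts $(G,H,K)$ into Table~1, and the surviving representation rows of Table~2 are read off and translated. The one point your partition argument does not quite account for is Table~2 row~(3.1) (with $K=\AA_{2n-1}$), which must be absorbed into the theorem's case~(3) citing only $K=\CC_n$ from row~(3.2); this works because $\CC_n\subset\AA_{2n-1}$ inside $\DD_{2n}$ (so $V^{\AA_{2n-1}}\subset V^{\CC_n}$), and likewise row~(5.3) with $K=\BB_3$ is absorbed via $\BB_2\subset\BB_3$, a step your ``collapse'' sentence gestures at but does not explicitly justify.
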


For special direct sums of representations we have the following result.

\begin{proposition}
Let $V_i$ be an irreducible $H_i$-module where the $H_i$ are semisimple, $i=1,\dots,k$. Let $V=V_1\oplus\dots\oplus V_k$ be the corresponding $H=H_1\times\dots\times H_k$-module. Suppose that $0\neq v_i\in V_i$ such that $v_i$ is (semi)-characteristic for $H_i$ for each $i$. Then $v$ is (semi)-characteristic for $H$ where $v=(v_1,\dots,v_k)\in V$.
\end{proposition}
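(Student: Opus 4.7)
The plan is to reduce by induction on $k$ to the case $k=2$ (applying the two-factor statement to the pair consisting of $H_1\times\cdots\times H_{k-1}$ acting on $V_1\oplus\cdots\oplus V_{k-1}$ together with $H_k$ acting on $V_k$) and then to show that the preserver $G = \{g \in \GL(V_1 \oplus V_2) : gHv = Hv\}$ satisfies $G^0 = G_1^0 \times G_2^0$, where $G_i = \{g_i \in \GL(V_i) : g_i H_i v_i = H_i v_i\}$. The block-diagonal inclusion $G_1 \times G_2 \hookrightarrow G$ preserves $Hv = H_1 v_1 \times H_2 v_2$, so the inclusion $G_1^0 \times G_2^0 \subseteq G^0$ is immediate; granted the reverse inclusion, if each $v_i$ is characteristic (resp.\ semi-characteristic), so that $G_i^0 = H_i$ (resp., $G_i^0$ is an extension of $H_i$ by a torus $T_i$), then $G^0 = G_1^0 \times G_2^0$ equals $H$ (resp., is an extension of $H$ by $T_1 \times T_2$), yielding the claim.

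The bulk of the argument is to show $G^0$ is block-diagonal. I would analyze $\lieg := \Lie(G^0)$, which is $\Ad(H)$-stable inside $\End(V) = \End(V_1) \oplus \Hom(V_2, V_1) \oplus \Hom(V_1, V_2) \oplus \End(V_2)$. Since $V_1$ and $V_2$ are pairwise non-isomorphic irreducible $H$-modules (each $H_j$, $j \neq i$, acts trivially on $V_i$), the $H$-isotypic decomposition of $\End(V)$ respects this four-block splitting. For $X \in \lieg$ with components $X_{ij} \in \Hom(V_j, V_i)$ and any $i \neq j$, the condition $(\Ad(h) X) v \in \lieh v$ for all $h = (h_1, h_2) \in H$, specialized by varying $h_j$ alone and subtracting the value at $h_j = e$, yields $h_i X_{ij}(h_j^{-1} v_j - v_j) \in \lieh_i v_i$. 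Because $V_j^{H_j} = 0$, the span $\sspan\{h_j^{-1} v_j - v_j : h_j \in H_j\}$ is a nonzero $H_j$-submodule of the irreducible $V_j$, hence all of $V_j$; so $h_i \Im X_{ij} \subseteq \lieh_i v_i$ for every $h_i \in H_i$, and the $H_i$-submodule of $V_i$ generated by $\Im X_{ij}$ lies inside $\lieh_i v_i$. By irreducibility of $V_i$, either $X_{ij} = 0$ or $\lieh_i v_i = V_i$ (i.e., $H_i v_i$ is open in $V_i$).

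The hard part is the \emph{prehomogeneous} case $\lieh_i v_i = V_i$, where the infinitesimal argument no longer forces $X_{ij}=0$; here I would use the full orbit condition $\exp(tX) \in G$. When the off-diagonal blocks are $H$-non-isomorphic, $\lieg \cap \Hom(V_j, V_i)$ is $0$ or all of $\Hom(V_j, V_i)$ by irreducibility, and in the latter case one may take $X = X_{ij}$ alone; then $X^2 = 0$, so $\exp(tX) = I + tX$, forcing the translation invariance $H_i v_i + \Im X_{ij} \subseteq H_i v_i$. Using $H_i$-invariance of the proper complement $S_i := V_i \setminus H_i v_i$, the $H_i$-submodule generated by $\Im X_{ij}$ translates $H_i v_i$ (and hence $S_i$) into itself, and by irreducibility this submodule equals $V_i$; but then $S_i$ would be invariant under translation by all of $V_i$, impossible since $S_i$ is proper and contains $0$. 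The subtle subcase arises when both $V_1$ and $V_2$ are self-dual, so that $\Hom(V_2, V_1)$ and $\Hom(V_1, V_2)$ are $H$-isomorphic and $\lieg$ could contain a \emph{graph} element $Z = X + \alpha X$ with $\alpha$ an $H$-equivariant isomorphism; here an explicit computation of $\exp(tZ)$ as a block matrix involving $\cosh$ and $\sinh$ of the composite, combined with a choice of $(w_1, w_2) \in Hv$ that makes one block of $\exp(tZ)(w_1, w_2)$ vanish, delivers the same contradiction.
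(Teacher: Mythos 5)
Your approach is genuinely different from the paper's. The paper argues by induction using a maximal flag $W_1\subset\dots\subset W_r\subset V$ of $G^0$-stable subspaces, whereas you try to show directly that $\lieg$ is block-diagonal in $\End(V)=\bigoplus_{i,j}\Hom(V_j,V_i)$. You share the key mechanism with the paper --- if $\lieg$ contains a nonzero piece of $\Hom(V_j,V_i)$, then the $G$-orbit of $v$ contains a point with $i$-th coordinate $0$, impossible --- but your packaging creates two real problems.

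First, the reduction to $k=2$ is not valid. The two-factor statement applies to a pair of \emph{irreducible} modules, but the lumped module $V_1\oplus\dots\oplus V_{k-1}$ is not irreducible for $H_1\times\dots\times H_{k-1}$. This is not cosmetic: your block argument invokes irreducibility of $V_i$ at the decisive step (to conclude that the $H_i$-submodule of $V_i$ generated by $\Im X_{ij}$ is all of $V_i$), and it also uses that the off-diagonal blocks $\Hom(V_j,V_i)$ are irreducible $H$-modules. Neither holds after lumping. You would need to run the block analysis directly with all $k$ summands --- which is doable, since the $V_i$ remain pairwise non-isomorphic as $H$-modules --- but that is not the reduction you wrote.

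Second, the self-dual subcase is a genuine gap, not a ``subtle subcase.'' You correctly note that when $V_1\otimes V_2^*\simeq V_2\otimes V_1^*$, the off-diagonal isotypic component of $\lieg$ could be a graph $\{X+c\,\alpha X\}$. But the proposed remedy --- computing $\exp(tZ)$ as a block matrix of $\cosh$ and $\sinh$ of the composites $X\alpha X$ and $\alpha X\cdot X$ and ``choosing $(w_1,w_2)\in Hv$ making a block vanish'' --- is a sketch, not a proof: $\sqrt{cX\alpha X}$ need not be well-defined, the power series is genuinely entangled between the two blocks, and the existence of the required $(w_1,w_2,t)$ is asserted without argument. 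This is precisely the hard case of your approach. The paper's flag argument quietly sidesteps it: once $W_r$ is a $G^0$-stable hyperflag with $V/W_r\simeq V_k$, anything in $\lieg$ is automatically block lower-triangular with respect to $W_r\subset V$, so the off-diagonal contribution lies entirely in $\Hom(V_k,W_r)$ and can never be a two-sided graph; the $H$-stable subspace it generates therefore contains some full $\Hom(V_k,V_i)$, and the ``$v_i\mapsto 0$'' contradiction runs without a self-dual caveat. If you want to keep your direct approach, you need a genuine argument for the graph case; otherwise, adopting the flag reduction is the cleaner route.
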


\begin{proof} 
Let $G$ be as usual. First suppose that $V$ is an irreducible $G^0$-module. Then, up to a cover and scalar matrices,  $G^0=G_1\times\dots\times G_r$ where the $G_j$ are simple and $V\simeq U_1\otimes \dots\otimes U_r$ where the $U_j$ are irreducible $G_j$-modules. By Theorem \ref{thm:irred} each simple factor of each $H_i$ must project nontrivially to a single $G_j$. But given the structure of $V$ as $H$-module, this implies that $k=1$, where the theorem is trivial.

We may now assume that there is a maximal flag $W_1\subset\dots\subset W_r\subset W_{r+1}=V$ of $G^0$-stable subspaces where $r\geq 1$. We may assume that, as $H$-module,  $W_r=V_1\oplus\dots\oplus V_p$ so that $V/W_r\simeq V_{p+1}\oplus\dots\oplus V_k$. The image of $G$ in $\GL(V/W_r)$ is reductive. Let  $G'$ denote its semisimple part. Then for $g\in G'$, we have $g(v_{p+1},\dots,v_k)\in (H_{p+1}\times\dots\times H_k)(v_{p+1},\dots,v_k)$. By induction on $k$, $G'=H_{p+1}\times\dots\times H_k$. But by maximality of the flag, we must have that $p=k-1$, i.e., $V/W_r\simeq V_k$. If $V_k$ is not $G$-stable, then  $\lieg$ contains a nonzero linear map of $V_k$ to $W_r$. Since $\lieg$ is stable under the action of $H$, we may assume that it contains $\Hom(V_k,V_1)$. Thus the $G$-orbit of $v$ contains a point $(0,v_2,\dots,v_k)$. Such a point is not in $Hv$, so we have a contradiction. Thus $V_k$ is 
 $G$-stable and we have a $G$-module direct sum decomposition  $V=W_r\oplus V_k$. It follows by induction on $k$ that $Hv$ is (semi)-characteristic.
\end{proof}

If one considers the adjoint representation $\lieh$ of a simple $H$, the only case  that appears  in Theorem \ref{thm:simple} is $H=\CC_n$, $n\geq 2$, where $\lieh=\phi_1^2$. Thus we have

\begin{corollary}
Let $H=H_1\times\dots\times H_k$ where the $H_i$ are simple,  and let $\phi\colon H\to\GL(\lieh)$ be the adjoint representation. Let $v=(v_1,\dots,v_k)\in \oplus_i\lieh_i$ where no $v_i$ is zero. Then $v$  is semi-characteristic if and only if for every simple component $H_i$ of type $\CC_n$, $n\geq 2$, $v_i\in \lie c_n$ is not on the   highest weight orbit.
\end{corollary}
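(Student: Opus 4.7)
My plan is to reduce to a single simple factor via the preceding Proposition and then apply Theorem \ref{thm:simple} after a short case-check identifying which exceptional entries correspond to adjoint representations, supplementing with a direct argument for rank-one factors.

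For the ``if'' direction, I would show that if no simple component $H_i$ of type $\CC_n$ ($n\ge 2$) has $v_i$ on the highest weight orbit, then each $v_i\in\lieh_i$ is semi-characteristic for $H_i$, at which point the preceding Proposition (each $\lieh_i$ is irreducible since $H_i$ is simple) yields that $v$ is semi-characteristic. For $H_i$ of rank at least two, Theorem \ref{thm:simple} lists the only non-semi-characteristic orbits in irreducible $H_i$-modules, and I would verify that of its six entries only (1) with $k=2$ describes an adjoint representation: $\phi_1^2(\CC_n)=S^2\phi_1$ is $\lie c_n$ as $\Sp_{2n}$-module, with the exceptional orbit being the highest weight one. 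Entries (2), (3), (5), (6) feature spin representations $\phi_{2n},\phi_{2n-1},\phi_4,\phi_7$ of various $B_n$'s (whose adjoint is $\phi_2$), while entry (4) features $\phi_2(\GG_2)$, which in Dynkin's convention is the seven-dimensional representation (the adjoint of $\GG_2$ being $\phi_1$); none of these is an adjoint representation. For rank-one components $H_i=\SL_2$, to which Theorem \ref{thm:simple} does not apply, I would argue directly: under $\SL_2/\{\pm 1\}\simeq\SO(q)\subset\GL_3$ for the Killing form $q$, a nonzero adjoint orbit in $\GL_3$ has preserver equal to $\Orth_3$ (if $q(v)\ne 0$, the orbit being a level quadric) or to the similitude group $\C^*\cdot\Orth_3$ (if $q(v)=0$, the orbit being the punctured null cone); in either case the identity component is an extension of $\SO_3$ by a possibly trivial torus.

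For the ``only if'' direction, suppose some simple component $H_i=\CC_n$ ($n\ge 2$) has $v_i$ a highest weight vector of $\lie c_n$. By Table 2(1), $\SL_{2n}\subset\GL(\lieh_i)$ preserves $\Sp_{2n}\cdot v_i$. Extending $\SL_{2n}$ to act trivially on each $\lieh_j$, $j\ne i$, produces a subgroup of the preserver of $Hv=\prod_j H_jv_j$ whose identity component has simple factor $\SL_{2n}$ in place of the $\Sp_{2n}$-factor of $H$; since $\SL_{2n}\ne \Sp_{2n}$, this is not an extension of $H$ by a torus, so $v$ is not semi-characteristic. The main obstacle is the case-check tying the six exceptional entries of Theorem \ref{thm:simple}, in Dynkin's numbering of fundamental weights, to adjoint representations; the rank-one case is then routine but must be handled separately.
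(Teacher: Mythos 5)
Your proof is correct and follows essentially the same route as the paper. The paper reduces to the simple factors via the preceding Proposition and then observes, in the sentence immediately before the Corollary, that the only entry of Theorem \ref{thm:simple} arising from an adjoint representation is case (1) with $k=2$, i.e., $H=\CC_n$ with $\lieh=\phi_1^2$; you carry out this case-check explicitly (correctly reading Dynkin's labelling, in which $\phi_1(\GG_2)$ is the adjoint and $\phi_2(\GG_2)$ the seven-dimensional representation, and noting the spin representations in entries (2), (3), (5), (6)), and you additionally supply the rank-one $\SL_2$ argument via the quadric/null-cone picture for $\SO_3\subset\GL_3$, which falls outside the hypothesis of Theorem \ref{thm:simple} and which the paper leaves implicit, together with the ``only if'' direction via $\SL_{2n}\supset\Sp_{2n}$ preserving the highest weight orbit, both of which are sound.
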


\section{The case $H$ is simple}
We now consider the case where $H$ is simple of rank at least two and our $H$-module $V$ may be reducible. We consider the possible semisimple $G$ which can act on $V$ such that $Gv=Hv$ where $v\in V$ is  generic for the action of $H$.
  
 Here are some examples to keep in mind.

 \begin{example}\label{ex:usualdouble}
 Let $H$ be simple and let $V$ be an $H$-module. Let $G=H\times H$ and let $v$ be the identity in $V\otimes V^*$.  Then $Gv=Hv$ where the diagonal copy of $H$ in $G$ plays the role of $K$.
 \end{example}
 
 \begin{example}\label{ex:usualK}
 Let $(G,H,K)$ or $(G,K,H)$  be an entry of Table 1. Let $V=\bigoplus_{i=1}^n m_iV_i$ be the isotypic decomposition of a $G$-module such that  $\dim V_i^K\geq m_i\geq 1$ for all $i$. Let $v\in V^K$ be generic for the action of $G$. Then   $Gv=Hv$. 
 \end{example}

 \begin{example} \label{ex:a2n-1} Here $H\simeq \AA_{2n-1}$. Let $(G_1,H_1,K_1)=(\DD_{2n},\AA_{2n-1},\BB_{2n-1})$  and  $(G_2,H_2,K_2)=(\AA_{2n-1},\AA_{2n-2},\CC_n)$. 
 Let $G=G_1\times G_2$, let $H$ denote the diagonal copy of $\AA_{2n-1}$ and let $K=K_1\times K_2$. Then $G=HK$. Let $V$ be a representation of $G$ which contains a generic vector $v\in V^K$.  If $V$ is irreducible, then $V=V_1\otimes V_2$ where $V_i$ is an irreducible representation of $G_i$, $i=1$, $2$, and  $v\in V_1^{K_1}\otimes V_2^{K_2}$. The only possibilities allowing nontrivial fixed points are $V_1=\phi_1^k$, $k\geq 0$, and $V_2=\phi_2^{a_2}\phi_4^{a_4}\cdots\phi_{2n-2}^{a_{2n-2}}$ where the $a_{2i}$ are in $\Z^+$. In both cases, $\dim V_i^{K_i}=1$. Thus for $v$ to be generic,  $V$ must be a sum of representations (each of multiplicity one) of the form $V_1\otimes V_2$  where $\dim V_i^{K_i}=1$ for all $i$. For $V$ to be almost faithful the sum must contain a nontrivial $V_1$  and a nontrivial $V_2$.
 \end{example}
  
  \begin{example} \label{ex:a3} Here $H\simeq \AA_3$. 
  Let $(G_1,H_1,K_1)=(\BB_3,\DD_3,\GG_2)$ and $(G_2,H_2,K_2)=(\AA_3,\AA_2,\CC_2)$. Let $G=G_1\times G_2$, let   $H$ be the diagonal copy of $\AA_3=\DD_3$ and let $K=K_1\times K_2$. Then $G=HK$. If $V_i$ is an irreducible representation of $G_i$, $i=1$, $2$,   with $V_1^{K_1}\neq (0)\neq V_2^{K_2}$, then $V_1=\phi_3^k$, $k\geq 0$ and $V_2=\phi_2^\ell$, $\ell\geq 0$. Again, $\dim V_i^{K_i}=1$, $i=1$, $2$, and the conditions for $v$ generic and $V$ almost faithful   are as in the case above.
    \end{example}

  \begin{example} \label{ex:b3} Here $H\simeq\BB_3$. Let  $(G_1,H_1,K_1)=(\DD_4,\BB_3,\BB_3)$ and $(G_2,H_2,K_2)=(\BB_3, \GG_2, \BB_2$ or $\DD_3)$. Let $G=G_1\times G_2$, let $H\subset H_1\times G_2$  be the diagonal copy of $\BB_3$ and let $K=K_1\times K_2$. Then $G=HK$. The possibilities for the $V_i$   having nontrivial $K_i$-fixed points are $V_1=\phi_1^k$, $k\geq 0$ and $V_2=\phi_1^a\phi_2^b$ if $K_2=\BB_2$ and $V_2=\phi_1^a$ if $K_2=\DD_3$ where $a$ and $b$ are nonnegative. While $V_1^{K_1}$ has dimension 1, this is not true for $V_2^{K_2}$, in general, if $K_2=\BB_2$.  Let $v\in V$ be generic and $K$-fixed. Then    irreducible $G$-modules which can occur in $V$ are sums of  tensor products of  modules $V_1\otimes V_2$ where $\dim V_i^{K_i}\geq 1$, $i=1$, $2$.  
  \end{example}

  \begin{example}\label{ex:double} Let $(G_1,H_1,K_1)$ be an entry of Table 1. Let $G=G_1\times G_1$, let $H$ be the diagonal copy of $G_1$ and let $K=H_1\times K_1\subset G$. Then $G=HK$. It is usually easy to determine the almost faithful $V_1\otimes V_2$ with $K$-fixed points.  For example, in the case of  $(\AA_{2n-1},\CC_n,\AA_{2n-2})$, $V_1$ has to be of the form $\phi_2^{a_2}\cdots\phi_{2n-2}^{a_{2n-2}}$ where the $a_{2i}$ are nonnegative and $V_2$ has to be of the form $\phi_1^k$ or $\phi_{2n-1}^k$ for $k\geq 0$.   On the other hand, if we have $(\DD_8,\BB_7,\BB_4)$, then $V_1$ is of the form $\phi_1^k$, $k\geq 0$, and we have been unable to pin down exactly which $V_2$ have $\BB_4$ fixed points.   
   \end{example}
  
  Looking at Table 1 one easily sees the following.
  
  \begin{proposition}\label{prop:table1}
  Suppose that  $(G,H,K)$ and $(G',H',K')$ appear  in Table 1 where $H\cap K\simeq H'\subset L\subset G'$ or $H\cap K\simeq K'\subset L\subset G'$ and $L$ is isomorphic to $H$ or $K$.  Then $G'$ is isomorphic to $L$.
  \end{proposition}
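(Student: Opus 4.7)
The proposition asserts a closure property of Table~1 under the natural operation of iterating factorizations, and its proof is a finite case check, since Table~1 has only eleven entries. The plan is as follows.

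First I would list the isomorphism classes of the intersections $H\cap K$ that actually occur in Table~1, namely $\CC_{n-1}$, $\AA_{n-1}$, $\CC_{n-1}\times\AA_1$, $\AA_1$, $\AA_2$, $\AA_1\times\AA_1$, $\GG_2$, and $\BB_3$. For each class I would identify the rows $(G',H',K')$ of Table~1 in which $H'$ or $K'$ lies in the same class. The rank parameter of $G'$ is determined by the match, so this step produces an explicit and short list of candidate pairs of rows to examine.

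For each candidate pair, the hypothesis specifies the isomorphism type of $L$ (namely $H$ or $K$ from the first triple) as well as the inclusion $H\cap K\subset L\subset G'$. I would then split the argument into two steps. First, a rank/dimension comparison rules out most cases: if $\rank L>\rank G'$, or if the smallest faithful representation of $L$ exceeds $\dim\phi_1(G')$ and no compatible self-dual structure exists, then $L$ cannot embed in $G'$ and the hypothesis is vacuous. Second, for the few surviving cases I would use the restriction $\phi_1(G')|H'$ (or $\phi_1(G')|K'$) listed in Table~1, together with standard branching rules for the classical groups, to check that any reductive subgroup $L\subset G'$ of the prescribed isomorphism type containing the given copy of $H'$ (or $K'$) has $\dim L = \dim G'$, hence equals $G'$.

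The main obstacle is the cluster of low-rank accidental isomorphisms ($\BB_2\simeq\CC_2$, $\AA_3\simeq\DD_3$, and the triality of $\DD_4$), which allow a fixed abstract subgroup to embed in $G'$ in several non-conjugate ways. In the rows built on $\DD_4$ (5.1--5.4) and on $\DD_8$ (row~6) I would handle these by tracking the decomposition of $\phi_1(G')$ under the prescribed copy of $H\cap K$, since this data pins down the embedding up to outer automorphism of $G'$ and leaves no room for a proper intermediate $L$ of the required type. Once these rows are dispatched, the conclusion $G'\simeq L$ is immediate in every case.
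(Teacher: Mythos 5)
Your plan matches the paper's approach: the paper offers no explicit proof of this proposition (it is introduced with ``Looking at Table~1 one easily sees the following''), so a systematic inspection of the table is exactly what is intended, and your scheme of enumerating the possible isomorphism types of $H\cap K$, locating the matching $H'$ or $K'$, and then applying rank/dimension bounds and embedding constraints is the right way to organize it.

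One small sharpening worth noting: once you carry out the enumeration, the case check turns out to be even simpler than your second step suggests. In every configuration where a group $L$ of the prescribed type can actually sit between $H'$ (or $K'$) and $G'$, the isomorphism type of $L$ already coincides with that of $G'$ (for instance $\BB_3\subset L\subset\BB_3$ arising from row~5.4 paired with row~4.1 or 4.2, or $\DD_3\simeq\AA_3$ arising from row~4.2 paired with the $n=2$ instance of row~1). Since $L$ and $G'$ are connected of the same dimension, $L=G'$ is immediate, with no branching rules required. The branching/representation-theoretic argument is only needed negatively, to show that the \emph{other} candidate type of $L$ (the one of different abstract type from $G'$) cannot embed in $G'$ at all, so those cases are vacuous. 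A typical instance is $L\simeq\CC_n$ with $\CC_{n-1}\subset L\subset\AA_{2n-3}$ from pairing row~1 with itself at parameter $n-1$: the minimal faithful module of $\Sp_{2n}$ has dimension $2n>2n-2$, so no embedding exists. Also be careful with your shorthand ``$H\cap K\subset L\subset G'$'': the inclusion in the hypothesis is of the \emph{given} subgroup $H'\subset G'$ (respectively $K'\subset G'$) into $L$, not of $H\cap K$ itself, which lives inside $G$; this matters when you track the decomposition of $\phi_1(G')$ as you propose to do in the $\DD_4$ and $\DD_8$ rows.
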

 
 \subsection{}\label{subsec:setup}
  Let $H$ be simple  of rank at least $2$ and let $V$ be an almost faithful $H$-module. Let $v\in V$ be generic. Let $G=G_1\times\cdots\times G_r$ where the $G_i$ are simple and simply connected and $G$ acts almost faithfully on $V$ such that  $Gv=Hv$ where $H\subset G$. Let $K$ denote a Levi factor of $G_v$. Then $G=HK$. Let $\pr_i\colon G\to G_i$ denote projection on the $i$th factor, $i=1,\dots,r$. We may assume that $\pr_i(H)\neq\{e\}$ for $1\leq i\leq s$ and $\pr_i(H)=\{e\}$ for $s<i\leq r$ where $s\geq 1$. Let $G'=G_1\times\dots\times G_s$ and $G''=G_{s+1}\times\cdots\times G_r$. For $r\geq j>s$ there is a unique simple component $K_j$ of $K$ such that $\pr_j(K_j)=G_j$   and clearly $K'':=K_{s+1}\times\cdots\times K_{r}$ covers $G''$. The kernel of $K''\to G'$ commutes with $H$ and fixes $v$. Since $Hv$ spans $v$ (Proposition \ref{prop:generic}), the kernel  must be finite. Hence  $K''$ covers its image in $G'$. Let   $K'$ denote the product of the simple components of $K$ not in $K''$. Then $K'\subset G'$ and the projection of $K''$ to $G'$ centralizes $K'$. We must have that $HK'=G'$. 
  
  We may write $V=\bigoplus V_i\otimes W_i$ where the $V_i$ are pairwise nonisomorphic irreducible representations of $G'$ and the $W_i$ are representations of $G''$. Then the projection $v_i$ of $v$ to each $V_i\otimes W_i$ is a tensor of rank $\dim W_i$ since $v$ is generic. Let $U_i$ denote the smallest subspace of $V_i$ such that $v_i\in U_i\otimes W_i$. Then $\dim U_i=\dim W_i$ and $v_i\in U_i\otimes W_i$ corresponds to a $K''$-equivariant isomorphism of $W_i^*$ onto $U_i\subset V_i$. In the sense of the following definition, $v_i$ corresponds to a \emph{subordination\/} $\alpha_i\colon (W_i^*,G'') \to (V_i,G')$.

    \begin{definition}
Let $Z_i$ be an $L_i$-module $i=1$, $2$, where the $L_i$ are reductive.   We say that $Z_1$ is \emph{subordinate \/} to $Z_2$ if there is a  linear injection $\alpha\colon Z_1\to Z_2$ and a reductive subgroup $L\subset L_1\times L_2$  such that $\alpha$ is $L$-equivariant (for the $L$-module structures on $Z_1$ and $Z_2$). Moreover, we require that $\pr_1\colon L\to L_1$ be a cover.  
We say that $\alpha\colon Z_1\to Z_2$ is a \emph{subordination\/} of $Z_1$ to $Z_2$. We sometimes use the notation $\alpha\colon (Z_1,L_1)\to (Z_2,L_2)$ to specify the groups involved.
\end{definition}

 We now consider the possibilities for $K'$.
 
  \begin{lemma}\label{lem:pi_i(K)}
  Let $H$, etc. be as in \eqref{subsec:setup}. Then for $1\leq i\leq s$ we have $\pr_i(K')\neq G_i$.
  \end{lemma}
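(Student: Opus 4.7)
My plan is to argue by contradiction. Assume that $\pr_i(K') = G_i$ for some $i$ with $1 \le i \le s$. I first reduce to the case of a single simple component of $K'$ being responsible for this surjection. Writing $K'$ as a commuting product $K'_1 \cdots K'_m$ of simple subgroups, the images $\pr_i(K'_a) \subset G_i$ are commuting semisimple subgroups whose product equals $G_i$. Since a simple Lie algebra cannot be expressed as a sum of two commuting nonzero subalgebras---their pairwise intersections would lie in the center and force a decomposition into proper ideals---at most one projection $\pr_i(K'_a)$ can be nontrivial. Hence there is a unique simple component $K_0 \subset K'$ with $\pr_i(K_0) = G_i$, and $\pr_i|_{K_0}$ is an isogeny. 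Applying the same observation to the subgroup $\pi(K'')$, which centralizes $K'$ in $G'$, shows that its $i$-th projection lies in $Z(G_i)$ and is therefore finite.

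I then split on whether $K_0$ coincides with the direct factor $G_i$, i.e.\ whether $\pr_j(K_0) = \{e\}$ for every $j \ne i$ with $j \le s$. In this first case $G_i \subset K \subset G_v$, so $G_i$ fixes $v$. Since $G_i$ is normal in $G$ (as a direct factor), for any $h \in H$ and $g \in G_i$ the conjugate $h^{-1} g h$ lies in $G_i$ and fixes $v$, whence $g \cdot hv = h(h^{-1}gh)v = hv$. Thus $G_i$ fixes $Hv$ pointwise; by Proposition~\ref{prop:generic} the orbit $Hv$ spans $V$, so $G_i$ acts trivially on $V$, contradicting the almost faithfulness of the $G$-action.

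The remaining case, in which $K_0$ is instead the graph of a nontrivial homomorphism $\tau\colon G_i \to \tilde G$ with $\tilde G := G_1 \times \cdots \widehat{G_i} \cdots \times G_s$, is the main obstacle. Here $G_i$ acts on $v$ by $gv = \tau(g)^{-1} v$ rather than fixing it, and $G_i$ permutes $Hv$ nontrivially, so the normality argument above fails. I would try to force the contradiction by combining two ingredients: the subordination structure from \S\ref{subsec:setup}, which realizes each $v_k \in V_k \otimes W_k$ as the graph of a $K''$-equivariant map $\alpha_k\colon W_k^* \to V_k$ whose image is additionally forced into $V_k^{K_0}$; and the Onishchik factorization $\lieg' = \lieh + \liek'$ (via Theorem~\ref{thm:g=h+k}), which restricts how a simple subgroup $K_0$ can sit diagonally inside $G_i \times \tilde G$ while also satisfying $G' = HK'$. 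The expectation is that the combination of $K_0$-invariance on each $\alpha_k$ with the $\pi(K'')$-equivariance produces a positive-dimensional subgroup of $G_v$ that centralizes $H$ in the $G_i$-direction, contradicting Proposition~\ref{prop:generic} exactly as in the proof that the kernel of $K'' \to G'$ is finite. If a uniform argument proves elusive, the fallback is to use Onishchik's explicit classification in Table~1 to rule out graph embeddings of $K_0$ case by case.
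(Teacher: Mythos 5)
Your reduction to a single simple component $K_0\subset K'$ with $\pr_i(K_0)=G_i$ is correct, and your ``first case'' ($K_0$ projecting trivially to every other factor, hence $K_0=G_i\subset G_v$, contradicting almost-faithfulness via normality of $G_i$) coincides with the paper's argument. The gap is that you stop at the diagonal case: you explicitly label it ``the main obstacle,'' offer two speculative ingredients, and then a fallback of case-by-case Table~1 checking. None of this is carried out, so the statement is not proved. Moreover, the first ingredient you propose (the subordination structure of $v$) plays no role in the paper's proof of this lemma; it is only used later, and there is no indication that it would force the contradiction you want here.

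What the paper actually does in the diagonal case is short and purely dimension-theoretic. Pick $j\neq i$, $1\leq j\leq s$, with $\pr_j(K_0)\neq\{e\}$ (this exists precisely because you are not in the first case). Since $K_0$ is simple and $\pr_i|_{K_0}$ is an isogeny onto $G_i$, one has $\dim K_0=\dim G_i$, and $\pr_j|_{K_0}$ is an isogeny onto its image, so $\dim\pr_j(K_0)=\dim G_i$. Write $H'$ and $K_0'$ for the projections of $H$ and $K_0$ to $G_i\times G_j$; since $K_0$ is the only simple factor of $K'$ that projects nontrivially to $G_i$, Theorem~\ref{thm:g=h+k} applied to $HK'=G'$ gives $H'K_0'=G_i\times G_j$ after passing to strongly semisimple parts. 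Now split: if $\pr_j(K_0)=G_j$, the dimension inequality $\dim H'+\dim K_0'\geq\dim G_i+\dim G_j$ together with $\dim H'\leq\dim H\leq\dim G_i=\dim K_0'=\dim G_j$ forces equality throughout, so $\pr_i(H)=G_i$ and $\pr_j(H)=G_j$; then $\lieh'$ and $\liek_0'$ are both diagonal copies of a simple Lie algebra $\lieg$ in $\lieg\oplus\lieg$, and Jacobson's theorem (every automorphism of a complex semisimple Lie algebra has a nonzero fixed vector) shows $\lieh'\cap\liek_0'\neq(0)$, so their sum cannot be all of $\lieg\oplus\lieg$. If instead $\pr_j(K_0)\neq G_j$, then $\dim G_i=\dim\pr_j(K_0)<\dim G_j$, hence $\dim H\leq\dim G_i<\dim G_j$ (here $\dim H=\dim\pr_i(H)\leq\dim G_i$ since $\pr_i|_H$ is an isogeny); using Corollary~\ref{cor:rank1} to discard rank-one noise, one again has $H'K_0'=G_i\times G_j$, but now $\dim H'+\dim K_0'\leq\dim H+\dim G_i<\dim G_i+\dim G_j$, a contradiction. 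Your fallback suggestion of enumerating Table~1 would in principle also work, but it is heavier than necessary; the dimension count plus Jacobson handles the diagonal case uniformly.
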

  
   \begin{proof}
  Suppose that $\pr_i(K')=G_i$. Then there is a unique simple component $K_i$ of $K'$ such that $\pr_i(K_i)=G_i$. If $\pr_j(K_i)=\{e\}$ for $j\neq i$, then $G_i$ acts trivially on $Gv$, which is not possible. Hence  $\pr_j(K_i)\neq \{e\}$ for some $j\neq i$,  $1\leq j\leq s$.  Suppose that $\pr_j(K_i)=G_j$. Then no simple component of $K'$ other than $K_i$ can project nontrivially to $G_i$ and $G_j$. Consider the projections $H'$  of $H$ and $K'_i$ of $K_i$ to $G_i\times G_j$. Then $H'K'_i=G_i\times G_j$, and by reason of dimension we must have that $\pr_i(H')=G_i$ and $\pr_j(H')=G_j$. On the level of Lie algebras this says that we have a simple Lie algebra $\lieg$ and two subalgebras $\lieh_1$ and $\lieh_2$ of $\lieg\oplus\lieg$ which project isomorphically to each $\lieg$  factor such that $\lieh_1+\lieh_2=\lieg\oplus\lieg$. But it follows from  \cite[Theorem 9]{Jacobson} that $\lieh_1\cap\lieh_2\neq (0)$, a contradiction.
  
  We may thus assume that $\dim\pr_j(K_i)<\dim G_j$, hence $\dim H<\dim G_j$ as well. By Corollary \ref{cor:rank1} we   have that 
$\pr_j(K_i)\pr_j(H)=G_j$ and that $\pr_j(K')$  differs from $\pr_j(K_i)$ by at most a factor of rank 1.  Hence, with $H'$ and $K_i'$ as above, we again have $H'K_i'=G_i\times G_j$ which is not possible by reason of  dimension. Hence we have that $\pr_i(K')\neq G_i$ for $1\leq i\leq s$.
  \end{proof}

  \begin{corollary}\label{cor:pi_j(K_i)}
 Suppose that $1\leq i\leq s$ and that $K_i\subset K'$ is a simple factor of rank at least two such that $\pr_i(K_i)\neq \{e\}$. Then $\pr_j(K_i)=\{e\}$ for $i\neq j$, $1\leq j\leq s$.
  \end{corollary}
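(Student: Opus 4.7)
The plan is to argue by contradiction, following the template of the proof of Lemma \ref{lem:pi_i(K)}. Suppose $\pr_j(K_i) \ne \{e\}$ for some $j \in \{1,\dots,s\} \setminus \{i\}$. Since $K_i$ is simple of rank at least two, both $\pr_i$ and $\pr_j$ have finite kernel on $K_i$, so $\pr_i(K_i)$ and $\pr_j(K_i)$ are simple of rank at least two. Lemma \ref{lem:pi_i(K)} says $\pr_\ell(K') \ne G_\ell$ for $\ell = 1,\dots,s$, so in particular $\pr_i(K_i)$ and $\pr_j(K_i)$ are proper subgroups of $G_i$ and $G_j$.

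The first substantive step is to verify $\pr_\ell(H)\cdot\pr_\ell(K_i) = G_\ell$ for $\ell = i,j$. When $\pr_\ell(H) = G_\ell$ this is immediate. Otherwise the triple $(G_\ell,\pr_\ell(H),\pr_\ell(K'))$ is in Table~1 by Onishchik's theorem, so the strongly semisimple part $\pr_\ell(K')_s$ is simple. Since $\pr_\ell(K_i) \subset \pr_\ell(K')_s$ is simple of rank $\geq 2$ and any other simple factor $K_m$ of $K'$ of rank $\geq 2$ projecting to rank $\geq 2$ would commute with $\pr_\ell(K_i)$ in $\pr_\ell(K')_s$ (contradicting Corollary~\ref{cor:rank1}(1)), one gets $\pr_\ell(K')_s = \pr_\ell(K_i)$, whence Corollary~\ref{cor:rank1}(2) yields $\pr_\ell(H)\pr_\ell(K_i) = G_\ell$. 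In particular $(G_\ell,\pr_\ell(H),\pr_\ell(K_i))$ is itself a Table~1 triple whenever $\pr_\ell(H)\ne G_\ell$.

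Next I would consider the projection $\pr_{ij}\colon G' \to G_i\times G_j$. All other simple factors $K_m\subset K'$ ($m\neq i$) commute with $K_i$, so their images lie in
\[
Z_{G_i\times G_j}(\pr_{ij}(K_i)) = Z_{G_i}(\pr_i(K_i))\times Z_{G_j}(\pr_j(K_i)),
\]
each factor of which has rank at most $1$ by Corollary~\ref{cor:rank1}(1) applied to the Table~1 triples just produced. Thus any $K_m$ of rank $\geq 2$ has $\pr_i(K_m) = \pr_j(K_m) = \{e\}$ (since a nontrivial projection would be simple of rank $\geq 2$). Consequently $\pr_{ij}(K') = \pr_{ij}(K_i)\cdot M$ where $M$ has dimension at most $6$.

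The main obstacle is closing with a Jacobson-style contradiction. With $H'_{ij} := \pr_{ij}(H)$ (of dimension $\dim H$) and $K'_{ij} := \pr_{ij}(K_i)$ (of dimension $\dim K_i$), the equality $\pr_{ij}(H)\pr_{ij}(K') = G_i\times G_j$ reduces, modulo the low-rank piece $M$, to $H'_{ij}\cdot K'_{ij}$ essentially filling $G_i\times G_j$. Combining this with the dimension identities $\dim G_\ell = \dim H + \dim K_i - \dim(\pr_\ell(H)\cap\pr_\ell(K_i))$ forces, as in the Jacobson step of Lemma \ref{lem:pi_i(K)}, two subalgebras of $\lieg_i\oplus\lieg_j$ that project with finite kernel to each factor and sum to $\lieg_i\oplus\lieg_j$, contradicting Theorem~9 of \cite{Jacobson}. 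The most delicate bookkeeping will be the subcase $\pr_i(H) = G_i$ or $\pr_j(H) = G_j$, where Table~1 does not furnish the centralizer bound directly; there one uses the isogeny $H\cong G_\ell$ to control the size of $M$ by hand and run the same dimension contradiction.
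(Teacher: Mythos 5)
Your argument for the main case (both $\pr_i(H)\neq G_i$ and $\pr_j(H)\neq G_j$) follows the paper's route, and most of the structural observations are sound: identifying $\pr_\ell(K')_s=\pr_\ell(K_i)$ via Corollary~\ref{cor:rank1}(2), using the centralizer computation $Z_{G_i\times G_j}(\pr_{ij}(K_i))=Z_{G_i}(\pr_i(K_i))\times Z_{G_j}(\pr_j(K_i))$ together with Corollary~\ref{cor:rank1}(1) to conclude that no other rank-$\ge 2$ factor of $K'$ projects nontrivially to $G_i$ or $G_j$, and reducing to $\pr_{ij}(H)\cdot\pr_{ij}(K_i)=G_i\times G_j$ (this reduction should cite Theorem~\ref{thm:g=h+k} to discard the rank-$\le 1$ piece $M$ cleanly, rather than saying it ``essentially'' fills).

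There are two genuine problems. First, the final contradiction you invoke is wrong. Jacobson's Theorem~9, as used in the proof of Lemma~\ref{lem:pi_i(K)}, applies to two subalgebras of $\lieg\oplus\lieg$ that project \emph{isomorphically onto} each $\lieg$-factor, and it produces a nontrivial intersection. Here $\pr_{ij}(\lie k_i)$ does \emph{not} project onto $\lieg_i$ or $\lieg_j$ --- by Lemma~\ref{lem:pi_i(K)} these projections are proper --- so the hypotheses of Jacobson's theorem fail and no contradiction is obtained that way. What actually closes the case, and what the paper does, is a plain dimension count: $\pr_{ij}(H)\pr_{ij}(K_i)=G_i\times G_j$ forces $\dim G_i+\dim G_j\le\dim H+\dim K_i$, while the properness of $\pr_\ell(H)$ and $\pr_\ell(K_i)$ in $G_\ell$ gives $\dim G_\ell>\dim H$ and $\dim G_\ell>\dim K_i$, hence $\dim G_i+\dim G_j>\dim H+\dim K_i$. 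You write down the right dimension identity but then pivot to Jacobson instead of using it directly.

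Second, the subcase $\pr_i(H)=G_i$ (or $\pr_j(H)=G_j$) is not actually handled. Your sketch --- ``use the isogeny $H\cong G_\ell$ to control the size of $M$ by hand and run the same dimension contradiction'' --- breaks down at the first step: when $\pr_i(H)=G_i$ there is no Table~1 triple $(G_i,\pr_i(H),\pr_i(K'))$, so Corollary~\ref{cor:rank1}(1) gives no bound on $Z_{G_i}(\pr_i(K_i))$ and hence no control on the images of the other factors $K_m$ in $G_i$; the centralizer argument that justified $\pr_{ij}(K')_s=\pr_{ij}(K_i)$ simply doesn't run. The paper treats this case by a different move: it argues that $\pr_i(H)=G_i$ forces $\pr_j(K_i)=G_j$ (or symmetrically $\pr_i(K_i)=G_i$), which then contradicts Lemma~\ref{lem:pi_i(K)} directly. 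Your proposal would need to supply that (or an equivalent) argument; as written, the case is left open.
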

  
  \begin{proof}
  Suppose that $\pr_j(K_i)\neq \{e\}$.  If $\dim H<\dim G_i$ and $\dim H<\dim G_j$, then Corollary \ref{cor:rank1} shows  that $\pr_i(K_i)$ and $\pr_j(K_i)$ differ from $\pr_i(K')$ and $\pr_j(K')$ by at most groups of rank 1, so that with $H'$ and $K_i'$ as above, we have $H'K_i'=G_i\times G_j$, which is not possible by reason of dimension. Thus $\pr_i(H)=G_i$ (or $\pr_j(H)=G_j$) which forces $\pr_j(K_i)=G_j$ (or $\pr_i(K_i)=G_i$), contradicting the lemma above.
    \end{proof}
    
    \begin{theorem}\label{thm:Hsimple}
    Let $H$, $s$, $r$, $V=\bigoplus_i V_i\otimes W_i$, etc.\ be as above. Then one of the following occurs.
    \begin{enumerate}
\item $s=1$ and $H=G_1$. Then $G''=G_2\times\cdots\times  G_r$ and $v$ corresponds to   a subordination $(\bigoplus_i W_i^*,G'')\to(\bigoplus_i V_i,H)$ where $W_i^*$ is sent to $V_i^{K'}$ for each $i$.  
\item $s=1$, $H\neq G_1$ and $G_1=HK_1$ where $K_1\subset G_1$ is a simple component of $K$. If $r>1$, then  $r=2$, $G_2=\SL_2$ (up to a cover) and $(G_1,H,K_1\times\SL_2)$ is case 3.2 of Table 1. We have  a subordination    $(\bigoplus W_i^*,\SL_2)\to (\bigoplus V_i,H)$ where the image of $W_i^*$ is a subset of $V_i^{K_1}$ for each $i$.
\item $s=2$, $\pr_1(H)=G_1$, $\pr_2(H)=G_2$ and there are  subgroups $K_1'$, $K_2'$ of $H$ such that $(H,K_1',K_2')$ occurs in Table 1. We have  $K=K_1\times K_2$ where $K_i=\pr_i(K_i')$, $i=1$, $2$  and $G=HK$ (Example \ref{ex:double}). If $r>2$, then $r=3$, $(H,K_1',K_2'\times\SL_2)$ or $(H,K_2',K_1'\times\SL_2)$ is entry 3.2 of Table 1, $G''=\SL_2$ and $v$ corresponds to a subordination  $( \bigoplus_i W_i^*,\SL_2) \to(\bigoplus_i V_i,H)$ where $W_i^*$ has image in $V_i^{K_1\times K_2}$ for all $i$.
\item $r=s=2$ where $\pr_1(H)\neq G_1$ and $\pr_2(H)=G_2$. Then we are in the case  of Example  \ref{ex:a2n-1}, \ref{ex:a3} or \ref{ex:b3}. 
\end{enumerate}

 \end{theorem}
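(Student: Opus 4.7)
The plan is to combine Lemma~\ref{lem:pi_i(K)} and Corollary~\ref{cor:pi_j(K_i)} with the Onishchik machinery of Theorem~\ref{thm:g=h+k} and Table~1, proceeding by case analysis on $s$ and on which projections $\pr_i(H)$ are surjective. Because $H$ is simple and $\pr_i(H)\neq\{e\}$ for $i\le s$, each such projection is isogenous to $H$; projecting $HK'=G'$ to each factor yields $\pr_i(H)\cdot\pr_i(K')=G_i$, which together with Lemma~\ref{lem:pi_i(K)} makes $(G_i,\pr_i(H),\pr_i(K'))$ a Table~1 triple whenever $\pr_i(H)\neq G_i$.

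First I would handle the trivial factors. Every simple component of $K''$ maps into $G'$ and centralizes $K'$; by Corollary~\ref{cor:rank1} this centralizer has rank at most one, forcing each such component (being semisimple) to be $\SL_2$, and the joint centralization bounds the total rank, yielding $r-s\le 1$ in cases~(2) and~(3), with any such $\SL_2$ matching entry~(3.2) of Table~1. In case~(1) the group $K'$ is allowed to be trivial so $G''$ is unconstrained.

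Next I case-split on $s$. For $s=1$, either $H=G_1$ (case~(1)) or Onishchik places $(G_1,H,K_1)$ into Table~1 and the centralizer analysis delivers case~(2). For $s=2$ with both $\pr_i(H)=G_i$, a dimension count forces $G_1\cong H\cong G_2$ via the projections, Corollary~\ref{cor:pi_j(K_i)} splits $K'=K_1\times K_2$ with $K_i\subset G_i\cong H$, and the relation $HK'=G_1\times G_2$ (written in terms of the identified diagonal) collapses to $K_1K_2=H$; Theorem~\ref{thm:g=h+k} then places $(H,K_1,K_2)$ in Table~1, which is case~(3). For $s=2$ with exactly one surjective projection, the non-surjective side produces a Table~1 triple and one checks that the resulting modules and subordinations match precisely Examples~\ref{ex:a2n-1}, \ref{ex:a3}, \ref{ex:b3}, invoking Proposition~\ref{prop:table1} to rule out spurious combinations. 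Finally, $s\ge 3$ is excluded by applying the Jacobson-style argument from the proof of Lemma~\ref{lem:pi_i(K)} to each pair of projections: three pairwise Table~1 factorizations with diagonally embedded $H$ would require triple coset intersections incompatible with the rank bounds on $K'$ coming from Lemma~\ref{lem:pi_i(K)} and Corollary~\ref{cor:pi_j(K_i)}.

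Once the groups are identified, the subordination description of $v$ follows: writing $V=\bigoplus V_i\otimes W_i$, genericity of $v$ makes each $v_i$ a tensor of maximal rank $\dim W_i$, and $K$-invariance translates $v_i$ into a $K''$-equivariant injection $W_i^*\hookrightarrow V_i^{K'}$ (respectively $V_i^{K_1\times K_2}$ in case~(3)). The main obstacle I anticipate is the bookkeeping for rank-one subgroups inside $K'$: the strongly semisimple part is what Onishchik's classification sees directly, and one must carefully reconcile any tori or $\SL_2$ factors with Table~1, particularly entry~(3.2), and with the rank-one centralizer bound of Corollary~\ref{cor:rank1}. Ruling out $s\ge 3$ cleanly while accounting for these extra factors is the most delicate step.
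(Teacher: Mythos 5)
Your overall plan lines up with the paper's: reduce to the decomposition $G'=HK'$, invoke Lemma~\ref{lem:pi_i(K)} and Corollary~\ref{cor:pi_j(K_i)} to constrain the projections, use Proposition~\ref{prop:table1} to eliminate the case of two non-surjective projections of $H$, handle $G''$ via the centralizer of $K'$, and read off subordinations from genericity of $v$. The $s=1$ cases and the $s=2$, both-projections-surjective case are correct in outline (the $H\Delta(K_1\times K_2)=H\times H\Leftrightarrow K_1K_2=H$ reduction is the right one). But two steps do not go through as you describe.

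First, the exclusion of $s\ge 3$. You propose applying ``the Jacobson-style argument from Lemma~\ref{lem:pi_i(K)} to each pair of projections,'' but the Jacobson lemma used there requires two subalgebras of $\lieg\oplus\lieg$ that each project isomorphically onto both factors. By Lemma~\ref{lem:pi_i(K)} itself, $\pr_i(K')\neq G_i$ for $i\le s$, so $K'$ never supplies such a diagonal subalgebra, and the pairwise Jacobson contradiction you invoke simply is not available. The paper's proof is structurally different and essential: having reduced (via Proposition~\ref{prop:table1}) to the case $\pr_1(H)=G_1$, it sets $K_1$ (resp.\ $L'$) to be the part of $K'$ projecting nontrivially (resp.\ trivially) to $G_1$, produces the proper simple subgroup $H'\subset H$ (the image of $\pr_1^{-1}(\pr_1(K_1))\cap H$) with $G_2\times\cdots\times G_s=H'L'$, observes that every $\pr_i(H')\ne G_i$ for $i\ge 2$, and then re-applies the already-established ``two non-surjective projections is impossible'' step to $H'$ on $G_2\times\cdots\times G_s$, forcing $s-1\le 1$. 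This recursion is the content your sketch is missing, and it is exactly the step you flagged as delicate.

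Second, the bound $r-s\le 1$ in case~(3). Corollary~\ref{cor:rank1} bounds the rank of the centralizer of a \emph{single} Table~1 $K$ in a single $G$; applied to $K_1\times K_2\subset G_1\times G_2$ it only gives $C_{G_1}(K_1)\times C_{G_2}(K_2)$ of rank at most $2$, which does not rule out $r-s=2$. The paper needs, and uses, the sharper observation (by inspecting Table~1) that this centralizer is in fact finite unless $(H,K_1',K_2')$ or $(H,K_2',K_1')$ is entry~3.1, in which case it is a single $\SL_2$; only this forces $r\le 3$ and pins the extra factor to entry~3.2. Your argument for $r-s\le 1$ is therefore also incomplete in case~(3), although it is fine in case~(2) where only one centralizer is involved.
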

 
 \begin{proof}
 The cases where $s=1$ are quite easy and we leave them to the reader. Now suppose that $s>1$.  Suppose that $\pr_1(H)\neq G_1$ and that $\pr_2(H)\neq G_2$. Then there are strongly semisimple factors $K_i$ of $K'$ such that $\pr_i(K_i)\pr_i(H)=G_i$, $i=1$, $2$. By Lemma \ref{lem:pi_i(K)} and Corollary \ref{cor:pi_j(K_i)} the $\pr_i(K_i)$ are proper subgroups of the $G_i$ where $\pr_2(K_1)=\pr_1(K_2)=\{e\}$. Applying Proposition \ref{prop:table1} we obtain that each of the $G_i$ is isomorphic to $H$, a contradiction.  Thus we may assume that $\pr_1(H)=G_1$.
 
 Let $L'$ (resp.\ $K_1$) be the product of the strongly simple components of $K'$ which map trivially (resp.\ nontrivially) to $G_1$. Then $L'\subset G_2\times\dots\times G_s$. Since $G'=HK'$ and $\pr_1(H)=G_1$, we must have that $G_2\times\dots\times G_s=H'L'$ where $H'$ is the  inverse image of $\pr_1(K_1)$ in $H$ projected  to $G_2\times\dots\times G_s$.  Since $H'$ is a proper subgroup of $H$, $\pr_i(H')\neq G_i$, $i=2,\dots,s$. Since $\pr_2(H')\pr_2(L')=G_2$, we must have that $H'$ is simple, by Table 1. By our argument above, we must have that $s=2$.

 Now suppose that $\pr_1(H)=G_1$ and that $\pr_2(H)=G_2$. Then by Lemma \ref{lem:pi_i(K)} and Corollary \ref{cor:pi_j(K_i)} we have that $H\times H\simeq G_1\times G_2 =H(K_1\times K_2)$ where the $K_i\subset G_i$ are images of simple subgroups $K_1'$ and $K_2'$ of $K$. Then $H=K_1'K_2'$ so that $(H,K_1',K_2')$ occurs in Table 1, as claimed. Suppose that $r>2$. Then for $j>2$, $K_j\subset G_1\times G_2\times G_j$ projects onto $G_j$ and commutes with $K_1$ and $K_2$. But the centralizer of $K_1\times K_2$ in $G_1\times G_2$ is trivial unless $(H,K_1',K_2')$ or $(H,K_2',K_1')$ is entry 3.1 of Table 1, in which case the centralizer is $\SL_2$. Thus $r=3$ and there is a subordination as claimed.
 
 Finally, suppose that $s=2$ and that $\pr_2(H)=G_2$ and $\pr_1(H)\neq G_1$. Using Lemma \ref{lem:pi_i(K)} and Corollary \ref{cor:pi_j(K_i)} and the fact that $HK'=G_1G_2$, we see that there are simple subgroups $K_i\subset G_i$, $i=1$, $2$, such that $H(K_1 K_2)=G_1G_2$. We have entries $(G_1,\pr_1(H),K_1)$ and $(\pr_2(H),\pr_2(L),K_2)$ in Table 1 where $L$ is the preimage in $H$ of $\pr_1(H)\cap K_1$. Then $H$ must be   $\BB_3$ or of type $\AA_{2n-1}$. If $H=\BB_3$, then one easily sees that we are  in Example \ref{ex:b3} and that we cannot have $r>2$. The remaining possibilities are that  $H=\AA_{2n-1}$ and $G_1=\DD_{2n}$ or $\BB_3$ giving Examples \ref{ex:a2n-1} and \ref{ex:a3} where $r=2$ is forced.
  \end{proof}

The theorem above gives one the possibilities for the semisimple part of the Levi factor of $\{g\in\GL(V)\mid Gv=Hv\}$.   Preferable would be a  theorem which starts with a representation $V$ of $H$ and a generic $v\in V$ and tells you when $v$ is almost semi-characteristic.  In general, it is rather cumbersome to give such a theorem (for $\SL_2$ see section \ref{sec:sl2}). We content ourselves with working out the following example.

\begin{example}\label{ex:d2n+1} 
Let $H=\DD_{2n+1}$, $n\geq 2$. Let $V=\bigoplus_{i=1}^k n_iV_i$ be the isotypic decomposition of the $H$-module $V$. Let $v=(v_1,\dots,v_k)\in V$ be generic. We find conditions which guarantee that $v$ is almost semi-characteristic. 

Each $v_i$ is $(v_{i1},\dots,v_{i,n_i})$ where $v_{ij}$ lies in the $j$th copy of $V_i$, and the $v_{ij}$ span  a subspace $U_i\subset V_i$ of dimension $n_i$. In order to avoid case (1) of Theorem \ref{thm:Hsimple} we have to assume that the intersection of the  stabilizers of the subspaces $U_i$ in $H$ contains no nontrivial semisimple group. Cases (2) and (4) do not apply, so we are left with case (3), where we have $G=H\times H$, $K_1=\BB_{2n}$ and $K_2=\AA_{2n}$. But then there is a copy of $\AA_{2n-1}$ in $\DD_{2n+1}$ which fixes our point. We have already ruled this out.
\end{example}
 
\section{Semisimple groups}

We turn our attention to the case that $H\subset G$ where $G$ and $H$ are  connected semisimple, $V$ is an irreducible  $H$-module, $G$ acts almost faithfully on $V$ and $Gv=Hv$ for some nonzero $v\in V$. Let  $G_1,\dots,G_k$ be the  simple components of $G$. Then Theorem \ref{thm:irred}  tells us that $H=H_1\cdots H_k$ where the $H_i$ are semisimple and lie in $G_i$, $i=1,\dots,k$. Note that no $H_i$ is trivial, else $G_i$ acts trivially on $V$. Thus if $G_i$ has rank 1, then $G_i=H_i$. We have $V=V_1\otimes\dots\otimes V_k$ where $V_i$ is an almost faithful irreducible representation of both $G_i$ and $H_i$, $i=1,\dots,k$.

\subsection{}\label{subs:I}
Suppose that $G=HK$ where $K$ is semisimple and $G$, $H$ and $V$ are as above. (Think of $K\subset G_v$.)\ Let $\pr_i$ denote the projection of $G$ to $G_i$, $i=1,\dots,k$. Let $K'$ be a simple component of  $K$ and set $I':=\{i\mid H_i\pr_i(K')=G_i$ and $H_i\neq G_i\}$.  We may assume that $K$ contains no simple component of rank 1. 
\begin{proposition}\label{prop:I}
Let $G=HK$ as above. Let  $K'$, $K''$ be distinct simple components of $K$ and let $I'$ and $I''$ be as above. Then 
\begin{enumerate}
\item  $I'\cap I''=\emptyset$.
\item $|I'|\leq 2$. If $|I'|=2$, then $\pr_i(K')=G_i$ for some $i\in I'$.
\end{enumerate}
\end{proposition}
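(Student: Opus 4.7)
For part (1), I plan to exploit the rigidity of the Onishchik classification. Suppose $i\in I'\cap I''$; then $\pr_i(K')$ and $\pr_i(K'')$ are nontrivial projections of the simple rank-at-least-$2$ components $K'$ and $K''$ of $K$ (using the standing assumption that $K$ has no rank-$1$ simple factors), and hence are themselves simple of rank at least $2$ inside $G_i$. The factorization $H_i\pr_i(K')=G_i$ with $H_i$ proper semisimple places $(G_i,H_i,\pr_i(K'))$ in Table~1 (up to automorphism of $G_i$). Since $K'$ and $K''$ are distinct simple components of $K$ they commute, so $\pr_i(K'')$ is a reductive subgroup of $G_i$ centralizing $\pr_i(K')$. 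Corollary~\ref{cor:rank1}(1) then forces $\rank\pr_i(K'')\leq 1$, contradicting $\rank\pr_i(K'')\geq 2$.

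For part (2), the first step is the structural observation that, for every $i\in I'$ and every simple component $K''\neq K'$ of $K$, the projection $\pr_i(K'')$ is trivial. Indeed, $\pr_i(K'')$ centralizes $\pr_i(K')$ in $G_i$ and, as a quotient of the simple group $K''$, is either trivial or simple of rank at least $2$. In the latter case, if $\pr_i(K')\neq G_i$ then Corollary~\ref{cor:rank1}(1) forces rank at most $1$, a contradiction; if $\pr_i(K')=G_i$ then $\pr_i(K'')$ lies in the finite center of $G_i$ and hence is trivial by connectedness. Consequently $\pr_i(K)=\pr_i(K')$ for all $i\in I'$, and the factorization $HK=G$ projects to
\[
\bigl(\prod_{i\in I'} H_i\bigr)\cdot D = \prod_{i\in I'} G_i,
\]
where $D\subset\prod_{i\in I'} G_i$ is the (quasi-)diagonal image of $K'$ under the simultaneous projections $\pr_i$.

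Next I extract a dimension constraint. By Theorem~\ref{thm:g=h+k} applied to this factorization inside the semisimple group $\prod_{i\in I'} G_i$, one obtains
\[
\sum_{i\in I'} d_i \leq \dim K', \qquad d_i := \dim G_i-\dim H_i.
\]
For each $i\in I'$ with $\pr_i(K')\neq G_i$, the triple $(G_i,H_i,\pr_i(K'))$ is an entry of Table~1 and $d_i=\dim K'-\dim(H_i\cap\pr_i(K'))$ can be read off from that table; for $i\in I'$ with $\pr_i(K')=G_i$ one instead has $d_i<\dim G_i=\dim K'$.

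To finish I would run a case-by-case inspection, using Proposition~\ref{prop:table1} to constrain the ways a single simple $K'$ can simultaneously play a role in two distinct Table~1 triples, and checking that the inequality $\sum_{i\in I'} d_i\leq \dim K'$ rules out two indices $i\in I'$ at which $\pr_i(K')$ is proper and rules out $|I'|\geq 3$ outright. This yields $|I'|\leq 2$, with equality forcing $\pr_i(K')=G_i$ for some $i\in I'$, as claimed. The main obstacle is precisely this Table~1 verification: for each isomorphism type of $K'$ appearing in Table~1 one must check that two Onishchik factorizations sharing $K'$ are incompatible with the above dimension bound (and with the compatibility of the diagonal embedding $D$), relying on the very restrictive list of entries and the rigidity encoded in Proposition~\ref{prop:table1}.
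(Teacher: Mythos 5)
Part (1) of your proposal is essentially the paper's argument: for $i\in I'\cap I''$ the images $\pr_i(K')$ and $\pr_i(K'')$ commute, and Corollary~\ref{cor:rank1}(1) (or the finiteness of the center of $G_i$ when $\pr_i(K')=G_i$) forces the rank of the centralizer to be at most $1$, contradicting $\rank\pr_i(K'')\geq 2$. Your opening observation in part (2), that $\pr_i(K'')=\{e\}$ for every $i\in I'$ and every component $K''\neq K'$, is also sound and is implicitly used by the paper to justify that the projected factorization only involves $K'$.

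The gap is in the rest of part (2). Your dimension inequality $\sum_{i\in I'}d_i\leq\dim K'$ is correct but is \emph{not} strong enough to rule out two indices at which $\pr_i(K')$ is proper, nor to rule out $|I'|\geq 3$. A concrete counterexample: take $K'\simeq\AA_4$, and suppose $(G_i,H_i,\pr_i(K'))=(\AA_5,\CC_3,\AA_4)$ (entry 1 of Table~1 with $n=3$) and $(G_j,H_j,\pr_j(K'))=(\DD_5,\BB_4,\AA_4)$ (entry 2 with $n=4$). Then $d_i=35-21=14$, $d_j=45-36=9$, and $\dim K'=24$, so $d_i+d_j=23\leq 24$: the inequality is satisfied, yet this configuration must be excluded. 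Thus a bare dimension count cannot close the argument, and the ``case-by-case inspection'' you defer to is in fact the whole content of the step. Moreover Proposition~\ref{prop:table1}, which you invoke, is not stated in a form that directly applies here.

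What is needed is the finer group-theoretic argument the paper uses: from the factorization $\prod_{m\in I'}G_m=(\prod_m H_m)\,D$ with $D$ the quasi-diagonal image of $K'$, one deduces that $H_jL=G_j$ where $L=\pr_j(\pr_i^{-1}(H_i)\cap K')$. The key point is that $L$ is isogenous to $H_i\cap\pr_i(K')$, i.e.\ to the small group in the last column of Table~1 (in the example above, $L\simeq\CC_2$), and by direct inspection of Table~1 no such $L$ can give $H_jL=G_j$ when $(G_j,H_j,\pr_j(K'))$ is itself a Table~1 entry with $\pr_j(K')\supsetneq L$. The three-index case is handled by the same device: assuming $\pr_i(K')=G_i$, the projections of $\pr_i^{-1}(H_i)\cap K'$ to both $G_j$ and $G_k$ are proper, and the same contradiction arises. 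You should replace the dimension bound by this argument (or at minimum supplement it); as written your proof does not establish $|I'|\leq 2$.
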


\begin{proof} For any $i\in I'\cap I''$, the (nontrivial) images of $K'$ and $K''$ in $G_i$ commute. This is clearly not possible if $\pr_i(K')$  is $G_i$. If not, then we are in one of the entries of Table 1, and commutativity is not possible if $\pr_i(K')$ is one of the groups occurring there. Hence (1) holds.
Suppose that $i$, $j\in I'$, $i\neq j$ and $\pr_i(K')\neq G_i$ and $\pr_j(K')\neq  G_j$. Then we must have that $H_jL=G_j$ where $L=\pr_j(\pr_i\inv(H_i)\cap K')$ is a proper subgroup of $\pr_j(K')$ as in the last column of Table 1. But then, by inspection,  we cannot have $H_jL=G_j$. If $i$, $j$ and $k$ are distinct elements of $I'$, then we can assume that $\pr_i(K')=G_i$, and we derive a contradiction as before by considering the non-surjective projections of $\pr_i\inv(H_i)\cap K'$ to $G_j$ and $G_k$. Thus we have (2).
\end{proof}

\begin{theorem}\label{thm:k=2}
Suppose that $k=2$ and that  $H_1\neq G_1$, $H_2\neq G_2$ and $Gv=Hv$ for a nonzero $v\in V$. Then we are in one of the following cases.
\begin{enumerate}
\item There are tuples $(G_i,V_i,H_i,K_i)$  in Table 2, $i=1$, $2$, and $v\in V_1^{K_1}\otimes V_2^{K_2}$.
\item The tuple  $(G_1,V_1,H_1,K_1)$ is entry (1) of Table 2,   $(G_2,V_2,H_2,K_2)$ is entry (3.3) (with the same $k$) and $v$ generates the one-dimensional space of $\AA_{2n-1}$ fixed vectors in $V_1\otimes(V_2|\AA_{2n-1})$.
\item The tuple $(G_1,V_1,H_1,K_1)$ is entry (6.2) of Table 2,   $(G_2,V_2,H_2,K_2)$ is entry (6.3) (with $k=1$) and $v$ generates the one-dimensional space of $\DD_8$ fixed points in $V_1\otimes V_2$.
\end{enumerate}
\end{theorem}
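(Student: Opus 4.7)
The plan is to convert $Gv=Hv$ into an Onishchik decomposition $G=HK$ and to classify the ways in which the simple factors of $K$ can sit across $G_1\times G_2$. Concretely, I would take $K$ to be the semisimple part of a Levi factor of $G_v$; by Theorem~\ref{thm:g=h+k}, together with the observation that $H_i\neq G_i$ with $H_i$ semisimple forces each $G_i$ to have rank at least two (so $G=G_s$), the equality $G=HK$ holds. Write $K=K_1'\cdots K_r'$ in simple components of rank $\geq 2$ and assign to each $a$ the set $I_a'\subseteq\{1,2\}$ of Proposition~\ref{prop:I}. The equality $H_i\pr_i(K)=G_i$ together with Corollary~\ref{cor:rank1} (centralizers of the Table~1 subgroups have rank at most one) will single out, for each $i\in\{1,2\}$, a unique simple factor of $K$ with $i\in I_a'$ and will eliminate any further simple factor.

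For \emph{case A}, when every $I_a'$ is a singleton, the same centralizer argument applied inside each $G_i$ forces $K=K_1\times K_2$ with $K_i\subseteq G_i$. Then $(G_i,H_i,K_i)$ is one of the triples of Table~1; combined with the necessary condition that $V_i|H_i$ be irreducible (since $V=V_1\otimes V_2$ is $H$-irreducible), each $(G_i,V_i,H_i,K_i)$ appears in Table~2, and $v\in V^K=V_1^{K_1}\otimes V_2^{K_2}$. This is case~(1). For \emph{case B}, when some $K_a'$ (unique by Proposition~\ref{prop:I}(1)) has $I_a'=\{1,2\}$, the centralizer argument likewise forces $K=K_a'=:K'$ simple; Proposition~\ref{prop:I}(2) gives a surjective projection, say $\pr_1(K')=G_1$, and then $K'\simeq G_1$ because $K'$ is simple of rank $\geq 2$. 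In sub-case B1, $\pr_2(K')=G_2$ also, so $K'$ is the graph of an isogeny $G_1\simeq G_2$; modulo this identification, $HK'=G$ becomes $H_1\cdot H_2=G_1$, making $(G_1,H_1,H_2)$ a Table~1 triple, and $K'$-invariance of $v$ in the $G_1$-module $V_1\otimes V_2$ forces $V_2\simeq V_1^*$. Cross-referencing with the $H$-columns of Table~2 for each possible $G_1$ then narrows the possibilities to $(G_1,H_1,H_2)=(\DD_8,\BB_4,\BB_7)$ with $V_1=V_2=\phi_7$ (using self-duality of the half-spin of $\mathrm{Spin}(16)$), which is case~(3). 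In sub-case B2, $K_2:=\pr_2(K')\subsetneq G_2$, so $(G_2,H_2,K_2)$ is a Table~1 triple with $K_2\simeq G_1$; here $v$ being $K'$-fixed translates to $V_1^*\hookrightarrow V_2|K_2$, and $V_1|H_1$ irreducible places $(G_1,V_1,H_1)$ in Table~2. Scanning $K$-columns of Table~1 for the isomorphism $K_2\simeq G_1$ and matching against Table~2 for $G_1$, the only surviving combination is $G_1=\AA_{2n-1}$, $V_1=\phi_1^k$, $H_1=\CC_n$ (Table~2 entry~(1)) paired with $G_2=\DD_{2n}$, $V_2=\phi_{2n}^k$, $H_2=\BB_{2n-1}$, $K_2=\AA_{2n-1}$ (entry~(3.3) with the same $k$); the summand $\ss^k(\phi_{2n-1})$ of $V_2|\AA_{2n-1}$ carries the required unique one-dimensional $\AA_{2n-1}$-fixed line, yielding case~(2).

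The main obstacle is the enumeration step inside case~B, where one must simultaneously impose a Table~1 triple structure, a Table~2 irreducibility structure, and the branching condition $V_1^*\hookrightarrow V_2|K_2$ with multiplicity one. This last condition forces the explicit use of the $V|K$ decompositions of Table~2 (proved in the Appendix), and in particular of the fact that only the odd-indexed summand $\phi_1+\phi_3+\cdots+\phi_{2n-1}$ of $V_2|\AA_{2n-1}$ for $V_2=\phi_{2n}^k$ can accommodate $(\phi_1^k)^*=\phi_{2n-1}^k$, together with the self-duality of $\phi_7(\DD_8)$. Without these precise branching rules, the many superficially plausible Table~1/Table~2 matches could not be whittled down to exactly cases~(2) and~(3).
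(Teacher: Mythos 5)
Your overall architecture mirrors the paper's: you pass from $Gv=Hv$ to $G=HK$ with $K$ strongly semisimple, split according to how the simple factors of $K$ sit over $G_1\times G_2$, and end with a Table~1/Table~2 matching. The paper organizes this as (a) $K_1\subset G_1$, giving case~(1), versus a single diagonal simple $L$, which it further splits according to whether $\pr_2(L)=G_2$ (leading to case~(3)) or $\pr_2(L)\subsetneq G_2$ (leading to case~(2)); your cases~A, B1, B2 line up with those. So the route is essentially the same, and your B1 discussion (forcing $V_2\simeq V_1^*$ and then cross-referencing the two $H$-columns) is a fine version of the paper's Case~2.

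There is, however, a real gap in your sub-case~B2. When $L$ is a single diagonal simple with $\pr_1(L)=G_1$ and $\pr_2(L)=K_2\subsetneq G_2$, the decomposition $G=HL$ is strictly stronger than the two projected facts you extract from it (namely, $G_2=H_2K_2$ a Table~1 triple with $K_2\simeq G_1$, plus the branching condition $V_1^*\hookrightarrow V_2|K_2$). Writing an arbitrary $(x_1,0)$ as $(h_1,h_2)+l$ one sees that $G=HL$ also forces $G_2=H_2K_2'$ where $K_2'=\pr_2\bigl(\pr_1^{-1}(H_1)\cap L\bigr)\subsetneq K_2$ is (up to isogeny) a copy of $H_1$; that is, $(G_2,H_2,K_2')$ must \emph{also} be a Table~1 triple. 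This is the key lemma in the paper's Case~1, and you never derive or impose it. Without it your scan lets through a spurious case: take $(G_1,H_1,V_1)=(\BB_3,\GG_2,\phi_1^k)$ and $(G_2,H_2,K_2,V_2)=(\DD_4,\BB_3,\BB_3,\phi_1^j)$ with $j\geq k$, coming from Table~1 entry~5.4 and Table~2 entries~4.1/4.2 and~5.3. Here $K_2\simeq G_1$, $V_1|H_1$ is irreducible, and $V_1^*\simeq\phi_1^k(\BB_3)$ does embed with multiplicity one in $V_2|K_2=\ss^j(\phi_1+\theta_1)$, so every criterion you list is met. But $K_2'\simeq\GG_2$ and $(\DD_4,\BB_3,\GG_2)$ is not a Table~1 triple (indeed, since $\BB_3^{\mathrm{spin}}\cap\BB_3^{\mathrm{std}}=\GG_2$, one has $\BB_3^{\mathrm{spin}}\cdot\GG_2=\BB_3^{\mathrm{spin}}\neq\DD_4$), so $G\neq HL$, and this configuration does \emph{not} give $Gv=Hv$. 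You need the $K_2'$ constraint to discard it; the branching rules alone do not suffice.
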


\begin{proof} Let $K_1$ denote a maximal  strongly semisimple subgroup of $G_v$. Suppose that  $K_1\subset G_1$. Then Table 1 implies that $H_1K_1=G_1$. Let $K_2$ be a strongly semisimple subgroup of $G_v$ such that $\pr_2(K_2)H_2=G_2$. Then we must have $\pr_1(K_2)=\{e\}$ (again by Table 1), hence $K_2\subset G_2$ and we are in case (1).
 Thus we may suppose that 
any maximal strongly semisimple subgroup $L$ of $G_v$ lies diagonally in $G_1\times G_2$.  Since we are not in case (1), $\pr_2$ restricted to $L$ is almost faithful (and so is $\pr_1$). By Table 1, $L$ must be simple. It follows from Proposition \ref{prop:I} that we have two cases:
\smallskip

\noindent Case 1: $\pr_1(L)=G_1$ and $\pr_2(L)=K_2$ where $(G_2,H_2,K_2)$ is in Table 1. Moreover, $(G_2,H_2,K_2')$ is in Table 1, where $K_2'=\pr_2(\pr_1\inv(H_1)\cap L)$. Thus we are in the case    $(G_2,H_2,K_2)=(\DD_{2n},\BB_{2n-1},\AA_{2n-1})$ and $(G_2,H_2,K_2')=(\DD_{2n},\BB_{2n-1},\CC_n)$, $n\geq 2$, where $H_1\simeq\CC_n$. Then from Table 2 we see that $V_1\simeq\phi_1^k(\AA_{2n-1})$ (or its dual). 
 From Table 2(3.3) we get possibility (2) of our theorem. From (3.1) we get nothing since $\AA_{2n-1}$ has no fixed vectors in $V_1\otimes V_2$. The possibilities (4.2) and (5.2) fail for the same reason.  Hence we only get (2).
\smallskip

\noindent Case 2: Here we have that $\pr_1(L)=G_1$ and $\pr_2(L)=G_2$. Then $L=H_1'H_2'$ where   $H_i'=\pr_i\inv(H_i)\cap L$, $i=1$, $2$.  Moreover, there are irreducible representations $V_i$  of $L$ whose restrictions to $H_i'$ are irreducible, $i=1$, $2$. Table 2 tells us that we may have possibilities from entry (5.3) (and isomorphic entries), but   then there are no $\DD_4$-fixed points in $V_1\otimes V_2$. Finally, from   (6.2) and (6.3) we get possibility (3) above.
\end{proof}

\subsection{}\label{subs:v} Let $H\subset\GL(V)$ where $H$ is semisimple connected and $V$ is an irreducible $H$-module. Suppose that $v\in V$ is a  nonzero orbit such that the connected semisimple part $G$ of $\{g\in\GL(V)\mid gHv = Hv\}$  is strictly larger than $H$.  Let $K$ denote the strongly semisimple part of $G_v$. We are then in the   situation of \ref{subs:I}. For each simple component $K_j$ of $K$, let $I_j\subset\{1,\dots,k\}$ be as in \ref{subs:I}. Set $I'=\cup_j I_j$, $V'=\otimes_{i\in I'} V_i$ and $V''=\otimes_{i\not\in I'} V_i$. Define $G'$ and $G''$ analogously. Then $G''=H''=\Pi_{i\not\in I'}H_i$.
Let  $K'$ be the product of the $K_j$ such that $K_j\subset G'$ and let $K''$ be the product of the other simple factors of $K$  so we have $K=K'K''$.  Via the projections to $G'$ and $H''$ we have $K''$-module structures on $V'$ and $V''$.   Let $W'\subset V'$    be the minimal subspace such that $v\in W'\otimes V''$. Then $W'\subset (V')^{K'}$ and $v$ is $K''$-fixed.  

\begin{remark}
It follows from Theorem \ref{thm:k=2} that each simple component of $K'$ arises from an entry of Table 2 as in Theorem \ref{thm:k=2}(1), is a group $\AA_{2n-1}$ as  in Theorem \ref{thm:k=2}(2) or is the group $\DD_8$ in Theorem \ref{thm:k=2}(3).
\end{remark}

We   restate the discussion in \eqref{subs:v} as follows.

\begin{theorem} Let $v\in V$. If $Hv$ is not semi-characteristic,  then there are $K'$, $K''$, etc. as in \eqref{subs:v} and a minimal $K''$-stable subspace  $W'\subset (V')^{K'}$ such that $v\in W'\otimes V''$. If $K''\neq\{e\}$, then there is  a subordination   $\alpha\colon ((W')^*,K'')\to (V'',H'')$.
\end{theorem}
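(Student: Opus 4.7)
The plan is to verify that every object assembled in \eqref{subs:v} is indeed produced by the hypothesis, and that the resulting tensor $v\in W'\otimes V''$ yields the stated subordination. Because $V$ is irreducible, the earlier corollary on subgroups of $\GL(V)$ acting irreducibly shows that the identity component of $\{g\in\GL(V)\mid gHv=Hv\}$ is reductive; the failure of semi-characteristicity then forces its strongly semisimple part $G$ to contain $H$ strictly. With $K$ the strongly semisimple part of a Levi factor of $G_v$, one has $G=HK$, and Theorem \ref{thm:irred} supplies simple components $G_1,\dots,G_k$, subgroups $H_i\subset G_i$, and factors $V_i$ with $H=H_1\cdots H_k$ and $V=V_1\otimes\cdots\otimes V_k$.

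Next I would introduce the sets $I_j\subset\{1,\dots,k\}$ attached to the simple components $K_j$ of $K$ as in \eqref{subs:v}, set $I':=\bigcup_j I_j$, and justify the splittings $G=G'\times G''$ and $K=K'K''$. The key assertion $G''=H''$ I would prove by contradiction: if $i\notin I'$ but $H_i\ne G_i$, then $G=HK$ projected to $G_i$ yields $H_i\cdot\prod_j\pr_i(K_j)=G_i$; since each $K_j$ has rank at least two and distinct $K_j$ commute, Corollary \ref{cor:rank1}(2) together with Onishchik's Table~1 forces at most one $\pr_i(K_j)$ to be nontrivial, and that single factor must already satisfy $H_i\pr_i(K_j)=G_i$, placing $i$ in $I'$ and giving a contradiction.

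I would then define $W'\subset V'$ as the minimal subspace such that $v\in W'\otimes V''$, via the standard ``flattening'' interpretation of $v$ as determining the image of a canonical linear map $(V'')^*\to V'$. Since $K'\subset G_v$ acts trivially on the $V''$-factor, the identity $k'\cdot v=v$ combined with the minimality of $W'$ forces $W'\subset (V')^{K'}$. Since $K''$ fixes $v$ and acts on both factors through the projections $\pi'\colon K''\to G'$ and $\pi''\colon K''\to G''=H''$, the invariance $k''\cdot v=v$ combined with the same minimality gives $\pi'(k'')W'=W'$ for all $k''\in K''$, so $W'$ is $K''$-stable.

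Finally, assume $K''\ne\{e\}$. View $v\in W'\otimes V''$ as a linear map $\alpha\colon (W')^*\to V''$, injective by the minimality of $W'$. Take $L:=\{(k,\pi''(k))\mid k\in K''\}\subset K''\times H''$; the first projection $L\to K''$ is an isomorphism (hence a cover), and the $K''$-invariance of $v$, when rewritten through the partial-duality identification $v\leftrightarrow\alpha$, is precisely the $L$-equivariance of $\alpha$. This produces the claimed subordination. The most delicate step in the plan is the verification of $G''=H''$, since it is the sole point at which Onishchik's classification must be fully exploited rather than merely cited from the setup.
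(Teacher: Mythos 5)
Your proposal is correct and takes essentially the same route as the paper: the theorem is presented there as a restatement of the discussion in \eqref{subs:v}, and you simply flesh out that discussion --- the reductivity from irreducibility of $V$, the product decomposition from Theorem \ref{thm:irred}, the identification $G''=H''$ via Onishchik's Table 1 and Corollary \ref{cor:rank1}(2), the minimality argument for $W'\subset (V')^{K'}$ and its $K''$-stability, and the subordination via the graph of $\pi''\colon K''\to H''$. The one point worth flagging is terminological: the paper takes $G$ to be the connected \emph{semisimple} part of the stabilizer (not only the strongly semisimple part); rank-one factors of $G$ then automatically coincide with factors of $H$, so your slip does not affect the argument, but it is worth keeping the two notions distinct since the paper uses both.
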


Now we would like to find some simple sufficient criteria for all generic $v$ to be semi-characteristic. For this, we only need to avoid the case that $Gv=Hv$ where $G_i$ differs from $H_i$ for only one $i$. 
 Then after renumbering we have that $G_1\neq H_1$ and $G_i=H_i$ for $i>1$. We have that $V''=V_2\otimes\dots\otimes V_k$ and $H''=H_2\times\cdots\times  H_k$.  Note that $H_1$ may be any semisimple subgroup of $H$.

\begin{proposition}\label{prop:g1}
Let $G_1\supset H_1$ be as above. Then  one of the following occurs.
\begin{enumerate}
\item There is a subordination $\alpha\colon (V_1^*,G_1)\to (V'',H'')$ where $K''$ projects onto $G_1$.
 \item The tuple $(V_1,G_1,H_1,K_1)$ occurs in Table 2 where $K_1$ is the projection of $K''$ to $G_1$, and we have a subordination $(W_1^*,K_1)\to (V'',H'')$ where $W_1$ is minimal such that $v\in W_1\otimes V''$.
\item The group $K''$ projects trivially to $G_1$ and the tuple $(V_1,G_1,H_1,K_1)$ occurs in Table 2 for some $K_1$ where $V_1^{K_1}\neq (0)$. 
\end{enumerate}
\end{proposition}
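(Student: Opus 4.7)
The plan is to set $K_1 := \mathrm{pr}_1(K'') \subset G_1$ and carry out a case analysis according to whether $K_1 = G_1$, $K_1$ is proper and nontrivial, or $K_1 = \{e\}$. By the construction of \S\ref{subs:v}, $v \in V_1 \otimes V''$ is $K''$-fixed, so it corresponds to a $K''$-equivariant linear map $\phi_v \colon V_1^* \to V''$, where $K''$ acts on $V_1^*$ through $K_1 \subset G_1$ and on $V''$ through its projection to $H'' = H_2 \times \cdots \times H_k$. Dually, the image of $\phi_v^*$ is the minimal subspace $W_1 \subset V_1$ with $v \in W_1 \otimes V''$, and $W_1$ is $K_1$-stable.

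If $K_1 = G_1$, then $\ker \phi_v \subset V_1^*$ is a $G_1$-submodule of an irreducible module and hence vanishes, since $v \neq 0$. So $\phi_v$ is itself an injective $K''$-equivariant map, which is precisely the subordination $(V_1^*, G_1) \to (V'', H'')$ asserted in case (1), with $K''$ surjecting onto $G_1$.

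If $K_1$ is a proper nontrivial subgroup of $G_1$, then the factorisation $\phi_v \colon V_1^* \twoheadrightarrow W_1^* \hookrightarrow V''$ yields the $K_1$-equivariant subordination $(W_1^*, K_1) \to (V'', H'')$ required in case (2). To place the tuple $(V_1, G_1, H_1, K_1)$ in Table 2, I would project the identity $G = HK$ to $G_1$, obtaining $G_1 = H_1 \, \mathrm{pr}_1(K) = H_1 K' K_1$. Combining Theorem~\ref{thm:g=h+k} with Onishchik's classification (Tables~1 and~2), and using Proposition~\ref{prop:I} to restrict how distinct simple factors of $K$ may both touch $G_1$, would then reduce this product decomposition to an entry of Table~1; irreducibility of $V_1$ under $G_1$ and $H_1$ puts $(V_1, G_1, H_1, K_1)$ in Table~2.

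Finally, if $K_1 = \{e\}$, then $\mathrm{pr}_1(K) = K'$ and $G_1 = H_1 K'$, so that $(G_1, H_1, K')$ appears in Table~1 and $W_1 \subset V_1^{K'}$ is nonzero; this is case (3) of the proposition, with the $K_1$ in the statement playing the role of $K'$. The main obstacle is case (2) when $K' \neq \{e\}$: both $K'$ and $K_1$ then contribute to the factorisation $G_1 = H_1 K' K_1$, and one must verify that they fit inside a single Table~2 row rather than forcing a genuinely new ``two-subgroup'' structure. I expect this to follow from a careful reprise of Proposition~\ref{prop:I} and of the arguments used to prove Theorem~\ref{thm:k=2}, but its precise formulation is the delicate point.
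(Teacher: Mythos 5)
Your case split on $K_1 := \pr_1(K'')$ (trivial, proper nontrivial, all of $G_1$) is the same as the paper's, and the subordination arguments via the linear map $\phi_v\colon V_1^*\to V''$ are correct. But you explicitly flag, and do not resolve, the possibility that $K'\neq\{e\}$ in case (2), calling it ``the delicate point'' and deferring it to ``a careful reprise of Proposition~\ref{prop:I}.'' This is exactly the step the paper's proof settles in one line, and without it the proposition is not proved.

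The missing observation is: if $\pr_1(K'')$ is nontrivial, then $K'$ must be trivial. The reason is the structure of Table~1 / Corollary~\ref{cor:rank1}. One has $G_1 = H_1\,K'\,\pr_1(K'')$, where $K'$ and $\pr_1(K'')$ commute in $G_1$. Each simple component of $K''$ has rank $\geq 2$, so its image under $\pr_1$ is either trivial or again of rank $\geq 2$; hence $\pr_1(K'')$, if nontrivial, is strongly semisimple, as is $K'$. But Corollary~\ref{cor:rank1}(2) says that a product decomposition of a simple $G_1$ arising from Table~1 has the form $G_1 = H_s L$ with $L$ a single simple strongly semisimple factor, and Corollary~\ref{cor:rank1}(1) says the centralizer of that $L$ in $G_1$ has rank at most $1$. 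Whichever of $K'$, $\pr_1(K'')$ contains $L$, the other must lie in that centralizer, hence has rank $\leq 1$, hence (being trivial or strongly semisimple) is trivial. So at most one of $K'$, $\pr_1(K'')$ is nontrivial. Once $K'=\{e\}$ is known, $G_1 = H_1 K_1$ with $K_1=\pr_1(K'')$, so $(G_1,H_1,K_1)$ is in Table~1, and irreducibility of $V_1$ as both a $G_1$- and $H_1$-module places $(V_1,G_1,H_1,K_1)$ in Table~2 — exactly what you sketched, but now on firm ground. Filling in this single lemma turns your proposal into a complete proof along the paper's lines.
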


\begin{proof} If $\pr_1(K'')=\{e\}$, then we are in case (3). Suppose that $\pr_1(K'')$ is nontrivial. Then it follows from Table 1 that $K'=\{e\}$. If the projection of $K''$ to $G_1$ is   $G_1$, then $v$ corresponds to a subordination of $V_1^*$ to $V''$ and we are in case (1). The only other possibility is that the projection of $K''$ is $K_1$ where $(V_1,G_1,H_1,K_1)$ occurs in Table 2 and we are in case (2).  
\end{proof}

\begin{example}
Suppose that $k=2$,  $\dim V_2\geq \dim V_1$ and that $H_2=\SL(V_2)$. Let $v\in V_1\otimes V_2$ have maximal rank. Then case (1) applies. If $(V_1,G_1,H_1,K_1)$ occurs in Table 2, let  $W_1$ be any nontrivial $K_1$-subspace of $V_1$ and let $v\in W_1\otimes V_2$ have maximal rank. Then case (2) applies.
\end{example}

From Proposition \ref{prop:g1} we get the following criterion for all nonzero orbits $Hv$ to be semi-characteristic.

\begin{corollary}\label{cor:allsemi}
Let $V$ be an irreducible $H$-module where $H$ is semisimple. Write $H=H_1\times\cdots\times   H_k$ where the $H_i$ are  simple for $i>1$, and let $V=V_1\otimes\dots\otimes V_k$ be the corresponding decomposition of $V$. Let $V''$ denote $V_2\otimes\dots\otimes V_k$ and set $H''= H_2\cdots H_k$. Suppose that none of the following occurs for any   decomposition $H=H_1\times\cdots\times H_k$.
\begin{enumerate}
\item There is a subordination $(V_1^*,H_1)\to (V'',H'')$ where $H_1\neq\SL(V_1)$.
\item There is a   tuple $(V_1,G_1,H_1,K_1)$  in Table 2  and  a subordination $(W^*,K_1)\to (V'',H'')$ where $W\subset V_1$ is $K_1$-stable.
\item There is a    tuple $(V_1,G_1,H_1,K_1)$  in Table 2  where $V_1^{K_1}\neq (0)$. 
\end{enumerate}
Then every nonzero $v\in V$ is semi-characteristic.
\end{corollary}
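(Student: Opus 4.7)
The plan is a proof by contradiction built on Proposition~\ref{prop:g1}. Assume some nonzero $v\in V$ is not semi-characteristic and set $G:=\{g\in\GL(V)\mid gHv=Hv\}$. Because $V$ is $H$-irreducible and $G\supset H$, the group $G$ acts irreducibly on $V$, so $G^0$ is reductive by the Jacobson result recalled in \S2. The failure of semi-characteristicity is then equivalent to the semisimple part $\tilde G$ of $G^0$ strictly containing $H$; since $H\subset\tilde G\subset G$, this forces $\tilde G v=Hv$.

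Next, I apply the framework of \S\ref{subs:v}: decompose $\tilde G=\tilde G_1\times\cdots\times\tilde G_\ell$ into its simple factors, so by Theorem~\ref{thm:irred} we may write $H=\tilde H_1\cdots\tilde H_\ell$ with each $\tilde H_i\subset\tilde G_i$ semisimple, and correspondingly $V=\tilde V_1\otimes\cdots\otimes\tilde V_\ell$ with each $\tilde V_i$ irreducible for both $\tilde G_i$ and $\tilde H_i$. Since $\tilde G\neq H$, at least one $\tilde H_j\neq\tilde G_j$; relabel so $j=1$. Then $G':=\tilde G_1\times\tilde H_2\times\cdots\times\tilde H_\ell$ satisfies $H\subset G'\subset\tilde G$, hence $G'v=Hv$ by the sandwich $Hv\subset G'v\subset\tilde G v=Hv$. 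Decomposing each $\tilde H_i$ ($i\geq 2$) into simple factors and relabeling, I obtain a presentation $H=H_1\times H_2\times\cdots\times H_k$ with $H_1:=\tilde H_1$ semisimple and $H_i$ simple for $i>1$, together with a matching tensor factorization $V=V_1\otimes\cdots\otimes V_k$. This is exactly the setting in which Proposition~\ref{prop:g1} applies to the pair $(G',H)$, with $G_1:=\tilde G_1\supsetneq H_1$.

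Proposition~\ref{prop:g1} then yields one of three alternatives, and I would verify that each contradicts one of the excluded conditions of the corollary. In alternative~(1) the subordination $(V_1^*,G_1)\to(V'',H'')$ restricts, by intersecting the witnessing reductive subgroup of $G_1\times H''$ with $\pr_1\inv(H_1)$, to a subordination $(V_1^*,H_1)\to(V'',H'')$. Moreover $H_1\neq\SL(V_1)$, since otherwise $G_1\supsetneq\SL(V_1)$ would be a connected semisimple subgroup of $\GL(V_1)$ acting irreducibly and properly containing $\SL(V_1)$, which is impossible because $\GL(V_1)/\SL(V_1)$ is abelian. This contradicts exclusion~(1) of the corollary. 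Alternatives~(2) and~(3) of Proposition~\ref{prop:g1} match verbatim the exclusions~(2) and~(3), with the same tuple $(V_1,G_1,H_1,K_1)$ drawn from Table~2, and thus produce direct contradictions as well.

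The only place requiring care is the reduction step producing $G'$ and the refined decomposition; one must check that the resulting decomposition respects the asymmetry in Corollary~\ref{cor:allsemi}'s hypothesis, namely that $H_1$ may be semisimple while $H_2,\dots,H_k$ are required to be simple. This asymmetry is precisely the one permitted in Proposition~\ref{prop:g1} (see the parenthetical remark preceding its statement), so no additional work is needed and the proof is complete.
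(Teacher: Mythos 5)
Your proof is correct and follows the same route as the paper: the paper derives Corollary~\ref{cor:allsemi} directly from Proposition~\ref{prop:g1}, using the reduction (sketched in the paragraph preceding that proposition) to the case where exactly one factor $G_1$ of the ambient semisimple group differs from the corresponding factor $H_1$ of $H$. You have simply spelled out that reduction (passing from $\tilde G$ to $G'=\tilde G_1\times\tilde H_2\times\cdots\times\tilde H_\ell$, then splitting the $\tilde H_i$, $i\geq 2$, into simple factors), verified that each alternative of Proposition~\ref{prop:g1} maps to the corresponding excluded condition, and supplied the small additional observation that alternative~(1) restricts to a subordination for $H_1$ itself with $H_1\neq\SL(V_1)$ forced by $H_1\subsetneq G_1\subset\SL(V_1)$.
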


Admittedly, the corollary is a little unwieldy, but in any concrete case it is quite easy to apply. We see what we can say in the case of isotropy representations of symmetric spaces.

\begin{example}\label{ex:A5A1}
Let $H=\AA_5\times\AA_1$ acting on $V=\phi_3\otimes\phi_1$. This corresponds to the symmetric space of type EII (see \cite[Ch. X, Table V]{Helg78}). Let $0\neq v\in V$ and let $G$ be as usual with semisimple part $G_s$. If $G_s$ contains $H$, then $G_s$  cannot be simple (by Table 2), and if it is of the form $G_1\times G_2$ where $G_1\supset\AA_5$ and $G_1=\SL_2$, then it follows from Corollary \ref{cor:allsemi} or Proposition \ref{prop:g1} that $G_1=\AA_5$. Hence $v$ is semi-characteristic.
\end{example}

\begin{example}\label{ex:CpCq}
Let $p\geq q\in\N$ where $p>1$. Let $H=\Sp_{2p}\times\Sp_{2q}$ act in the natural way on $V=\C^{2p}\otimes\C^{2q}$. This corresponds to the symmetric space of type CII. Let $0\neq v\in V$. Then one easily sees that the only possibility for a semisimple $G_s$ containing $H$ stabilizing $Hv$ occurs in the case that $v$ has rank 1, in which case $G_s=\SL_{2p}\times\SL_{2q}$. For $q>1$ this corresponds to  Theorem \ref{thm:k=2}(1). If rank $v>1$, then $v$ is semi-characteristic.
\end{example}

\begin{example}\label{ex:glpq} Let $p\geq q\geq 1$. Let $H$ be the intersection of the block diagonal copy of $\GL_p\times\GL_q$ in $\GL_{p+q}$  with $\SL_{p+q}$. Then $H$ acts naturally on $V\oplus V^*$ where $V=\Hom(\C^p,\C^q)$. This is  an isotropy representation corresponding to the symmetric space of type AIII. First suppose that $q\geq 2$. Let $v=(x,x^*)$ where $x\in V$ and $x^*\in V^*$ are nonzero. Let $G=\{g\in\GL(V\oplus V^*)\mid gHv=Hv\}^0$.  Suppose that $v$ is not semi-characteristic. Then $\lieg$ is an $H$-stable Lie subalgebra of  $\Hom(V\oplus V^*,V\oplus V^*)$ which properly contains $\lieh$.
If $\lieg$ projected to $\Hom(V,V)$ or $\Hom(V^*,V^*)$ is more than a central extension of $\lieh$, then $\lieg$ has to contain $\lie{sl}_{p+q}$. But there is no corresponding entry in Table 2.   Thus we can suppose
that $\lieg$ projects nontrivially to one of the irreducible components of 
$$
\Hom (V,V^*)\simeq V^*\otimes V^*\simeq (S^2(\C^p)+\wedge^2(\C^p))\otimes(S^2((\C^q)^*)+\wedge^2((\C^q)^*))).
$$
Let us consider the case that $\lieg$ contains   $\lieg':=\wedge^2(\C^p)\otimes\wedge^2((\C^q)^*)$. Now $x$ has normal form $\sum_{i=1}^k e_i^*\otimes f_i$ where $e_1,\dots,e_p$ is a basis of $\C^p$, $f_1,\dots,f_q$ is a basis of $\C^q$ and $e_1^*,\dots, e_p^*$, $f_1^*,\dots,f_q^*$ denote  the elements of the dual bases. Then $e_1\wedge e_2\otimes f_1^*\wedge f_2^*$ lies in $\lieg'$  and applied to $x$ gives us $y^*=e_1\otimes f_1^*+e_2\otimes f_2^*$ if $k\geq 2$. The contraction of $x$ and $y^*$ (an $H$-invariant of $V\oplus V^*$) is not zero. Thus $(x,x^*+y^*)$ cannot be in the $H$-orbit of $v$, a contradiction. If $k=1$, then acting by the reductive part of $H_x$ we can bring $x^*$ to the normal form 
$$
ce_1\otimes f_1^*+e_1\otimes f^*+e\otimes f_1^*+\sum_{i=2}^\ell e_i\otimes f_i^*
$$
where $c\in \C$, $f^*\in \sspan\{f_2^*,\dots,f_q^*\}$ and $e\in \sspan\{e_2,\dots,e_p\}$. If $c\neq 0$, then acting by unipotent elements  of $H_x$ we can arrange that $e$ and $f^*$ are zero. Then  $\ell+1$ is an invariant of $x^*$ (its rank) under the action of $H_x$. But we can change $\ell$  by adding elements   of $\lieg'$ applied to $x$, again giving a contradiction.   If $c=0$ and $q>2$, then one similarly sees that we can change the rank of $x^*$. If $c=0$, $q=2$ and $e$ or $f^*\neq 0$, however, the $G'$-orbit of $v$ is contained in the $H_x$-orbit of $v$ and $v$ is not semi-characteristic.
The other three possible components of $\lieg$ give nothing new. Thus  $v$ is possibly not semi-characteristic only  when $q=2$,  $v$ is in the null cone and one of $x$ and $x^*$ has rank 1.

If $p=1$ and $q=1$ we have a torus action in which case $G=H$. If $q=1$ and $p\geq 2$  we  have the action of $\GL_p$ on $\C^p\oplus(\C^p)^*$. If $Hv$  is not closed, then $v$ is semi-characteristic. If $Hv$ is closed, then $G\simeq\SO_{2p}$ and $v$ is not semi-characteristic.
\end{example}

\begin{example}\label{ex:exceptions}
In general, one has a good chance to have points $v$ which are not semi-characteristic in case your representation is reducible. One can calculate that this actually occurs for the following isotropy representations of symmetric spaces.
\begin{enumerate}
\item $(V,H)=(\C^p\otimes\C^2,\SO_p\times\SO_2)$, $p\geq 3$. This is of type BDI and of type CI for $p=3$.
\item $(V,H)=(\wedge^2(\C^n)\oplus\wedge^2((\C^n)^*),\GL_n)$, $3\leq n\leq 5$. This is of type DIII.
\item $(V,H)=(\phi_4\otimes\nu_1+\phi_5\otimes\nu_{-1},\DD_5\times\C^*)$. This is of type EIII. Here $\nu_j$ denotes the one-dimensional representation of $\C^*$ of weight $j$.
\item $(V,H)=(\phi_1\otimes\nu_1+\phi_5\otimes\nu_{-1},\EE_6\times\C^*)$. This is of type EVII.
\end{enumerate}
\end{example}

Our discussion above establishes

\begin{proposition}
Let $(V,H)$ be the isotropy representation of an irreducible  symmetric space. Then, with the exception of the adjoint representation of $\CC_n$, $n\geq 2$ and the exceptions noted in Examples \ref{ex:CpCq}, \ref{ex:glpq} and  \ref{ex:exceptions}, every orbit $Hv$, $v$ generic, is semi-characteristic.
\end{proposition}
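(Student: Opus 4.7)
The plan is to go through Cartan's classification of irreducible Riemannian symmetric pairs $(V,H)$ and verify the claim type by type, using the machinery already developed. For each isotropy pair one determines whether $V$ is an irreducible $H$-module and, if $H$ is semisimple, whether $H$ has more than one simple factor; then the appropriate criterion applies.

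When $V$ is an irreducible $H$-module with $H$ simple of rank at least two, Theorem \ref{thm:simple} pinpoints the six families of exceptions. Among isotropy representations of irreducible symmetric spaces, the only entry of Theorem \ref{thm:simple} that arises is case (1) with $k=2$, namely the adjoint representation of $\CC_n$ (symmetric space of type CI), and this accounts for the $\CC_n$ exception in the statement. The other five families from Theorem \ref{thm:simple} are not isotropy representations, as one checks by comparing highest weights against the standard list (\cite[Ch.\ X, Table V]{Helg78}). When $V$ is irreducible and $H$ is semisimple with several simple factors, Corollary \ref{cor:allsemi} does the job: one checks that none of its three obstructions (a subordination from $V_1^*$, a subordination from a $K_1$-stable subspace, or a Table 2 tuple with nontrivial $K_1$-fixed vectors) can arise for an isotropy pair. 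This is precisely the reasoning of Example \ref{ex:A5A1} for type EII, and the same kind of direct check dispatches the tensor-product cases like $(\phi_1\otimes\phi_1,\,\AA_1\times\BB_n)$ of type BDI with small parameters.

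When $V$ is reducible as an $H$-module, one instead applies Theorem \ref{thm:Hsimple} (if $H$ is simple) or Proposition \ref{prop:g1} in the semisimple case. The reducible isotropy representations of irreducible symmetric spaces are precisely those of types AIII, BDI with $\SO_2$-factor, CII, DIII, EIII and EVII (together with CI, which is adjoint and already handled). Each has been analysed: CII in Example \ref{ex:CpCq}, AIII in Example \ref{ex:glpq}, and the four remaining sporadic exceptions in Example \ref{ex:exceptions}. In every remaining case---e.g.\ DIII with $n\geq 6$ or the generic-rank regime of CII and AIII---the criteria from Sections 5 and 6 leave no room for a semisimple $G$ strictly larger than (a central extension of) $H$.

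The main obstacle is the reducible case: the subordination criteria of Sections 5 and 6 give only necessary conditions for $Hv$ to fail to be semi-characteristic, so one must still confirm, as in the explicit computation carried out in Example \ref{ex:glpq}, that the remaining generic orbits really are semi-characteristic. This requires a case-by-case inspection of $H$-stable subalgebras of $\End(V)$ using the isotypic decomposition, together with the invariants that distinguish $H$-orbits within a fibre of the categorical quotient. Once this bookkeeping is completed for the finite list of reducible isotropy pairs, the proposition follows.
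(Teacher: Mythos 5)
Your overall strategy --- a case-by-case march through Cartan's classification, applying Theorem \ref{thm:simple} when $H$ is simple and $V$ irreducible, Corollary \ref{cor:allsemi} when $H$ is semisimple with several factors, and the examples and subordination machinery when $V$ is reducible --- mirrors the paper's own (implicit) argument, which gives no detailed proof and only refers to ``the discussion above.'' One minor slip: the adjoint representation of $\CC_n$ is the isotropy representation of the group-type (type~II) symmetric space $\Sp_{2n}\times\Sp_{2n}/\Sp_{2n}$, not of CI. The space CI is $\Sp(n,\R)/\U(n)$, whose complexified isotropy representation is the \emph{reducible} $\GL_n$-module $S^2(\C^n)\oplus S^2((\C^n)^*)$ (for $n=2$ it reappears as $\C^3\otimes\C^2$ in Example \ref{ex:exceptions}(1)).

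The more serious problem is your claim that ``the other five families from Theorem \ref{thm:simple} are not isotropy representations.'' That is false: case~(5) --- $H=\BB_4$, $\phi=\phi_4$ --- is precisely the $16$-dimensional spin representation of $\Spin_9$, which is the isotropy representation of the rank-one symmetric space FII~$=F_4/\Spin_9$. Here $\Spin_9$ acts transitively on each nonzero level set of the invariant quadratic form (recall $\Spin_9/\Spin_7\simeq S^{15}$), so every closed $\Spin_9$-orbit in $\C^{16}$ already coincides with the corresponding $\SO_{16}$-orbit, and hence $G^0=\SO_{16}$; the nullcone orbit is the highest-weight orbit, covered by case~(2) with $n=2$, $k=1$. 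Thus \emph{no} nonzero (hence no generic) orbit of $\phi_4(\BB_4)$ is semi-characteristic, and FII ought to appear among the exceptions. Your verification step declares this family irrelevant without actually excluding it, so the proof does not go through as written. (Note that the proposition in the paper also omits FII from its exception list, so the gap originates there --- but it is exactly the kind of thing the ``one checks by comparing highest weights'' step is supposed to catch, and it didn't.)
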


The proposition applies to some of the  questions of Ra\"is in \cite{Rais1}.

\section{Representations of SL$_2$}\label{sec:sl2}
 We consider the case of $H$-modules $V$ where $H:=\SL_2$ and $V^H=0$.  We have a generic $v\in V$  and $G:=\{g\in\GL(V)\mid gHv=Hv\}^0$ is not equal to $H$.   We denote by $R_n$ the $H$-module of binary forms of degree $n$. Then $R_n\simeq S^n(\C^2)$ has basis $x^n,x^{n-1}y,\dots,y^n$ where $x$, $y$ are the usual basis of $\C^2$ and $x^n$ is a highest weight vector.   Let  $N_G(H)$ denote the  connected normalizer of $H$ in $G$.

To determine $G$, we show that it suffices  to determine $N_G(H)$ and $\lieg_u$. We determine $N_G(H)$ in Theorem \ref{thm:sl2} below. We show that $\lieg_u$ is abelian and a multiplicity free $H$-module (Proposition \ref{prop:multfree}). We give necessary and sufficient conditions for $\lieg_u$ to contain a copy of $R_p$, $p>0$ (Theorem \ref{thm:mainsl2}). We then find some simple conditions that guarantee that $\lieg_u$ is zero or the trivial $H$-module for every generic $v\in V$ (Corollary \ref{cor:triv}).

  \begin{lemma}\label{lem:levi}
  Let $\tilde G$ be a Levi component of $G$ containing  $H$. Then $\tilde G\subset N_G(H)$.
   \end{lemma}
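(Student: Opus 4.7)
The plan is to show that $\lieh$ is an ideal of $\tilde\lieg$, which is equivalent to $\tilde G\subset N_G(H)$ since both $\tilde G$ and $H$ are connected. First I would observe that $\tilde G\subset G$ together with $Gv=Hv$ forces $\tilde Gv=Hv$, hence $\tilde G=H\tilde G_v$; as $\tilde G$ is reductive, a Levi component $K$ of $\tilde G_v$ is reductive and $\tilde G=HK$, giving $\tilde\lieg=\lieh+\liek$. Choosing an $H$-invariant complement $\mathfrak m$ to $\lieh$ in $\tilde\lieg$ (by complete reducibility), $H$-invariance of $\mathfrak m$ gives $[\lieh,\mathfrak m]\subset\mathfrak m$ automatically; since $\mathfrak m\cap\lieh=0$, this shows that $\lieh$ is an ideal of $\tilde\lieg$ if and only if $\mathfrak m$ is a trivial $\lieh$-module.

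I would then apply Theorem \ref{thm:g=h+k} to $\tilde\lieg=\lieh+\liek$. Since $H=\SL_2$ has rank $1$, $\lieh_s=0$, so $\tilde\lieg_s=\liek_s\subset\liek$. Thus every simple factor $S$ of $\tilde G$ of rank $\geq 2$ lies in $\tilde G_v$, and, being normal in $\tilde G$, for any $h\in H$ we have $S\cdot hv=h(h^{-1}Sh)v=hSv=\{hv\}$, so $S$ fixes every point of $Hv$. Genericity of $v$ means $Hv$ spans $V$, hence $S$ acts trivially on $V$, and faithfulness of $\tilde G\subset\GL(V)$ forces $S=\{e\}$. Consequently $\tilde\lieg=\liez\oplus\bigoplus_{i=1}^r\mathfrak{s}_i$ with $\liez$ central and each $\mathfrak{s}_i\simeq\mathfrak{sl}_2$.

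Since $\lieh$ is simple non-abelian it projects trivially to $\liez$, and each non-zero projection $\pi_i(\lieh)\subset\mathfrak{s}_i$ is a three-dimensional simple subalgebra, hence $\pi_i(\lieh)=\mathfrak{s}_i$. So $\lieh$ embeds diagonally in $\bigoplus_{i\in I}\mathfrak{s}_i$ for some non-empty $I\subset\{1,\dots,r\}$. If $|I|=1$ then $\lieh=\mathfrak{s}_i$ is a simple ideal of $\tilde\lieg$ and we are done. To rule out $|I|\geq 2$, I would pick distinct $i,j\in I$ and consider the element $g\in G_i\times G_j\subset\tilde G$ equal to $h$ in the $G_i$-component and $h^{-1}$ in the $G_j$-component (identity elsewhere), for generic $h\in G_i=\SL_2$ with $h^2\neq e$. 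Then $gv\in\tilde Gv=Hv$ yields $gv=h'v$ for some $h'\in H$. Using faithfulness of $\tilde G$ to pick $\tilde G$-isotypic components $V_\alpha,V_\beta$ on which $G_i$ and $G_j$ act non-trivially (with $v_\alpha,v_\beta\neq 0$ by genericity), the stabilizer conditions that $h'^{-1}g$ fix $v_\alpha$ and $v_\beta$ force $h^{-1}h'\in L_1$ and $hh'\in L_2$ for certain at-most-one-dimensional subvarieties $L_1,L_2\subset\SL_2$. Eliminating $h'$ gives $h^2\in L_1L_2$, a set of dimension at most $2$; since $h$ ranges over a three-dimensional generic subset of $\SL_2$ this is impossible.

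The main obstacle will be making the stabilizer analysis rigorous when $v_\alpha,v_\beta$ are not simple tensors, since the line-stabilizer bound is cleanest on simple-tensor components. One can handle this by noting that for generic $v$ the induced $G_i$- (resp.\ $G_j$-) action on $v_\alpha$ (resp.\ $v_\beta$) has at most a one-dimensional stabilizer, which is automatic for irreducible $\SL_2$-representations of dimension $\geq 4$, with the low-dimensional cases treated by direct inspection.
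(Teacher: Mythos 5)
Your first three paragraphs reproduce the paper's argument essentially verbatim: $\tilde G v = Hv$ gives $\tilde G = HK$ with $K$ a Levi of $\tilde G_v$, then Theorem~\ref{thm:g=h+k} with $\lieh_s=0$ forces $\tilde\lieg_s\subset\liek$, and since the rank-$\geq 2$ factors are normal in $\tilde G$ (hence normalized by $H$) they kill $Hv$, so by genericity and faithfulness they vanish. This part is correct and matches the paper.

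Where you and the paper diverge is in the endgame. The paper simply asserts ``all the components of $\tilde G$ have rank at most 1 \emph{and} $\tilde G\subset N_G(H)$'' without further argument, whereas you correctly point out that the second conjunct does not follow from the first for free: one must rule out $\lieh$ sitting diagonally in two or more $\mathfrak{sl}_2$-factors. Flagging this is a genuine service. However, your argument for $|I|=1$ has two real gaps. First, the claim that the condition ``$h'^{-1}g$ fixes $v_\alpha$'' forces the $G_i$-component $h'^{-1}h$ into an at-most-one-dimensional subvariety $L_1$ is false as stated: the element $h'^{-1}g$ acts on $V_\alpha$ through \emph{all} its components simultaneously, and the $G_j$-, $G_m$- and torus-components can conspire with the $G_i$-component so that no constraint is placed on $h'^{-1}h$ alone; restricting attention to the projection to a single factor loses information. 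Second, even if you did have honest one-variable constraints, the bound ``at most one-dimensional'' is wrong for the low-dimensional representations (for $R_1$ or $R_2$, a vector stabilizer in $\SL_2$ can be a Borel, of dimension $2$), which you acknowledge but defer. As it stands, the dimension count $h^2\in L_1L_2$ is not established.

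A cleaner way to close the gap, staying within the machinery you already invoke: once $\tilde\lieg=\liez\oplus\bigoplus_i\mathfrak{s}_i$, write $\liek=\liek_0\oplus\bigoplus_m\liek_m$ with $\liek_0$ abelian and each $\liek_m\simeq\mathfrak{sl}_2$. No $\liek_m$ can equal a factor $\mathfrak{s}_i$ (same normality/genericity argument you already used), so each $\liek_m$ is a diagonal across at least two factors, and distinct $\liek_m$ occupy disjoint sets of factors because two nontrivial diagonals in overlapping $\mathfrak{sl}_2$-blocks never commute. Likewise $\liek_0$ must project trivially to every factor touched by some $\liek_m$. Comparing dimensions in $\lieh+\liek=\tilde\lieg$ (using $\lieh\cap\liek\subset\lieh_v$ so $\dim(\lieh\cap\liek)\leq 1$, since $\lieh\subset\liek$ would force $\lieh v=0$) yields $r\leq 2$, and then the only surviving configuration with $|I|\geq 2$ is $r=|I|=2$ with $\liek$ a single diagonal $\mathfrak{sl}_2$ plus a central torus; but the sum of two (possibly twisted) diagonals in $\mathfrak{sl}_2\oplus\mathfrak{sl}_2$ has dimension at most $5$, since $\operatorname{Ad}(c)-1$ always has nontrivial kernel, giving the contradiction. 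This is both elementary and airtight, and avoids the stabilizer-dimension claim entirely.

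So: correct identification of the missing step, same initial route as the paper, but the final argument as written is not sound and needs to be replaced (or substantially repaired) along the lines above.
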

   
    \begin{proof}
   It follows from Theorem \ref{thm:g=h+k} that $\tilde G_v$ contains the simple components of $\tilde G$ of rank at least 2. These components are normalized by $H$  so they fix the whole orbit $Hv$ which spans $V$. Thus all the components of $\tilde G$ have rank at most 1 and $\tilde G\subset N_G(H)$. 
   \end{proof}

   \begin{corollary}\label{cor:normalizer}
   We have $\lieg\simeq \tilde\lieg\ltimes\lieg_u$. Hence $G\neq N_G(H)$ if and only if $\lieg_u$, as $H$-module, contains $R_p$ for some $p>0$. To determine $G$ it suffices to determine $N_G(H)$ and $\lieg_u$.
   \end{corollary}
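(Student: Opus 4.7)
The first assertion $\lieg \simeq \tilde\lieg \ltimes \lieg_u$ is just the Levi decomposition of the algebraic Lie algebra $\lieg$: $\tilde\lieg$ is the Lie algebra of the chosen Levi factor containing $H$, and $\lieg_u$ is the Lie algebra of the unipotent radical. Nothing to check here beyond the remark that, since everything is algebraic and in characteristic zero, the decomposition exists and is unique up to conjugation.

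The main step is to compute the normalizer Lie algebra. My plan is to use that $\lieh = \lie{sl}_2$ has only inner derivations: given any $X \in \lie{n}_\lieg(\lieh)$, the restriction $\ad X|_\lieh$ is a derivation, hence equals $\ad Y|_\lieh$ for some $Y \in \lieh$, and then $X - Y \in \lieg^H$. Combined with $\lieh \cap \lieg^H = 0$ (since $\lieh$ is centerless), this gives
\[
\lie{n}_\lieg(\lieh) \;=\; \lieh \oplus \lieg^H.
\]
Now by Lemma \ref{lem:levi}, $\tilde G \subset N_G(H)$, so $\tilde\lieg \subset \lieh \oplus \lieg^H$, and intersecting with $\tilde\lieg$ gives $\tilde\lieg = \lieh \oplus \tilde\lieg^H$. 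Together with $\lieg = \tilde\lieg \oplus \lieg_u$ as $H$-modules, this yields
\[
\lieg \;=\; \lieh \oplus \tilde\lieg^H \oplus \lieg_u, \qquad \lie{n}_\lieg(\lieh) \;=\; \lieh \oplus \tilde\lieg^H \oplus \lieg_u^H.
\]
Since $G$ and $N_G(H)$ are both connected (recall $N_G(H)$ is the connected normalizer), $G = N_G(H)$ is equivalent to $\lieg = \lie{n}_\lieg(\lieh)$, i.e., $\lieg_u = \lieg_u^H$. Decomposing $\lieg_u$ as an $H$-module into a sum of $R_p$'s, this fails precisely when some $R_p$ with $p > 0$ occurs, proving the stated equivalence.

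For the final clause, knowledge of $N_G(H)$ recovers $\tilde G$ as a Levi subgroup of $N_G(H)$ containing $H$ (since $\tilde\lieg = \lieh \oplus \tilde\lieg^H$ sits inside $\lie{n}_\lieg(\lieh) = \tilde\lieg \oplus \lieg_u^H$, and $\lieg_u^H$ is the unipotent radical of $N_G(H)$). Given in addition $\lieg_u$ as a subspace of $\lieg$, the adjoint action of $\tilde\lieg$ on $\lieg_u$ is just the restriction of $\ad$, so the full bracket of $\lieg = \tilde\lieg \ltimes \lieg_u$ is determined, and with it $G = G^0$. The only mild subtlety is in the last paragraph — making sure that $\tilde G$ really is a Levi of $N_G(H)$ — but this is transparent from the explicit $H$-module decompositions above, so no serious obstacle arises.
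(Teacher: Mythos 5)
Your proof is correct and follows the route the paper intends (the paper leaves the corollary without an explicit proof, presenting it as an immediate consequence of Lemma~\ref{lem:levi} and the Levi decomposition). The one observation worth highlighting is that the computation of the normalizer Lie algebra can be done even more locally: since $\lieg_u$ is an $H$-stable ideal with $\lieh\cap\lieg_u=0$, for $X\in\lieg_u$ the condition $[X,\lieh]\subset\lieh$ already forces $[X,\lieh]\subset\lieh\cap\lieg_u=0$, so $\lieg_u\cap\lie n_\lieg(\lieh)=\lieg_u^H$ directly, and then Lemma~\ref{lem:levi} gives $\lie n_\lieg(\lieh)=\tilde\lieg\oplus\lieg_u^H$ without first needing the description $\lie n_\lieg(\lieh)=\lieh\oplus\lieg^H$; but your version (via inner derivations of $\lie{sl}_2$) is equally valid and yields the same decomposition. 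Your handling of the last clause --- that $\tilde G$ is a Levi of $N_G(H)$ because $\lieg_u^H$ is the unipotent radical of $\lie n_\lieg(\lieh)$, so $N_G(H)$ together with $\lieg_u\subset\lie{gl}(V)$ reconstructs $\lieg$ --- is exactly the right justification.
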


We now consider the possibilities for $N_G(H)$.

  \begin{example}\label{ex:normal}
  \begin{enumerate}
  
\item Let $\bar H$ be another copy of $\SL_2$ and let $V_k=R_k\otimes \bar R_k$, $k\geq 1$, where $\bar R_k$ is the $\bar H$-module of binary forms of degree $k$.. Let $v_k\in V_k$ be a nonzero fixed vector of $\{ (h,h)\}\subset H\times \bar H$. Then $Hv_k=(H\times\bar H)v_k$ and $v_k$ is generic. We can also take   $V=V_{k_1}\oplus\dots\oplus V_{k_\ell}$ and $v=(v_{k_1},\dots,v_{k_\ell})$ where $k_1<\dots< k_\ell$. Then $v$ is generic and $N_G(H)\supset H \bar H$.

\item Let $V=\bigoplus_{k\in F} m_k R_k$ where   $1\leq m_k\leq k+1$ for all $k$ and $F$ is a nonempty finite subset of $\N$. Let $B$ and $\bar B$ be the standard Borel subgroups of $H$ and $\bar H$, respectively. Let $v_k$  be the highest weight vector  of the copy of $R_{2k+2-2m_k}$ in $V_k=R_k\otimes\bar R_k$ for the diagonal $H$-action. Then $v_k$  lies in $R_k$ tensored with the span of the weight vectors of $\bar R_k$ of weight at least   $k-2m_k+2$.  Now $v_k$ is an eigenvector for the diagonal copy of $B$, with weight $2k+2-2m_k$. For $\bar b\in \bar B$, let $\chi(\bar b)$ denote its upper left hand entry.  Let $\bar b$ act on $\bar R_k$ as the tensor product of the usual action and the scalar action  $\bar b\mapsto \chi(\bar b)^{2m_k-2k-2}$. Then $v_k$ is fixed by the diagonal in $B\times \bar B$. Assume that $m_k\geq 2$ for some $k$ so that   $\bar B$ acts effectively. Set $v=\bigoplus_{k\in F} v_k$. Then $v$ is generic and  $N_G(H)\supset H\bar B$.

\item Let $V=\bigoplus_{k\in F} m_k R_k$ as above.  Let $v\in V$ be a generic vector whose projection $v_{k,j}$ to the $j$th copy of $R_k$ is a  weight vector. If the weight is not zero, then there is an obvious $\C^*$-action on this copy of $R_k$ such that $v_{k,j}$ is fixed by the product of the standard torus in $H$ and our  external  copy of $\C^*$.  If $v$ is not a sum of zero weight vectors  we have  $N_G(H)\supset H\C^*$.

\end{enumerate}
  \end{example}

 \begin{theorem}\label{thm:sl2}
 Let $V=\bigoplus_{k\in F} m_k R_k$ be a representation of $H=\SL_2$ where $V^H=0$. Let $v\in V$ be generic.   If $N_G(H)\neq H$,   then, up to the action of $\prod_{k\in F}\GL_{m_k}$, we are  in one of the cases of Example \ref{ex:normal}. If $G\supset H \bar H$ as in Example \ref{ex:normal}(1), then $G=H\bar H$.
 \end{theorem}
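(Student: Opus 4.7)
The plan has three steps: reduce to the identity $N_G(H)^0 = H \cdot Z$ where $Z := Z_G(H)^0$; classify $Z$ by an analysis inside $\GL(V)^H = \prod_k \GL(m_k)$ and match the cases to Example \ref{ex:normal}; and finally, in the setting of Example \ref{ex:normal}(1), show that the unipotent radical $\lieg_u$ is trivial.

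For the first step, $\mathfrak{sl}_2$ has no outer derivations, so $\mathfrak{n}_\lieg(\lieh) = \lieh + \lie c_\lieg(\lieh)$ and hence $N_G(H)^0 = H \cdot Z$; in particular $N_G(H) \neq H$ is equivalent to $Z \neq \{e\}$. Since $c \in \prod_k \GL(m_k)$ already centralizes $H$, the condition $cHv \subset Hv$ collapses to $cv \in Hv$, and therefore $Z = \hat Z^0$ where $\hat Z := \{c \in \prod_k \GL(m_k) : cv \in Hv\}$.

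To analyse $\hat Z$ I would parametrize $v$ by the injective linear maps $M_k \colon \C^{m_k} \hookrightarrow R_k$ (injectivity is exactly genericity), set $U_k := M_k(\C^{m_k})$, and define $H_0 := \bigcap_k \text{Stab}_H(U_k)$. The equation $zv = hv$ with components $z_k \in \GL(m_k)$ then translates to $h$ lying in $H_0$ together with $z_k = M_k^{-1} (h|_{U_k}) M_k$; a brief calculation shows $\lieh_v$ is an ideal of $\lieh_0$, so the assignment $h \mapsto z$ gives $\hat Z \simeq H_0/H_v$, whence $Z \simeq H_0^0/H_v^0$. Because the $\mathbb{G}_a$-stable subspaces of each irreducible $R_k$ coincide with the $B$-stable ones, the possibilities for $H_0^0$ up to $H$-conjugacy are $\{e\}$, $T$, $B$, and $H$. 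Matching to Example \ref{ex:normal}: $H_0^0 = H$ forces $U_k = R_k$ hence $m_k = k+1$, and after a $\prod_k \GL(m_k)$-change of basis $M_k$ becomes the canonical iso $\C^{m_k} \simeq \bar R_k$, recovering (1); $H_0^0 = B$ puts $U_k$ into top-weight form, and the scalar adjustment in (2) makes $v_k$ diagonal-$(B \times \bar B)$-invariant; $H_0^0 = T$ with $H_v^0 \neq T$ forces each $w_{k,j}$, after a basis change, to be a weight vector with not all weights zero, giving (3).

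For the second assertion, assume $G \supset H \bar H$. As $H\bar H$-module, $V = \bigoplus_k R_k \otimes \bar R_k$ is multiplicity-free, so rerunning the preceding analysis inside the Levi $\tilde G$ yields $Z_{\tilde G}(H)^0 = \bar H$ and $\tilde G^0 = H \bar H$. It remains to prove $\lieg_u = 0$. From $\lieg v \subset \lieh v$, every $X \in \lieg_u$ satisfies $Xv = Yv$ for some $Y \in \lieh$; for an $H\bar H$-invariant $X$ this forces $X$ to equal the image $\bar Y \in \bar\lieh$ of $Y$, which is semisimple, hence $X = 0$. The main obstacle is ruling out nontrivial $H\bar H$-isotypic summands of $\lieg_u$: for an irreducible summand $W \cong R_a \otimes \bar R_b \subset \lieg_u$ the $\Delta$-equivariance of $X \mapsto Xv$ landing in $\lieh v \cong R_2$ (with $\Delta \subset H \times \bar H$ the diagonal) only restricts $(a,b)$ via the Clebsch--Gordan condition $|a-b| \leq 2 \leq a+b$ with $a+b$ even, leaving many candidates a priori. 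I would decompose $\mathfrak{gl}(V) = \bigoplus_{k,\ell}(R_k R_\ell) \otimes (\bar R_k \bar R_\ell)$ as an $H\bar H$-module, identify the irreducible summands that actually occur, and show by explicit Clebsch--Gordan projections that any surviving $W$ necessarily produces a nonzero component of $Wv$ in a transverse ($R_0$-isotypic) piece of $V$, contradicting $Wv \subset \lieh v$.
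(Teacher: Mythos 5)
Your analysis of $N_G(H)$ is essentially the paper's argument in a slightly different dress. The paper computes $G' := Z_G(H)^0$ directly (it is a quotient of a connected subgroup of $H$ because $\Lie G_v$ surjects onto $\lieg'$ and injects into $\lieh$) and then runs cases on $G' = \SL_2$, $\C^*$, or $\bar B$-like. You instead identify $Z_G(H) \cong H_0/H_v$ via $H_0 := \bigcap_k \mathrm{Stab}_H(U_k)$ and case on $H_0^0$; this is fine and yields the same trichotomy. One wrinkle: your case $H_0^0 = B$ can produce $Z \cong B/U \cong \C^*$ when all $m_k = 1$ and $H_v^0 = U$, which is really Example \ref{ex:normal}(3), not (2). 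That is exactly why the paper's Example (2) imposes $m_k \geq 2$ for some $k$; the casing should be organized by $Z = H_0^0/H_v^0$ rather than by $H_0^0$ alone, though this is a small adjustment.

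The genuine gap is in the second assertion, that $G = H\bar H$ in the setting of Example (1). Your argument that the $H\bar H$-invariant piece of $\lieg_u$ vanishes is correct (from $Xv = Yv$, $\Delta$-invariance of $v$, and $Hv$ spanning $V$, one gets $X \in \bar\lieh \cap \lieg_u = 0$). But the remaining case $W \cong R_a \otimes \bar R_b \subset \lieg_u$ with $(a,b) \neq (0,0)$ is left as an unexecuted sketch, and the strategy as stated does not work: you already observed that $X \mapsto Xv$ is a $\Delta$-equivariant map $W|_\Delta \to \lieh v \cong R_2$, so $Wv$ has no $R_0$-isotypic component to project onto -- the ``transverse piece'' you hope to hit is excluded a priori by Schur, not produced. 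The paper resolves this with Proposition \ref{prop:multfree}: $\lieg_u$ is multiplicity-free as an $H$-module. The centralizer argument (that $Z_G(H)^0 = \bar H$ and, symmetrically, $Z_G(\bar H)^0 = H$, both reductive) kills the summands with $a = 0$ or $b = 0$; any remaining summand $R_a \otimes \bar R_b$ with $b \geq 1$ restricts to $H$ as $R_a$ with multiplicity $b+1 \geq 2$, contradicting multiplicity-freeness. Without an input of that strength (which in the paper rests on Lemma \ref{lem:dimone} -- $\dim \lieg_u(v) = 1$ -- established independently later in the section), a pure Clebsch--Gordan bookkeeping of $\mathfrak{gl}(V)$ will not close the argument.
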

  
  \begin{proof} 
 We have $N_G(H)=HG'$ where $G'$ is the identity component of the centralizer of $H$ in $G$. The group $G'_v$ fixes $Hv$, so it is trivial. Hence the Lie algebra of $G_v=\{hg'\mid hg'v=v\}$  projects onto $\lieg'$ and into $\lieh$, so that $G'$ is locally isomorphic to a quotient of a connected subgroup of $H$. Hence $G'$ is locally isomorphic to a connected subgroup of $H$.
  
  \smallskip
  
  \noindent Case 1:\ $G'=\SL_2$ or $\SO_3$.  Going to a cover, we can assume that $G'=\bar H$ so that $G_v$ is isomorphic to the diagonal copy of $H$. Then $V$ is a sum of representations $V_k=R_k\otimes S_k$ where $S_k$ is a representation of $\bar H$ of dimension at most $k+1$ and the projection of $v$ to $V_k$ is a fixed point of the diagonal action of $H$. Thus $S_k\simeq \bar R_k$ and $v$ is as in Example \ref{ex:normal}(1).   Suppose that $\lieg_u\neq 0$. 
  Then, as $(H\times\bar H)$-module, $\lieg_u$ cannot contain $R_0$ or $\bar R_0$ since the connected centralizer of $H$ is $\bar H$ and vice versa. Thus $\lieg_u$ contains a term $R_a\otimes\bar R_b$ where $ab\neq 0$. Hence, as $H$-module, $\lieg_u$ is not multiplicity free. But this contradicts Proposition \ref{prop:multfree} below. Hence $\lieg_u=0$ and $G=H\bar H$.
  
   \smallskip

  \noindent Case 2:\ $G'=\C^*$. Then $G_v\subset H\times\C^*$ is a diagonal torus and the fixed subspace of $G_v$ on each isotypic component $m_kR_k$ of $V$ is a sum of $m_k$ distinct weight spaces of $H$. Thus   we are in Example \ref{ex:normal}(3). 
  
    \smallskip
  
  \noindent Case 3:\   $G'\supset \bar U$ and $G'\not\supset \bar H$  where $\bar U\subset\bar H$ is the standard maximal unipotent subgroup of our second copy $\bar H$ of $\SL_2$.   We have $V=\bigoplus_{k\in F} R_k\otimes S_k$ where $S_k$ is a representation of $\bar U$. The isotropy group of $v$ in $H\times\bar U$ can be taken to be the diagonal copy of $U$ in $U\times\bar U$. Then $v$ corresponds to a subordination $S_k^*\to R_k$, hence the image of $S_k^*$ is a $U$-stable subspace of $R_k$ and $S_k$ is a $\bar B$-stable subspace of $\bar R_k$. 
  In fact, it is the span of $x^k$, $x^{k-1}y,\dots,x^{k-m_k+1}y^{m_k-1}$, and acting by   elements of the various $\GL(m_k)$ we can arrange  that $v$ is as in Example \ref{ex:normal}(2). Hence $N_G(H)\simeq H\bar B$.
\end{proof}

\begin{corollary}
Let $V$ be as above and $v\in V$  generic. Suppose that Example \ref{ex:normal}(1) does not apply so that  $N_G(H)\neq H\bar H$.
Then  $v$ is almost semi-characteristic.
\end{corollary}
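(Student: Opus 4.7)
The plan is to use Theorem~\ref{thm:sl2} together with the Levi decomposition to reduce the claim to a direct inspection of the remaining three possibilities for $N_G(H)$.

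First I would invoke Corollary~\ref{cor:normalizer} to write $\lieg\simeq\tilde\lieg\ltimes\lieg_u$, where $\tilde G$ is a Levi component of $G^0$ chosen to contain $H$. By Lemma~\ref{lem:levi} we then have $\tilde G\subset N_G(H)$. Since Example~\ref{ex:normal}(1) is excluded, Theorem~\ref{thm:sl2} gives three remaining possibilities for $N_G(H)$: namely $H$ itself, $H\bar B$ (Example~\ref{ex:normal}(2)), or $H\C^*$ (Example~\ref{ex:normal}(3)). So $\tilde G$ is a connected reductive subgroup sandwiched between $H$ and one of these three groups.

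Next I would identify the semisimple part of $\tilde G$ in each case. Writing $\tilde G=Z(\tilde G)^0\cdot[\tilde G,\tilde G]$ as an almost direct product, since $H$ is semisimple the inclusion $H\subset\tilde G$ forces $H=[H,H]\subset[\tilde G,\tilde G]$. Conversely, the connected reductive subgroups of $H\bar B$ containing $H$ are only $H$ and $H\C^*$ (where $\C^*$ is the maximal torus of $\bar B$), because the unipotent radical $\bar U$ of $\bar B$ cannot sit inside a reductive subgroup; the same list applies to $H\C^*$, and trivially to $H$. In every case the derived subgroup $[\tilde G,\tilde G]$ is exactly $H$. Hence the semisimple part of a Levi factor of $G^0$ equals $H$, which is the definition of $v$ being almost semi-characteristic.

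Since the classification work has already been done in Theorem~\ref{thm:sl2} and the structural statement in Corollary~\ref{cor:normalizer}, no serious obstacle remains; the argument is essentially a bookkeeping check on what a Levi can look like inside a solvable-by-$H$ group. The only point that needs a line of justification is that a reductive subgroup of $H\bar B$ containing $H$ cannot pick up any of the unipotent part $\bar U$, which follows from the fact that $\bar U$ is normal in $\bar B$ and consists of unipotent elements, so its intersection with any reductive subgroup is trivial.
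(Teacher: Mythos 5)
Your proof is correct and matches the paper's intended argument: the corollary is stated without a separate proof precisely because it follows immediately from Theorem~\ref{thm:sl2}, Lemma~\ref{lem:levi}, and the observation that the semisimple part of any connected reductive subgroup of $H\bar B$ or $H\C^*$ containing $H$ is $H$ itself. The one small refinement worth making is that $\bar U$ is normal in all of $H\bar B$ (not merely in $\bar B$), since $H$ centralizes $\bar B$; then $\tilde G\cap\bar U$ is a normal unipotent subgroup of the reductive group $\tilde G$ and hence trivial, exactly as you use.
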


We now turn to the determination of $\lieg_u$ when it is not zero or a trivial $H$-module. 

\begin{proposition}\label{prop:rp} Let $v\in V$ be generic.
Suppose that there is a copy of $R_p$ in $\lie{gl}(V)$,   $p>0$, which is a  Lie subalgebra and acts nilpotently on $V$. Further suppose that $R_p(v)\subset\lieh(v)$. Then $R_p\subset\lieg_u$.
\end{proposition}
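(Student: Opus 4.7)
The plan is to show $R_p\subset\lieg$ using a tangency argument and then to push it into $\lieg_u$ by combining the Levi decomposition with the nilpotency hypothesis. The main obstacle will be the borderline case $p=2$, where $R_p$ and $\lieh$ have the same $H$-module type and might a priori overlap in $\lieg\setminus\lieg_u$.

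First I would establish $R_p\subset\lieg$; this step uses neither that $R_p$ is a subalgebra nor the nilpotency. Given $X\in R_p$ and $h\in H$, the $H$-invariance of $R_p\subset\lie{gl}(V)$ gives $\Ad(h)X=hXh\inv\in R_p$, so $(hXh\inv)(v)\in\lieh(v)$; unwinding and using that $\Ad(h)\lieh=\lieh$, this becomes $X(h\inv v)\in\lieh(h\inv v)=T_{h\inv v}(Hv)$. Thus, viewed as a vector field on $V$, $X$ is tangent to the smooth locally closed subvariety $Hv$ at each of its points, so $\exp(tX)$ preserves $Hv$ for all $t$, giving $X\in\lieg$.

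Next I would analyse $R_p$ relative to the Levi decomposition. The intersection $R_p\cap\lieg_u$ is an $H$-submodule of the irreducible $R_p$, so it is either $R_p$ (in which case we are done) or zero. Suppose for contradiction the latter, so that the projection $\pi\colon R_p\to\tilde\lieg=\lieg/\lieg_u$ is an $H$-equivariant injection, where $\tilde\lieg$ is a Levi factor containing $\lieh$. By Lemma \ref{lem:levi}, $\tilde\lieg$ normalizes $\lieh$; since $\lieh\simeq\lie{sl}_2$ is centerless with only inner derivations, $\tilde\lieg=\lieh\oplus\lie{z}_{\tilde\lieg}(\lieh)$ as $H$-modules, with $\lieh\simeq R_2$ and $\lie{z}_{\tilde\lieg}(\lieh)$ a trivial $H$-module. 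An irreducible $H$-submodule isomorphic to $R_p$ with $p>0$ can therefore only be the copy $\lieh$, forcing $p=2$ and $\pi(R_p)=\lieh$.

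For the remaining $p=2$ case, choose an $H$-equivariant Levi splitting $\lieg=\tilde\lieg\oplus\lieg_u$ and let $X_0\in R_p$ be a nonzero weight-zero vector. Then $X_0=ch+u_0$ for some scalar $c$ and some $u_0\in\lieg_u$, where $h$ is the standard Cartan element of $\lieh$. From $[h,X_0]=0$ and $[h,h]=0$ we deduce $[h,u_0]=0$, so $ch$ and $u_0$ commute in $\lie{gl}(V)$. Now $ch$ is semisimple on $V$, $u_0$ is nilpotent (as $\lieg_u$ is the Lie algebra of a unipotent subgroup of $\GL(V)$), and $X_0$ is nilpotent by hypothesis; uniqueness of Jordan decomposition of commuting operators therefore forces $ch=0$, so $X_0\in\lieg_u$. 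Since a nonzero weight-zero vector generates $R_2$ as an $H$-submodule, $R_p\subset\lieg_u$, contradicting $R_p\cap\lieg_u=0$. The whole argument really turns on this last step: the nilpotency hypothesis is precisely what prevents the summand $R_2\subset\lieg$ from coinciding with the adjoint copy of $\lieh$ in a Levi factor.
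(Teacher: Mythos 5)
Your proof takes essentially the same route as the paper's, and is correct. For the first step (showing $R_p\subset\lieg$), the paper argues via the group $G_p$ with Lie algebra $\lieh\ltimes R_p$: the tangency relation $R_p(hv)\subset\lieh(hv)$ shows that $Hv$ is open in $G_pv$, from which the paper concludes $G_p\subset G$. Your phrasing in terms of the flow $\exp(tX)$ being tangent to $Hv$ at every point is logically the same move; both versions leave implicit the standard but not completely trivial step that the flow cannot escape $Hv$ through its boundary in $\overline{Hv}$ (equivalently, that ``$Hv$ open in $G_pv$'' upgrades to ``$Hv=G_pv$''), so you are at exactly the paper's level of detail here.

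Where your write-up adds value is the second step. The paper disposes of it in one sentence --- ``The projection of $R_p$ to $\Lie(N_G(H))$ is trivial (by our classification of $N_G(H)$ and the fact that $R_p$ is nilpotent)'' --- and this is terser than it looks: nilpotency on $V$ is a property of elements of $R_p$, not of their images under an $H$-equivariant projection along $\lieg_u$, so one really does need an argument to see why the image cannot be $\lieh\simeq R_2$. Your treatment supplies exactly this: the only nontrivial $H$-type available in a Levi factor $\tilde\lieg\subset\Lie(N_G(H))$ is $R_2=\lieh$, and then the Jordan decomposition of the weight-zero vector $X_0=ch+u_0$ (semisimple $ch$ plus commuting nilpotent $u_0$, with $X_0$ nilpotent by hypothesis) forces $c=0$. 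This is the right way to cash in the nilpotency hypothesis, and it cleanly fills the gap the paper elides. So: same strategy, with a more careful justification of the step the paper glosses over.
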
  

\begin{proof}  
Consider the action $\sigma\colon R_p\otimes V\to V$. Then for $h\in H$, 
$$
\sigma(R_p\otimes h(v))=h\sigma(R_p\otimes v)\subset h\lieh(v)=\lieh(hv).
$$
Hence $Hv$ is open in $G_pv$ where $G_p$ is the connected group with Lie algebra $\lieh\ltimes R_p$. Thus $G_p\subset G$ and $R_p\subset\lieg$. The projection of $R_p$ to $\Lie(N_G(H))$ is trivial (by our classification of $N_G(H)$ and the fact that $R_p$ is nilpotent). Hence $R_p\subset\lieg_u$.
\end{proof} 

\begin{remark}
The proposition remains true if $p=0$ as long as $G\neq H\bar H$ as in Example \ref{ex:normal}(1).
\end{remark}

 \begin{example}\label{ex:gu2=0}
 Let $p$, $l>0$. Let $v_{l+p}=x^{l+p}$ and let $v_l=a_0x^l+a_1x^{l-1}y$, $a_1\neq 0$.  Set $V=R_{l+p}+R_l$. Consider a nonzero equivariant map $\sigma\colon R_p\otimes R_ {l+p}\to R_l$. Then $\sigma(x^iy^{p-i}\otimes  v_{l+p})$ vanishes for $i>0$ and $\sigma(y^p\otimes v_{l+p})$ is a nonzero multiple of $x^l$. Thus we may arrange that $\sigma(y^p\otimes x^{l+p})=a_1x^l$. If $A\in\lie{sl}_2$ is $x\pt/\pt y$, then $\sigma(y^p\otimes v_{l+p})=A(v_l)$. We may consider $\sigma$ as an equivariant mapping of $R_p$ to $\Hom(R_{l+p},R_l)$.  Then $R_p$ applied to $v:=v_{l+p}+v_l$ is the same as $\lieu(v)$ where $\lieu=\C\cdot A$. By Proposition \ref{prop:rp}, $R_p\subset\lieg_u$. We can also have a copy of $R_q$ in $\lieg_u$, $q\neq p$, by adding   $R_{l+q}$ to $V$ and adding $v_{l+q}$ to $v$, where $v_{l+q}=x^{l+q}$.  
  \end{example}
 
We now try to pin down the structure of $V$ and $v$ The situation can be quite complicated. First we need a lemma.

       \begin{lemma}\label{lem:phineq0}
     Let $\phi\colon R_p\otimes R_n\to R_{p+n-2i}$ be equivariant and nonzero where $0\leq i\leq\min\{p,n\}$. Then $\phi(x^{p-j}y^j\otimes x^n)\neq 0$ for $i\leq j\leq p$.
     \end{lemma}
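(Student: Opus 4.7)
The plan is to pin down $\phi$ up to a nonzero scalar via Schur's lemma, and then reduce the claim to a short computation with the $i$-th transvectant.

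The first step is to invoke the Clebsch--Gordan decomposition $R_p\otimes R_n\simeq\bigoplus_{k=0}^{\min(p,n)}R_{p+n-2k}$, in which each simple summand appears with multiplicity one. By Schur's lemma this makes $\Hom_H(R_p\otimes R_n,R_{p+n-2i})$ one-dimensional, so any two nonzero $H$-equivariant maps into $R_{p+n-2i}$ agree up to a nonzero scalar.

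The second step is to write down a convenient explicit generator, namely the $i$-th transvectant
\[
\tau_i(f,g):=\sum_{k=0}^{i}(-1)^k\binom{i}{k}\,\pt_x^{\,i-k}\pt_y^{\,k}f\cdot\pt_x^{\,k}\pt_y^{\,i-k}g.
\]
Equivariance of $\tau_i$ is classical (the map is built from the $\SL_2$-invariant Cayley $\Omega$-process), and non-vanishing in the range $0\leq i\leq\min(p,n)$ is equally standard: for example $\tau_i(y^p,x^n)$ is a nonzero multiple of $x^{n-i}y^{p-i}$. Thus $\phi=c\,\tau_i$ for some $c\neq 0$, and the lemma reduces to showing $\tau_i(x^{p-j}y^j,x^n)\neq 0$ for $i\leq j\leq p$.

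The third step is the computation itself, and here the input $x^n$ does the work: since $\pt_y^{\,i-k}(x^n)=0$ whenever $i-k>0$, only the single term $k=i$ survives the sum defining $\tau_i$, leaving $\tau_i(x^{p-j}y^j,x^n)=(-1)^i\,\pt_y^i(x^{p-j}y^j)\cdot\pt_x^i(x^n)$, which is a nonzero scalar times $x^{p+n-j-i}y^{j-i}$ exactly when $j\geq i$, $j\leq p$, and $n\geq i$ --- all guaranteed by hypothesis. None of this is a real obstacle; the only subtlety worth flagging is the use of multiplicity-one in Clebsch--Gordan, since without it equivariance alone would not pin $\phi$ down up to a scalar.
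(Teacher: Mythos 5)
Your proof is correct, but it takes a genuinely different route from the paper's. You identify $\Hom_H(R_p\otimes R_n,R_{p+n-2i})$ as one-dimensional by Clebsch--Gordan multiplicity one, exhibit the $i$-th transvectant $\tau_i$ as an explicit nonzero generator, and then evaluate $\tau_i(x^{p-j}y^j\otimes x^n)$ directly: the factor $x^n$ kills every summand except $k=i$, leaving a manifestly nonzero monomial whenever $i\leq j\leq p$ and $i\leq n$. The paper instead argues by contradiction without any explicit formula: if $\phi(x^{p-j}y^j\otimes x^n)=0$ then the $\lie{sl}_2$-submodule $W$ generated by $x^{p-j}y^j\otimes x^n$ lies in $\ker\phi$; applying $x\pt/\pt y$ reduces to $j=i$, and then an induction with $y\pt/\pt x$ followed by $x\pt/\pt y$ shows $W$ contains the entire weight-$(p+n-2i)$ space, hence the whole summand $R_{p+n-2i}$, forcing $\phi=0$. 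Your approach is shorter once the transvectant formula and its equivariance are admitted as known, and it gives the exact value of the image vector; the paper's approach is self-contained within the $\lie{sl}_2$-module formalism already in play and avoids importing the classical symbolic apparatus. Both hinge on the same multiplicity-one fact, which you rightly flag as the crux. One small redundancy: your check that $\tau_i(y^p\otimes x^n)\neq 0$ is the $j=p$ instance of the general computation in your third step, so it could be folded in; but logically you do need some nonvanishing of $\tau_i$ before you may write $\phi=c\,\tau_i$, so the ordering is sound.
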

     
     \begin{proof} If $\phi(x^{p-j}y^j\otimes x^n)= 0$, then the $\lie{sl}_2$-submodule $W$ of $R_p\otimes R_n$ generated by $x^{p-j}y^j\otimes x^n$ lies in the kernel of $\phi$. Applying $x\pt/\pt y\in\lie{sl}_2$ repeatedly we may reduce to the case that $\phi(x^{p-i}y^i\otimes x^n)=0$. Suppose  by induction   that $x^{p-k}y^k\otimes x^{n-l}y^l$ lies in $W$ for $k+l=i$ and $l\leq s$. Then applying $y\pt/\pt x$ followed by $x\pt/\pt y$ to  $x^{p-k}y^k\otimes x^{n-s}y^s$ we obtain elements in $W$ as well as $k(n-s)x^{p-k+1}y^{k-1}\otimes x^{n-s-1}y^{s+1}$. Thus $W$ contains all the weight vectors of $R_p\otimes R_n$ of weight $p+n-2i$. This implies that $R_{p+n-2i}\subset W$, a contradiction. Thus $\phi(x^{p-j}y^j\otimes x^n)\neq 0$.
     \end{proof}
     
  \begin{remark}\label{rem:phineq0}
  By reversing the roles of $x$ and $y$ one has that $\phi(x^jy^{p-j} \otimes y^n)\neq 0$ for $i\leq j\leq p$.
   \end{remark}

    \begin{corollary}\label{cor:x^n}
   Let $\phi$, etc.\ be as above where $p+n-2i\neq 0$.  Let $w=x^n\in R_n$. Then $\dim\phi(R_p \otimes w)\geq 2$ unless $i=p<n$ so that  $\phi(y^p \otimes w)$ is a highest weight vector of $R_{n-p}$.  
  \end{corollary}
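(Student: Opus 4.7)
The plan is to leverage Lemma \ref{lem:phineq0} together with weight considerations for the standard diagonal torus $T\subset\SL_2$.

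First I would compute the $T$-weight of $\phi(x^{p-j}y^j\otimes x^n)$. Since $x^{p-j}y^j\otimes x^n$ has weight $(p-2j)+n$, so does its image under $\phi$. By Lemma \ref{lem:phineq0}, these images are nonzero for every $j$ in the range $i\leq j\leq p$, and they have pairwise distinct weights $n+p-2j$, hence they are linearly independent.

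Now split into cases on whether $i<p$ or $i=p$. If $i<p$, the range $i\leq j\leq p$ contains at least two integers, so I get at least two linearly independent vectors inside $\phi(R_p\otimes w)$, giving $\dim\phi(R_p\otimes w)\geq p-i+1\geq 2$. If instead $i=p$, then the only value of $j$ for which Lemma \ref{lem:phineq0} guarantees a nonzero image is $j=p$, producing $\phi(y^p\otimes x^n)$, of weight $n-p=p+n-2i$, which is precisely the highest weight of $R_{p+n-2i}=R_{n-p}$; note we must have $n>p$ here since $p+n-2i\neq 0$. For $j<p=i$, the vector $\phi(x^{p-j}y^j\otimes x^n)$ would have weight $n+p-2j>n-p$, exceeding the highest weight of $R_{n-p}$, so it must vanish. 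Thus in this exceptional case $\phi(R_p\otimes w)$ is spanned by the single highest weight vector $\phi(y^p\otimes x^n)$, matching the stated exception.

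There is essentially no obstacle beyond this bookkeeping: Lemma \ref{lem:phineq0} does all the work of producing nonzero vectors, and the linear independence follows automatically from distinct weights. The only subtlety is making sure the case $i=p$ is actually realizable with $\phi(R_p\otimes w)$ genuinely one-dimensional, which is confirmed by the weight-ceiling argument above using $p+n-2i\neq 0$, i.e., $n>p$.
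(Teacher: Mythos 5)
Your argument is correct and is essentially the intended proof: Lemma \ref{lem:phineq0} produces the nonzero images $\phi(x^{p-j}y^j\otimes x^n)$ for $i\le j\le p$, their distinct $T$-weights $n+p-2j$ give linear independence, so $\dim\phi(R_p\otimes w)\ge p-i+1\ge 2$ when $i<p$, and in the remaining case $i=p$ (with $n>p$ forced by $p+n-2i\neq 0$) the only surviving image $\phi(y^p\otimes x^n)$ has weight $n-p$, the highest weight of $R_{n-p}$, while the images for $j<p$ vanish since their weights exceed $n-p$. This matches the paper's (implicit) reasoning.
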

  
Set $W_0=V$ and for $j>0$ set $W_j=\lieg_u(W_{j-1})$. Then  $W_j$ is a proper $H$-stable subspace of $W_{j-1}$ for $j>0$. Let $k$ be the greatest  integer $j$ such that $W_j\neq 0$.  Since $\lieg_u$ acts nontrivially on $V$, we must have $k>0$. Let $V_j$ be an $H$-complement to $W_{j+1}$ in $W_j$ for $0\leq j\leq k$. Then $V=\oplus_j V_j$. Write $v=v_0+v_1+w_2$ where $v_i\in V_i$, $i=1$, $2$,  and $w_2\in W_2$. As before, let $A$ denote $x\pt/\pt y\in\lieh$.

     \begin{lemma}\label{lem:dimone} Perhaps replacing $v$ by $hv$ for some $h\in H$ we have the following.
     \begin{enumerate}
     \item  The vector $v_0$ is a sum of highest weight vectors.
     \item The dimension of $\lieg_u(v)$ is one with basis $A(v)$.
     \item Suppose that $R_p\subset \lieg_u$ where $p>0$. Then for $p\geq i>0$, $x^iy^{p-i}\in R_p$ annihilates $v$.
     \end{enumerate}
     \end{lemma}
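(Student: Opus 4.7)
The plan is to establish (1), (2), (3) in order, driven by the constraint $\lieg_u(v)\subset\lieg(v)=\lieh(v)$ (coming from $Gv=Hv$) combined with the observation that $\lieg_u(v)\neq 0$. The latter holds because if $\lieg_u\subset\lieg_v$, then $\exp(\lieg_u)$ would fix $v$; since $H$ normalizes $\lieg_u$, one has $\exp(\lieg_u)(hv)=h\exp(\Ad(h\inv)\lieg_u)(v)=hv$ for every $h\in H$, so $\exp(\lieg_u)$ would fix all of $Hv$ and hence act trivially on $V$ (as $Hv$ spans $V$ by genericity), forcing $\lieg_u=0$. For each $X\in\lieg_u$, I write $Xv=\alpha(X)H_0v+\beta(X)Ev+\gamma(X)Fv$ in the standard $\lie{sl}_2$-basis and project to $V_0$ (using $\lieg_u(V)\subset W_1$), obtaining the fundamental relation
\[
\alpha(X)H_0v_0+\beta(X)Ev_0+\gamma(X)Fv_0=0.
\]

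For (1), non-triviality of $\lieg_u(v)$ forces some $(\alpha,\beta,\gamma)\neq 0$, so $\lieh_{v_0}\neq 0$; the hypothesis $V^H=0$ (hence $V_0^H=0$) rules out $\lieh_{v_0}$ being all of $\lieh$ or a Borel subalgebra (a Borel-fixed vector in $V_0$ would be a weight-zero highest weight vector, necessarily in a trivial summand). Thus $\lieh_{v_0}$ is one-dimensional, conjugate either to $\C E$ (whence $v_0\in V_0^U$ after the conjugation, establishing (1)) or to $\C H_0$. The hard step is eliminating the semisimple case: there $Ev_0$ and $Fv_0$ live in distinct weight spaces of each $R_n\subset V_0$, so the projected relation forces $\beta=\gamma=0$ and $Xv=\alpha(X)H_0v$. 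Abelianness of $\lieg_u$ (Proposition \ref{prop:multfree}) applied to $XYv=YXv$, together with the identity $XP(H_0)=P(H_0-w(X))X$ for a $T$-weight vector $X$, shows $\alpha$ is supported on a single $T$-weight $w_0$; iterating, $X^nv=\alpha(X)^n H_0(H_0-w_0)(H_0-2w_0)\cdots(H_0-(n-1)w_0)v$. Since $X$ is nilpotent on $V$, some $X^N=0$ annihilates $v$, so $v$'s $H_0$-weight support lies in the finite progression $\{0,w_0,\dots,(N-1)w_0\}$. The symmetry of weights in $H$-modules ($\mu$ a weight $\iff$ $-\mu$ a weight) then forces $w_0=0$, hence $v\in V^{H_0}$ and $H_0v=0$, contradicting $\lieg_u(v)\neq 0$.

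Statement (2) is immediate from (1): with $Ev_0=0$ the projected relation becomes $\alpha H_0v_0+\gamma Fv_0=0$. Writing $v_0=\sum_n c_nx^n$ with $c_n\neq 0$ (by genericity) and $n>0$ (by $V_0^H=0$), $H_0v_0$ and $Fv_0$ sit in distinct $T$-weight spaces of each $R_n$-component and are linearly independent. Thus $\alpha(X)=\gamma(X)=0$ and $Xv=\beta(X)Av$; combined with $\lieg_u(v)\neq 0$, this gives $\lieg_u(v)=\C Av$ and $Av\neq 0$.

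For (3), fix $R_p\subset\lieg_u$ and let $X=x^iy^{p-i}$ with $0<i\leq p$. From (2), $Xv=\beta(X)Av$; projecting to $V_1$ yields $\pi_1(Xv_0)=\beta(X)Av_1$, since $Xv_1,Xw_2\in W_2$ project to zero and $Av_0=0$. The $R_p$-action on $V_0$ decomposes into Clebsch--Gordan pieces $\sigma_{n,m}\colon R_p\otimes R_n\to R_m$ for summands $R_n\subset V_0$ and $R_m\subset W_1$, and the composite $\pi_1\circ\sigma_{v_0}\colon R_p\to V_1$ has image in the at-most-one-dimensional subspace $\C Av_1$. By Corollary \ref{cor:x^n}, every nonzero component $\sigma_{n,m}$ reaching $V_1$ must therefore be of the exceptional type ($i_0=p<n$, $m=n-p$), and in the exceptional case Lemma \ref{lem:phineq0} gives $\sigma_{n,m}(x^iy^{p-i}\otimes x^n)=0$ for every $i>0$. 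Hence $\pi_1(Xv_0)=0$, so $\beta(X)Av_1=0$; if $Av_1\neq 0$ we conclude $\beta(X)=0$ and $Xv=0$, otherwise the same analysis is iterated down the filtration $W_2\supset W_3\supset\cdots$, comparing $\pi_j(Xv)$ with $\beta(X)Av_j$ at the smallest $j$ with $Av_j\neq 0$. The two main obstacles are the elimination of the semisimple-stabilizer case in Step 1 (via the nilpotent-abelian iteration and weight symmetry) and the filtration descent in Step 3 when $Av_1=0$.
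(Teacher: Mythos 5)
Your scaffolding mirrors the paper's --- project the relation $Xv\in\lieh v$ to $V_0$, obtain $\lieh_{v_0}\neq 0$, split into the nilpotent case (giving (1)) and the semisimple case to be eliminated, then deduce (2) and (3) --- but the elimination of the semisimple case has a genuine gap. After obtaining $Xv=\alpha(X)H_0v$ and the polynomial constraint $(H_0-(N-1)w_0)\cdots(H_0-w_0)H_0\,v=0$ for a weight vector $X$ with $\alpha(X)\neq 0$, you assert that the symmetry of the weights of an $\SL_2$-module forces $w_0=0$. This does not follow: weight symmetry is a property of the module $V$, not of the particular vector $v$. A generic $v$ need only project nontrivially to each irreducible summand, and within each $R_n$ its component can sit in a single weight space; so the weight support of $v$ lying in, say, $\{0,2,4,\dots\}$ is entirely consistent with $V$ having symmetric weights. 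The argument therefore stalls exactly at the point that must be forced. (The paper eliminates this case by a dynamical argument instead: $v_1$ is shown to split into a weight-$j$ and weight-$0$ part, and the $\exp(tC)$- and $\exp(tD)$-orbits of $(v_0,v_1')$ in a two-dimensional $T$-stable quotient must agree, yet one passes through $(v_0,0)$ and the other cannot.) In addition, your appeal to Proposition \ref{prop:multfree} for abelianness of $\lieg_u$ is circular: the paper's proof of that proposition cites parts (2) and (3) of Lemma \ref{lem:dimone}. In your argument abelianness happens to be dispensable --- you only need to iterate $X^nv$ for one weight vector $X$ with $\alpha(X)\neq 0$, which exists by linearity of $\alpha$ --- but as written the citation is a logical loop.

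Two smaller points on (3). The vanishing $\sigma_{n,n-p}(x^iy^{p-i}\otimes x^n)=0$ for $i>0$ is not a consequence of Lemma \ref{lem:phineq0} (which asserts \emph{non}-vanishing in the range $j\geq i_0$); it is a one-line weight count, since $x^iy^{p-i}\otimes x^n$ has weight $n-p+2i>n-p$. And the proposed descent down the filtration when $Av_1=0$ never arises: by part (2), $\lieg_u(v)=\C Av$, and since $\lieg_u(v)\not\subset W_2$ (its $H$-span is all of $W_1$), the $V_1$-component $Av_1$ of $Av$ is automatically nonzero.
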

     
     \begin{proof} Since $\lieg_u$ acts nontrivially on $V$ and $v$ is generic, there has to be a  $C\in\lieg_u$ such that $C(v)\in W_1$ and $C(v)\not\in W_2$. Then    there must be a $D\in\lieh$ such that $D(v_0+v_1)=C(v)$ modulo $W_2$. Since $D$ preserves $V_0$ and $V_1$, we must have that $D(v_1)=C(v_0)$ modulo $W_2$ and that $D$ annihilates $v_0$. Up to the action of $H$, we may thus assume that $v_0$ is a sum of highest weight vectors or a sum of zero weight vectors. We assume the latter and derive a contradiction. Since $\lieg_u$ is $H$-stable, we may assume that $C$ is a weight vector for the action of $\C^*\subset H$. Note that $D$ generates the Lie algebra of $\C^*\subset H$. If $C$ has weight zero, then so does $C(v)+W_2=C(v_0)+W_2$ and we cannot have that $C(v)=D(v)$ modulo $W_2$. Thus $C$ has weight $j$ for some $j\neq 0$ so that $C(v_0)+W_2=D(v_1)+W_2$ also has weight $j$. Hence $v_1=v_1'+v_1''$ where $v_1'+W_2=C(v_0)+W_2$ and $v_1'$ has weight $j$ while $v_1''$ has weight $0$.  Now let  $Z$ be the two-dimensional vector space generated by $v_0$ and $v_1'$, all modulo $W_2$. The groups generated by $\exp(tD)$ and $\exp(tC)$, $t\in\C$, act on $Z$ and the orbits of  $(v_0,v_1')$ are the same. But $\exp(tC)(v_0,v_1')$ contains the point $(v_0,0)$ while $\exp(tD)(v_0,v_1')$ clearly does not. Hence we have (1), i.e., $v_0$ is a sum of highest weight vectors. Moreover, $\lieg_u(v)+W_2$ is one-dimensional and generated by $A(v_1)+W_2$. 
           
           Let $C\in\lieg_u$ as above. Then $C(v)=D(v)$ for some $D\in\lieh_{v_0}$, where $D$ is a multiple of $A$. Hence we have (2). Finally, suppose that $R_p\subset \lieg_u$ where $p>0$. By Corollary \ref{cor:x^n}, for $i>0$, $x^iy^{p-i}$ annihlates $v$, modulo $W_2$, while $y^p$ sends $v$ to a multiple of $A(v)$, modulo $W_2$. If $x^iy^{p-i}$ acts nontrivially on $v$ it follows that $\dim\lieg_u(v)>1$. Hence we have (3).
      \end{proof}

      Let $\sigma\colon R_p\otimes  V\to V$ be the action of some $R_p\subset \lieg_u$ where $p\geq 0$.  Let $\mu\colon V\to V$ be the action of $y^p$ via $\sigma$. We may assume that $\mu(v)=A(v)$.  
\begin{corollary} 
\begin{enumerate}
\item  For all $j\geq 1$, $\mu^j(v)=A^j(v)$.
\item If $p>0$, then for all $1\leq i\leq p$, $ j\geq 0$, $\sigma(x^iy^{p-i}\otimes A^j(v))=0$.
\end{enumerate}
\end{corollary}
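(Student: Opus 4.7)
The plan is to use the $\lie{sl}_2$-equivariance of $\sigma$ to propagate both assertions by induction on $j$. Since $\sigma$ is $\lie{sl}_2$-equivariant, for any $r\in R_p$ and $w\in V$ one has $A\sigma(r\otimes w)=\sigma(Ar\otimes w)+\sigma(r\otimes Aw)$. Using $A(x^iy^{p-i})=(p-i)x^{i+1}y^{p-i-1}$, this specializes to the two key formulas
\[
\mu(Aw)=A\mu(w)-p\,\sigma(xy^{p-1}\otimes w),\qquad \sigma(x^p\otimes Aw)=A\,\sigma(x^p\otimes w).
\]
If $p=0$, then $R_0$ is trivial, $\mu$ commutes with $H$, and (1) follows immediately (and (2) is vacuous); so I assume $p\geq 1$ below.

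I will prove by simultaneous induction on $j\geq 0$ the following pair of statements: \emph{(a$'$)} $\mu(A^jv)=A^{j+1}v$, and \emph{(b)} $\sigma(x^iy^{p-i}\otimes A^jv)=0$ for all $1\leq i\leq p$. Once (a$'$) is known for every $j$, part (1) follows by the short induction $\mu^{j+1}(v)=\mu(\mu^jv)=\mu(A^jv)=A^{j+1}v$, while (b) at every $j$ is precisely part (2). The base case $j=0$ is immediate: (a$'$) is the hypothesis $\mu(v)=A(v)$, and (b) is Lemma~\ref{lem:dimone}(3).

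For the inductive step I first prove (b) at level $j+1$. Applying $A$ to the vanishing $\sigma(x^ky^{p-k}\otimes A^jv)=0$ (valid by (b) at level $j$) and expanding via equivariance yields
\[
0=(p-k)\,\sigma(x^{k+1}y^{p-k-1}\otimes A^jv)+\sigma(x^ky^{p-k}\otimes A^{j+1}v);
\]
for $1\leq k\leq p-1$ the first term vanishes by (b) at level $j$ with index $k+1$, which handles (b) at level $j+1$ for indices $1,\dots,p-1$. The remaining case $i=p$ is the second key formula: $\sigma(x^p\otimes A^{j+1}v)=A\,\sigma(x^p\otimes A^jv)=0$. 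Now invoke (b) at level $j$ for $i=1$, which makes the first key formula at $w=A^jv$ collapse to $\mu(A^{j+1}v)=A\mu(A^jv)$; by (a$'$) at level $j$ the right side equals $A\cdot A^{j+1}v=A^{j+2}v$, giving (a$'$) at level $j+1$.

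The main difficulty is purely organizational: one must sequence the induction so that (b) at level $j$ (in particular its $i=1$ instance) is available before passing to (a$'$) at level $j+1$, and the two subcases $i<p$ and $i=p$ of (b) must be treated separately because the relation $A(x^iy^{p-i})=(p-i)x^{i+1}y^{p-i-1}$ carries no information at $i=p$; the second key formula $\sigma(x^p\otimes Aw)=A\sigma(x^p\otimes w)$ exists precisely to fill this gap.
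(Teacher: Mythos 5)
Your proposal is correct and takes essentially the same approach as the paper: apply $A$ to the inductive identities using $\lie{sl}_2$-equivariance of $\sigma$ and run a simultaneous induction on $j$ for the two statements $\mu(A^jv)=A^{j+1}v$ and $\sigma(x^iy^{p-i}\otimes A^jv)=0$. The only stylistic difference is that you treat $i=p$ via a separate "second key formula" $\sigma(x^p\otimes Aw)=A\sigma(x^p\otimes w)$, whereas the paper subsumes that case by noting the coefficient $p-i$ vanishes there; the underlying observation ($A(x^p)=0$) is identical.
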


\begin{proof} Suppose that $p>0$. We prove (1) and (2) simultaneously by induction on $j$. Assume that $\mu^j(v)=A^j(v)$ for $1\leq j\leq m$ and that $\sigma(x^iy^{p-i}\otimes A^jv)=0$ for $0\leq j<m$, $i>0$. We certainly have the case that $m=1$. 
Apply $A$ to the equation $\sigma(y^p\otimes A^{m-1}(v))=A^m(v)$. Since $\sigma$ is equivariant, one obtains  that 
$$
\sigma(pxy^{p-1}\otimes A^{m-1}(v))+\sigma(y^p\otimes A^m(v))=A^{m+1}(v).
$$
Since the first term above is zero, we have that $\mu(A^m(v))=A^{m+1}v$ so that, by induction, we have $\mu^{m+1}(v)=A^{m+1}(v)$. Now apply $A$ to the equation $\sigma(x^iy^{p-i}\otimes A^{m-1}(v))=0$. One obtains that
$$
\sigma((p-i)x^{i+1}y^{p-i-1}\otimes A^{m-1}(v))+\sigma(x^iy^{p-i}\otimes A^m(v))=0
$$
so that $\sigma(x^iy^{p-i}\otimes A^m(v))=0$. This completes the induction. In case $p=0$, $A$ commutes with the generator of $R_0$, so that (1) is immediate.
\end{proof}

\begin{remark}\label{rem:induction}
Suppose that $p>0$ and that we  have (1) above. Then applying $A$ to the equations of (1) and using induction we obtain  (2).
\end{remark}
 
 \begin{proposition}\label{prop:multfree}
 The Lie algebra $\lieg_u$ is abelian and as $H$-module is multiplicity free.
 \end{proposition}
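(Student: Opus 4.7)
The plan is to prove multiplicity freeness first and then derive commutativity from it. Both parts rest on the following principle, which I will use repeatedly: a copy of $R_r$ inside $\lieg_u$ whose ``lowest weight'' generator $y^r$ annihilates $v$ must be zero. Indeed, Lemma~\ref{lem:dimone}(3) says the remaining basis vectors $x^iy^{r-i}$ with $i>0$ already kill $v$, so the whole copy kills $v$, and by $H$-equivariance together with the fact that $Hv$ spans $V$ (Proposition~\ref{prop:generic}), the copy acts as zero on all of $V$.

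For multiplicity freeness, suppose $\Hom_H(R_p,\lieg_u)$ has dimension at least two. By Lemma~\ref{lem:dimone}(2), every $\sigma$ in this space satisfies $\sigma(y^p)(v)\in\C A(v)$, which defines a linear functional $\sigma\mapsto c(\sigma)$ on $\Hom_H(R_p,\lieg_u)$. Its kernel has dimension at least one, so there is a nonzero $\sigma$ with $\sigma(y^p)(v)=0$. The principle above then forces $\sigma=0$, a contradiction. The case $p=0$ is analogous: an $H$-fixed element of $\lieg_u$ commutes with $H$, so if it kills $v$ it kills all of $V$.

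With multiplicity freeness established, I decompose $\lieg_u=\bigoplus_{r\in S}R_r$ and note that commutativity reduces to showing $[R_p,R_q]=0$ for every (unordered) pair in $S$, including $p=q$. The first step is to verify that $[R_p,R_q](v)=0$ by evaluating the bracket on basis elements $\alpha=x^{p-i}y^i\in R_p$ and $\beta=x^{q-j}y^j\in R_q$. If either $i>0$ or $j>0$, Lemma~\ref{lem:dimone}(3) combined with the Corollary following it (stating $\sigma(x^ky^{r-k})(A^mv)=0$ for $k>0$, $m\geq 0$) makes both $\alpha\beta(v)$ and $\beta\alpha(v)$ vanish. The remaining case is $\alpha=y^p$, $\beta=y^q$: applying $A$ to the relation $y^r(v)=c_rA(v)$ and using $xy^{r-1}(v)=0$ yields $y^r(A^jv)=c_rA^{j+1}(v)$, and then $[y^p,y^q](v)=c_pc_qA^2(v)-c_qc_pA^2(v)=0$.

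Now $[R_p,R_q]\subset\lieg_u$ is an $H$-submodule each of whose irreducible components has lowest weight element annihilating $v$, and the principle from the first paragraph forces $[R_p,R_q]=0$. The main care required is to observe that for every nontrivial summand $R_r\subset\lieg_u$ the scalar $c_r$ is necessarily nonzero, which is again an instance of the principle and which legitimizes the rescaling $\mu_r=\sigma(y^r)/c_r$ needed to invoke the Corollary that underlies the identity $y^r(A^jv)=c_rA^{j+1}(v)$.
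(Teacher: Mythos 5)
Your proposal is correct, and the multiplicity-freeness part coincides essentially verbatim with the paper: both arguments use Lemma~\ref{lem:dimone}(2) to see that the map sending a copy of $R_p\subset\lieg_u$ to the scalar $c$ with $y^p(v)=cA(v)$ is linear in the copy, so two copies would produce a nonzero copy with $c=0$, which by Lemma~\ref{lem:dimone}(3) and genericity would act trivially on $V$.

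Your commutativity argument, however, takes a genuinely different and more computational route. The paper observes that $[R_p,R_q]$ maps $V$ into $W_2=\lieg_u(W_1)$, so any nonzero $R_s\subset[R_p,R_q]$ would produce $0\neq R_s(v)\in W_2$; but $\lieg_u(v)=\C A(v)$ with $A(v)\notin W_2$, contradicting the one-dimensionality in Lemma~\ref{lem:dimone}(2). You instead compute $[\alpha,\beta](v)=0$ explicitly on all weight-vector pairs $\alpha=x^{p-i}y^i$, $\beta=x^{q-j}y^j$, splitting into the cases $i>0$ or $j>0$ (where Lemma~\ref{lem:dimone}(3) and the Corollary's $\sigma(x^iy^{p-i}\otimes A^jv)=0$ kill both $\alpha\beta(v)$ and $\beta\alpha(v)$) versus $i=j=0$ (where the identity $y^r(A^jv)=c_rA^{j+1}v$ gives a symmetric cancellation). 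Both arguments are valid. The paper's is shorter and entirely structural, requiring only the bare statement of Lemma~\ref{lem:dimone}(2) and the filtration $\{W_j\}$; yours needs the stronger explicit identities from the Corollary (and the observation that each $c_r\neq 0$, justifying the normalization), but in exchange exhibits concretely why $[R_p,R_q]$ kills $v$. One small wording issue: having shown that every basis element of $[R_p,R_q]$ annihilates $v$, you need not invoke your ``lowest-weight'' principle at the end — simply note that an $H$-stable subspace of $\lie{gl}(V)$ annihilating $v$ annihilates all of $Hv$ and hence all of $V$.
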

 
 \begin{proof}
 Suppose that we have copies of $R_p$ and $R_q$ in $\lieg_u$ where we allow $p=q$ (in which case we have two copies of $R_p$). If $[R_p.R_q]\neq 0$, then we have a copy of some $R_s$ in $\lieg_u$ which maps $V$ to $W_2$. Thus $R_s(v)\neq 0$ while $R_s(v)\in W_2$. This implies, as in the proof of Lemma \ref{lem:dimone}, that $\lieg_u(v)$ has dimension greater than one, a contradiction. Hence $\lieg_u$ is abelian. If $R_p$ has multiplicity two or more,  
 then it follows from Lemma \ref{lem:dimone} that there is a copy of $R_p$ which sends $v$ to $0$ implying that this copy of $R_p$ acts trivially on $V$, a contradiction. Hence $\lieg_u$ is multiplicity free.
  \end{proof}
  
  \begin{proposition}
  For all $i\geq 0$, $A^i(v)$ is generic in $W_i$.
  \end{proposition}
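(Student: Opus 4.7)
The plan is by induction on $i$. For $i=0$, $A^0(v)=v$ is generic in $W_0=V$ by hypothesis. For $i>k$, $W_i=0$, and since any $\mu=\sigma(y^p,\cdot)$ arising from an $R_p\subset\lieg_u$ satisfies $\mu(W_j)\subset\lieg_u(W_j)=W_{j+1}$, one has $A^i(v)=\mu^i(v)\in W_i=0$, so genericity is vacuous. The content is thus the inductive step for $0<i\le k$.

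Assume $W_{i-1}=\sspan(HA^{i-1}(v))$. Because $\lieg_u\subset\lie{gl}(V)$ is $H$-stable, for $X\in\lieg_u$ and $h\in H$ one has $Xh=h(h\inv Xh)$ with $h\inv Xh\in\lieg_u$, so
\[
W_i\;=\;\lieg_u(W_{i-1})\;=\;\sspan\bigl(H\cdot\lieg_u(A^{i-1}(v))\bigr).
\]
It therefore suffices to prove $\lieg_u(A^{i-1}(v))=\C\cdot A^i(v)$. Using Proposition~\ref{prop:multfree} I would decompose $\lieg_u=\bigoplus_\ell R_{p_\ell}$ and treat each summand. Fix $R_{p_\ell}$ with action $\sigma_\ell$ and put $\mu_\ell:=\sigma_\ell(y^{p_\ell},\cdot)$. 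By Lemma~\ref{lem:dimone}(2), $R_{p_\ell}(v)\subset\lieg_u(v)=\C A(v)$; moreover $R_{p_\ell}(v)\neq 0$, for otherwise $R_{p_\ell}$ would annihilate $Hv$ by $H$-equivariance, hence the spanning set $V$, contradicting $R_{p_\ell}\neq 0$. Rescale $R_{p_\ell}$ so that $\mu_\ell(v)=A(v)$. If $p_\ell>0$, the corollary after Lemma~\ref{lem:dimone}, applied to this $R_{p_\ell}$, yields $\mu_\ell^j(v)=A^j(v)$ and $\sigma_\ell(x^m y^{p_\ell-m}\otimes A^j(v))=0$ for $1\le m\le p_\ell$, $j\ge 0$; hence $R_{p_\ell}(A^{i-1}(v))=\C\cdot\mu_\ell^i(v)=\C\cdot A^i(v)$. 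If $p_\ell=0$, then $R_0$ is spanned by an $H$-invariant $X\in\lie{gl}(V)$, which commutes with $A$, so after rescaling $X(v)=A(v)$ one gets $X(A^{i-1}(v))=A^{i-1}X(v)=A^i(v)$, giving again $R_0(A^{i-1}(v))=\C\cdot A^i(v)$.

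Summing over $\ell$ produces $\lieg_u(A^{i-1}(v))=\C\cdot A^i(v)$, which, combined with the $H$-equivariance identity above, gives $W_i=\sspan(HA^i(v))$ and closes the induction. The only delicate step is the shuffle $\lieg_u\cdot h=h\cdot\lieg_u$ coupled with the fact that every $R_{p_\ell}(A^{i-1}(v))$ collapses to a single line through $A^i(v)$; beyond that, every ingredient has been prepared by Lemma~\ref{lem:dimone}, its corollary, and Proposition~\ref{prop:multfree}.
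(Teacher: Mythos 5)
Your proof is correct and follows essentially the same route as the paper: you write $W_i=\sspan\bigl(H\cdot\lieg_u(A^{i-1}(v))\bigr)$ using $H$-stability of $\lieg_u$, and then collapse $\lieg_u(A^{i-1}(v))$ to $\C\cdot A^i(v)$ via the corollary following Lemma~\ref{lem:dimone}. The paper compresses this to ``the same argument shows\dots,'' while you usefully make explicit the multiplicity-free decomposition of $\lieg_u$ (Proposition~\ref{prop:multfree}), the nonvanishing of each $R_{p_\ell}(v)$, and the separate case $p_\ell=0$.
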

  
  \begin{proof}
  Since $v$ is generic in $V$ and $\lieg_u$ is $H$-stable, $W_1$ is generated by the $H$-orbit of $\lieg_u(v)$. Hence $Av$ is generic in $W_1$. Then the same argument shows that the $H$-orbit of $A^2(v)$ spans $W_2$, etc.
  \end{proof}
  We say that a vector $w\in R_l$ has \emph{height $k$\/} if $w=a_0x^l+\dots+a_kx^{l-k}y^k$ where $a_k\neq 0$. A vector in $Z:=\sum_i m_i R_i$ has \emph{height at least $k$\/} (resp.\ \emph{height at most $k$\/}) if it is generic in $Z$ and when written as a sum $\sum_i v_{i,1}+\dots+v_{i,m_i}$ where $v_{i,j}$ is in the $j$th copy of $R_i$, each $v_{i,j}$ has height at least $k$ (resp.\ at most $k$).
  
  \begin{proposition}
   The $H$-modules $V_i$ are multiplicity free.
 \end{proposition}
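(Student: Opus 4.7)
My plan is to argue by contradiction, showing that any multiplicity in $V_i$ would conflict with the genericity of $A^iv$ in $W_i$ established in the preceding proposition. The whole argument is driven by the identity $A^jv=\mu^jv\in W_j$, which will force each $v_i$ to sit inside $\ker A^{i+1}$ and thereby collapse $A^iv_i$ into a one-dimensional highest-weight line.

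First I would establish $A^jv\in W_j$ for every $j\geq 0$. Pick any $R_p\subset\lieg_u$ and let $\mu$ be the action of $y^p\in R_p$ on $V$, normalized so that $\mu(v)=A(v)$. Since $y^p\in\lieg_u$, we have $\mu(W_{j-1})\subset\lieg_u(W_{j-1})=W_j$, so by the corollary to Lemma \ref{lem:dimone}, $A^jv=\mu^jv\in W_j$. Writing $v=\sum_{j=0}^{k}v_j$ with $v_j\in V_j$, I use the $A$-stability of each $V_j$ (they are $H$-stable by construction) to decompose $A^jv=\sum_i A^jv_i$ with $A^jv_i\in V_i$. Comparing against $W_j=V_j\oplus V_{j+1}\oplus\dots\oplus V_k$ yields $A^jv_i=0$ whenever $i<j$, and in particular $A^{i+1}v_i=0$ for every $i$.

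Next, suppose for contradiction that $V_i$ contains two simple summands $U_1,U_2$, both isomorphic to $R_n$, and let $v_i^{(\alpha)}\in U_\alpha$ be the corresponding components of $v_i$. Identifying each $U_\alpha$ with $R_n$, the condition $A^{i+1}v_i^{(\alpha)}=0$ confines $v_i^{(\alpha)}$ to $\sspan\{x^n,x^{n-1}y,\dots,x^{n-\min(i,n)}y^{\min(i,n)}\}$. A direct $\lie{sl}_2$-calculation shows that $A^i$ kills every $x^{n-k}y^k$ with $k<i$ and (when $i\leq n$) sends $x^{n-i}y^i$ to $i!\,x^n$. Hence $A^iv_i^{(\alpha)}$ always lies in the one-dimensional highest-weight line $\C x^n\subset U_\alpha$.

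By the preceding proposition $A^iv$ is generic in $W_i$, so its projections to the distinct simple summands of the $R_n$-isotypic component of $W_i$ are linearly independent in $R_n$. The two projections to $U_1$ and $U_2$ both lie in the same one-dimensional highest-weight line, which is impossible. Hence each irreducible appears in $V_i$ with multiplicity at most one. There is no real obstacle: everything is essentially assembled by the preceding lemma, corollary and proposition, with only the elementary calculation of $A^i$ on $\ker A^{i+1}$ to verify.
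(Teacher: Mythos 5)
Your proof is correct and follows essentially the same route as the paper's: both hinge on showing that $A^{i+1}v_i=0$ (so each component of $v_i$ has height at most $i$), deducing that the projection of $A^i v$ to any copy of $R_n\subset V_i$ lands on the one-dimensional highest-weight line, and then invoking genericity of $A^i v$ in $W_i$ to rule out two such linearly independent projections. The only cosmetic difference is that the paper additionally records that those components have height exactly $i$ (so the projections are nonzero), a refinement your argument does not need since vanishing projections would violate genericity just as well.
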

  
  \begin{proof} 
 The vector $A^jv$ is generic in $W_j$, $j\geq 0$, and the projection of $A^jv$ to any $R_l$ in $W_j$ cannot be zero.  Thus the projection of $v$ to any $R_l\subset W_j$ has height at least $j$. We have $v+W_j=v_0+v_1+\dots+v_{j-1}+W_j $ where $A^jv\in W_j$. It follows that $A^jv_i=0$ for $i<j$, hence any $v_i$ is a sum of vectors of height at most $i$. Since $v_j\in W_j$ it is a sum of vectors of height at least $j$. Thus $A^jv_j$  is a sum of highest weight vectors and it is generic in $W_j$. Hence any $R_l$ can occur in $W_j$ with multiplicity at most one.
     \end{proof}

          Write $v=v_0+\dots+v_k$ where $v_i\in V_i$. Then each $v_i$ is a sum $\sum_{l\in F_i} v_{i,l}$ where $F_i\subset \N$ and   $v_{i,l}$ lies in the copy of $R_l\subset V_i$.
       \begin{corollary}
Each vector $v_{i,l}$ has height $i$.
       \end{corollary}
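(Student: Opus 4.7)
The plan is to combine the two one-sided bounds already established in the proof of the preceding proposition. From that proof we know:
\begin{enumerate}
\item Since $A^j v_i = 0$ for $i < j$, each component $v_i$ is a sum of vectors of height at most $i$; in particular $v_{i,l}$ has height at most $i$.
\item For any $R_l$ contained in $W_j$, the projection of $v$ to that copy of $R_l$ has height at least $j$; this follows from genericity of $A^jv$ in $W_j$.
\end{enumerate}

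Now I would simply observe that $V_i$ is an $H$-complement to $W_{i+1}$ in $W_i$, so $V_i \subset W_i$. Hence every copy $R_l \subset V_i$ is a copy of $R_l$ contained in $W_i$, and the projection of $v$ to this copy is precisely $v_{i,l}$. Applying (2) with $j=i$ gives that $v_{i,l}$ has height at least $i$. Combined with (1), we conclude that $v_{i,l}$ has height exactly $i$.

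There is no real obstacle here since both bounds are already in hand; the only content is the observation that both apply simultaneously to $v_{i,l}$ because $V_i \subset W_i$. The short proof to write is therefore just: \emph{By the preceding proposition, $v_i$ is a sum of vectors of height at most $i$, so $v_{i,l}$ has height at most $i$. Since $v_{i,l}$ is the projection of $v$ to the copy $R_l \subset V_i \subset W_i$, it also has height at least $i$. Hence $v_{i,l}$ has height exactly $i$.}
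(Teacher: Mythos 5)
Your proof is correct and follows exactly the route the paper intends: the corollary is an immediate consequence of the two one-sided height bounds established inside the proof of the preceding proposition (height at most $i$ from $A^{i+1}v_i=0$, and height at least $i$ from $v_i\in W_i$ together with the genericity of $A^i v$ in $W_i$), so no separate argument is needed.
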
     
      
     \begin{lemma}
     Let $\phi\colon R_p\otimes R_m\to R_l$ be equivariant and nonzero.
     \begin{enumerate}
\item Necessarily $m=l+p-2i$ for some $i$ with $0\leq i\leq\min\{l,p\}$.
\item Suppose that $w\in R_m$ has height $n\leq l-i$. Then $\phi(y^p\otimes w)$ has height $n+i$.
\end{enumerate}
\end{lemma}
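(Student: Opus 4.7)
My plan is to derive (1) from the Clebsch--Gordan decomposition and (2) from a weight-space computation combined with Remark \ref{rem:phineq0}.

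For (1), $R_p \otimes R_m$ decomposes as $\bigoplus_{j=0}^{\min(p,m)} R_{p+m-2j}$, so a nonzero equivariant $\phi$ forces $l = p + m - 2j$ for some $0 \le j \le \min(p,m)$. Setting $i := p - j$ rewrites this as $m = l + p - 2i$. The bounds $j \le p$ and $j \ge 0$ give $i \ge 0$ and $i \le p$, while $j \le m$ gives $i \le l$ (since $l - i = m - j \ge 0$); hence $0 \le i \le \min(l,p)$.

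For (2), write $w = \sum_{k=0}^n a_k\, x^{m-k} y^k$ with $a_n \ne 0$. By $H$-equivariance, $\phi(y^p \otimes x^{m-k}y^k)$ is a weight vector in $R_l$ of weight $-p + (m - 2k) = l - 2(i+k)$, the equality using $m = l + p - 2i$. Consequently
\[
\phi(y^p \otimes x^{m-k}y^k) = c_k\, x^{l-i-k} y^{i+k}
\]
for a scalar $c_k$, with the convention $c_k = 0$ if $i + k > l$. Summing over $k$, all monomials appearing in $\phi(y^p \otimes w)$ have $y$-exponent at most $i + n$, so the height is at most $i + n$; equality will follow once we show $c_n \ne 0$.

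To produce $c_n \ne 0$ I would apply Remark \ref{rem:phineq0} to the flipped map $\tilde\phi\colon R_m \otimes R_p \to R_l$, $\tilde\phi(u \otimes v) := \phi(v \otimes u)$, which is again nonzero and equivariant. In the Clebsch--Gordan convention of Lemma \ref{lem:phineq0}, $\tilde\phi$ lands in $R_{m+p - 2i'}$ with $i' := p - i$, and Remark \ref{rem:phineq0} yields $\tilde\phi(x^j y^{m-j} \otimes y^p) \ne 0$ for $i' \le j \le m$. Choosing $j = m - n$ requires $i' \le m - n$, i.e., $n \le m - p + i = l - i$, which is precisely the hypothesis. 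Translating back, $\phi(y^p \otimes x^{m-n}y^n) \ne 0$, so $c_n \ne 0$ and the height equals $i + n$. The only subtle point is the index bookkeeping: the paper's convention $m = l + p - 2i$ is related to the Clebsch--Gordan convention $l = p + m - 2i'$ used in Lemma \ref{lem:phineq0} by $i' = p - i$, and to apply the earlier remark in the desired slot one must first swap the two tensor factors.
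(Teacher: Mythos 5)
Your proof is correct and follows essentially the same route as the paper: (1) via Clebsch--Gordan, (2) via a weight computation plus an appeal to Remark~\ref{rem:phineq0}. You are right that invoking Remark~\ref{rem:phineq0} for the vector $y^p\otimes x^{m-n}y^n$ (where the \emph{first} tensor slot is a lowest weight vector) requires swapping the tensor factors and translating the Clebsch--Gordan index $i'=p-i$; the paper compresses this to the statement that $z\ne 0$ provided the weight is $\geq -l$, which is the same condition $n\leq l-i$ after unwinding — your bookkeeping just makes the implicit flip explicit.
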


\begin{proof}
Since representations of $H$ are self-dual, $R_l$ appears in $R_p\otimes R_m$ if and only if $R_m$ appears in $R_p\otimes R_l$. Then Clebsch-Gordan implies (1). Now consider $z:=\phi(y^p\otimes x^{m-n}y^n)$ where $m=l+p-2i$. Then Remark \ref{rem:phineq0} shows that $z\neq 0$ if the weight of $y^p\otimes x^{m-n}y^n$ is at least $-l$. This is equivalent to $n\leq l-i$, hence we have (2).
\end{proof}

As above, we have  $v_i=\sum_{l\in F_i} v_{i,l}$ where $v_{i,l}\in R_l\subset V_i$. For any $s\geq 0$, we have $W_1=V_1\oplus V_2\oplus\dots\oplus V_{s+1}\oplus W_{s+2}$, hence we have an $H$-equivariant projection of $W_1$ to $V_{s+1}$. Let $\tau$ denote $\sigma$ on $R_p\otimes(V_0+\dots+V_{s})$ followed by projection onto $R_l\subset V_{s+1}$.
Since we have $\sigma(y^p\otimes A^r(v))=A^{r+1}(v)$, $r\geq 0$, for every $v_{s+1,l}\in R_l\subset V_{s+1}$, $A^{r+1}(v_{s+1,l})$ must be a multiple of $\tau(y^p\otimes A^r(v_0+\dots+v_{s}))$ for $r\geq 0$. Note that $\tau$ vanishes on $R_p\otimes v_{i,t}$ unless  $t=l+p-2j$ where $0\leq j\leq\min\{p,s\}$.  
\begin{proposition}
Let $s$ and $l$  be as above. Let $R_{l+p-2j}\subset  V_i$, $j\leq\min\{p,s\}$. If $i+j> s$, then $\tau(R_p\otimes R_{l+p-2j})=0$.
\end{proposition}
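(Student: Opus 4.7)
The plan is to exploit the identity $\sigma(y^p \otimes A^r v) = A^{r+1} v$ (from the corollary to Lemma~\ref{lem:dimone}) by projecting it onto $R_l \subset V_{s+1}$.

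For each $0 \leq i' \leq s$ and each $0 \leq j' \leq \min\{l,p\}$ with $R_{l+p-2j'} \subset V_{i'}$, let
\[
\phi_{i',j'} \colon R_p \otimes R_{l+p-2j'} \longrightarrow R_l
\]
denote the composition of $\sigma$, restricted to $R_p \otimes (R_{l+p-2j'} \subset V_{i'})$, with the projection onto $R_l \subset V_{s+1}$. By Schur's lemma, each $\phi_{i',j'}$ is either zero or (up to a nonzero scalar) the unique equivariant map; the proposition is then equivalent to showing $\phi_{i,j} = 0$ whenever $i + j > s$. Using that $A^r v_{i'} = 0$ for $r > i'$ and that $\sigma(R_p \otimes V_{i'}) \subseteq W_{i'+1}$, expanding $A^r v = \sum_{i'} A^r v_{i'}$ and projecting to $R_l \subset V_{s+1}$ yields, for each $0 \leq r \leq s$,
\[
A^{r+1} v_{s+1,l} \;=\; \sum_{i''=r}^{s} \sum_{j''} \phi_{i'',j''}\bigl(y^p \otimes A^r v_{i'', l+p-2j''}\bigr).
\]

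I would proceed by downward induction on $i$, aiming to show $\phi_{i,j'} = 0$ for all $j' > s - i$. Setting $r = i$, the LHS $A^{i+1} v_{s+1,l}$ has height at most $s - i$ (since $v_{s+1,l}$ has height $s+1$), so its $y^{j'}$ coefficient vanishes for $j' > s - i$. On the RHS, the $i''=i$ contribution $\phi_{i,j''}(y^p \otimes A^i v_{i, l+p-2j''})$ is a scalar multiple of $\phi_{i,j''}(y^p \otimes x^{l+p-2j''})$, which by Lemma~\ref{lem:phineq0} is a nonzero multiple of $x^{l-j''} y^{j''}$ whenever $\phi_{i,j''} \neq 0$. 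For the $i'' > i$ terms, the inductive hypothesis forces $\phi_{i'',j''} = 0$ unless $j'' \leq s - i''$; when nonzero, $A^i v_{i'',l+p-2j''}$ has height $i'' - i$, so $\phi_{i'',j''}(y^p \otimes \cdot)$ applied to it has height at most $j'' + (i'' - i) \leq (s - i'') + (i'' - i) = s - i$, contributing nothing to $y^{j'}$ coefficients with $j' > s - i$.

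Comparing $y^{j'}$ coefficients for $j' > s - i$ thus isolates a single term $\phi_{i,j'}(y^p \otimes A^i v_{i, l+p-2j'})$, which must vanish. Since $A^i v_{i,l+p-2j'}$ is a nonzero multiple of the highest weight vector $x^{l+p-2j'}$, Lemma~\ref{lem:phineq0} then forces $\phi_{i,j'} = 0$, completing the induction (the base case $i = s$ being the same argument with no $i''>i$ terms to consider). The main subtlety will be the height bookkeeping that prevents stray contributions from $i'' > i$ from polluting the critical $y^{j'}$ coefficient; it is precisely the inductive bound $j'' \leq s - i''$ that combines with the height $i''-i$ of $A^i v_{i'',\cdot}$ to cap the total height by $s - i$.
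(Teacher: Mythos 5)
Your proof is correct and follows essentially the same route as the paper's. The paper picks the lexicographically maximal bad pair $(i,j)$ (largest $i$, then largest $j$), evaluates the identity $\tau(y^p\otimes A^i v)=A^{i+1}(v_{s+1,l})$ and compares heights to show the $(i,j)$ term has to vanish, then remarks that one completes by downward induction; you set up the downward induction on $i$ from the start and handle all $j'>s-i$ simultaneously by comparing $y^{j'}$-coefficients, but the height bookkeeping (terms with $i''<i$ are killed by $A^i$, terms with $i''>i$ are bounded in height by $s-i$ via the already-established bound $j''\le s-i''$, and the $i''=i$ contribution is a pure $x^{l-j''}y^{j''}$ multiple by Lemma~\ref{lem:phineq0}) is the same in substance. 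The one cosmetic difference is that you invoke Schur's lemma plus Lemma~\ref{lem:phineq0} to pass directly from $\phi_{i,j'}(y^p\otimes x^{l+p-2j'})=0$ to $\phi_{i,j'}=0$, whereas the paper verifies separately that $\tau(x^my^{p-m}\otimes A^i(v_{i,l+p-2j}))=0$ for $m>0$ before concluding; this extra step is not needed, as you correctly observe.
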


\begin{proof}
Consider the pairs $(i',j')$ where $0\leq i'\leq s$, $0\leq j'\leq\min\{p,s\}$, $i'+j'> s$ and $v_{i',l+p-2j'}\neq 0$. Assume that $i$ is the maximal $i'$ that occurs and that $j$ is the maximal $j'$ that occurs in a pair $(i,j')$. Consider $A^i(v_{i,l+p-2j})$. It is a highest weight vector of weight $l+p-2j$. Suppose that    $\tau(y^p\otimes A^i(v_{i,l+p-2j}))$ is nonzero. Then it has height $j> s-i$. Moreover, by the choice of $i$ and $j$, $\tau(y^p\otimes A^i(v_{i,l+p-2j}))$ is the  nonzero $\tau(y^p\otimes A^i(v_{i,l+p-2j'}))$ of largest height (equivalently, of lowest weight). But $\tau(y^p\otimes A^i(v))=A^{i+1}(v_{s+1,l})$  where $A^{i+1}(v_{s+1,l})$ has height $s-i$. Thus $\tau(y^p\otimes A^i(v_{i,l+p-2j}))$ must be zero. Now for $0<m\leq p$  we have that $\sigma(x^my^{p-m}\otimes A^i(v))=0$. Again, by height considerations, one sees that $\tau(x^my^{p-m}\otimes A^i(v_{i,l+p-2j}))$ must vanish. Hence $\tau(R_p\otimes A^i(v_{i,l+p-2j}))=0$ which  shows that $\tau(R_p\otimes R_{l+p-2j})=0$.  Now the proof can be completed by downward induction on $i$ and $j$.
\end{proof}

For $0\leq j\leq\min\{p,s\}$ and $R_{l+p-2j}\subset V_i$, the restriction of $\tau$ to $R_p\otimes R_{l+p-2j}$ is a multiple  $t_{i,s-j}\tau_{s-j}$ of $\tau_{s-j}$ where $\tau_{s-j}\colon R_p\otimes R_{l+p-2j}\to R_l$ is equivariant and normalized so that $\tau_{s-j}(y^p\otimes x^{l+p-2j})=x^{l-j}y^j$.
We have that 
\begin{equation}\label{eq:basic}
\sum_{j=0}^{\min\{p,s\}}\sum_{i=0}^{ s-j}  t_{i,s-j}\tau_{s-j}(y^p\otimes A^r(v_{i,l+p-2j}))=A^{r+1}(v_{s+1,l})
\end{equation}
for all $r\geq 0$ where we set $v_{i,q}=0$ if $q\not\in F_i$.
 
Let $a_i$ and $b^{s-j}_{i,m}$ be scalars such that $v_{s+1,l}=a_0x^l+\dots+ a_{s+1}x^{l-s-1}y^{s+1}$ and $v_{i,l+p-2j}=b^{s-j}_{i,0} x^{l+p-2j}+\dots+b^{s-j}_{i,i}x^{l+p-2j-i}y^i$ whenever $l+p-2j\in F_i$ and $0\leq j\leq\min\{p,s\}$. We use $t_j$ and $b_j$ as shorthand for $t_{j,j}$ and $b^j_{j,j}$, respectively. For now assume that $l+p-2j\in F_{s-j}$, $j=0,\dots,\min\{p,s\}$.
 
 \begin{theorem}\label{thm:maineqn}  
For  $m=0,\dots,s$, consider the equation  \eqref{eq:basic} with $r=m$ in weight $l-2s+2m$. This gives us $s+1$ equations
 in the unknowns $t_{s-j}b_{s-j}$ for $0\leq j\leq\min\{p,s\}$.
 The unique solutions are 
 $$
 t_{s-j}b_{s-j}=\frac{\binom sj\binom pj}{\binom{l+p-j+1}{j}}(s+1)a_{s+1},\ 0 \leq j\leq \min\{p,s\}.
 $$
 \end{theorem}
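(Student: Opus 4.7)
The plan is to extract, from \eqref{eq:basic} with $r=m$, a single scalar equation in each of the products $T_j:=t_{s-j}b_{s-j}$ by projecting onto the coefficient of $x^{l-s+m}y^{s-m}$, i.e., the weight $l-2s+2m$ subspace of $R_l$. This particular weight is chosen because on the right-hand side $A^{m+1}(v_{s+1,l})$ contributes to it only through the monomial $x^{l-s-1}y^{s+1}$, giving the clean value $\frac{(s+1)!}{(s-m)!}a_{s+1}$, so only the top coefficient $a_{s+1}$ appears. On the left-hand side, a weight-matching argument (using that $v_{i,l+p-2j}$ has height $i$ and that $A^m$ drops the $y$-exponent by $m$) forces $i=s-j$ and $k=s-j$, so precisely the highest-weight coefficient $b_{s-j}=b^{s-j}_{s-j,s-j}$ enters, scaled by $(s-j)!/(s-j-m)!$.

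Next I would make $\tau_{s-j}$ explicit. Since $R_l$ occurs with multiplicity one in $R_p\otimes R_{l+p-2j}$, the map is proportional to the $(p-j)$-th transvectant; because $\partial y^p/\partial x=0$, only a single term of that transvectant survives when the left input is $y^p$, and the normalization $\tau_{s-j}(y^p\otimes x^{l+p-2j})=x^{l-j}y^j$ pins down the scalar. A direct computation then yields
$$\tau_{s-j}(y^p\otimes x^{l+p-2j-b}y^b)=\frac{\binom{l-j}{b}}{\binom{l+p-2j}{b}}\,x^{l-j-b}y^{j+b}.$$
Combining with $A^m(x^{l+p-2j-k}y^k)=\frac{k!}{(k-m)!}x^{l+p-2j-k+m}y^{k-m}$, the $m$-th projected equation becomes
$$\sum_{j=0}^{\min(p,s-m)}T_j\,\frac{(s-j)!}{(s-j-m)!}\,\frac{\binom{l-j}{s-j-m}}{\binom{l+p-2j}{s-j-m}}=\frac{(s+1)!}{(s-m)!}\,a_{s+1}.$$
The equation at $m=s-j_0$ introduces the new unknown $T_{j_0}$ while $T_0,\dots,T_{j_0-1}$ have already been determined at larger $m$, so the system is lower triangular (with possibly redundant but automatically consistent equations for $m<s-p$ when $p<s$, since the $T_j$ arise from a genuine solution of \eqref{eq:basic}); the $T_j$ are thus uniquely determined, and the case $m=s$ immediately gives $T_0=(s+1)a_{s+1}$, matching the claim at $j=0$.

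The main obstacle is the combinatorial verification of the closed form
$$T_j=\frac{\binom{s}{j}\binom{p}{j}}{\binom{l+p-j+1}{j}}(s+1)a_{s+1}$$
by induction on $j$. Assuming it for $T_0,\dots,T_{j-1}$ and substituting into the $m=s-j$ equation, one cancels the common factor $(s+1)!a_{s+1}$ and clears denominators, reducing the claim to a single binomial identity that I expect to dispatch either by the Chu--Vandermonde convolution or, equivalently, by recognizing the sum as a terminating ${}_3F_2$ summable by the Pfaff--Saalsch\"utz theorem; a small check at $j=2$ (e.g.\ $l=3$, $p=2$) gives $\frac{3}{20}+\frac{4}{15}+\frac{1}{12}=\frac{1}{2}$, in agreement with the right-hand side.
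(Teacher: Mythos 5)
Your setup is correct and matches the paper's: you project equation \eqref{eq:basic} with $r=m$ onto the weight $l-2s+2m$ line, observe that only the top coefficient $b^{s-j}_{s-j,s-j}$ can contribute there (since $k=s-j$ is forced and $k\le i\le s-j$), derive exactly the same $(s+1)$ scalar equations the paper writes (your indexing $T_j=t_{s-j}b_{s-j}$ is the paper's system \eqref{eq:star} under the substitution $j\mapsto s-j$), and correctly note the triangular structure and the base case $T_0=(s+1)a_{s+1}$.

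The genuine gap is in the last step, the verification of the closed form. You say you ``expect to dispatch'' the resulting identity by Chu--Vandermonde or Pfaff--Saalsch\"utz, but neither will work as stated. After substituting the claimed formula, the identity to prove is
$$
\sum_{j'=0}^{j}\binom{j}{j'}\frac{\binom{p}{j'}}{\binom{l+p-j'+1}{j'}}\cdot\frac{\binom{l-j'}{j-j'}}{\binom{l+p-2j'}{j-j'}}=1,
$$
and the factor $1/\binom{l+p-j'+1}{j'}$ contributes $(l+p-2j'+1)!/(l+p-j'+1)!$, which leaves a loose linear term $l+p-2j'+1$ of slope $-2$ in $j'$ after all factorials are balanced. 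When one writes the term ratio $a_{j'+1}/a_{j'}$ in standard hypergeometric form, that factor splits as $\bigl((l+p-1)/2-j'\bigr)\big/\bigl((l+p+1)/2-j'\bigr)$, giving a ratio with four linear numerator factors and four linear denominator factors (one of which is $j'+1$), with leading-coefficient ratio $-1$. So the sum is a terminating ${}_4F_3$ evaluated at $-1$, not a ${}_3F_2$ at $1$; Chu--Vandermonde and Pfaff--Saalsch\"utz apply only to the latter, and the ${}_4F_3$ here is not well-poised or otherwise of a type covered by the classical summation theorems. The paper acknowledges this difficulty implicitly: it verifies the identity by the WZ (Wilf--Zeilberger) algorithm in MAPLE, not by a classical summation formula. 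Your spot check at $j=2$, $l=3$, $p=2$ is reassuring but does not substitute for the general argument. Either carry out a genuine WZ certification (or equivalent mechanical proof), or find the correct contiguous/telescoping decomposition of the ${}_4F_3$; as it stands the proof is incomplete at precisely the point where the paper resorts to computer verification.

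One smaller remark: when $p<s$ you truncate the sum to $j\le\min(p,s-m)$ and remark that the ``extra'' equations for $m<s-p$ are ``automatically consistent.'' The paper's handling is cleaner and worth adopting: keep the full square $(s+1)\times(s+1)$ triangular system in $c_0,\dots,c_s$, and simply observe that the claimed closed form yields $c_{s-j}=0$ for $j>p$, so the same unique solution works uniformly. That sidesteps the need to argue about overdetermined consistency.
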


 \begin{proof} First assume that $p\geq s$.
 Since $\tau_{s-j}(y^p\otimes x^{l+p-2j})=x^{l-j}y^j$, it follows that 
 $$
 \tau_{s-j}(y^p\otimes x^{l+p-2j-k}y^k)=\frac{\binom{l-j}k}{\binom{l+p-2j}k}x^{l-j-k}y^{j+k}, \ k\leq l-j.
 $$
 Now the $0$th equation ($m=0$) is
 $$
 t_0b_0+t_1b_1\tfrac{l-s+1}{l+p-2s+2}+\dots+t_jb_j\frac{\binom{l-s+j}j}{\binom{l+p-2s+2j}j}+\dots+t_sb_s\frac{\binom ls}{\binom {l+p}s}=(s+1)a_{s+1}.
 $$
 For $m=1$ the equation is
 $$
 t_1b_1+2t_2b_2\tfrac{l-s+2}{l+p-2s+4}+\dots+jt_jb_j\frac{\binom{l-s+j}{j-1}}{\binom{l+p-2s+2j}{j-1}}+\dots+st_sb_s\frac{\binom l{s-1}}{\binom {l+p}{s-1}}=s(s+1)a_{s+1}
  $$
  and the $m$th equation is
  $$
  m!t_mb_m+\dots+j!/(j-m)!t_jb_j\frac{\binom{l-s+j}{j-m}}{\binom{l+p-2s+2j}{j-m}}+\dots+\frac{s!}{(s-m)!}t_sb_s\frac{\binom l{s-m}}{\binom{l+p}{s-m}}=\frac{(s+1)!}{(s-m)!}a_{s+1}. 
  $$
 Thus our system of equations  is equivalent to
\begin{equation}\label{eq:star}
  \sum_{j=0}^s\binom jm c_j\frac{\binom{l-s+j}{j-m}}{\binom{l+p-2s+2j}{j-m}}=\binom sm,\ m=0,\dots,s
\end{equation}
where $c_j=t_jb_j/((s+1)a_{s+1})$. Since the equations are in triangular form, there is a unique solution. Now the theorem will be proved if we can show that  a  solution  to \eqref{eq:star} is
$$
c_{s-j}=\frac{\binom sj\binom pj}{\binom{l+p-j+1}{j}}.
$$ 
But one can prove this using the  WZ  method \cite{WZalgorithm}. (See  \cite{WZnotices} for a brief introduction.)\  We used the implementation of the WZ method in MAPLE.

Now suppose that $p<s$. We may still consider the system of equations \eqref{eq:star}. The solutions remain the same, but note that for $j>p$, the formula for $c_{s-j}$ gives zero.   Hence the theorem is true even when $p<s$.
 \end{proof}
 
  \begin{remark}  
 Let $0\leq j\leq \min\{p,s\}$. We assumed that $R_{l+p-2j}$ occurred in $V_{s-j}$. But the equations force $t_{s-j}b_{s-j}$ to be nonzero. Thus, in fact, $R_{l+p-2j}$ must occur in $V_{s-j}$ for there to be a solution of \eqref{eq:basic} for all $r\geq 0$.
 \end{remark}
 
 Since we are guaranteed to have vectors $v_{s-j,l+p-2j}$ in our solution of \eqref{eq:basic}, what role do the vectors $v_{i,l+p-2j}$ play for $i<s-j$? It is easy to see that the term involving $v_{i,l+p-2j}$ may be eliminated if we change $v_{s-j,l+p-2j}$ to $v_{s-j,l+p-2j}+\frac{t_{i,s-j}}{t_{s-j}}v_{i,l+p-2j}$.    Let us say that $v''=\sum_j v'_{s-j,l+p-2j}$ is obtained from $v'=\sum_j v_{s-j,l+p-2j}$ by an \emph{admissible modification\/} if each $v'_{s-j,l+p-2j}$ differs from $v_{s-j,l+p-2j}$ by a linear combination of the $v_{i,l+p-2j}$ for $i<s-j$. Thus we have the following 

 \begin{remark}
 We have a solution of \eqref{eq:basic} if and only if,  up to an admissible modification of the $v_{s-j,l+p-2j}$, we have a solution of
 \begin{equation}\label{eq:basic2}
 \sum_{j=0}^{\min\{p,s\}}t_{s-j}\tau_{s-j}(y^p\otimes A^r(v_{s-j,l+p-2j}))=A^{r+1}(v_{s+1,l}),\ r\geq 0.
 \end{equation}
 \end{remark}
  
 \begin{proposition}\label{prop:unique}
Let $v_{s+1,l}\in V_{s+1}$   and $v_{s-j,l+p-2j}\in V_{s-j}$ have coefficients $a_j$ and $b^{s-j}_{i,m}$ as above. Fix the $b_{s-j}$, $0\leq j\leq\min\{p,s\}$. Then there are unique values of the $t_{s-j}$ and $b^{s-j}_{s-j,m}$    for $m<s-j$ such that there is a solution of \eqref{eq:basic2}.
 \end{proposition}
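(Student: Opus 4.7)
The plan is to reduce \eqref{eq:basic2} to a sequence of triangular linear systems, one for each ``depth'' of coefficient, and to invoke the uniqueness argument already used in Theorem \ref{thm:maineqn}. For the first step, note that since $v_{s+1,l}\in V_{s+1}\subset W_{s+1}\setminus W_{s+2}$, its top coefficient $a_{s+1}$ is nonzero. Theorem \ref{thm:maineqn}, applied to the projections of \eqref{eq:basic2} onto the lowest weight $l-2s+2m$ of $A^{m+1}(v_{s+1,l})$ for $m=0,\dots,s$, produces explicit nonzero values for the products $t_{s-j}b_{s-j}$. The triangular system there involves only the top-height coefficients because, among the coefficients $b^{s-j}_{s-j,k}$ of $v_{s-j,l+p-2j}$, only $k=s-j$ contributes to weight $l-2s+2m$ of $\tau_{s-j}(y^p\otimes A^m(\,\cdot\,))$. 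Since the $b_{s-j}$ are prescribed (and necessarily nonzero for the system to be consistent), the $t_{s-j}$ are uniquely determined and nonzero.

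For the remaining coefficients I would introduce a \emph{depth} stratification: call $b^{s-j}_{s-j,k}$ of depth $d:=(s-j)-k$, so depth $0$ is the top. The key combinatorial observation is that for each $d\ge 1$, the weight-$(l-2s+2m+2d)$ component of \eqref{eq:basic2} with $r=m$, for $m=0,\dots,s-d$, involves only depth-$d$ coefficients on the left-hand side: the term $b^{s-j}_{s-j,k}$ passes through $\tau_{s-j}(y^p\otimes A^m(\,\cdot\,))$ to weight $l-2j-2(k-m)$, which equals $l-2s+2m+2d$ exactly when $k=s-j-d$. Coefficients of smaller depth contribute at strictly lower weights and those of larger depth at strictly higher, so only depth $d$ survives in this equation.

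The depth-$d$ system has the same triangular shape as in Theorem \ref{thm:maineqn}, with $s$ replaced by $s-d$: the $m=s-d$ equation involves only $j=0$, and each decrement of $m$ admits one additional value of $j$. Using $\binom{l-j}{0}/\binom{l+p-2j}{0}=1$, the diagonal entry at the $m$-th step equals $m!\,t_{d+m}$, which is nonzero by the first step. Hence the depth-$d$ coefficients $b^{s-j}_{s-j,s-j-d}$ are uniquely determined, and an induction on $d$ pins down all $b^{s-j}_{s-j,m}$ for $m<s-j$. The main obstacle is the weight bookkeeping that isolates depth-$d$ contributions in the second step; once handled, uniqueness follows from triangularity alone, with no further Wilf--Zeilberger identity needed beyond the one underlying Theorem \ref{thm:maineqn}.
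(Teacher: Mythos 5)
Your proof is correct, but it organizes the triangular system in a genuinely different way from the paper. The paper's proof proceeds ``column by column'': after fixing the $t_{s-j}$ via Theorem~\ref{thm:maineqn}, it treats the vectors $v_{s-m,l+p-2m}$ in order of increasing $m$, extracting the weight-$(l-2m)$ scalar components of \eqref{eq:basic2} for $r=0,\dots,s-m$. At a given $m$, the weight-$(l-2m)$ equations involve $b^{s-j}_{s-j,m+r-j}$ for $j=0,\dots,m$, so columns $0,\dots,m-1$ (already solved) plus one new unknown $b^{s-m}_{s-m,r}$ per value of $r$; the resulting chain is triangular with respect to this ordering. Your proof instead stratifies the unknowns by \emph{depth} $d=(s-j)-k$, isolating them via the weight-$(l-2s+2m+2d)$ components with $r=m$. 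A pleasant byproduct of your organization, which the paper's column-by-column scheme does not enjoy, is that the depth-$d$ subsystems are mutually independent: the weight-$(l-2s+2m+2d)$ component of \eqref{eq:basic2} with $r=m$ involves only depth-$d$ coefficients on the left (your weight argument) and only the single coefficient $a_{s+1-d}$ on the right, so no depth feeds into another. Your identification of the diagonal entry $m!\,t_{d+m}$ is correct, as is the observation that only one application of the WZ identity (already in Theorem~\ref{thm:maineqn}) is needed. One minor note: you say the $b_{s-j}$ are ``necessarily nonzero for the system to be consistent,'' but it is cleaner to observe that $b_{s-j}=b^{s-j}_{s-j,s-j}\neq 0$ simply because $v_{s-j,l+p-2j}$ has height $s-j$ by hypothesis, which is what makes the $t_{s-j}$ determined once the products $t_{s-j}b_{s-j}$ are. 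Finally, like the paper's proof, yours establishes uniqueness from the chosen subcollection of scalar equations but does not explicitly verify that the remaining components of \eqref{eq:basic2} are automatically satisfied when $p<s$ (so the system is overdetermined); this is a shared feature, not a shortcoming particular to your argument.
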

 
 \begin{proof}
 We know that the $t_{s-j}$ are uniquely determined. We only need to show that   the $b^{s-j}_{s-j,m}$ for $m<s-j$ are unique satisfying \eqref{eq:basic2}. This is easy because of the triangular form of the equations. For $r=0$,   the equation in weight $l$ reads $t_sb^s_{s,0}=a_1$. For arbitrary $r\leq s$, the equation in weight $l$ is $r!t_sb^s_{s,r}=(r+1)!a_{r+1}$.  For $r=s$ this is one of the equations we considered in Theorem \ref{thm:maineqn} and we have $b^s_{s,r}=\frac{r+1}{t_{s}}a_{r+1}$ for $r<  s$. Now suppose that we have determined the $b^{s-q}_{s-q,j}$ for $0\leq q<m$. Consider \eqref{eq:basic2} in weight $l-2m$ with $r=0$. It gives an expression for $t_{s-m}b^{s-m}_{s-m,0}$ in terms of the $a_i$ and $b^{s'}_{s',s'-j'}$ for $s'>s-m$. Thus we may solve for $b^{s-m}_{s-m,0}$. For $0<r\leq s-m$ we obtain an equation that we can solve for $b^{s-m}_{s-m,r}$. The equation that we get for $b_{s-m}$ is one of the equations that we considered in Theorem \ref{thm:maineqn}. Hence given $a_1,\dots,a_{s+1}$ and the $b_{s-j}$, there are unique $t_{s-j}$ and $b^{s-j}_{s-j,m}$ solving \eqref{eq:basic2}.
  \end{proof}
  
   \begin{remark}\label{rem:eqn}
 Suppose that $l+p-2j\in F_i$ for all $i+j\leq s$, $0\leq j\leq\min\{p,s\}$. Then we may modify the $v_{s-j,l+p-2j}$ admissibly so that the $b^{s-j}_{s-j,m}$, $m<s-j$, are arbitrary. Hence there are $t_{s-j}$ giving solutions  of \eqref{eq:basic2} (after admissible modifications) and giving solutions of   \eqref{eq:basic} (without  changing any vectors).
 \end{remark}

 Let us formulate the conditions that need to be satisfied to have $R_p\subset\lieg_u$, $p>0$.
 \begin{definition}
 Let $v\in V$ be generic. We say that \emph{$v$ satisfies $(*_p)$\/} if 
 \begin{enumerate}
\item We have a decomposition $V=\oplus_{i=0}^k V_i$ where the $V_i$ are multiplicity free $H$-modules. Let $F_i\subset \N$ such that $V_i=\oplus_{l\in F_i} R_l$.
\item Possibly replacing $v$ by $hv$ for some $h\in H$, we have that $v=\sum_i\sum_{l\in F_i} v_{i,l}$ where $v_{i,l}\in R_l\subset V_i$ has height $i$.  
\item For every $v_{s+1,l}\in V_{s+1}$, $s\geq 0$, we have that $l+p-2j\in F_{s-j}$ for $0\leq j\leq\min\{p,s\}$. Let the $t_{s-j}$ be given by  Theorem \ref{thm:maineqn}. Then the vectors $v_{s-j,l+p-2j}$, perhaps after admissible modification, are solutions of  \eqref{eq:basic2}.
\end{enumerate}
 \end{definition}

 \begin{theorem}\label{thm:mainsl2}
Let $v\in V$ be generic. Then $R_p\subset\lieg_u$, $p>0$, if and only if $v$ satisfies $(*_p)$. 
 \end{theorem}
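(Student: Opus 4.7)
The plan is to prove the two directions of the equivalence separately, with most of the technical work already carried out in the preceding propositions.

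For the forward direction, assume $R_p\subset\lieg_u$ and extract the conditions of $(*_p)$ from the earlier structural analysis. Condition (1) follows from the filtration $W_j=\lieg_u(W_{j-1})$ and the choice of $H$-complements $V_j$; the fact that each $V_j$ is multiplicity free was already proved. Condition (2) comes from Lemma \ref{lem:dimone} (after possibly replacing $v$ by an $H$-translate so that $v_0$ is a sum of highest weight vectors) together with the height analysis in the corollary preceding Lemma \ref{lem:dimone}'s restatement. For condition (3), fix $v_{s+1,l}\in V_{s+1}$ and write down equation \eqref{eq:basic}: this is forced by $\sigma(y^p\otimes A^r v)=A^{r+1}v$ and the vanishing $\sigma(x^iy^{p-i}\otimes A^rv)=0$ for $i>0$ coming from Lemma \ref{lem:dimone}(3) propagated by Remark \ref{rem:induction}. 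The proposition preceding Theorem \ref{thm:maineqn} shows that the only surviving $\tau$-contributions are the diagonal ones with $i+j\le s$, and Theorem \ref{thm:maineqn} combined with the remark after it forces $l+p-2j\in F_{s-j}$ with the specified values $t_{s-j}b_{s-j}$; Proposition \ref{prop:unique} pins down the remaining coefficients up to admissible modification.

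For the backward direction, assume $(*_p)$ and construct an $H$-equivariant linear map $\sigma\colon R_p\otimes V\to V$ whose image lies in the filtration $W_\bullet$. On the summand $R_p\otimes R_{l+p-2j}\subset R_p\otimes V_{s-j}$, define $\sigma$ to be $t_{s-j}\tau_{s-j}$ followed by the inclusion of $R_l\subset V_{s+1}$, where $t_{s-j}$ is determined by Theorem \ref{thm:maineqn} applied to the coefficients $a_r$ of the relevant $v_{s+1,l}$. (When several $v_{s+1,l}$'s use the same source $R_{l+p-2j}\subset V_{s-j}$, one must check that the prescribed $t_{s-j}$ are consistent; this follows because for each $(s-j,l+p-2j)$ there is a well-defined target $R_l\subset V_{s+1}$ given by the filtration step, and the coefficients determine a single scalar.) By construction $\sigma$ carries $V_i$ into $V_{i+1}\subset W_{i+1}$, so the image $R_p\subset\End(V)$ acts nilpotently, and $(*_p)(3)$ together with Remark \ref{rem:induction} guarantees $\sigma(R_p\otimes v)\subset\C\cdot A(v)=\lieh(v)$. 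One checks $R_p$ is a commutative subalgebra of $\End(V)$: commutativity on $V_i$ lands in $V_{i+2}$, and two applications of $\sigma$ to $v$ give $A^2(v)$ no matter the order of factors in $R_p$, so by genericity of $v$ and $H$-equivariance the commutators vanish.

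With these properties verified, Proposition \ref{prop:rp} applies directly and yields $R_p\subset\lieg_u$. The main obstacle is the consistency step in the backward direction: one must show that the single linear map $\sigma\colon R_p\otimes V\to V$ assembled from the local prescriptions on each $R_p\otimes R_{l+p-2j}$ is genuinely well-defined and $H$-equivariant across \emph{all} choices of target $v_{s+1,l}$ simultaneously, rather than being a patchwork that works only on individual summands. This is where the uniqueness statement in Proposition \ref{prop:unique} and the triangular structure of \eqref{eq:basic2} do the critical work: different targets $v_{s+1,l}$ never compete for the same coefficient $t_{s-j}$ on the same $R_{l+p-2j}\subset V_{s-j}$ because the decomposition $V_{s-j}=\oplus_{l'\in F_{s-j}}R_{l'}$ is multiplicity free, and the admissible modifications absorb all remaining freedom.
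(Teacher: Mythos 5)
Your proof is correct and follows essentially the same route as the paper: the forward direction assembles the conditions in $(*_p)$ from Lemma \ref{lem:dimone}, the height analysis, Theorem \ref{thm:maineqn} and Proposition \ref{prop:unique}, while the backward direction constructs an equivariant $\sigma\colon R_p\otimes V\to V$ from the $\tau_{s-j}$'s, checks that the resulting $S\simeq R_p\subset\End(V)$ is nilpotent and abelian with $\sigma(R_p\otimes v)\subset\lieh(v)$, and invokes Proposition \ref{prop:rp}. One small imprecision in the abelianness step: two successive applications of $\sigma$ to $v$ do not ``give $A^2(v)$ no matter the order of factors'' — they give $0$ unless both factors are $y^p$ — but in every case $[S,S](v)=0$, which together with genericity of $v$ and $H$-stability of $[S,S]$ gives $[S,S]=0$ exactly as you and the paper conclude; likewise the ``consistency'' worry you flag for the assembled $\sigma$ is already built into condition $(*_p)(3)$ (multiplicity-freeness of each $V_i$ identifies the sources and targets unambiguously, and the required agreement of coefficients is exactly what $(*_p)(3)$ demands).
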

 
 \begin{proof}
 We have shown that $R_p\subset\lieg_u$ implies that $(*_p)$ holds. Conversely, if $(*_p)$ holds, then we have constants $t_{i,s-j}$ such that \eqref{eq:basic} is satisfied. Let $\tau$ denote the corresponding map  
 $$
R_p\otimes ( \bigoplus_{j=0}^{\min\{p,s\}}\bigoplus_{i=0}^{ s-j}\bigoplus_{l+p-2j\in F_i} R_{l+p-2j}\subset V_i) \to R_l\subset V_{s+1}.
 $$
The various mappings $\tau$ combine  to give us an equivariant map  $\sigma\colon R_p\otimes V\to V$. It follows from Remark \ref{rem:induction} that $\sigma(x^iy^{p-i}\otimes v)=0$ for $i>0$. By construction,  $\sigma(R_p\otimes v)$ is one-dimensional and generated by $\sigma(y^p\otimes v)=A(v)$. 
If $S$ denotes the  copy of $R_p\subset\End(V)$ corresponding to $\sigma$, then $[S,S](v)=0$ which implies that $[S,S]$ acts trivially on $V$, i.e., $[S,S]=0$. By construction, $S$ consists of nilpotent transformations. Now 
by Proposition \ref{prop:rp} we have $S\subset \lieg_u$.
 \end{proof}

\begin{corollary}\label{cor:condition}
Suppose that there is a generic $v\in V$   such that $\lieg_u$ is not zero or the trivial $H$-module. Then there are subsets $F_0,\dots,F_k\subset\N$ such that $V=\bigoplus_{i=0}^k\bigoplus_{l\in F_i} R_l$ and $p>0$ such that  for every $l\in F_{s+1}$, $s\geq 0$ we have  $l+p-2j\in F_{s-j}$ for $0\leq j\leq \min\{p,s\}$.
\end{corollary}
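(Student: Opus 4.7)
The strategy is to reduce the corollary to a direct application of Theorem \ref{thm:mainsl2}. All the combinatorial information asserted by the corollary is already packaged into the conditions $(*_p)$, so the real work is simply to produce a single $p > 0$ to which the theorem applies.

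The first step is to show that $\lieg_u$ contains a copy of $R_p$ for some $p > 0$. By Proposition \ref{prop:multfree}, $\lieg_u$ is an abelian Lie algebra and, as an $H$-module, is multiplicity free; hence it splits as a direct sum $\lieg_u = \bigoplus_q R_q$ over a finite set of pairwise distinct nonnegative integers $q$. The hypothesis that $\lieg_u$ is not zero and not a trivial $H$-module forces at least one index in this direct sum to be strictly positive. Pick any such $p$.

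Since $R_p \subset \lieg_u$ with $p > 0$, Theorem \ref{thm:mainsl2} gives that $v$ satisfies $(*_p)$. Clause (1) of $(*_p)$ provides the $H$-stable decomposition $V = \bigoplus_{i=0}^{k} V_i$ with each $V_i$ multiplicity free, together with the subsets $F_0, \ldots, F_k \subset \N$ such that $V_i = \bigoplus_{l \in F_i} R_l$; this yields the required decomposition $V = \bigoplus_{i=0}^{k} \bigoplus_{l \in F_i} R_l$. Clause (3) of $(*_p)$ states precisely that for each $s \geq 0$ and each $v_{s+1, l} \in V_{s+1}$, i.e.\ each $l \in F_{s+1}$, we must have $l + p - 2j \in F_{s-j}$ for $0 \leq j \leq \min\{p, s\}$. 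This is the combinatorial conclusion of the corollary.

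There is no real obstacle: the substantive work is done in Proposition \ref{prop:multfree} (to extract a nontrivial $R_p$) and in Theorem \ref{thm:mainsl2} (to extract $(*_p)$ from $R_p \subset \lieg_u$). The corollary is essentially a restatement of clauses (1) and (3) of $(*_p)$ once the existence of a suitable $p$ is secured.
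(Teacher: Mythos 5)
Your proof is correct and follows essentially the same (implicit) route as the paper: the corollary is stated without a separate proof precisely because it is an immediate unpacking of Proposition \ref{prop:multfree} (which supplies a copy of $R_p$ with $p>0$ inside $\lieg_u$ once it is not zero or trivial) together with clauses (1) and (3) of condition $(*_p)$ as delivered by Theorem \ref{thm:mainsl2}. The one small point you implicitly use, and could make explicit, is that clause (2) of $(*_p)$ guarantees $v_{i,l}\neq 0$ for every $l\in F_i$ (since $v_{i,l}$ has height $i$), so the quantifier ``for every $v_{s+1,l}\in V_{s+1}$'' in clause (3) is indeed the same as ``for every $l\in F_{s+1}$.''
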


\begin{corollary}\label{cor:triv}
The group $G$ normalizes $H$ if $V$ does not satisfy the condition of Corollary \ref{cor:condition}. In particular, $G$ normalizes $H$ in the following cases.
\begin{enumerate}
\item $V$ is an isotypic $H$-module.
\item The multiplicity of $R_l$ is at least two, where $l$ is maximal such that   $R_l\subset V$. 
\end{enumerate}
\end{corollary}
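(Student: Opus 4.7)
The main assertion follows as the contrapositive of Corollary~\ref{cor:condition}. Suppose $G$ fails to normalize $H$. Then by Corollary~\ref{cor:normalizer} the unipotent radical $\lieg_u$ contains some copy of $R_p$ with $p>0$, so in particular $\lieg_u$ is neither zero nor a trivial $H$-module; Corollary~\ref{cor:condition} then produces subsets $F_0,\dots,F_k\subset\N$ with $V\simeq\bigoplus_i\bigoplus_{l\in F_i}R_l$ and an integer $p>0$ satisfying the compatibility $l+p-2j\in F_{s-j}$ for every $l\in F_{s+1}$, $s\ge 0$, $0\le j\le\min\{p,s\}$. Hence $V$ satisfies the condition of Corollary~\ref{cor:condition}, and the contrapositive yields the first statement.

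For cases (1) and (2) I would verify that the condition must fail for every admissible decomposition and every $p>0$. The crucial input is that the layers $V_i=\bigoplus_{l\in F_i}R_l$ are multiplicity-free (established earlier in the section), so each index $l'$ lies in each $F_i$ at most once and the total multiplicity of $R_{l'}$ in $V$ equals $|\{i:l'\in F_i\}|$. In case~(1), with $V\simeq mR_l$ and $m\ge 2$, multiplicity-freeness forces $F_i\subset\{l\}$ for all $i$ and requires at least $m\ge 2$ nonempty $F_i$'s, so some $F_{s+1}$ with $s\ge 0$ contains $l$; the compatibility at $j=0$ then demands $l+p\in F_s\subset\{l\}$, which is impossible for $p>0$. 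The borderline $m=1$ is handled directly: $V$ is then irreducible, so any $G\supset H$ preserving $Hv$ acts irreducibly on $V$, hence is reductive by the Jacobson theorem cited in the excerpt, giving $\lieg_u=0$ and $G=N_G(H)$ via Lemma~\ref{lem:levi}.

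In case~(2), fix the maximal $l$ with $R_l\subset V$ and assume its multiplicity is at least two. Then $l$ belongs to at least two of the $F_i$, so in particular $l\in F_{s+1}$ for some $s\ge 0$; the compatibility with $j=0$ then demands $l+p\in F_s$, contradicting the maximality of $l$, since $l+p>l$ exceeds every index occurring in any $F_i$. The only real obstacle is to unwind precisely what ``$V$ does not satisfy the condition of Corollary~\ref{cor:condition}'' means as a non-existence statement over \emph{all} admissible multiplicity-free layer decompositions and $p>0$, and to remember that the irreducible case of (1) falls outside this framework and must be dispatched by Jacobson's theorem; once this is disentangled, both cases reduce to the trivial arithmetic facts $l+p\ne l$ and $l+p>l$.
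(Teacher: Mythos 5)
Your proof is correct and follows essentially the same route as the paper: reduce to the contrapositive of Corollary~\ref{cor:condition} via Corollary~\ref{cor:normalizer} and Lemma~\ref{lem:levi}, then for cases~(1) and~(2) observe that some $F_{s+1}$ with $s\ge 0$ must contain $l$ while $l+p$ cannot lie in $F_s$, for $p>0$. The only place you go beyond the paper is the borderline $m=1$ in case~(1): you correctly notice that, read literally, the combinatorial condition of Corollary~\ref{cor:condition} is vacuously satisfied by an irreducible $V=R_l$ (take $k=0$, $F_0=\{l\}$), so the contrapositive alone gives nothing, and you dispatch it by Jacobson's theorem (irreducible implies reductive) plus Lemma~\ref{lem:levi}. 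The paper simply writes ``Part (1) is clear,'' implicitly relying on the fact that in the derivation of Corollary~\ref{cor:condition} one has $k>0$ and all $V_j\ne 0$; your alternative is a clean and self-contained way to close that gap.
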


\begin{proof}
Part (1) is clear. In case (2), there has to be a vector $v_{s+1,l}$ where $s\geq 0$. Thus we must have $R_{p+l}\subset V_{s}$, which obviously fails.
\end{proof}

Using Remark \ref{rem:eqn} it is clear that one can have  extremely complicated situations where $R_p\subset\lieg_u$. Here is a modestly complicated case.
 
 \begin{example}\label{ex:complicated}
 Let $V_0=R_{l+p-2}\oplus R_{l+2p}$, $V_1=R_{l+p}$ and $V_2=R_l$, where $l>1$, $p>0$. Let $v=v_{0,l+p-2}+v_{0,l+2p}+v_{1,l+p}+v_{2,l}\in V=V_0\oplus V_1\oplus V_2$ where the $v_{r,s}$ are of height $r$ in $R_s\subset V_r$. Then by Remark  \ref{rem:eqn}, $\lieg_u$ contains a copy of $R_p$. Here  we have that $\sigma(y^p\otimes v_{0,l+2p})=A(v_{1,l+p})$ and $\sigma(y^p\otimes A^r(v_{0,l+p-2}+v_{1,l+p}))=A^{r+1}(v_{2,l})$, $r=0$, $1$. If we add a copy of $R_l$ to $V_1$ and a copy of $R_l$ to $V_0$ (assume $p\neq 2$) with corresponding components $v_{1,l}$ and $v_{0,l}$ in $v$, then we also have a copy of $R_0$ in $\lieg_u$. If $p=2$, we already have $R_l\subset V_0$ and we  only have to add $R_l\subset V_1$ and $v_{1,l}$. 
  \end{example}
  
  \begin{example}
  Suppose that $V=2R_l\oplus R_{l-1} \oplus R_{l+1}$ where $l\geq 2$. Then it is possible to have a generic $v\in V$ such that $R_1\subset \lieg_u$. However, one can check that this is not possible if we increase the multiplicity of $R_l$ to $3$.
  \end{example}

  \section{Appendix}  

Here we establish the branching rules which are used in Table 2 and the calculation of $V^K$ in cases 6.3 and 6.4. Recall that if $\phi$ is a $G$-module, then $\ss(\phi)=\oplus_k\ss^k(\phi)$ where $
\ss^k(\phi)$ denotes the subspace of $S^k(\phi)$ obtained using Cartan multiplication of the irreducible subrepresentations of $\phi$. 

Let $n=2m+1$, $m\geq 1$. Let $X$ denote the cone in $\phi_{n-1}(\DD_n)$ which is the closure of the orbit of a highest weight vector. Consider the action of $\SL_n\times\C^*$ on $\O(X)$ where $\phi_1(\DD_n)=\C^{2n}=\phi_1(\SL_n)\oplus\phi_{n-1}(\SL_n)=\C^n\oplus(\C^n)^*$. Here  $\C^n$ is the span of the positive weight vectors of $\DD_n$, $\C^*$ acts on $\C^n$ with weight $2$ and on $(\C^n)^*$ with weight $-2$. As $\SL_n\times\C^*$ representation, $\phi_{n-1}(\DD_n)$ is $\nu_n\oplus\phi_{n-2}(\SL_n)\otimes\nu_{n-4}+\phi_{n-4}(\SL_n)\otimes\nu_{n-8}+\dots+\phi_1(\SL_n)\otimes\nu_{-n+2}$  and $\phi_n(\DD_n)=\phi_{n-1}(\DD_n)^*=\phi_{n-1}(\SL_n)\otimes\nu_{n-2}+\dots+\phi_2(\SL_n)\otimes\nu_{-n+4}+\nu_{-n}$.

\begin{theorem} Let $X$ be the closure of the highest weight orbit in $\phi_{n-1}(\DD_n)$. Then, as $(\SL_n\times\C^*)$-module,    $\O(X)=\ss(\phi_n(\DD_n))$.
\end{theorem}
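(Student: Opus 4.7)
The plan is to establish the identity first at the level of $\DD_n$-modules, since every $\DD_n$-submodule restricts to an $(\SL_n\times\C^*)$-submodule, so a $\DD_n$-module isomorphism between $\O(X)$ and $\ss(\phi_n(\DD_n))$ automatically gives what is asserted. The whole argument will rest on three standard ingredients: normality of the highest weight orbit closure, Borel--Weil for cominuscule parabolics, and the duality of the two spin representations when $n$ is odd.

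First I would observe that $X$ is the affine cone over the smooth projective variety $\DD_n/P$, where $P$ is the parabolic stabilizing the line through a highest weight vector $v_\lambda\in\phi_{n-1}(\DD_n)$ with $\lambda=\lambda_{n-1}$. The corresponding simple root is cominuscule, so $\DD_n/P$ is the spinor variety in its minimal embedding $\DD_n/P\hookrightarrow\mathbb P(\phi_{n-1}(\DD_n))$. By a general theorem on the closure of a highest weight orbit in an irreducible representation of a reductive group (Vinberg--Popov; see also Kostant's work on the structure of $\O(X)$), the variety $X$ is normal. In particular, $\O(X)$ coincides with the section ring of the hyperplane bundle: $\O(X)=\bigoplus_{k\geq 0}H^0\bigl(\DD_n/P,L_{k\lambda_{n-1}}\bigr)$.

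Next I would apply the Borel--Weil theorem to identify $H^0(\DD_n/P,L_{k\lambda_{n-1}})$ with the dual of the irreducible $\DD_n$-module of highest weight $k\lambda_{n-1}$, i.e.\ $V_{k\lambda_{n-1}}^*$. The key point now is that, since $n=2m+1$ is odd, the longest Weyl group element acts on the spinor weights by $-w_0(\lambda_{n-1})=\lambda_n$; equivalently, the Dynkin diagram automorphism swapping the two fork nodes is nontrivial in $\DD_n$ and realises $\phi_{n-1}(\DD_n)^*\cong\phi_n(\DD_n)$. Consequently $V_{k\lambda_{n-1}}^*\cong V_{k\lambda_n}$ as $\DD_n$-modules for every $k\geq 0$, so
\[
\O(X)\;=\;\bigoplus_{k\geq 0}V_{k\lambda_n}\;=\;\ss(\phi_n(\DD_n))
\]
as graded $\DD_n$-modules, the last equality being the definition of $\ss$ (Cartan powers of $\phi_n$). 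Restricting to the Levi subgroup $\SL_n\times\C^*\subset\DD_n$ yields the theorem.

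The principal technical input is the normality of $X$ together with the identification of its graded pieces; once these are in hand everything else is formal. The only genuine obstacle is making the $\C^*$-grading match correctly. To check this, I would verify that the embedding $X\hookrightarrow\phi_{n-1}(\DD_n)$ is $\C^*$-equivariant with $\C^*$ acting on $v_\lambda$ with weight $n$ (this is visible from the explicit decomposition $\phi_{n-1}(\DD_n)=\nu_n\oplus\phi_{n-2}(\SL_n)\otimes\nu_{n-4}\oplus\cdots$ recalled in the setup), and then compare with the weight $-n$ that $\C^*$ carries on the summand $\nu_{-n}\subset\phi_n(\DD_n)$. Since evaluation of degree-$k$ functions on the cone $\C^*\!\cdot v_\lambda$ pairs a weight-$n$ vector $k$ times with its dual, the graded piece $\O(X)_k$ carries $\C^*$-weight $-kn$, exactly matching the lowest $\C^*$-weight of $\ss^k(\phi_n(\DD_n))$; the remaining $\SL_n$-types in $\ss^k(\phi_n(\DD_n))$ are then forced by the $\DD_n$-module identification.
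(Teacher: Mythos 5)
Your outline captures the right high-level facts — normality of $X$, the Kostant/Vinberg–Popov identification $\O(X)\cong\bigoplus_k V_{k\lambda_{n-1}}^*$, and $V_{\lambda_{n-1}}^*\cong V_{\lambda_n}$ for $n$ odd — but it stops one step short of the theorem, and the missing step is exactly what the paper's proof supplies.

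The gap is in the final sentence ``Restricting to the Levi subgroup $\SL_n\times\C^*\subset\DD_n$ yields the theorem.'' What you have proved up to that point is $\O(X)\cong\bigoplus_k V_{k\lambda_n}$ as $\DD_n$-modules, and indeed this equals $\ss(\phi_n(\DD_n))$ \emph{when $\ss$ is taken in the $\DD_n$-category}, since then $\ss^k(\phi_n)=V_{k\lambda_n}$ is a single irreducible. But the theorem asserts that $\O(X)$ equals $\ss(\phi_n(\DD_n))$ \emph{as an $(\SL_n\times\C^*)$-module}, where $\ss^k$ now means the sum of Cartan products of the irreducible $(\SL_n\times\C^*)$-constituents of $\phi_n(\DD_n)$. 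Forming the Cartan power does not commute with restriction to a reductive subgroup: in general the restriction of $V_{k\lambda}$ to $K\subset G$ is strictly larger than the $k$-th $K$-Cartan power of $V_\lambda|K$ (e.g.\ for $\SL_2\subset\SL_3$ embedded via $R_2$, $S^2(R_2)=R_4+R_0$ while $\ss^2(R_2)=R_4$). So the identity
\[
V_{k\lambda_n}\big|_{\SL_n\times\C^*}\;=\;\ss^k_{\,\SL_n\times\C^*}\bigl(\phi_n(\DD_n)\big|_{\SL_n\times\C^*}\bigr)
\]
is a genuine branching statement that has to be proved; nothing you have written establishes it, and it is not ``formal.'' (Your closing paragraph checks only the lowest $\C^*$-weight in each degree, which is a one-dimensional piece of what must be verified; the full $\SL_n$-module content of each degree is the real issue.)

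This is precisely where the paper's argument does nontrivial work. Instead of descending from the $\DD_n$-level, the paper computes the $(\SL_n\times\C^*)$-structure of $\O(X)$ directly: it observes that the $\SL_n$-fixed point $x\in\nu_n$ is a smooth point of $X$ with slice representation $\theta_1+\phi_{n-2}(\SL_n)$, that $\O(X)^{\SL_n}=\C[z]$ is a polynomial ring on the coordinate of $\nu_n$, and then applies Luna's slice theorem to conclude that $\O(X)$ is a free $\C[z]$-module on $\O(\phi_{n-2}(\SL_n))=S(\phi_2(\SL_n))$. Since $S(\phi_2)$ is the multiplicity-one sum of all $\phi_2^{a_2}\cdots\phi_{n-1}^{a_{n-1}}$, this produces exactly the Cartan products of the $(\SL_n\times\C^*)$-constituents of $\phi_n(\DD_n)$, with the correct $\C^*$-grading built in. To repair your proof you would either need to reproduce that slice-theorem computation, or independently prove the $\SO_{2n}\downarrow\GL_n$ branching rule for Cartan powers of the half-spin representation; neither is supplied.
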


\begin{proof}
It is well-known that $X$ is normal, and every point of $X$ except the origin is smooth. For $x\in\nu_n$, $x\neq 0$, $x$ is a smooth point of $X$, $x$ is a fixed point of $\SL_n$ and the slice representation of $\SL_n$ at $x$ is $\theta_1+\phi_{n-2}$. Since $\phi_{n-2}$ has no invariants,  $\O(X)^{\SL_n}$ is generated by a coordinate function $z$ on $\nu_n$. Then $z$ is not a zero divisor in $\O(X)$, hence $\O(X)$ is a free $\C[z]$-module. By Luna's slice theorem \cite{Luna73}, $\O(X)$ is a free  $\C[z]$-module on $\O(\phi_{n-2})$.    But $\O(\phi_{n-2})=S(\phi_2)$ is just the sum of all representations of the form $\phi_2^{a_2}\cdot\phi_4^{a_4}\dots\phi_{n-1}^{a_{n-1}}$ each with multiplicity one. It follows that  the products of the highest weight vectors of the restriction of  $\phi_{n-1}(\DD_n)^*$ to $\SL_n$ freely generate the highest weights of $\O(X)$ as an $\SL_n$-module and as an $(\SL_n\times\C^*)$-module.
\end{proof}

Now suppose the $n=2m$, $m\geq 2$. Let $X$ denote the closure of the orbit of a highest weight vector of $\phi_{n-1}(\DD_n)$. Consider the action of $\SL_n\times\C^*\subset\DD_n$ such that  $\phi_1(\DD_n)$  becomes $\C^n\otimes\nu_1\oplus (\C^n)^*\otimes\nu_{-1}$. Effectively, we have the action of $\GL_n$. Then $\phi_{n-1}(\DD_n)$, as a  $\GL_n$-module, is $\nu_m+\phi_{n-2}\otimes\nu_{m-2}+\dots+\nu_{-m}$.

\begin{theorem}
Let $n=2m$, $m\geq 2$ and let $X$ be the closure of the highest weight orbit in $\phi_{n-1}(\DD_n)$. Then, as $\GL_n$-module,    $\O(X)=\ss(\phi_{n-1}(\DD_n))$.
\end{theorem}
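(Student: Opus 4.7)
My plan is to adapt the Luna-slice argument of the odd case, handling the additional features present for even $n$. Start by observing that $X$ is normal and smooth away from the origin, since it is the affine cone over the smooth spinor variety $\DD_n/P_{n-1}$. Pick nonzero $x\in\nu_m$; this $x$ is a highest-weight vector of $\phi_{n-1}(\DD_n)$, a smooth point of $X$, fixed by $\SL_n$, and scaled by the central $\C^*\subset\GL_n$ with weight $m$. Using $\lie{so}_{2n}=\lie u^-\oplus\lie{gl}_n\oplus\lie u$, where $\lie u^-\cong\wedge^2(\C^n)^*$ is the unipotent radical opposite to the parabolic stabilizing $\C x$ and $\lie u\cdot x=\lie{sl}_n\cdot x=0$, one computes the slice representation of $\SL_n$ at $x$ to be $T_xX=\C x\oplus\lie u^-\cdot x\cong\theta_1+\phi_{n-2}(\SL_n)$, exactly as in the odd case.

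The even case differs in two respects that must be handled. First, $\phi_{n-1}(\DD_n)|\GL_n$ has a second $\SL_n$-fixed line $\nu_{-m}$, hence a second smooth $\SL_n$-fixed point $y\in\nu_{-m}\setminus\{0\}$ on $X$ with an analogous slice. Second, $\O(\phi_{n-2})^{\SL_n}$ is no longer trivial: it contains the Pfaffian of degree $m$. I would show that on $X$ the Pfaffian invariant coincides, up to a scalar, with the product $zz'$ of the coordinate functions on $\nu_m$ and $\nu_{-m}$ (for $n=4$ this is immediate from the quadratic equation defining $X\subset\phi_3(\DD_4)\cong\C^8$). Thus $\O(X)^{\SL_n}=\C[z,z']$, and applying Luna at both $x$ and $y$ gives $\O(X)|\SL_n=\C[z,z']\otimes\bigoplus\phi_2^{a_2}\phi_4^{a_4}\cdots\phi_{n-2}^{a_{n-2}}$ with each Cartan product occurring with multiplicity one.

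Finally, tracking the $\C^*$-weights of the summands from their positions in the slice embedding into $\phi_{n-1}(\DD_n)|\GL_n=\nu_m+\phi_{n-2}\otimes\nu_{m-2}+\cdots+\nu_{-m}$ shows that this decomposition agrees term-by-term with $\ss(\phi_{n-1}(\DD_n))|\GL_n$. The main obstacle is the combinatorial weight-bookkeeping together with the global gluing of the two local Luna descriptions at $x$ and $y$. The cleanest shortcut is to invoke the projective normality of the spinor variety (a standard fact for cominuscule minimal orbit closures), giving $\O(X)=\bigoplus_k V(k\omega_{n-1})=\ss(\phi_{n-1}(\DD_n))$ as $\DD_n$-modules by Borel--Weil, and then restricting to $\GL_n$.
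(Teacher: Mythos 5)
Your main line of attack is the same as the paper's: compute the slice representation of $\SL_n$ at a highest weight vector $x\in\nu_m$ (and at $y\in\nu_{-m}$), identify $\O(X)^{\SL_n}$, and then deduce the module structure of $\O(X)$ over the invariant ring. The slice computation $T_xX\cong\theta_1+\phi_{n-2}$ is correct, and your observation that the Pfaffian invariant of the slice must correspond to $z_+z_-$ is the right intuition. The paper reaches the same endpoint ($\O(X)^{\SL_n}=\C[z_+,z_-]$, with $\O(X)$ free over this ring and each $\phi_2^{a_2}\cdots\phi_{n-2}^{a_{n-2}}$ appearing exactly once) without proving the Pfaffian identification directly: it uses the slice to show $\dim\quot X{\GL_n}=1$, then counts boxes in Young diagrams to show the trivial $\SL_n$-representation can only occur in $\C^*$-weights that are multiples of $m$, and invokes the sphericality of $\GL_n\subset\SO_{2n}$ to conclude each $\nu_{km}$ occurs with multiplicity one. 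The global gluing issue you flag (``combinatorial weight-bookkeeping together with the global gluing of the two local Luna descriptions'') is exactly what the sphericality and Young-diagram argument take care of, and you leave it unresolved.

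The ``cleanest shortcut'' you propose does not work, and this is the real gap. Projective normality of the spinor variety gives $\O(X)=\bigoplus_k V(k\omega_{n-1})$ \emph{as $\DD_n$-modules}; that is, it identifies $\O(X)$ with $\ss(\phi_{n-1}(\DD_n))$ taken with respect to $\DD_n$. But the theorem asserts $\O(X)=\ss(\phi_{n-1}(\DD_n))$ \emph{as $\GL_n$-modules}, and here $\ss$ is taken with respect to $\GL_n$ applied to the restricted, now reducible, module $\phi_{n-1}(\DD_n)|\GL_n=\nu_m+\phi_{n-2}\otimes\nu_{m-2}+\dots+\nu_{-m}$ (see the paper's definition of $\ss^k$ and its use in Table 2). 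The operation $\ss$ does not commute with restriction to a subgroup: for instance $\ss^k_{\SL_2}(R_2)=R_{2k}$ has dimension $2k+1$, while $\ss^k_T(R_2|T)=S^k(\nu_2+\nu_0+\nu_{-2})$ has dimension $\binom{k+2}{2}$. So the equality $\bigoplus_k V(k\omega_{n-1})|\GL_n=\ss_{\GL_n}(\phi_{n-1}(\DD_n)|\GL_n)$, which is precisely what the theorem is asserting, is a genuine branching statement, and ``then restricting to $\GL_n$'' silently begs the question. You should drop the shortcut and instead complete the slice argument: establish freeness of $\O(X)$ over $\C[z_+,z_-]$ and use that the generic fiber of $X\to\C^2$ is $\SL_n/\CC_m$ (whose coordinate ring is $\bigoplus\phi_2^{a_2}\cdots\phi_{n-2}^{a_{n-2}}$, each summand once), as the paper does.
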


\begin{proof} Let $z_{\pm}$ be coordinate functions on the copies of $\nu_{\pm m}$ in $\phi_{n-1}(\DD_n)$. As above, one computes that
there is a slice representation $(\phi_{2m-2}+\theta_1,\SL_{2m})$ for the action of $\SL_{2m}$ on $X$. The slice representation   has a quotient of dimension two  and principal isotropy group   $\CC_m$.   It follows that  the $\GL_n$-invariants have dimension 1, hence they must be  generated by $z_+z_-$. Moreover, the only way that the trivial $\SL_n$-representation can occur in $\C[\phi_{n-2}\otimes\nu_{m-2}+\dots+\phi_2\otimes\nu_{-m+2}]$ is in products whose $\C^*$-weight is a multiple of $\pm m$ (just count boxes in Young diagrams). Since $\GL_n$ is spherical in $\DD_n$, each $\nu_{km}$, $k\in \Z$, occurs once in the free $\C[z_+z_-]$-module $\O(X)$. Thus the $\SL_n$-invariants must be the  polynomial ring $\C[z_+,z_-]$ and $\O(X)$ is free over $\C[z_+,z_-]$. For the corresponding map $X\to\C^2$, the general fiber is $SL_n/\CC_m$, which gives that the only $\SL_n$-representations that occur are  $\phi_2^{a_2}\dots\phi_{n-2}^{a_{n-2}}$ for $a_2,\dots,a_{n-2}\geq 0$, each with multiplicity one. It follows that $\O(X)=\ss(\phi_{n-1}(\DD_n))$.
\end{proof}

Finally, we consider the case where $X$ is the closure of the highest weight vector in $\phi_n(\DD_n)$, $n=2m\geq 4$. As $\GL_n$-module, we have $\phi_n(\DD_n)=\phi_{n-1}\otimes\nu_{m-1}\oplus\dots\oplus\phi_{1}\otimes\nu_{-m+1}$.

\begin{theorem}
As $\GL_n$-module, $\O(X)=\ss(\phi_n(\DD_n))$.
\end{theorem}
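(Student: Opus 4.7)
Plan: I would follow the template of the previous theorem, adapted to the representation $\phi_n(\DD_n)$ with $n = 2m$. First, $X$ is normal with isolated singularity at the origin, being the affine cone over the smooth projective spinor variety $\DD_n/P$ (where $P$ stabilizes the highest weight line of $\phi_n$). This permits Luna's slice theorem at any smooth (i.e.\ non-zero) point.

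The central task is to find a smooth point $x_0 \in X$ at which the $\SL_n$-isotropy and slice representation can be determined. The essential difference from the previous theorem is that the decomposition $\phi_n|\GL_n = \phi_{n-1}\otimes\nu_{m-1} + \phi_{n-3}\otimes\nu_{m-3} + \cdots + \phi_1\otimes\nu_{-m+1}$ contains no $\SL_n$-trivial summand, so no linear $\SL_n$-invariant is available to play the role of $z_\pm$. The lowest-degree $\GL_n$-invariants on $X$ are quadratic, arising from the dual pairings $(\phi_{n-k}\otimes\nu_{m-k}) \otimes (\phi_k\otimes\nu_{k-m}) \to \nu_0$ for odd $k$. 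After identifying the slice, $\O(X)$ becomes a free module over $\O(X)^{\SL_n}$ (a polynomial ring), the sphericity of $\GL_n$ in $\DD_n$---equivalent to $\DD_n/\GL_n$ being the symmetric space of type DIII---forces multiplicity one for each $\DD_n$-isotypic component, and the generic fiber structure as a spherical homogeneous space of $\SL_n$ pins down the list of occurring representations to match $\ss(\phi_n(\DD_n))$.

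A much cleaner alternative bypasses the slice computation entirely: since $\phi_n(\DD_n) = V_{\omega_n}$ is minuscule, the coordinate ring of the affine cone over the flag variety $\DD_n/P \hookrightarrow \mathbb{P}(\phi_n)$ is $\bigoplus_{k \geq 0} V_{k\omega_n}$ as $\DD_n$-modules, by Borel-Weil together with projective normality of the minuscule embedding. Since $\ss(\phi_n(\DD_n)) = \bigoplus_{k \geq 0} V_{k\omega_n}$ by definition of the Cartan component, we obtain $\O(X) = \ss(\phi_n(\DD_n))$ as $\DD_n$-modules, hence as $\GL_n$-modules. The main obstacle in the direct, slice-based approach is exhibiting the correct slice in the absence of a linear invariant; the alternative route is essentially immediate from classical results on minuscule representations. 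One could also argue via the outer automorphism $\sigma$ of $\DD_n$, which swaps $\phi_{n-1}$ and $\phi_n$ and conjugates the relevant Levi to its sister copy, transporting the previous theorem's $\DD_n$-module identity $\O(X') \cong \bigoplus_k V_{k\omega_{n-1}}$ to $\O(X) \cong \bigoplus_k V_{k\omega_n}$; but care is needed since $\sigma$ interchanges the two non-conjugate $\GL_n$ Levis of $\DD_n$.
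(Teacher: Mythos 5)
Your proposal misreads what the theorem is asserting, and as a result neither of your three sketched routes actually proves it. The notation $\ss$ here is the Cartan algebra taken with respect to $\GL_n$ (this is how $\ss$ is used throughout Table~2, where $\ss^k(V)$ is always formed from the irreducible summands of $V$ over the small group $K$): explicitly, $\ss(\phi_n(\DD_n))$ is the sum, each with multiplicity one, of all $\GL_n$-Cartan products of the summands $\phi_{n-1}\otimes\nu_{m-1},\dots,\phi_1\otimes\nu_{-m+1}$ of $\phi_n(\DD_n)|\GL_n$. So the content of the theorem is a genuine branching rule: the restriction of $\bigoplus_k V_{k\omega_n}$ to the Levi $\GL_n$ is precisely $\bigoplus \phi_1^{a_1}\phi_3^{a_3}\cdots\phi_{n-1}^{a_{n-1}}\otimes\nu_{\star}$ with multiplicity one. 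Your ``cleaner alternative'' instead interprets $\ss$ as the $\DD_n$-Cartan algebra, which turns the statement into the tautology $\O(X)=\bigoplus_k V_{k\omega_n}$ as $\DD_n$-modules and says nothing about the $\GL_n$-decomposition.

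Your slice-based first route also has a fatal flaw: you posit quadratic $\GL_n$-invariants on $X$ coming from the dual pairings $\phi_{n-k}\otimes\phi_k\to\det$, but those quadratics are exactly part of the defining equations of the spinor cone and therefore vanish identically on $X$. In fact $\O(X)$ has \emph{no} nonconstant $\SL_n$-invariants at all --- this is precisely why the paper states ``there are no invariants in this case'' and abandons the slice method. The outer-automorphism idea you float also does not close: $\sigma$ takes the previous theorem's identity for $\O(X')$ as a module over one $\GL_n$ Levi into information about $\O(X)$ over the \emph{other}, non-$\SO_{2n}$-conjugate Levi, which is not what is wanted; you flag the issue but do not resolve it.

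The paper's actual argument is quite different and worth understanding. It exhibits a point $v=e_1+e_1\wedge\omega+\dots$ (the image of a highest weight vector under $\exp$ of a generic nilpotent $\omega\in\lie{so}_{2n}$) whose $\SL_n$-orbit is dense in $X$, computes the isotropy group $H$ of $v$ (a parabolic-like semidirect product of $\CC_{m-1}$ with an abelian unipotent radical), uses factoriality of $X$ (Vinberg--Popov) and absence of $\SL_n$-semiinvariants to conclude the complement of $\SL_n\cdot v$ has codimension $\geq 2$, so $\O(X)\cong\O(\SL_n/H)$, and then reads off the $\SL_n$-irreducibles with nonzero $H$-fixed vector: exactly $\phi_1^{a_1}\phi_3^{a_3}\cdots\phi_{n-1}^{a_{n-1}}$, each with a one-dimensional space of fixed vectors. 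None of this appears in your proposal.
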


\begin{proof}
There are no invariants in this case, so we have to proceed a little differently. We first find a general point of $X$. Let $e_1,\dots,e_{n}$ be the usual basis of $\C^{n}$. 
Let $\omega=e_2\wedge e_3+\dots+e_{2n-2}\wedge e_{2n-1}$ considered as an element of the Lie algebra of $\DD_n$. Then the action of $\exp(\omega)$ on $e_1$ sends it to the sum $v$ of the elements $e_1\wedge\omega^{k}\in\phi_{2k+1}$, $k=0,\dots,m-1$. The isotropy group $H$ of  $\SL_n$ acting on $v$ is the semidirect product of $\CC_{m-1}$ with $\Hom(\C \cdot e_n,\C^{n-1})\oplus\Hom(\C^{n-2},\C\cdot e_1)$ where $\C^{n-2}$ here stands for the span of $e_2,\dots,e_{n-1}$ and $\C^{n-1}$ stands for $\C^{n-2}\oplus\C\cdot e_1$. Note that our copy of $\CC_{m-1}$ acts standardly on $\C^{n-2}$. Now $\dim SL_n/H = \dim X$, so that $\SL_n\cdot  v$ is a dense orbit in $X$.    Since $X$ is factorial \cite[Theorem 4]{PopVin72},  
 any divisor in the  complement of the dense orbit must be defined by a semi-invariant of $\SL_n$, hence by an invariant.  Thus there are no such divisors, so that the complement of $\SL_n\cdot v$ has codimension 2. It follows that $\O(X)\simeq\O(\SL_n/H)$. But the irreducibles of $\SL_n$ with an $H$-fixed vector are those of the form $\phi_1^{a_1}\phi_3^{a_3}\dots\phi_{n-1}^{a_{n-1}}$ where the $a_i$ are nonnegative, and the fixed point set has dimension one. Thus $\O(X)$ is as claimed.
\end{proof}

We now compute the ring of $K$-invariants in the cases  (6.3)  and (6.4)  of Table 2.
\begin{proposition}
Let $X$ (resp.\ $Y$) be the closure of the orbit of the highest weight vector of $\phi_7(\DD_8)$ (resp.\ $\phi_8(\DD_8)$). Consider the action of $\BB_4$ on $X$ and $Y$ where $\phi_1(\DD_8)|\BB_4=\phi_4(\BB_4)$. Then $\O(X)^{\BB_4}=\C[f_4]$ and $\O(Y)^{\BB_4}=\C[f_2,f_3]$ where $\deg f_i=i$.
\end{proposition}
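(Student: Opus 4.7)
The plan is to parallel the slice-theoretic arguments used in the preceding three theorems of this appendix. Since $\phi_7$ and $\phi_8$ are minuscule and self-dual representations of $\DD_8$ (self-duality holds as $8\equiv 0\pmod 4$), the cones $X,Y$ are normal and factorial, and, as $\DD_8$-modules,
\[
\O(X)=\bigoplus_{k\geq 0}\ss^k(\phi_7(\DD_8))=\ss(\phi_7(\DD_8)),\qquad \O(Y)=\bigoplus_{k\geq 0}\ss^k(\phi_8(\DD_8))=\ss(\phi_8(\DD_8)).
\]
Hence $\O(X)^{\BB_4}$ and $\O(Y)^{\BB_4}$ are graded rings whose degree-$k$ pieces are $\ss^k(\phi_7(\DD_8))^{\BB_4}$ and $\ss^k(\phi_8(\DD_8))^{\BB_4}$.

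Noting $\dim X=\dim Y=1+\binom{8}{2}=29$ and $\dim\BB_4=36$, to match the claimed rings one needs $\dim\quot{X}{\BB_4}=1$ and $\dim\quot{Y}{\BB_4}=2$, i.e.\ the generic $\BB_4$-isotropy on $X$ (resp.\ $Y$) has dimension $8$ (resp.\ $9$). I would verify these counts by exhibiting explicit general points via the factorization $\DD_8=\BB_7\BB_4$ of Table 1(6): every $\BB_4$-orbit on $X$ meets the $\BB_7$-orbit of a highest-weight vector $x_0$, whose $\BB_7$-stabilizer is a parabolic $P\subset\BB_7$, so the generic $\BB_4$-stabilizer is $P\cap\BB_4\subseteq\BB_7\cap\BB_4=\BB_3$. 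A parallel argument handles $Y$, using Table 2(6.3) and (6.4) to identify the $\BB_7$-restrictions of the half-spin modules.

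Next, I would produce invariants of the claimed degrees by direct $\BB_4$-equivariant contraction inside $S^k\phi_i(\DD_8)$, projecting onto the Cartan summand $\ss^k(\phi_i(\DD_8))$ to obtain nonzero elements of $\O(X)^{\BB_4}$ (in degree $4$) and $\O(Y)^{\BB_4}$ (in degrees $2$ and $3$). Nonvanishing of these polynomials on $X$ (resp.\ $Y$) is checked by evaluation at the general point above. To conclude that no further generators appear, I would invoke Luna's slice theorem at a point with the identified principal isotropy: since $X$ and $Y$ are factorial, the slice representation yields a polynomial invariant ring whose Poincar\'e series matches $\C[f_4]$ (resp.\ $\C[f_2,f_3]$), accounting for all of $\O(X)^{\BB_4}$ (resp.\ $\O(Y)^{\BB_4}$). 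Algebraic independence of the $f_i$ then follows from the Krull-dimension count.

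The principal obstacle is producing a usable general point and explicitly identifying its $\BB_4$-stabilizer, since the embedding $\BB_4\hookrightarrow\DD_8$ via the spin representation $\phi_4(\BB_4)$ has no standard matrix realization. A practical route is to carry out the isotropy calculation abstractly by weight-theoretic bookkeeping, or equivalently by working in an octonionic model of $\phi_4(\BB_4)$; the $F_4$-interpretation of $\DD_8$ further suggests that $f_2$ and $f_3$ on $Y$ correspond to the norm and trace on the exceptional cubic Jordan algebra, with $f_4$ on $X$ a related quartic obtained by Cartan multiplication.
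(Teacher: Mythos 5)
Your overall strategy (normality and $\O(X)=\ss(\phi_7)$, $\O(Y)=\ss(\phi_8)$, dimension counts, slice theory, Poincar\'e series) is the right one and broadly mirrors the paper, but the crucial step — identifying the generic $\BB_4$-isotropy — does not go through as you describe. In the factorization $\DD_8=\BB_7\BB_4$, it is true that every point $y=g x_0$ of the open $\DD_8$-orbit can be moved by $\BB_4$ into $\BB_7 x_0$; but if $y=hx_0$ with $h\in\BB_7$, then $(\BB_4)_y=\BB_4\cap h(\DD_8)_{x_0}h\inv$, which is \emph{not} $P\cap\BB_4$ (where $P=(\BB_7)_{x_0}$) and is \emph{not} contained in $\BB_7\cap\BB_4=\BB_3$ for generic $h$. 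So this gives neither the principal isotropy group nor even its dimension, and the quotient-dimension counts $\dim\quot{X}{\BB_4}=1$, $\dim\quot{Y}{\BB_4}=2$ are left unproven. You flag this honestly as ``the principal obstacle,'' but it is not a loose end — it is the heart of the proof.

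What the paper actually does to close this gap is quite different and worth knowing. The Levi subgroups $L(P)$, $L(Q)$ of the parabolic stabilizers of the highest weight lines of $\phi_7$, $\phi_8$ double-cover the two $\SO_{16}$-conjugacy classes of $\GL_8\subset\SO_{16}$, and for a given reductive subgroup of $\BB_4=\Spin_9$ landing inside a $\GL_8$, one decides \emph{which} half-spin module it has fixed vectors in by identifying the class of its $\GL_8$. For $X$ one takes $H=\Ad\SL_3\subset\SO_9$ (so $\phi_4(\BB_4)|H=2\phi_1\phi_2$, which sits inside a $\GL_8$), checks $X^H\neq 0$ with finite generic isotropy, and gets a $28$-dimensional closed orbit, hence $\dim\O(X)^{\BB_4}\leq 1$; combined with the LiE-computed Poincar\'e series $1+t^4+\cdots$, this yields $\C[f_4]$. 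For $Y$, the subgroup is $\tilde K=(\SL_4\times\SL_2)/\pm I$ (double cover of $\SO_6\times\SO_3$), whose slice representation $S^2(\C^4)+\theta_1$ at a fixed highest-weight point identifies the principal isotropy as $\SO_3\times\SO_3\times\SO_3$, whence $\dim\quot{Y}{\BB_4}=2$; a Luna--Richardson argument with $N_{\SO_9}(H)/H\simeq W(\DD_3)$ acting on the $5$-dimensional $\phi_8(\DD_8)^H$ bounds generator degrees by $5$, and the Poincar\'e series $1+t^2+t^3+t^4+t^5+\cdots$ forces $\C[f_2,f_3]$. None of these specific subgroups, the $L(P)$ vs.\ $L(Q)$ dichotomy, the slice computations, or the Weyl-group degree bound appear in your sketch, and without them the argument does not close. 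Your octonionic/Jordan-algebra remark at the end is an interesting heuristic but is not developed and is not used by the paper.
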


\begin{proof}  Using LiE \cite{Lie, Lie2} one computes that the Poincar\'e series of $\O(X)^{\BB_4}$ is $1+t^4+\dots$ and that the Poincar\'e series of $\O(Y)^{\BB_4}$ is $1+t^2+t^3+t^4+t^5+\dots$. Recall that $X$ and $Y$ are normal, hence so are $\O(X)^{\BB_4}$ and $\O(Y)^{\BB_4}$. Thus $\dim\O(Y)^{\BB_4}\geq 2$. The restriction of $\phi_7(\DD_8)$ (resp.\ $\phi_8(\DD_8)$) to $\BB_4$ is $\phi_1\phi_4$ (resp.\ $\phi_1^2+\phi_3$). Let $P$ (resp.\ $Q$) be the stabilizer of the highest weight line in $\phi_7(\DD_8)$ (resp.\ $\phi_8(\DD_8)$). Then   the Levi components $L(P)$ and $L(Q)$ of $P$ and $Q$  double cover representatives of the two $\SO_{16}$-conjugacy classes of embeddings of $\GL_8$ in $\SO_{16}$. We have $L(P)\simeq(\SL_8\times\C^*)/(\Z/4\Z)$ and the same for $L(Q)$. Restricted to  $L(P)$, $\phi_7(\DD_8)$ becomes  the representation $\nu_4+\wedge^6(\C^8)\otimes\nu_2+\wedge^4(\C^8)+\wedge^2(\C^8)\otimes\nu_{-2}+\nu_{-4}$.  The highest weight space of $\phi_7(\DD_8)$ is $\nu_4$. The tangent space to $X$ at a nonzero point of $\nu_4$ is $\nu_4+\wedge^6(\C^8)\otimes\nu_2$ so that $\dim X=29$. The restriction of $\phi_7(\DD_8)$ to $L(Q)$ is  $\wedge^7(\C^8)\otimes\nu_{3}+\wedge^5(\C^8)\otimes\nu_1+\wedge^3(\C^8)\otimes\nu_{-1}+\C^8\otimes\nu_{-3}$. For $\phi_8(\DD_8)$, the decompositions relative to $L(P)$ and $L(Q)$ are reversed, so $\dim Y=29$, also.

Consider the action of $H=\Ad\SL_3$ on $\C^9$ as $\phi_1\phi_2+\theta_1$. Then $\phi_4(\BB_4)|H=2\phi_1\phi_2$.  Clearly the image of $H$ in $\SO_{16}$ lies in a copy of $\GL_8$. Suppose that this copy of $\GL_8$ is double covered by a conjugate of $L(P)$. Then $X^{H}\neq  (0)$, and the $\BB_4$-orbit of a nonzero fixed point is closed since the normalizer of $H$ in $\BB_4$ is a finite extension of $H$ \cite[3.1 Corollary 1]{Luna75}. It is easy to check that the isotropy group of a nonzero point of $X^H$ is at most a finite extension of $H$. Thus the dimension of the corresponding closed $\BB_4$-orbit is $28$. Hence  $\dim\O(X)^{\BB_4}\leq 1$ and the Poincar\'e series information gives that $\O(X)^{\BB_4}=\C[f_4]$ where $\deg f_4=4$.  If our copy of $\GL_8$ were double covered by a conjugate of $L(Q)$, then we would see that $\dim \O(Y)^{\BB_4}\leq 1$, which is a contradiction. Thus $\O(X)^{\BB_4}$ is as claimed.

Now consider the group $K=\SO_6\times\SO_3\subset\SO_9$. Then the double cover $\tilde K$ of $K$ is  $(\SL_4\times\SL_2)/\pm I$ and $\phi_4(\BB_4)$, as $\tilde K$-representation, is $\C^4\otimes\C^2+(\C^4)^*\otimes\C^2$. Thus $\tilde K$ is a subgroup of a copy of $\GL_8$ in $\SO_{16}$. If this $\GL_8$ is double covered by a conjugate of $L(P)$, then one sees that there are no nonzero fixed points of $\tilde K$ (actually $K$) in $\phi_8(\DD_8)$. But $\phi_8(\DD_8)|\BB_4=\phi_1^2+\phi_3$ has $K$-fixed points of dimension 2.  Hence our copy of $\GL_8$ is double covered by a conjugate of $L(Q)$ and the  weight space $\nu_4$ of the restriction of $\phi_8(\DD_8)$   to $L(Q)$ lies in $Y$ and is fixed by $\tilde K$.   The group $N_{\BB_4}(\tilde K)/\tilde K\simeq\Z/2$    flips the highest and lowest weight spaces $\nu_{\pm 4}$. Since $\tilde K$ is a maximal connected reductive subgroup of $\BB_4$,   the stabilizer of $\nu_4$ is $\tilde K$ and any point of $\nu_4$ lies on a closed orbit. The slice representation of $\tilde K$ is $S^2(\C^4)+\theta_1$ which shows that the principal isotropy group $H$ of the action of $\BB_4$ (actually $\SO_9)$ is $SO_3\times\SO_3\times\SO_3$. It follows that $\dim \quot Y{\BB_4}=2$. Now $N_{\SO_9}(H)/H\simeq W(\DD_3)$, the Weyl group of $\DD_3$, where $V:=\phi_8(\DD_8)^H$ has dimension 5. One easily computes that the generators of $\O(V)^{W(\DD_3)}$ are of degree at most $5$. Then by the Luna-Richardson theorem \cite[3.2 Corollary]{Luna75} it follows that the invariants of $\O(Y)^{\SO_9}$ have generators in degree at most $5$, and then from our information about the Poincar\'e series it follows that $\O(Y)^{\SO_9}=\C[f_2,f_3]$ where $\deg f_i=i$, $i=2$, $3$.
\end{proof}

  \begin{remark} There is no representation of $\SO_9$ with principal isotropy group $H=\SO_3\times\SO_3\times\SO_3$ and slice representation $S^2(\C^4)+\theta_1$ of $K=\SO_6\times\SO_3$ which has homogeneous invariants $f_2$ and $f_3$ of degrees 2 and 3, respectively. The reason is that we would have a slice which is   an open   $K$-invariant subset of the  linear subspace $V=\C\cdot v+S^2(\C^4)$ where $K$ fixes $v$, and the restrictions of the $f_i$ to $V$ would have to be functions of $\C\cdot v$ alone since the invariant of $S^2(\C^4)$ is of degree 4. Thus $f_2$ and $f_3$ would be algebraically dependent, a contradiction to normality.
  \end{remark}
  
  \begin{remark}
  The generators $f_2$ and $f_3$ form a homogeneous regular sequence in $\O(Y)$, hence $\O(Y)$ is a free graded $\C[f_2,f_3]$-module \cite[Lemma 3.3]{Stanley79}. It follows that $\O(Y)$ is cofree, i.e., each module of covariants is free over $\C[f_2,f_3]$. Of course, we have the analogous result for $\O(X)$.
  \end{remark}

\providecommand{\bysame}{\leavevmode\hbox to3em{\hrulefill}\thinspace}
\providecommand{\MR}{\relax\ifhmode\unskip\space\fi MR }
\providecommand{\MRhref}[2]{%
  \href{http://www.ams.org/mathscinet-getitem?mr=#1}{#2}
}
\providecommand{\href}[2]{#2}

\end{document}